  \setlist{nosep} % removes all vertical spacing in lists, nolistsep is now deprecated
\newtheorem{theorem}{Theorem}[section]
\newtheorem{lemma}[theorem]{Lemma}
\newtheorem{corollary}[theorem]{Corollary} % added
\newtheorem{proposition}[theorem]{Proposition} % added
\newtheorem{conjecture}{Conjecture}[] % added, separate global counter
\theoremstyle{definition}
\newtheorem{definition}[theorem]{Definition}
\newtheorem{example}[theorem]{Example}
\theoremstyle{remark}
\newtheorem{remark}[theorem]{Remark}
\numberwithin{equation}{section}
    \newcommand*{\RR}{\mathbb{R}}
    \newcommand*{\NN}{\mathbb{N}}
    \newcommand*{\EE}{\mathbb{E}}
    \newcommand*{\PP}{\mathbb{P}}
   \newcommand*{\sgn}{\operatorname{sgn}} % signum
    \newcommand*{\eqdistr}{\overset{d}{\sim}} % same distribution
    \newcommand*{\hadprod}{\mathbin{\circ}} % Hadamard product of matrices (binary operator!)
    \newcommand*{\conv}{\operatorname{conv}} % convex hull
    \newcommand*{\Ext}{\operatorname{Ext}} % extremal points
    \newcommand*{\ind}{\mathbf{1}} % indicator
       \newcommand*{\LZeroSlepian}{L_{K}} % for constants in the table
       \newcommand*{\LZeroSlepianValue}{\ln(n)^{1/p^*}\ln(m)^{1/q}} % for constants in the table
     \newcommand*{\prho}{\rho} % substitution for some change of notation
     \newcommand*{\rearr}{\downarrow} % substitution for change of notation with rearrangments; use: x^{\rearr{}}
\colorlet{ColorGHLP}{black}  
\colorlet{ColorRadek}{black} 
\colorlet{ColorEasy}{black} 
\colorlet{ColorGHLPLight}{ColorGHLP} 
\colorlet{ColorRadekLight}{ColorRadek}
\colorlet{ColorEasyLight}{ColorEasy}
\tikzset{
	hatch distance/.store in=\hatchdistanceprim,
	hatch distance=7.3pt, % distance is different (square side vs square diagonal)
	hatch thickness/.store in=\hatchthicknessprim,
	hatch thickness=0.8pt
}
\pgfqpoint{\hatchdistanceprim}{\hatchdistanceprim}}
\pgfqpoint{\hatchdistanceprim}{\hatchdistanceprim}}
\newcommand*{\MyHalfThickness}{0.035} % half of the thickness of the boundary
\newcommand*{\SqrtTwo}{1.41421356}
\newcommand*{\MyRadiusBig}{2*\MyHalfThickness} % for the new, 30 angle version
\newcommand*{\MyLabelSpace}{0.1} % placement adjustement for labels
\begin{document}

\title[Norms of structured matrices]{Norms of structured random matrices}

\author[R. Adamczak]{Rados\l{}aw Adamczak}
\address{Rados{\l}aw Adamczak, Institute of Mathematics, University of Warsaw, Banacha 2, 02--097 Warsaw, Poland.}
\email{radamcz@mimuw.edu.pl}

\author[J. Prochno]{Joscha Prochno}
\address{Joscha Prochno, Faculty of Computer Science and Mathematics, University of Passau, Innstra{\ss}e 33, 94032 Passau, Germany.}
\email{joscha.prochno@uni-passau.de}

\author[M. Strzelecka]{Marta Strzelecka}
\address{Marta Strzelecka, Institute of Mathematics and Scientific Computing, University of Graz, Heinrichstra{\ss}e 36, 8010 Graz, Austria; 
	Institute of Mathematics, University of Warsaw, Banacha 2, 02--097 Warsaw, Poland.}
\email{martast@mimuw.edu.pl (corresponding author)}

\author[M. Strzelecki]{Micha\l{} Strzelecki}
\address{Micha{\l} Strzelecki, Institute of Mathematics, University of Warsaw, Banacha 2, 02--097 Warsaw, Poland.}
\email{michalst@mimuw.edu.pl}

\begin{abstract}
For $m,n\in\mathbb{N}$, let $X=(X_{ij})_{i\leq m,j\leq n}$ be a random matrix,
$A=(a_{ij})_{i\leq m,j\leq n}$ a real deterministic matrix,
and $X_A=(a_{ij}X_{ij})_{i\leq m,j\leq n}$ the corresponding structured random matrix.
We study the expected operator norm of $X_A$ considered as a random operator between $\ell_p^n$ and $\ell_q^m$ for $1\leq p,q \leq \infty$.
We prove optimal bounds up to logarithmic terms when the underlying random matrix $X$ has i.i.d.\ Gaussian entries,
independent mean-zero  bounded entries,
or independent mean-zero $\psi_r$ ($r\in(0,2]$) entries.
In certain cases, we determine the precise order of the expected norm up to constants. Our results are expressed through a sum of operator norms of Hadamard products $A\circ A$ and $(A\circ A)^T$.
\end{abstract}

\date{March 31, 2023}

\subjclass[2020]{%
Primary 60B20; % Probability theory and stochastic processes
               % Probability theory on algebraic and topological structures
               % Random matrices (probabilistic aspects)
Secondary 46B09; 52A23; 60G15; 60E15.
               % Probabilistic methods in Banach space theory
               % Asymptotic theory of convex bodies
               % Gaussian processes
               % Inequalities; stochastic orderings
}

\keywords{Gaussian random matrix, operator norm, structured random matrix, $\psi_r$ random variable.
}

\maketitle

\vspace{-4pt}
\tableofcontents

%--------------------
%
\section{Introduction and main results}
%
%--------------------

With his work on the statistical analysis of large samples \cite{W1928}, Wishart initiated the systematic study of large random matrices. Ever since, random matrices have continuously entered more and more areas of mathematics and applied sciences beyond probability theory and statistics, for instance, in numerical analysis through the work of Goldstine and von Neumann \cite{vN,GvN1951} and in quantum physics through the works of Wigner \cite{W1955,W1957,W1958} on his famous semicircle law, which resulted in significant effort to understand spectral statistics of random matrices from an asymptotic point of view. Today, random matrix theory has grown into a vital area of probability theory and statistics, and within the last two decades, random matrices have come to play a major role in many areas of (algorithmic) computational mathematics, for instance, in questions related to sparsification methods \cite{AM2007,ST2004} and sparse approximation \cite{Tr2008_b,Tr2008}, dimension reduction \cite{AC2009,BDN2015,LLR1995}, or combinatorial optimization \cite{NRV2014,So2011}. We refer the reader to \cite{ABDF2015, AGZ2010, T2015} for more information.

In this paper, we are interested in the non-asymptotic theory of (large) random matrices. This theory plays a fundamental role in geometric functional analysis at least since the '70s, the connection coming in various different flavors. It is of particular importance in the geometry of Banach spaces and the theory of operator algebras \cite{BGN,BG1981,DS2001,Gl1983,G1985,HPV2021} and their applications to high-dimensional problems, for instance, in convex geometry \cite{FY2019,GLSW2002}, compressed sensing \cite{CGLP2012,FR2013,Rau2010,V2012}, information-based complexity \cite{HKNPUsurvey,HKNPU2021}, or statistical learning theory \cite{RV2007,V2018}. On the other hand, geometric functional analysis had and still has enduring influence on random matrix theory as is witnessed, for instance, through applications of measure concentration techniques; we refer to \cite{DS2001,L2007} and the references cited therein.
The quantity we  study and focus on here concerns the expected operator norm of random matrices considered as operators between finite-dimensional $\ell_p$ spaces; recall that $\ell_p^n$ denotes the space $\RR^n$ equipped with the (quasi-)norm $\|\cdot\|_p$, given by $\|(x_j)_{j=1}^n\|_p = (\sum_{j=1}^n |x_j|^p)^{1/p}$ for $0<p<\infty$ and $\|(x_j)_{j=1}^n\|_\infty = \max_{j\le n}|x_j|$ if $p=\infty$. We address the following problem:
for $1 \leq p,q \leq \infty$ and $m,n\in\NN$, determine the right order (up to constants that may depend on the parameters $p$ and $q$) of
 \[
 \EE\| X_A\colon \ell^n_p \to \ell^m_q\|,
 \]
where, given a deterministic real $m\times n$ matrix $A = (a_{ij})_{i\leq m, j\leq n}$
and a random matrix $X = (X_{ij})_{i\leq m, j\leq n}$, we denote by
\[
X_A \coloneqq A\hadprod X = (a_{ij} X_{ij})_{i\leq m, j\leq n}
\]
the structured random matrix; the symbol $\hadprod$ stands for the Hadamard product of matrices (i.e., entrywise multiplication).
The bounds on the expected operator norm should be of optimal order and expressed in terms of the coefficients $a_{ij}$, $i\leq m,j\leq n$. Understanding such expressions and related quantities is important, for instance, when studying the worst-case error of optimal algorithms which are based on random information in function approximation problems \cite{HKNPU2021} (see also \cite{KU2021}) or the quality of random information for the recovery of vectors from an $\ell_p$-ellipsoid, where (the radius of) optimal information is given by Gelfand numbers of a diagonal operator \cite{HPS2021}.

In the case where the random entries of $X$ are i.i.d.~standard Gaussians (then we write $G_A$ instead of $X_A$) and $1\leq p,q \leq \infty$, we will show the following bound, which is sharp up to logarithmic terms:
  \begin{equation}\label{main-result-gaussian-intro}
 D_1 + D_2 \lesssim \EE \|G_A \colon \ell^n_p\to \ell^m_q\|
  \lesssim  (\ln n)^{1/p^*}  (\ln m)^{1/q} \bigl[  \sqrt{\ln(mn)} D_1
  +\sqrt{\ln n} D_2\bigr],
  \end{equation}
  where $D_1  \coloneqq \|A\hadprod A \colon \ell^n_{p/2} \to \ell^m_{q/2}\|^{1/2}$ , $D_2  \coloneqq \|(A\hadprod A)^T \colon \ell^m_{q^*/2} \to \ell^n_{p^*/2}\|^{1/2}$, and $p^*$ denotes the H\"older conjugate of $p$ defined by the relation $1/p+1/p^*=1$.
 As will be explained later, we obtain sharp estimates in certain cases and derive results similar to \eqref{main-result-gaussian-intro} for other models of randomness.

%--------------------
%
\subsection{History of the problem and known results}\label{sec:known-results}
%
%--------------------

In what follows, $A = (a_{ij})_{i,j}$ is a real deterministic matrix and $G=(g_{ij})_{i,j}$ always stands for a random matrix with i.i.d.\ standard Gaussian entries (usually the matrices are of size $m\times n$ unless explicitly stated otherwise).
  We use $C(r)$, $C(r,K)$, etc.\ for positive constants which may depend only on the parameters given in brackets and write $C, C', c,c',\dots$ for positive absolute constants.
 The symbols $\lesssim$, $\lesssim_{r}$, $\lesssim_{r, K}$, etc.\ denote that the inequality holds up to multiplicative constants depending only on the parameters given in the subscripts;
we write $a\asymp b$ if $a\lesssim b$ and $b\lesssim a$, and $\asymp_r$, $\asymp_{r,K}$, etc. if the constants may depend on the parameters given in the subscript.

In 1975, Bennett, Goodman, and Newman \cite{BGN} proved that
 if $X$ is an $m\times n$ random matrix
with independent, mean-zero entries taking values in $[-1,1]$,
and $2\leq q < \infty$,
then
\begin{equation}
	\label{eq:BGN}
\EE \| X\colon \ell^n_2 \to \ell^m_q\| \lesssim_q \max\{n^{1/2}, m^{1/q}\}.
\end{equation}
In fact, up to constants, this estimate is best possible:
for any $m\times n$ matrix $X'$ with $\pm 1$ entries one readily sees that $ \| X'\colon \ell^n_2 \to \ell^m_q\| \geq \max\{n^{1/2}, m^{1/q}\}$; just use standard unit vectors and operator duality.
Moreover,
in this `unstructured' case, where $a_{ij}=1$ for all $i,j$,
it is easy to extend \eqref{eq:BGN} to the whole range of $p, q\in [1,\infty]$
(see~\cite{Bennett,CarlMaureyPuhl} or Remark~\ref{rem:BGN-extended-for-all-p,q} below). Also, if all entries are i.i.d.\ Rademacher random variables, then
the bounds are two-sided, i.e.,
the expected operator norm is, up to constants, the same as the minimal norm for all $p$, $q$
(see \cite[Proposition~3.2]{Bennett} or \cite[Satz~2]{CarlMaureyPuhl}).

The case most studied  in the literature is the one of the spectral norm, i.e., the $\ell_2^n \to \ell_2^m$ operator norm.
 Seginer~\cite{Seginer}
 proved in 2000 that if $X = (X_{ij})_{i\leq m, j\leq n}$ is an $m\times n$ random matrix
 with i.i.d.\ mean-zero entries,
 then its operator norm is of the same order as the sum of expectations
 of the maximum Euclidean norm of rows and columns of $X$, i.e.,
 \begin{align}
 	\label{eq:Seginer}
 	\EE \|X\colon \ell_{2}^n\to\ell_2^m\|
 	& \asymp
    \EE \max_{j \leq n} \|(X_{ij})_{i=1}^m\|_2
    + \EE \max_{i \leq m}\|(X_{ij})_{j=1}^n\|_{2}.
 \end{align}
Riemer and Sch\"utt~\cite{Riemer-Schuett} proved that,
up to a logarithmic factor $\ln(en)^2$, the same holds true for any random matrix with independent but not necessarily identically distributed mean-zero entries. Let us also mention that in the Gaussian setting one can use a non-commutative Khintchine bound (see, e.g., \cite[Equation (4.9)]{Tropp12}) to show that, up to a factor $\sqrt{\ln n}$, the expected spectral norm is of the order of the largest Euclidean norm of its rows and columns.

In the very same setting that was considered by Riemer and Sch\"utt, Lata{\l}a~\cite{Latala-Some-estimates} had obtained a few years earlier the dimension-free estimate
\begin{equation*}
\label{eq:Latala}
\EE  \|X \colon \ell_{2}^n\to\ell_2^m\|
\lesssim
 \max_{j\leq n} \Bigl(\sum_{i=1}^m \EE X_{ij}^2\Bigr)^{1/2}
 + \max_{i\leq m} \Bigl(\sum_{j=1}^n \EE X_{ij}^2\Bigr)^{1/2}
 +  \Bigl(\sum_{i=1}^m \sum_{j=1}^n \EE X_{ij}^4\Bigr)^{1/4}.
\end{equation*}
This bound is superior to the Riemer--Sch\"utt bound in the case of 
matrices with all entries equal to $1$ and is optimal for Wigner matrices. In other cases, like the one of diagonal matrices, the Riemer--Sch\"utt bound is better.

In the case of structured Gaussian matrices, Lata{\l}a, van~Handel, and Youssef~\cite{LvHY}, building upon earlier work of Bandeira and van~Handel \cite{BvH2016} (which combined the moment method with combinatorial considerations) as well as results proved by van~Handel in \cite{vH2017} (which used  Slepian's lemma),
obtained the precise behavior without any logarithmic terms in the dimension, namely
  \begin{align}
 	\label{eq:LvHY}
 	\EE \| G_A \colon \ell_{2}^n\to\ell_2^m\|
 	& \asymp
 \EE \max_{j \leq n} \|(a_{ij} g_{ij})_{i=1}^m\|_2
 + \EE \max_{i \leq m}\|(a_{ij} g_{ij})_{j=1}^n\|_{2}  \\ \nonumber
 & \asymp \max_{j \leq n} \|(a_{ij})_{i=1}^m\|_2
  + \max_{i \leq m}\|(a_{ij})_{j=1}^n\|_{2} + \EE\max_{i\leq m, j\leq n}|a_{ij}g_{ij}| .
\end{align}
Their proof is based on a clever block decomposition of the underlying matrix (see \cite[Figure 3.1]{LvHY}). This result finally answered in the affirmative a conjecture made by Lata{\l}a more than a decade before. We also refer the reader to the survey \cite{vH2017_survey} discussing in quite some detail results prior to \cite{LvHY} and \cite{vH2017} --- the latter work discusses the conjectures of Lata{\l}a and van Handel and shows their equivalence.

Very recently, Lata\l{}a and \'Swi\k{a}tkowski \cite{latala-swiatkowski2021norms} investigated a similar problem when the underlying random matrix has Rademacher entries.
They proved a lower bound which, up to a $\ln \ln n$ factor, can be reversed  for randomized $n\times n$ circulant  matrices.

In \cite{GHLP}, Gu\'edon, Hinrichs, Litvak, and Prochno studied our main and motivating question on the order of the expected operator norm of structured random matrices considered as operators between $\ell_p^n$ and $\ell_q^m$ in the special case where $ p\leq 2\leq q $ and the random entries are Gaussian. In this situation, where we are not dealing with the spectral norm, the moment method cannot be employed. The approach in \cite{GHLP} was therefore different and based on a majorizing measure construction combining the works \cite{GMPT2008} and \cite{GR2007}. In \cite[Theorem 1.1]{GHLP}, the authors proved that if $1< p\leq 2\leq q < \infty$, then
\begin{align}
\nonumber
\EE \|G_A\colon \ell_{p}^n\to\ell_q^m\|
\lesssim
 \gamma_q  \max_{j \leq n}\|(a_{ij})_{i=1}^m\|_q &+ (p^*)^{5/q} (\ln m)^{1/q}
\gamma_{p^*}\max_{i \leq m}\|(a_{ij})_{j=1}^n\|_{p^*} \\
& +  (p^*)^{5/q} (\ln m)^{1/q} \gamma_q \ \EE\max_{i\leq m, j\leq n}|a_{ij}g_{ij}|,
\label{eq:GHLP}
\end{align}
where $\gamma_r \coloneqq (\EE |g|^r)^{1/r}$ for a standard Gaussian random variable $g$. Moreover, for $p = 1$ and $q \ge 2$, it was noted in \cite[Remark~1.4]{GHLP} (see also \cite[Twierdzenie~2]{Matlak}) that
\begin{align}\label{eq:Matlak}
 \EE \|G_A\colon \ell_{1}^n\to\ell_q^m\|
& \lesssim \sqrt{q}  \max_{j\le n} \|(a_{ij})_{i=1}^m\|_q + \EE\max_{i\leq m, j\leq n}|a_{ij}g_{ij}|.
\end{align}
Later, an extension of \eqref{eq:GHLP} to the case of matrices with i.i.d.\ isotropic log-concave rows
was obtained by Strzelecka in~\cite{Strzelecka}.

Trying to extend the upper bound for $\EE\|G_A\colon \ell_p^n \to \ell_q^m\|$ to the whole range $1\leq p, q\leq \infty$ one encounters two difficulties.
First of all, the methods used in order to prove  \eqref{eq:GHLP} fail
if $q\leq 2$ or $p \geq 2$, because the majorizing measure construction used in \cite{GHLP} is restricted to the case $q\geq 2$ and the assumption $1<p\leq 2$ is required in a H\"older bound. Moreover, when $q\leq 2$ or $p \geq 2$ the result cannot hold with the right-hand side of the same form as in~\eqref{eq:GHLP}
(see Remark~\ref{rem:BGN-extended-for-all-p,q} below for counterexamples\footnote{By Jensen's inequality, the expected norm of a matrix with i.i.d.\ Rademacher entries is less than or equal to $\sqrt{2/\pi}$ times the expected norm of the matrix with Gaussian entries, so \eqref{eq:GHLP} for $q\le 2$ or $p\ge 2$ would imply the same (up to a constant) bound for $\pm 1$ matrices, which does not hold in this range of $(p,q)$ as we  explain in Remark~\ref{rem:BGN-extended-for-all-p,q}.}
to \eqref{eq:GHLP} in the cases $q\leq 2$ and $p \geq 2$). 
This explains the different form of expressions $D_1$ and $D_2$
in \eqref{main-result-gaussian-intro}, which in the range $p \le 2 \le q$ reduce to the maxima of norms on the right-hand side of~\eqref{eq:GHLP} --- see~\eqref{eq:display-after-1.8} below.

% % % % % % % % % % % % % % % % % % % % % %
\subsection{Lower bounds and conjectures}\label{subsect:conjectures}
% % % % % % % % % % % % % % % % % % % % % %

By arguments similar to the ones used in order to prove the lower bound in~\eqref{eq:LvHY}, one can check that in the range considered in \cite{GHLP,Matlak} (i.e., $1\leq p\leq 2\leq q \leq \infty$) one has
\begin{align}
\label{eq:lower-bound-GHLP}
 \EE \|G_A\colon \ell_{p}^n\to\ell_q^m\|
 \gtrsim_{p,q}
\max_{j \leq n}\|(a_{ij})_{i=1}^m\|_q &+ \max_{i \leq m}\|(a_{ij})_{j=1}^n\|_{p^*} \\
&+ \EE\max_{i\leq m, j\leq n}|a_{ij}g_{ij}|. \nonumber
\end{align}
Note that for $p=1$,
\begin{displaymath}
\max_{i \leq m}\|(a_{ij})_{j=1}^n\|_{p^*} = \max_{i\le m,j\le n} |a_{ij}| \le \sqrt{\pi/2}\,\EE\max_{i\leq m, j\leq n}|a_{ij}g_{ij}|,
\end{displaymath}
which explains the simplified form of \eqref{eq:Matlak}.

We remark that the proof of \eqref{eq:lower-bound-GHLP} is based merely on the observation that the operator norm is greater than the maximum entry of the matrix and the appropriate maximum norms of its rows and columns, combined with comparison of moments for Gaussian random vectors. Another but related way to proceed, valid for all $1 \le p, q \le \infty$, is to exchange expectation and suprema over the $\ell_p^n$ and $\ell_{q^\ast}^m$ balls in the definition of the operator norm.
 We present the details in Subsection~\ref{subsec:lower-bounds}.
In particular, Proposition \ref{prop:cm-lower-bound} and Corollary \ref{cor:cm-lower-bound} imply\footnote{We use here also a trivial observation that $\|G_A\colon \ell_{p}^n\to\ell_q^m\| \ge \max_{i,j} |a_{ij}g_{ij}|$.} that, for $1\le p,q \le \infty$,
\begin{align}
\EE \|G_A\colon \ell_{p}^n\to\ell_q^m\| \gtrsim  \|A\hadprod A \colon \ell^n_{p/2} \to \ell^m_{q/2} \|^{1/2} &+ \|(A\hadprod A)^T \colon \ell^m_{q^*/2} \to \ell^n_{p^*/2}\|^{1/2} \nonumber\\
&+ \EE\max_{i\leq m, j\leq n}|a_{ij}g_{ij}|.
\label{eq:lower-bound}
\end{align}
It is an easy observation (see Lemma \ref{lem:special-norms} below) that for  $p\le 2 \le q$,
\begin{equation}
\label{eq:display-after-1.8}
\begin{split}
\|A\hadprod A \colon \ell^n_{p/2} \to \ell^m_{q/2} \|^{1/2}  &= \max_{j \leq n}\|(a_{ij})_{i=1}^m\|_q, \\
\|(A\hadprod A)^T \colon \ell^m_{q^*/2} \to \ell^n_{p^*/2}\|^{1/2}  &= \max_{i \leq m}\|(a_{ij})_{j=1}^n\|_{p^\ast}.
\end{split}
\end{equation}
Thus, in the range $1 \le  p \le 2 \le q < \infty$ considered in \cite{GHLP,Matlak}, the lower bounds \eqref{eq:lower-bound-GHLP} and \eqref{eq:lower-bound} coincide.

Although it would be natural to conjecture at this point that the bound~\eqref{eq:lower-bound} may be reversed up to a multiplicative constant depending only on $p,q$, such a reverse bound turns out not to be true in the case $p\le q< 2$ (and in the dual one $2< p\le q$) as we shall show in Subsection~\ref{subsection:counterexample}.

In order to conjecture the right asymptotic behavior of $\EE\|G_A\colon \ell_p^n \to \ell_q^m\|$, one may take a look at the boundary values of $p$ and $q$,
i.e., $p\in \{1,\infty \}$ or $q\in \{1, \infty \}$. Note that \eqref{eq:Matlak} provides an asymptotic behavior of $\EE\|G_A\colon \ell_p^n \to \ell_q^m\|$ on a part of this boundary 
(i.e., for $p=1$ and $2\le q\le \infty$ and in the dual case $q=\infty$ and $1\le p\le 2$). We provide sharp results on the remaining parts of the boundary of $[1,\infty]\times[1,\infty]$ (see dense lines on the boundary of Figure~\ref{fig:diagram-big-after} below):
			\begin{alignat*}{2}
			\EE \|G_A \colon \ell^n_p\to \ell^m_1\|	&\asymp_{p}
			D_1 +D_2 &&\qquad \text{for all }1<p\le \infty,
			\\
			\EE \|G_A \colon \ell^n_\infty\to \ell^m_q\|	&\asymp_{q}
			D_1 +D_2 &&\qquad \text{for all }1\le q<\infty,\\
			\EE \|G_A \colon \ell^n_1\to \ell^m_q\|	&\asymp_{\phantom{r}} D_1 + \max_{j\le n} (\sqrt{\ln(j+1)} b_j^{\rearr{}})	&&\qquad \text{for all }1\le q\leq 2,
			\\
			\EE \|G_A\colon \ell_p^n \to \ell_\infty^m\| &\asymp_{\phantom{r}} D_2+ \max_{i\le m} (\sqrt{\ln(i+1)} d_i^{\rearr{}}) &&\qquad \text{for all }2\leq p\le \infty,
			\end{alignat*}
			where 
		\begin{equation*}
\begin{split}
D_1  &\coloneqq \|A\hadprod A \colon \ell^n_{p/2} \to \ell^m_{q/2}\|^{1/2},\\
D_2  &\coloneqq \|(A\hadprod A)^T \colon \ell^m_{q^*/2} \to \ell^n_{p^*/2}\|^{1/2},
\end{split}
\qquad\qquad
\begin{split}
b_j&\coloneqq \|(a_{ij})_{i\le m} \|_{2q/(2-q)}, \\
d_i &\coloneqq \|(a_{ij})_{j\le n} \|_{2p/(p-2)},
\end{split}
\end{equation*}
and with $(x_1^{\rearr{}}, \ldots,x_n^{\rearr{}})$ denoting 
the non-increasing rearrangement of $(|x_1|,\ldots, |x_n|)$ for a given $(x_j)_{j\le n}\in\RR^n$.
(For the precise formulation see Propositions~\ref{prop:gauss-q=1} and \ref{prop:gauss-p=1-q<2}, and Corollary~\ref{cor:gauss-p>2-q=infty} below.)

Moreover, in Subsection~\ref{subsec:lower-bounds} we generalize the lower bounds from the boundary into the whole range $(p,q)\in [1,\infty]\times [1,\infty]$ (see Figure~\ref{fig:diagram-big-after} below), i.e., we prove
\begin{equation} \label{eq:hippo-intro-lower}
\EE \|G_A\colon \ell_{p}^n\to\ell_q^m\|
\gtrsim_{p,q} D_1+D_2 +
\begin{cases}
  \EE \max_{i \le m,j\le n} |a_{ij}g_{ij}| &  \text{if }\  p\leq 2\leq q,\\
  \max_{j\le n}\sqrt{\ln (j+1)} b_j^{\rearr{}}  &    \text{if }\ p\leq q\leq 2,\\
  \max_{i\le m}\sqrt{\ln (i+1)} d_i^{\rearr{}}&   \text{if } \ 2\leq p \leq q,\\
  0 &   \textrm{if } \ q<p.
\end{cases} 
\end{equation}

%-----------------------------------
%  BEGIN OF DIAGRAM:
%-----------------------------------

\begin{figure}[h]
	\centering	
	\begin{tikzpicture}[scale=3.5]
	% the initial square is 2x2,
	% i.e., the segment from p = infty to p = 2 has length 1,
	% with point (0,0) at p=q=\infty
	% region  GHLP, boundary (,,Matlak''):
	\draw [pattern=south west lines, pattern color=ColorGHLP] (1,0-\MyHalfThickness) -- (1,0+\MyHalfThickness) -- (2-\MyHalfThickness,0+\MyHalfThickness) -- (2-\MyHalfThickness,1) -- (2+\MyHalfThickness,1) -- (2+\MyHalfThickness,0-\MyHalfThickness) -- cycle;
	% vHLY (p=2=q):
	\draw [pattern=south west lines, pattern color=ColorGHLP]  (1,1) circle [radius=\MyRadiusBig];	
	% region GHLP, inside:
	\draw [pattern=loose south west lines,, pattern color=ColorGHLPLight]  (1,0+\MyHalfThickness) -- (1,1-\MyRadiusBig) arc(270:360:\MyRadiusBig) -- (2-\MyHalfThickness,1) --  (2-\MyHalfThickness,0+\MyHalfThickness) -- cycle;	
	% region p\leq q\leq 2 (,,Radek''), boundary
	\draw [pattern=horizontal lines, pattern color=ColorRadek] (2-\MyHalfThickness,1) -- (2-\MyHalfThickness,{2-cos(30)*\MyRadiusBig})  arc (240:-60:\MyRadiusBig) -- (2+\MyHalfThickness,1) --  cycle;
	% region p\leq q\leq 2 (,,Radek''), inside
	\draw [pattern=MyLooseHor, pattern color=ColorRadekLight] (1+\MyRadiusBig,1) arc(0:45:\MyRadiusBig)  -- (2-\MyRadiusBig/\SqrtTwo,2-\MyRadiusBig/\SqrtTwo) arc(225:240:\MyRadiusBig)-- (2-\MyHalfThickness,1) --  cycle;
	% (dual) region 2\leq p\leq q (,,Radek''), boundary:
	\draw [pattern=vertical lines, pattern color=ColorRadek] ({0+\MyRadiusBig*cos(30)},0-\MyHalfThickness) arc(330:30:\MyRadiusBig) -- (1,0+\MyHalfThickness) -- (1,0-\MyHalfThickness) --  cycle;
	% (dual) region 2\leq p\leq q (,,Radek''), inside, dual
	\draw [pattern=MyLooseVert, pattern color=ColorRadekLight] ({0+\MyRadiusBig*cos(30)},0+\MyHalfThickness) arc(30:45:\MyRadiusBig) -- (1-\MyRadiusBig/\SqrtTwo,1-\MyRadiusBig/\SqrtTwo) arc(225:270:\MyRadiusBig) -- (1,0+\MyHalfThickness) --  cycle;
	% region q\geq p (,,easy''), boundary:
	\draw [pattern=south east lines, pattern color=ColorEasy] ({0+\MyHalfThickness},{0+\MyRadiusBig*cos(30)}) arc(60:120:\MyRadiusBig) -- (0-\MyHalfThickness,2+\MyHalfThickness) -- ({2-\MyRadiusBig*cos(30)},2+\MyHalfThickness) arc(150:210:\MyRadiusBig) -- (0+\MyHalfThickness,2-\MyHalfThickness)  -- cycle;
	% region q\geq p (,,easy''), inside
	\draw [pattern=loose south east lines, pattern color=ColorEasyLight]
	({0+\MyRadiusBig/\SqrtTwo},{0+\MyRadiusBig/\SqrtTwo}) arc(45:60:\MyRadiusBig)-- (0+\MyHalfThickness,{0+\MyRadiusBig*cos(30)}) -- (0+\MyHalfThickness,2-\MyHalfThickness) -- ({2-\MyRadiusBig*cos(30)},2-\MyHalfThickness) arc(210:225:\MyRadiusBig) -- (1+\MyRadiusBig/\SqrtTwo,1+\MyRadiusBig/\SqrtTwo) arc(45:225:\MyRadiusBig) -- cycle;
	% labels:
	\node [below] at (2,0-\MyLabelSpace) {$p=1$};
	\node [below] at (1,0-\MyLabelSpace) {$p=2$};
	%\node [below] at (0-\MyLabelSpace,0-\MyLabelSpace) {$q=p=\infty$}; $ ver 1
	\node [below] at (0,0-\MyLabelSpace) {$p=\infty$}; % ver2
	\node [left] at (0-\MyLabelSpace,0) {$q=\infty$}; %\ver2
	\node [left] at (0-\MyLabelSpace,1) {$q=2$};
	\node [left] at (0-\MyLabelSpace,2) {$q=1$};
	%% \node [right] at (2+\MyLabelSpace,2) {\phantom{$q=1$}}; %% uncomment this to center the only diagram without taking the labels into account
	\end{tikzpicture}
%%%	\captionsetup{singlelinecheck=off} % from caption package
	\caption[foo]{
The third summand in \eqref{eq:hippo-intro-lower} and  in Conjecture~\ref{conj:conjecture_1}:
		\begin{alignat*}{2}
		&\text{northeast lines: \ } && \qquad\EE \max_{i \le m,j\le n} |a_{ij}g_{ij}|,\\
		&\text{horizontal lines: \ }  && \qquad \max_{j\le n}\sqrt{\ln (j+1)} b_j^{\rearr{}},\\
		&\text{vertical lines: \ } &&\qquad \max_{i\le m}\sqrt{\ln (i+1)} d_i^{\rearr{}},\\
		&\text{northwest lines: \ } &&\qquad 0.
		\end{alignat*}
		Note that the horizontal axis represents $1/p$ and the vertical one $1/q$.
		Dense lines correspond to exact asymptotics
		and loosely spaced lines to upper and lower bounds matching up to logarithms. 
	}	
	\label{fig:diagram-big-after} % this should be placed here, after the caption 
\end{figure}	

%-----------------------------------
%  END OF DIAGRAM
%-----------------------------------

Let us now discuss the relation between the terms appearing above. We postpone the proofs of all the following claims to Section~\ref{sec:lower-bounds}.

In the case $p\le 2 \le q$, we have
\begin{align}		\label{eq:bound_Emax_term}
	D_1+D_2+ \EE \max_{i \le m,j\le n} |a_{ij}g_{ij}| 
	&\asymp_{p, q} D_1+D_2+\max_{i\le m, j\le n}\sqrt{\ln (j+1)} a_{ij}' 
	\\
	&\asymp_{p, q} D_1+D_2+\max_{i\le m, j\le n}\sqrt{\ln (i+1)} a_{ij}'',\nonumber
\end{align}
where the matrices $(a_{ij}')_{i,j}$ and $(a_{ij}'')_{i,j}$ are obtained by permuting the columns and rows, respectively, of the matrix $(|a_{ij}|)_{i,j}$ in such a way that $\max_i a_{i1}'\ge \dots \ge \max_i a_{in}'$ and $\max_j a_{1j}'' \ge \dots \ge \max_j a_{mj}''$. 
Therefore, in the range $1\le p\le q \le \infty$ the right-hand side of \eqref{eq:hippo-intro-lower} changes continuously with $p$ and $q$ (for a fixed matrix~$A$).

Obviously,  $ \max_{j\le n}\sqrt{\ln (j+1)} b_j^{\rearr{}} \ge \max_{i\le m, j\le n}\sqrt{\ln (j+1)} a_{ij}'  $ and, in general, the former quantity may be of larger order than the latter one.
In Subsection~\ref{subsection:counterexample} we shall present a more subtle relation: for every $1\le p\le q< 2$ we shall give an example showing that the right-hand side of \eqref{eq:hippo-intro-lower} may be of larger order than $D_1+D_2+\EE \max_{i \le m,j\le n} |a_{ij}g_{ij}| $. Note that by duality, i.e., 
the fact that
	\begin{equation}	\label{eq:duality}
	 \|X_A\colon \ell_{p}^n\to\ell_q^m\| = \|(X_A)^T \colon \ell_{q^*}^m\to\ell_{p^*}^n \| =\|(X^T)_{A^T} \colon \ell_{q^*}^m\to\ell_{p^*}^n \| ,
	 \end{equation}
 the same holds in the case $2< p\le q$. This suggests that the behavior of $\EE\|G_A\colon \ell_p^n\to \ell_q^m\|$ is different in the regions with horizontal or vertical lines than in the region with northeast lines.

Moreover, we have
\begin{equation}
	\label{eq:bound_q<p_third_term_unnec}
D_1+D_2\gtrsim_{p,q}
\begin{cases}
\max_{j\le n}\sqrt{\ln (j+1)} b_j^{\rearr{}}  &  \text{if }  q< p \text{ and } q<2, \\
  \max_{i\le m}\sqrt{\ln (i+1)} d_i^{\rearr{}}&   \text{if } q <p \text{ and } p^\ast<2
\end{cases}
\end{equation}
(see Subsection~\ref{sec:The_proof_of_Inequalities_with_D1+D2}).
 Note that this is not the case for $p \le q$, as one can easily see by considering, e.g., $A$ equal to the identity matrix. 
 This suggests a different (than in other regions), simplified, behavior of $\EE\|G_A\colon \ell_p^n\to \ell_q^m\|$  in the region with northwest lines.

Given the discussion above, the lower bounds presented in \eqref{eq:hippo-intro-lower}, and the fact that they can be reversed for all $p\in[1,\infty]$, $q\in \{1,\infty\}$ (and for all $q\in[1,\infty]$,  $p\in \{1,\infty\}$), it is natural to conjecture the following.

\begin{conjecture}\label{conj:conjecture_1}
For all $1\le p, q \le \infty$,
we conjecture that
\begin{equation} \label{eq:hippo}
\EE \|G_A\colon \ell_{p}^n\to\ell_q^m\|
\asymp_{p,q} D_1+D_2 +
\begin{cases}
  \EE \max_{i \le m,j\le n} |a_{ij}g_{ij}| &  \text{if }\  p\leq 2\leq q,\\
  \max_{j\le n}\sqrt{\ln (j+1)} b_j^{\rearr{}}  &    \text{if }\ p\leq q\leq 2,\\
  \max_{i\le m}\sqrt{\ln (i+1)} d_i^{\rearr{}}&   \text{if } \ 2\leq p \leq q,\\
  0 &   \textrm{if } \ q<p.
\end{cases} 
\end{equation}
\end{conjecture}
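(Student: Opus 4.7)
The plan is to establish the missing upper bound in each of the four regions separately, since the lower bounds are already contained in \eqref{eq:hippo-intro-lower}. First I would exploit the duality identity \eqref{eq:duality}: because $D_1, D_2$ swap roles under $(p,q)\leftrightarrow(q^*,p^*)$ and the quantities $b_j^{\rearr{}}, d_i^{\rearr{}}$ exchange accordingly, the case $2\leq p\leq q$ reduces to $p^*\leq q^*\leq 2$. Thus only three regions require direct treatment: $p\leq 2\leq q$, $p\leq q\leq 2$, and $q<p$.

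For $q<p$, where the third term vanishes, the plan is to show that $D_1+D_2$ alone dominates $\EE\|G_A\|$. Here I would use a truncation split $a_{ij}=a_{ij}\ind_{\{|a_{ij}|\leq t\}}+a_{ij}\ind_{\{|a_{ij}|>t\}}$ with threshold $t$ calibrated by $D_1\wedge D_2$, treating the bounded part by a Bennett--Goodman--Newman style estimate (cf.~\eqref{eq:BGN}) lifted via the Hadamard-product formulation of $D_1,D_2$, and bounding the small-support part by the diagonal inequalities $\|G_A\|\leq (\sum |a_{ij}g_{ij}|^r)^{1/r}$ for suitable $r$. The bound \eqref{eq:bound_q<p_third_term_unnec} guarantees that no extra maximum term is needed.

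For $p\leq 2\leq q$, the bound \eqref{main-result-gaussian-intro} together with \eqref{eq:display-after-1.8} already yields the conjectured estimate up to logarithmic factors. To remove them I would adapt the block decomposition of Lata\l{}a--van~Handel--Youssef~\cite{LvHY}: partition the entries of $A$ according to dyadic levels of $|a_{ij}|$, apply an appropriate Slepian / Gordon comparison inside each block (using that $\ell_p^n\to\ell_q^m$ for $p\leq 2\leq q$ admits Gaussian-type comparison in both directions), and reassemble using $\ell_q$-triangle inequalities on the block contributions. The sharpness of \eqref{eq:lower-bound} in this range, combined with the equivalence \eqref{eq:bound_Emax_term} between $\EE\max|a_{ij}g_{ij}|$ and the rearranged expression, suggests that chaining controlled by the joint $(D_1,D_2)$ geometry reproduces $\EE\max|a_{ij}g_{ij}|$ automatically.

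The region $p\leq q\leq 2$ is where I expect the main obstacle. The counterexample announced in Subsection~\ref{subsection:counterexample} rules out $\EE\max|a_{ij}g_{ij}|$ as the correct third term, so one needs an upper bound incorporating the rearranged quantity $\max_j\sqrt{\ln(j+1)}\,b_j^{\rearr{}}$. The approach would be to write $\|G_A x\|_q^q=\sum_i |\langle (a_{ij}g_{ij})_j,x\rangle|^q$ and take the supremum over $x\in B_{\ell_p^n}$ via generic chaining, after sorting the columns of $A$ by their $\ell_{2q/(2-q)}$-norms. The Gaussian process indexed by $B_{\ell_p^n}$ has increments whose variance is governed by $D_1$, while the tail behavior of $\max_j \|(a_{ij}g_{ij})_i\|_q$ for the sorted columns produces exactly the factor $\sqrt{\ln(j+1)}$. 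The delicate point is matching the Talagrand-type majorizing measure bound with the Hadamard-product norm $D_1$ (rather than the crude $\max_j \|(a_{ij})_{i}\|_q$ used in \cite{GHLP}); the obstruction in \cite{GHLP} was precisely the failure of a H\"older step for $q<2$, and overcoming this likely requires a two-scale decomposition of $B_{\ell_p^n}$, handling the ``spread'' directions with moment estimates tied to $D_1+D_2$ and the ``peaky'' directions with the rearranged-maximum term.
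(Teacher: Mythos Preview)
The statement you are attempting to prove is Conjecture~\ref{conj:conjecture_1}, and the paper does \emph{not} prove it. The paper establishes the lower bound \eqref{eq:hippo-intro-lower} in full generality, confirms the conjecture on the boundary of the $(1/p,1/q)$ square (Propositions~\ref{prop:gauss-q=1}, \ref{prop:gauss-p=1-q<2}, Corollary~\ref{cor:gauss-p>2-q=infty}, and the previously known cases $p=q=2$ and $p=1,\,q\ge 2$), and in the interior obtains only upper bounds with extra logarithmic factors (Corollary~\ref{cor:main-gauss}, Proposition~\ref{prop:gauss-p-smaller-2}). So there is no ``paper's own proof'' to compare against; the conjecture is open.

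Your proposal is a sketch of possible strategies rather than a proof, and each of the three nontrivial regions contains a genuine gap. In the region $q<p$ your truncation-plus-BGN plan is purely heuristic: the Bennett--Goodman--Newman argument is for unstructured matrices, and lifting it to the structured setting so that only $D_1+D_2$ appears (with no logarithms) is precisely what no one has managed---the paper's best general bound here is Corollary~\ref{cor:main-gauss}, which carries a $(\ln n)^{1/p^*}(\ln m)^{1/q}\sqrt{\ln(mn)}$ factor. In the region $p\le 2\le q$ you propose to adapt the Lata\l a--van~Handel--Youssef block decomposition, but that decomposition is tailored to the spectral norm and its combinatorics rely on $\ell_2$ orthogonality in a way that has no known analogue for general $\ell_p\to\ell_q$ norms; the paper and \cite{GHLP} obtain sharp results here only for $p=1$ or $q=\infty$, and your outline gives no mechanism for eliminating the logarithms otherwise. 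In the region $p\le q\le 2$ you correctly identify the obstruction (the H\"older step from \cite{GHLP} fails), but your proposed ``two-scale decomposition of $B_{\ell_p^n}$'' is not an argument: you would need to explain concretely how the chaining on the spread directions yields $D_1+D_2$ rather than the larger quantity $\max_j\|(a_{ij})_i\|_q$ times a logarithm, which is exactly the open difficulty.
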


\begin{remark}\label{rmk:conj-wrong}
	One could pose another natural conjecture, based on the potential generalization of the first line of the bound \eqref{eq:LvHY},
	 namely that  the inequality
	\begin{equation}\label{eq:conj-wrong}
		\EE \|G_A\colon \ell_{p}^n\to\ell_q^m\|
		\asymp_{p,q} 
		\EE\max_{i\le m} \| (a_{ij}g_{ij})_j\|_{p^\ast} +\EE\max_{j\le n}  \| (a_{ij}g_{ij})_i\|_q 
	\end{equation}
	holds for all $1\le p,q \le \infty$.
	Indeed, the lower bound is true with constant $\frac 12$, since  for every deterministic matrix $X$ one has
	\[
		 \|X\colon \ell_{p}^n\to\ell_q^m\| \ge \max\Bigl\{\max_{i\le m} \| (X_{ij})_j\|_{p^\ast} ,\max_{j\le n}  \| (X_{ij})_i\|_q \Bigr\}.
	\]
	However, as we prove in Subsection~\ref{subsect:conj-wrong}, this conjecture is wrong: although the right-hand sides of \eqref{eq:hippo} and \eqref{eq:conj-wrong} are comparable  in the range $1\le p \le 2\le q\le \infty$, for every pair of $p,q$ outside this range the right-hand side of \eqref{eq:conj-wrong} may be of smaller order than the left-hand side.
\end{remark}

Let us now present a conjecture concerning the boundedness of linear operators given by infinite dimensional matrices. 
In what follows, we say that a matrix $B= (b_{ij})_{i,j\in \NN}$ defines a bounded operator from $\ell_p(\NN)$ to $\ell_q(\NN)$ if for all $x \in \ell_p(\NN)$ the product $B x$ is well defined, belongs to $\ell_q(\NN)$ and the corresponding linear operator is bounded.

\begin{conjecture}\label{conj:conjecture_infty}
Let $A = (a_{ij})_{i,j\in \NN}$ be an infinite matrix with real coefficients and let $1\le p, q \le \infty$.
We conjecture that the matrix $G_A = (a_{ij}g_{ij})_{i,j\in \NN}$  defines a bounded linear operator between $\ell_p(\NN)$ and $\ell_q(\NN)$ almost surely if and only if the matrix $A\circ A$ defines a bounded linear operator between $\ell_{p/2}(\NN)$ and $\ell_{q/2}(\NN)$, the matrix $(A\circ A)^T$ defines a bounded linear operator between $\ell_{q^\ast/2}(\NN)$ and $\ell_{p^\ast/2}(\NN)$, and 
\begin{itemize}
	\item in the case $p\leq 2\leq q$,  $\EE \sup_{i,j\in\NN} |a_{ij}g_{ij}|<\infty$,
	\item in the case $p\leq q\leq 2$, $\lim_{j\to \infty} b_j = 0$,
and $\sup_{j\in \NN}\sqrt{\ln(j+1)} b_j^{\rearr{}} <\infty$, where 
$b_j =  \|(a_{ij})_{i\in \NN}\|_{2q/(2-q)}$, $j\in\NN$,
	\item in the case $2\leq p \leq q$, $\lim_{i\to \infty} d_i = 0$, and $\sup_{i\in \NN}\sqrt{\ln (i+1)} d_i^{\rearr{}} <\infty$, where $d_i \coloneqq \|(a_{ij})_{j\in \NN} \|_{2p/(p-2)}$, $i\in\NN$,
	\item (in the case $q<p$ we do not need to assume any additional conditions).
\end{itemize}
\end{conjecture}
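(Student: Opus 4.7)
The plan is to deduce Conjecture~\ref{conj:conjecture_infty} from the (still conjectural) two-sided bounds in Conjecture~\ref{conj:conjecture_1} via a truncation argument combined with Kolmogorov's $0$--$1$ law and Borell's integrability theorem for Gaussian suprema.

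\textbf{Setup and reduction.} For $N\in\NN$, let $T_N\colon \ell_p(\NN)\to\ell_q(\NN)$ be the bounded operator whose matrix agrees with $G_A$ on the principal $N\times N$ block and is zero elsewhere; restricting to vectors supported on the first $N$ coordinates gives $\|T_{N+1}\|\ge\|T_N\|$, so the sequence is monotone. First I would prove the equivalence
\[
G_A\text{ is bounded a.s.}\iff\sup_N \|T_N\| < \infty \text{ a.s.}
\]
The forward direction is immediate. For the converse one uses the hypothesis that $A\circ A\colon\ell_{p/2}\to\ell_{q/2}$ is bounded together with Kolmogorov's three-series theorem to conclude a.s.\ convergence of each row sum $\sum_j a_{ij}g_{ij}x_j$ for every $x\in\ell_p(\NN)$, followed by Fatou's lemma applied to $T_N x^{(N)}$ (with $x^{(N)}$ the truncation of $x$). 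Since $\{\sup_N\|T_N\|<\infty\}$ is a tail event, the $0$--$1$ law applies; expressing $\|G_A\|$ as the supremum of a countable centered Gaussian process (over rational unit-ball vectors in $\ell_p$ and $\ell_{q^*}$) and invoking Borell's inequality, one concludes that on the a.s.-finiteness event all moments of $\|G_A\|$ are finite. Monotone convergence $\|T_N\|\uparrow\|G_A\|$ then yields
\[
G_A\text{ bounded a.s.}\iff\sup_N\EE\|T_N\|<\infty.
\]

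\textbf{Necessity of the conditions.} Assume $\sup_N\EE\|T_N\|<\infty$ and apply the lower bound~\eqref{eq:hippo-intro-lower} to every truncation. Each summand on the right-hand side is nonnegative and nondecreasing in $N$; by approximation with finitely supported vectors, $\lim_N D_1^{(N)}=\|A\circ A\colon\ell_{p/2}(\NN)\to\ell_{q/2}(\NN)\|^{1/2}$ and analogously for $D_2^{(N)}$, so the required boundedness of the two Hadamard-square operators follows. The third summand yields the remaining condition regime by regime: monotone convergence gives $\EE\sup_{i,j}|a_{ij}g_{ij}|<\infty$ when $p\le 2\le q$; in the cases $p\le q\le 2$ or $2\le p\le q$ one obtains $\sup_{j}\sqrt{\ln(j+1)}b_j^{\rearr{}}<\infty$ (resp.\ the $d_i$-analogue), and since for a nonnegative sequence one has $b_j\to 0 \iff b_j^{\rearr{}}\to 0$, a finite weighted supremum forces decay to zero as required. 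When $q<p$, the lower bound carries no third term and nothing further is needed.

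\textbf{Sufficiency.} Assume the stated conditions. I would invoke the upper-bound half of Conjecture~\ref{conj:conjecture_1} on each $A^{(N)}$; every term on its right-hand side is then dominated uniformly in $N$ by the infinite-dimensional quantity that the assumptions guarantee is finite. Hence $\sup_N\EE\|T_N\|<\infty$, and the reduction concludes the proof. The \textbf{main obstacle} is concentrated here: the unconditional upper bound established in the paper carries prefactors $(\ln n)^{1/p^*}(\ln m)^{1/q}\sqrt{\ln(mn)}$ that blow up under truncation, so the ``if'' direction genuinely requires the full strength of Conjecture~\ref{conj:conjecture_1}---or at least log-free upper bounds in each of the four regions of Figure~\ref{fig:diagram-big-after}. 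On the boundary slices $p\in\{1,\infty\}$ or $q\in\{1,\infty\}$, the sharp two-sided estimates already proved in the paper are sufficient, so along these slices Conjecture~\ref{conj:conjecture_infty} can be established unconditionally by the above scheme.
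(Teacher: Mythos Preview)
Your proposal is correct and follows essentially the same route as the paper's proof of Proposition~\ref{prop:finite-implies-infinite}: truncate to $N\times N$ blocks, pass to the limit by monotone convergence, and reduce the ``a.s.\ bounded $\iff$ finite expectation'' step to a Gaussian integrability theorem. The paper quotes Landau--Shepp where you invoke Borell; these serve the same purpose. Two minor differences worth noting: the paper does not appeal to the $0$--$1$ law (it is not needed, since the conjecture concerns almost-sure boundedness rather than boundedness with positive probability), and the paper asserts the equivalence ``$B$ bounded $\iff \sup_n\|B^{(n)}\|<\infty$'' directly via monotone convergence without your three-series/Fatou step---your added care there is justified but not essential to the argument.
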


We remark that it suffices to prove Conjecture~\ref{conj:conjecture_1} in order to confirm Conjecture~\ref{conj:conjecture_infty}.
\begin{proposition}	\label{prop:finite-implies-infinite}
	Assume $1\le p, q\le \infty$.
		Then \eqref{eq:hippo} for this choice of $p,q$ implies the assertion of 	 Conjecture~\ref{conj:conjecture_infty}  for the same choice of $p,q$.
\end{proposition}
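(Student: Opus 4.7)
The plan is to apply the (conditional) bound~\eqref{eq:hippo} to the finite $m\times n$ truncations $A_{(m,n)}\coloneqq (a_{ij})_{i\le m,j\le n}$ and then pass to the limit $m,n\to\infty$. Two observations underlie this strategy. First, $\|G_{A_{(m,n)}}\colon\ell_p^n\to\ell_q^m\|$ is non-decreasing in each of $m$ and $n$, since restriction to (resp.\ projection onto) a coordinate subspace cannot increase the operator norm; moreover, whenever the countable supremum $\sup_{m,n}\|G_{A_{(m,n)}}\|$ is finite, the matrix $G_A$ defines a bounded operator on $\ell_p(\NN)$ of norm equal to this supremum (extending from finitely supported vectors by density when $p<\infty$; the case $p=\infty$ is handled via~\eqref{eq:duality}). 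Second, the quantities $D_1$, $D_2$, $\EE\max_{i,j}|a_{ij}g_{ij}|$ appearing on the right-hand side of~\eqref{eq:hippo} are likewise monotone under truncation, whereas the rearrangement terms require a separate argument.

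For the ``if'' direction, assume the four conditions of Conjecture~\ref{conj:conjecture_infty} hold. Then for every $(m,n)$ each summand on the right-hand side of~\eqref{eq:hippo} applied to $A_{(m,n)}$ is dominated by the corresponding quantity for $A$ itself---a finite constant independent of $m,n$. Hence $\sup_{m,n}\EE\|G_{A_{(m,n)}}\colon\ell_p^n\to\ell_q^m\|<\infty$, and by monotone convergence one obtains $\EE\|G_A\|<\infty$, so $G_A$ is a.s.\ bounded.

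For the ``only if'' direction, suppose $G_A$ is a.s.\ bounded. Then $\sup_{m,n}\|G_{A_{(m,n)}}\|=\|G_A\|<\infty$ a.s.; as a countable supremum of continuous semi-norms of the Gaussian sequence, Fernique's theorem (or Borell's inequality) yields $\EE\sup_{m,n}\|G_{A_{(m,n)}}\|<\infty$. Applying~\eqref{eq:hippo} to every truncation gives a uniform bound on each summand of the right-hand side. The $D_1$ and $D_2$ bounds immediately encode the asserted boundedness of $A\hadprod A$ and $(A\hadprod A)^T$ between the appropriate infinite-dimensional $\ell_r$ spaces, while the uniform bound on $\EE\max_{i\le m,j\le n}|a_{ij}g_{ij}|$ yields $\EE\sup_{i,j\in\NN}|a_{ij}g_{ij}|<\infty$ by monotone convergence.

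The main obstacle is the rearrangement quantity, say $\max_{j\le n}\sqrt{\ln(j+1)}\,b_j^{\rearr}$ in the regime $p\le q\le 2$: the rearrangement depends on the truncation level, and a priori the infinite rearrangement $b^{\rearr}$ need not even exist, since the decay $b_j\to 0$ is part of what we have to prove. The identity
\[
\max_{j\le n}\sqrt{\ln(j+1)}\,c_j^{\rearr}
\;=\;\sup_{\emptyset\ne S\subseteq[n]}\sqrt{\ln(|S|+1)}\,\min_{j\in S}c_j,
\]
valid because the $j$-th largest entry of a finite sequence is the maximum over $j$-element subsets of the minimum, resolves this. Combined with the monotone convergence $b_j^{(m)}\uparrow b_j$ as $m\to\infty$ (for fixed $j$), where $b_j^{(m)}\coloneqq\|(a_{ij})_{i\le m}\|_{2q/(2-q)}$, the finite-dimensional uniform bound translates to
\[
K\;\coloneqq\;\sup\bigl\{\sqrt{\ln(|S|+1)}\min_{j\in S}b_j\,:\,S\subseteq\NN,\ 0<|S|<\infty\bigr\}\;<\;\infty.
\]
Taking $|S|=1$ gives $\sup_j b_j<\infty$; testing $S$ against a putative infinite set $\{j:b_j>\varepsilon\}$ forces that set to be finite, hence $b_j\to 0$; finally, choosing $S$ to index the $j$ largest values of $(b_k)$ yields $\sqrt{\ln(j+1)}\,b_j^{\rearr}\le K$ for every $j$. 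The $d_i$ condition in the dual regime $2\le p\le q$ is handled symmetrically, and the regime $q<p$ requires no extra terms.
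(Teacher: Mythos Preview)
Your proof is correct and follows essentially the same route as the paper's: reduce to finite truncations, exploit monotonicity, and invoke the integrability of almost surely finite Gaussian suprema to pass from a.s.\ boundedness to finite expectation. The paper carries out the argument only in the case $p\le 2\le q$ (stating that the remaining cases are similar), reducing to a countable supremum of Gaussians via $\varepsilon$-nets and citing Landau--Shepp, whereas you invoke Fernique directly; these are equivalent.

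Your treatment of the rearrangement term is in fact more explicit than the paper's: the paper does not spell out how to handle the quantities $\max_{j\le n}\sqrt{\ln(j+1)}\,b_j^{\rearr}$ under passage to the limit, while your identity $\max_{j\le n}\sqrt{\ln(j+1)}\,c_j^{\rearr}=\sup_{\emptyset\ne S\subseteq[n]}\sqrt{\ln(|S|+1)}\,\min_{j\in S}c_j$, combined with $b_j^{(m)}\uparrow b_j$, cleanly yields both $b_j\to 0$ and the uniform bound on $\sqrt{\ln(j+1)}\,b_j^{\rearr}$. One minor point: your appeal to~\eqref{eq:duality} to cover $p=\infty$ is a little informal, since $(\ell_\infty)^*\neq\ell_1$ in the infinite-dimensional setting; the cleanest way to close this is the direct argument that $\sup_{m,n}\|G_{A_{(m,n)}}\colon\ell_\infty^n\to\ell_q^m\|<\infty$ forces each row of $G_A$ to lie in $\ell_1$, whence $G_Ax$ is well defined for every $x\in\ell_\infty$ and $\|G_Ax\|_q$ is bounded by a limit of the finite-dimensional norms. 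The paper glosses over the same point with a one-line ``monotone convergence'' remark.
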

We postpone the proof of this proposition to Subsection~\ref{subsect:infty-dim}.

In this article,	
in addition to the cases $p=q=2$ obtained in \cite{LvHY} and $p=1, q\ge 2$ proved in \cite{GHLP,Matlak},
we confirm Conjecture~\ref{conj:conjecture_1} when $p\in \{1,\infty\}$, $q\in[1,\infty] $ and when $q\in \{1,\infty\}$, $p\in[1,\infty] $. In all the other cases, we are able to prove the upper bounds only up to logarithmic (in the dimensions $m,n$) multiplicative factors (see Corollary~\ref{cor:main-gauss} below).
In particular, Proposition~\ref{prop:finite-implies-infinite} implies that  Conjecture~\ref{conj:conjecture_infty} holds for all $p\in \{1,\infty\}$, $q\in[1,\infty] $ and for all $q\in \{1,\infty\}$, $p\in[1,\infty] $. 

Note that in the structured case we work with, interpolating the results obtained for the boundary cases $p\in \{1, \infty\}$ or $q\in  \{1, \infty\}$ gives a bound with polynomial (in the dimensions) multiplicative constants which are much worse than logarithmic constants from Corollary~\ref{cor:main-gauss} below. However, as we shall see in Remark~\ref{rem:BGN-extended-for-all-p,q} below, interpolation techniques work well in the non-structured case.

%--------------------
%
\subsection{Main results valid for \texorpdfstring{$1\leq p, q\leq \infty$}{1 leq  p, q leq infty}}
\label{subsect:main-results}
%
%--------------------

We start with general theorems valid for the whole range of $p$, $q$.
Results which are based on methods working only for specific values of $p$, $q$,
but yielding better logarithmic terms, are presented in the next subsection.
A brief summary and comparison of all results can be found in Table~\ref{table:summary}.

Before stating our main results, we need to introduce additional  notation.
	For a non-empty set $J\subset \{1,\ldots,n\}$, and $1\leq p\leq \infty$, we define
	\[
		B_p^J\coloneqq \Bigl\{(x_j)_{j\in J}: \sum_{j\in J}|x_j|^p \le 1, \quad x_j\in \RR \Bigr\}.
	\]
	By	$\ell_p^J$ we denote the space $\RR^J\coloneqq \bigl\{(x_j)_{j\in J}: x_j\in \RR \bigr\}$ equipped with the norm
	\[
		\|x\|_{\ell_p^J} = \Bigl(\sum_{j\in J}|x_j|^p \Bigr)^{1/p},
	\]
	whose  unit ball is $B_p^J$.
Obviously, the space $\ell_p^J$ can be identified with a subspace of $\ell_p^n$.
 If  $A\colon \ell_p^n\to \ell_q^m$ is a linear operator, the notation $A\colon \ell_p^J\to \ell_q^I$ means that $A$ is restricted to the space $\ell_p^J$ and composed with a projection onto $\ell_q^I$.
Moreover, for $x=(x_1,\ldots, x_n)\in \RR^n$, $\sup_{J} \|x\|_{\ell_p^J} = \bigl(\sum_{j\le k} |x_j^{\rearr{}}|^p\bigr)^{1/p}$,
 where the supremum is taken over all $J\subset \{1,\ldots,n\}$ with $|J|=k$, and $(x_1^{\rearr{}}, \ldots,x_n^{\rearr{}})$
  is the non-increasing rearrangement of $(|x_1|,\ldots, |x_n|)$.

\begin{theorem}[Main theorem in a general version with sets $ I_0$, $J_0$]
\label{thm:main-gauss-sets}
	Assume that $m\le M$, $n\le N$, $1\leq p,q \leq \infty$, and $G=(g_{ij})_{i\leq M, j\leq N}$ has i.i.d.\ standard Gaussian entries. Then
 \begin{align*}
 \MoveEqLeft[10]
 \EE \sup_{I_0, J_0} \|G_A\colon \ell_p^{J_0} \to \ell_q^{I_0} \|
 =\EE \sup_{I_0, J_0} \sup_{x\in B_p^ {J_0}} \sup_{y\in B_{q^*}^ {I_0}} \sum_{i\in I_0} \sum_{j\in J_0} y_i a_{ij}g_{ij}x_j \\
 \leq \ln(en)^{1/p^*}  \ln(em)^{1/q}
 \Bigl[
 &\bigl( 2.4 \sqrt{\ln(mn)}+ 8 \sqrt{\ln M} + \sqrt{2/\pi}\bigr)
 \sup_{I_0,J_0} \|A\hadprod A \colon \ell^ {J_0}_{p/2} \to \ell^{ I_0}_{q/2}\|^{1/2}\\
& +\bigl(8\sqrt{\ln N } + 2\sqrt{2/\pi} \bigr)
 \sup_{I_0,J_0} \|(A\hadprod A)^T \colon \ell^{ I_0}_{q^*/2} \to \ell^ {J_0}_{p^*/2}\|^{1/2}
 \Bigr],
 \end{align*}
 where the suprema are taken over all sets $I_0\subset \{1,\ldots,M\}$, $J_0\subset \{1,\ldots,N\}$ such that $|I_0|=m$, $|J_0|=n$.
 \end{theorem}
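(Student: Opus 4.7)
My plan is to combine a Carl-type discretization of the unit balls $B_p^{J_0}$ and $B_{q^*}^{I_0}$ with the Gaussian maximal inequality applied to the resulting finite family of ``atoms.'' The logarithmic prefactors $\ln(en)^{1/p^*}$ and $\ln(em)^{1/q}$ will come from the discretization step; the $\sqrt{\ln(\cdot)}$ terms will come from the union bound / max inequality.

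The first step is to establish the representation
\[
  B_p^n \subseteq C\,\ln(en)^{1/p^*}\,\overline{\operatorname{conv}}\bigl\{k^{-1/p}\eta\colon \eta\in\{-1,0,1\}^n,\ |\operatorname{supp}\eta|=k,\ 1\le k\le n\bigr\},
\]
proved by sorting $x\in B_p^n$, exploiting $x_k^{\rearr{}}\le k^{-1/p}$, and telescoping via summation by parts, with H\"older's inequality (conjugate exponents $p,p^*$) producing the prefactor; an analogous statement for $B_{q^*}^m$ produces the factor $\ln(em)^{1/q}$. By bilinearity of $(x,y)\mapsto\langle y, G_A x\rangle$, and by allowing the supports of atoms to range freely over subsets of $[M]\times[N]$ (thereby absorbing the outer suprema over $I_0, J_0$), the random operator norm is bounded by $\ln(en)^{1/p^*}\ln(em)^{1/q}$ times $\max_{a\in\mathcal{A}}Z_a$, where
\[
  Z_a = k^{-1/p}l^{-1/q^*}\sum_{(i,j)\in I_1\times J_1}\eta'_i\eta_j a_{ij}g_{ij}
\]
is a centered Gaussian of variance $\sigma_a^2 = k^{-2/p}l^{-2/q^*}\sum_{I_1\times J_1}a_{ij}^2$ with $|J_1|=k$, $|I_1|=l$, and signs $\eta, \eta'$.

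To bound $\EE\max_a Z_a$, the next step is to obtain two complementary variance estimates $\sigma_a\le D_1'\cdot \phi_1(k,l)$ and $\sigma_a\le D_2'\cdot\phi_2(k,l)$ by testing the definitions of $D_1'$ and $D_2'$ on the indicator vectors $\mathbf{1}_{J_1}$ and $\mathbf{1}_{I_1}$, using H\"older's inequality (or its quasi-normed reverse when $p>2$ or $q<2$). For fixed $(k,l)$ the atom fiber has cardinality at most $(2eN/k)^k(2eM/l)^l\cdot 2^{k+l}$, so the Gaussian max inequality together with $\sqrt{a+b}\le\sqrt{a}+\sqrt{b}$ bounds the fiber maximum by $\sigma_{\max}^{(k,l)}\bigl[\sqrt{2k\log(2eN/k)}+\sqrt{2l\log(2eM/l)}\bigr]$. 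Pairing the first summand with the $D_2'$ variance bound and the second with the $D_1'$ bound, with the exponents in $\phi_1,\phi_2$ designed to cancel the $k^{1/2}$ and $l^{1/2}$ factors respectively, leaves an estimate $D_1'\sqrt{2\log(2eM)}+D_2'\sqrt{2\log(2eN)}$ uniform in $(k,l)$. The final outer maximum over $(k,l)\in[n]\times[m]$ is then handled by subgaussian concentration, contributing the additional $\sqrt{2\ln(nm)}$ factor attached to $D_1'$; the small $\sqrt{2/\pi}$ terms arise from bounding $\EE|Z_a|=\sigma_a\sqrt{2/\pi}$ in the degenerate case $k=l=1$ where no logarithms are available.

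The principal obstacle is precisely this pairing: to obtain the asymmetric coefficients $\sqrt{\ln(mn)}+\sqrt{\ln M}$ on $D_1'$ and $\sqrt{\ln N}$ on $D_2'$ (rather than the crude $\sqrt{\ln(MN)}$ on both), the $l$-entropy must be coupled with the $D_1'$ bound and the $k$-entropy with the $D_2'$ bound, so that the powers of $k,l$ cancel exactly. A secondary subtlety is the regime $p>2$ or $q<2$, where $\ell_{p/2}$ or $\ell_{q/2}$ is only a quasi-normed space and H\"older's inequality reverses direction; the variance bound then carries a different power of $k$ or $l$, and the pairing must be adapted, for example by peeling the sum over $(k,l)$ into regions or by interpolating between the two variance bounds, so that the claimed logarithmic structure is preserved uniformly across all $1\le p, q\le\infty$.
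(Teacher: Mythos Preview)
Your discretization step (the polytope approximation of $B_p^{J_0}$ and $B_{q^*}^{I_0}$) and your final Gaussian-concentration step over the $mn$ pairs $(k,l)$ match the paper's structure. The core of your argument, however, is different: you propose to bound the fiber maximum $\EE\max_{a\in\text{fiber}(k,l)}Z_a$ by the crude maximal inequality $\sigma_{\max}^{(k,l)}\sqrt{2\log|\text{fiber}|}$, whereas the paper (Proposition~\ref{prop:auxilary-main-gauss}) first uses symmetrization and the contraction principle to strip the signs $\eta,\eta'$, and then applies \emph{Slepian's lemma} to compare $X_{I,J}=\sum_{I\times J}a_{ij}g_{ij}$ with the decoupled process $Y_{I,J}=\sum_{i\in I}g_i(\sum_{j\in J}a_{ij}^2)^{1/2}+\sum_{j\in J}\widetilde g_j(\sum_{i\in I}a_{ij}^2)^{1/2}$.

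Your pairing scheme requires the two variance bounds $\sigma_a\le D_1'\,l^{-1/2}$ and $\sigma_a\le D_2'\,k^{-1/2}$. These do hold when $1\le p\le 2\le q\le\infty$ (by H\"older in $\ell_{q/2}$ and $\ell_{p^*/2}$, which are then genuine normed spaces), and in that range your argument goes through. But outside that range the bounds are \emph{false}, and no peeling or interpolation repairs them. Take $q<2$ and the matrix with $a_{11}=1$ as its only nonzero entry: then $D_1'=1$, while any atom with $1\in J_1$, $1\in I_1$, $|J_1|=1$, $|I_1|=l$ has $\sigma_a=l^{-1/q^*}$ with $1/q^*<1/2$, so $\sigma_a l^{1/2}=l^{1/2-1/q^*}\to\infty$. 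The union bound over the $\asymp\binom{M-1}{l-1}$ such atoms gives a contribution $\gtrsim l^{1/2-1/q^*}\sqrt{\log M}$, unbounded in $l$; yet the true supremum is just $|g_{11}|$. The point is that these atoms are almost perfectly correlated, and only a comparison tool that sees the \emph{increments} (Slepian, or a chaining argument) can exploit this. Your proposal is missing exactly that ingredient for the range $p>2$ or $q<2$.
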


The above theorem gives an estimate on the largest operator norm among all submatrices of $G_A$ of fixed size. Let us remark that apart from being of intrinsic interest, quantities of this type (for $p=q=2$) have appeared in connection with the study of the restricted isometry property of random matrices with independent rows \cite{MR2949869} or in the analysis of entropic uncertainty principles for random quantum measurements \cite{MR3478525, MR3081910}.

Let us now give an outline of the proof of  Theorem~\ref{thm:main-gauss-sets}. Note that
\begin{equation} \label{eq:outline1}
	 \|G_A\colon \ell_p^{J_0} \to \ell_q^{I_0} \| =  \sup_{x\in B_p^ {J_0}} \sup_{y\in B_{q^\ast}^ {I_0}} \sum_{i\in I_0} \sum_{j\in J_0} y_i a_{ij}g_{ij}x_j.
\end{equation}
In the first step of our proof, we find  polytopes $L$ and $K$ approximating (with accuracy depending logarithmically on the dimension) the unit balls in $\ell_p^{J_0}$ and $\ell_{q^\ast}^{I_0}$, respectively.
The extreme points of the sets $K$ and $L$ have a special and simple structure: absolute values of their non-zero coordinates are all equal to a constant depending only on the size of the support of a given point.
Since $K$ is close to
$B_{q^*}^{I_0}$
and $L$ is close
to $B_p^{J_0}$, we may consider only $x\in \Ext (L), y\in \Ext (K)$ in \eqref{eq:outline1}.
 Since non-zero coordinates of $x\in \Ext (L)$ and $y\in \Ext (K)$, respectively, are all equal up to a sign we may use a symmetrization argument and the contraction principle to
 remove
 $x$ and $y$ in the sum on the right-hand side of  \eqref{eq:outline1}.
  Thus, in the next step of the proof we only need to estimate the expected value of
 \[
 	 \sup_{I_0, J_0} \sup_{\emptyset\neq I\subset I_0}\sup_{\emptyset\neq J\subset J_0} |I|^{-1/q^*}|J|^{-1/p} \sum_{i\in I, j\in J} a_{ij}g_{ij},
 \]
where $I$ and $J$ represent the potential supports of points in $\Ext (K)$ and $\Ext (L)$.
To deal with this quantity, we first consider the suprema over the subsets of fixed sizes and use Slepian's lemma to compare the supremum of the Gaussian process above with the supremum of another Gaussian process, which may be  bounded easily.
Then we make use of the term $|I|^{-1/q^*}|J|^{-1/p}<1$, which allows us to go back to suprema over the sets $B_p^ {J_0}$ and $B_{q^*}^ {I_0}$.
At the end, we use the Gaussian concentration inequality to unfix the sizes of sets $I$ and $J$ and complete the proof.

Applying  Theorem~\ref{thm:main-gauss-sets} with $N=n$, $M=m$ immediately yields the following result, which confirms Conjecture \ref{conj:conjecture_1}
up to some logarithmic terms.

\begin{corollary}[Main theorem -- $\ell_p$ to $\ell_q$ version]
\label{cor:main-gauss}
	Assume that $1\leq p,q \leq \infty$ and $G=(g_{ij})_{i\leq m, j\leq n}$ has i.i.d.\ standard Gaussian entries. Then,
 \begin{align*}
 \EE \|G_A \colon \ell^n_p\to \ell^m_q\|
 \lesssim  (\ln n)^{1/p^*}  (\ln m)^{1/q} \Bigl[ & \sqrt{\ln(mn)} \|A\hadprod A \colon \ell^n_{p/2} \to \ell^m_{q/2}\|^{1/2}\\
& +\sqrt{\ln n}  \|(A\hadprod A)^T \colon \ell^m_{q^*/2} \to \ell^n_{p^*/2}\|^{1/2} \Bigr].
 \end{align*}
\end{corollary}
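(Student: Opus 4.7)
The plan is to derive Corollary~\ref{cor:main-gauss} as an immediate specialization of Theorem~\ref{thm:main-gauss-sets}. I take the ambient dimensions to be $M=m$ and $N=n$, which forces $I_0=\{1,\ldots,m\}$ and $J_0=\{1,\ldots,n\}$ to be the only admissible index sets in the theorem's statement. Consequently the outer suprema collapse on both sides: the left-hand side reduces exactly to $\EE\|G_A\colon \ell_p^n \to \ell_q^m\|$, while the two supremum-over-subsets Hadamard-product operator norms on the right-hand side become the single quantities $\|A\hadprod A \colon \ell^n_{p/2} \to \ell^m_{q/2}\|^{1/2}$ and $\|(A\hadprod A)^T \colon \ell^m_{q^*/2} \to \ell^n_{p^*/2}\|^{1/2}$.

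After this substitution, the numerical prefactors multiplying the two Hadamard-product norms in Theorem~\ref{thm:main-gauss-sets} become $2.4\sqrt{\ln(mn)} + 8\sqrt{\ln m} + \sqrt{2/\pi}$ and $8\sqrt{\ln n} + 2\sqrt{2/\pi}$. Since $\sqrt{\ln m}\leq\sqrt{\ln(mn)}$, the first is bounded by an absolute constant times $\sqrt{\ln(mn)}$, and the second by an absolute constant times $\sqrt{\ln n}$. Absorbing these absolute constants into the $\lesssim$ symbol, and replacing $\ln(en)$, $\ln(em)$ by $\ln n$, $\ln m$ (which at worst costs a constant factor for $m,n\geq 2$, while the degenerate cases $m=1$ or $n=1$ reduce to a bound on a single column or row of $G_A$ and are trivial), I obtain precisely the estimate stated in the corollary.

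Since the derivation amounts to a substitution together with a routine bookkeeping of logarithmic factors, no genuine obstacle is expected: all of the nontrivial probabilistic work — the polytopal approximation of $\ell_p$-balls by sets whose extreme points have equal non-zero coordinates, the symmetrization and contraction steps that peel off the coordinates of $x$ and $y$, the Slepian-type comparison used to handle the supremum over supports of fixed sizes, and the Gaussian concentration inequality used to unfix those sizes — has already been carried out in the proof of Theorem~\ref{thm:main-gauss-sets}.
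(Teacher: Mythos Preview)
Your proposal is correct and matches the paper's own derivation: the text immediately preceding the corollary states that applying Theorem~\ref{thm:main-gauss-sets} with $N=n$, $M=m$ ``immediately yields'' the result, which is exactly the substitution you carry out. Your bookkeeping of the logarithmic prefactors is accurate and nothing further is needed.
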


Moreover, we easily recover the same bound in the case of independent bounded entries.
We state and prove a general version with sets $I_0$ and $J_0$ akin to Theorem~\ref{thm:main-gauss-sets} in Subsection~\ref{subsect:coupling}.

\begin{corollary}	\label{thm:main-bounded-introduction}
Assume that $1\leq p,q \leq \infty$ and $X=(X_{ij})_{i\leq m, j\leq n}$ has independent mean-zero entries taking values in $[-1,1]$.
Then
	\begin{align*}
		\EE \|X_A \colon \ell^n_p\to \ell^m_q\|
		 \lesssim  (\ln n)^{1/p^*}  (\ln m)^{1/q} \Bigl[ & \sqrt{\ln(mn)} \|A\hadprod A \colon \ell^n_{p/2} \to \ell^m_{q/2}\|^{1/2}\\
 		 &+\sqrt{\ln n}  \|(A\hadprod A)^T \colon \ell^m_{q^*/2} \to \ell^n_{p^*/2}\|^{1/2} \Bigr].
 \end{align*}
 \end{corollary}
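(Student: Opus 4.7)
The plan is to reduce the bounded-entries case to the Gaussian case (Corollary~\ref{cor:main-gauss}) by the classical symmetrization-plus-contraction scheme, losing only a universal constant. First I would apply the standard symmetrization inequality to the sum $X_A = \sum_{i,j} a_{ij} X_{ij} e_{ij}$ taken in the Banach space of $m \times n$ matrices equipped with the $\ell_p^n \to \ell_q^m$ operator norm. Since the summands are independent and mean-zero, this yields
\[
\EE \|X_A \colon \ell^n_p \to \ell^m_q\|
\le 2\, \EE_X \EE_{\varepsilon}\, \|(\varepsilon_{ij} a_{ij} X_{ij})_{i,j} \colon \ell^n_p \to \ell^m_q\|,
\]
where $(\varepsilon_{ij})$ are independent Rademacher variables independent of $X$.

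Next I would condition on $X$ and apply the Kahane contraction principle. Since $|X_{ij}| \le 1$ almost surely, viewing the conditional Rademacher average as a sum of the vectors $v_{ij} = a_{ij} e_{ij}$ weighted by the scalars $X_{ij}$, the contraction principle gives
\[
\EE_{\varepsilon}\, \|(\varepsilon_{ij} a_{ij} X_{ij})_{i,j}\|_{\ell^n_p \to \ell^m_q}
\le \EE_{\varepsilon}\, \|(\varepsilon_{ij} a_{ij})_{i,j}\|_{\ell^n_p \to \ell^m_q}
\]
pointwise in $X$. Integrating over $X$ eliminates the random multipliers. Then, using $\EE |g_{ij}| = \sqrt{2/\pi}$ together with Jensen's inequality and the fact that $\varepsilon_{ij} |g_{ij}|$ has the same distribution as $g_{ij}$, I would obtain
\[
\EE_{\varepsilon}\, \|(\varepsilon_{ij} a_{ij})_{i,j}\|_{\ell^n_p \to \ell^m_q}
\le \sqrt{\pi/2}\, \EE \|G_A \colon \ell^n_p \to \ell^m_q\|.
\]
Plugging in Corollary~\ref{cor:main-gauss} finishes the argument up to an absolute multiplicative constant.

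There is no real obstacle in this route; the only points to check carefully are that the symmetrization and contraction inequalities apply to a general norm on a finite-dimensional space (standard, see e.g.\ Ledoux--Talagrand), and that the resulting universal constants are absorbed in the $\lesssim$ notation. An alternative, slightly sharper path — which is presumably what the authors pursue in Subsection~\ref{subsect:coupling} in order to obtain the version with arbitrary subsets $I_0, J_0$ parallel to Theorem~\ref{thm:main-gauss-sets} — is to couple each bounded $X_{ij}$ with a multiple of a standard Gaussian (using that $X_{ij}$ is sub-Gaussian with a universal constant) and invoke a Slepian- or Gordon-type comparison inside the proof of Theorem~\ref{thm:main-gauss-sets}. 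Either route delivers the same logarithmic factors as in the Gaussian case, which is why Corollary~\ref{thm:main-bounded-introduction} looks identical to Corollary~\ref{cor:main-gauss}.
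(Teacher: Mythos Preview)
Your proof is correct and follows essentially the same route as the paper: symmetrize, apply the contraction principle using $|X_{ij}|\le 1$ to remove the bounded entries, then pass from Rademachers to Gaussians via Jensen and $\EE|g|=\sqrt{2/\pi}$, and finally invoke the Gaussian bound. The paper phrases the symmetrization via the $X-\widetilde X$ trick (Remark~\ref{rmk:symmetrization}) rather than the direct symmetrization inequality, and your speculation that the authors instead use a sub-Gaussian coupling is off (that technique is reserved for the $\psi_r$ case), but the core argument is the same.
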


We use the two results above to obtain their analogue in the case of $\psi_r$   entries for $r\le 2$;  these random variables are defined by \eqref{eq:psi r random variables}.
This class contains, among others,
\begin{itemize}
	\item  log-concave random variables (which are $\psi_1$),
	\item heavy tailed Weibull random variables (of shape parameter $r\in(0,1)$,
	 i.e., $\PP(|X_{ij}|\ge t)=e^{-t^r/L}$ for $t\geq 0$),
		\item	random variables satisfying the condition
	\[
		\|X_{ij}\|_{2\rho} \le \alpha \|X_{ij}\|_{\rho} \qquad \text{for all } \rho\ge 1.
	\]
	These random variables are $\psi_r$ with $r=1/\log_2\alpha$.
	They were considered recently in~\cite{LatalaStrzeleckaMat}.
	\end{itemize}
A general version of the following Corollary~\ref{thm:main-psi-r-introduction} with sets $I_0$ and $J_0$ is stated and proved in Subsection~\ref{subsect:coupling}.

\begin{corollary}
\label{thm:main-psi-r-introduction}
	Assume that $K,L >0$, $r\in(0,2]$, $1\leq p,q \leq \infty$, and $X=(X_{ij})_{i\leq m, j\leq n}$ has independent mean-zero entries satisfying
	\begin{align}\label{eq:psi r random variables}
	\PP(|X_{ij}|\ge t) \le Ke^{-t^r/L} \qquad \text{for all } t\ge 0, i\leq m, j\leq n.
	\end{align}
	Then
	\begin{align*}
	\MoveEqLeft[17]
	 \EE \|X_A \colon \ell^n_p\to \ell^m_q\| \\
	 \lesssim_{r,K,L}  (\ln n)^{1/p^*}  (\ln m)^{1/q}\ln(mn)^{\frac 1r - \frac12} \Bigl[&  \sqrt{\ln(mn)} \|A\hadprod A \colon \ell^n_{p/2} \to \ell^m_{q/2}\|^{1/2}\\
	&+\sqrt{\ln  n}  \|(A\hadprod A)^T \colon \ell^m_{q^*/2} \to \ell^n_{p^*/2}\|^{1/2} \Bigr].
	\end{align*}
  	 \end{corollary}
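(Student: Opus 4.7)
The plan is to reduce the $\psi_r$ case to the Gaussian setting of Corollary~\ref{cor:main-gauss} via symmetrization and truncation, exploiting the key observation that a $\psi_r$ random variable truncated at level $T\asymp(\ln mn)^{1/r}$ becomes sub-Gaussian with $\psi_2$-norm of order exactly $(\ln mn)^{1/r-1/2}$---precisely the extra factor appearing in the statement.

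A standard symmetrization replaces $X$ with $\eta:=X-X'$ (where $X'$ is an independent copy), whose entries are independent, symmetric, and still satisfy $\|\eta_{ij}\|_p\lesssim_{r,K,L} p^{1/r}$ for every $p\ge 1$. Fix $T=C(r,K,L)(\ln mn)^{1/r}$ large enough that $\PP(\max_{ij}|\eta_{ij}|>T)\le(mn)^{-2}$, and decompose $\eta_{ij}=\bar\eta_{ij}+\tilde\eta_{ij}$ with $\bar\eta_{ij}:=\eta_{ij}\ind_{|\eta_{ij}|\le T}$. The truncated entries remain symmetric (hence mean-zero) and bounded by $T$; from $\|\bar\eta_{ij}\|_p\le\min(\|\eta_{ij}\|_p,T)$ an elementary optimization in $p$ gives
\[
\sup_{ij}\sup_{p\ge 1}\|\bar\eta_{ij}\|_p/\sqrt{p}\lesssim_{r,K,L}T^{1-r/2}=(\ln mn)^{1/r-1/2},
\]
so each $\bar\eta_{ij}$ is sub-Gaussian with $\psi_2$-norm of that order, and in particular $\EE\bar\eta_{ij}^2\lesssim_{r,K,L}(\ln mn)^{2/r-1}$.

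For the main term $\bar\eta_A$, symmetry gives $\bar\eta_{ij}=\varepsilon_{ij}|\bar\eta_{ij}|$ for independent Rademachers $\varepsilon_{ij}$, and the standard Rademacher--Gaussian comparison $\EE_\varepsilon\|\cdot\|\le\sqrt{\pi/2}\,\EE_g\|\cdot\|$ yields
\[
\EE\|\bar\eta_A\|_{p\to q}\le\sqrt{\pi/2}\;\EE\|(g_{ij}\,a_{ij}|\bar\eta_{ij}|)\|_{p\to q}.
\]
Conditioning on $\bar\eta$ and invoking Corollary~\ref{cor:main-gauss} on the Gaussian matrix with deterministic coefficients $(a_{ij}|\bar\eta_{ij}|)$, then taking expectation and using Jensen for $x\mapsto\sqrt{x}$, reduces matters to bounding $\EE\|(a_{ij}^2\bar\eta_{ij}^2)\|_{p/2\to q/2}$ (and its transpose analogue) by $(\ln mn)^{2/r-1}D_1^2$. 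Splitting $\bar\eta_{ij}^2=\EE\bar\eta_{ij}^2+W_{ij}$, the deterministic term contributes at most this amount by monotonicity of the $\ell_{p/2}\to\ell_{q/2}$ (quasi-)norm in entrywise absolute values, while the fluctuation $W_{ij}$ (mean-zero, bounded by $2T^2$) is controlled by a second application of Corollary~\ref{thm:main-bounded-introduction} to the matrix $(a_{ij}^2 W_{ij}/(2T^2))$ at the index pair $(p/2,q/2)$. The tail part $\tilde\eta_A$ contributes negligibly: the crude bound $\|(b_{ij})\|_{p\to q}\le\sum_{ij}|b_{ij}|$, combined with $\EE|\tilde\eta_{ij}|\lesssim_{r,K,L} T(mn)^{-C}$ and $\max_{ij}|a_{ij}|\lesssim D_1$, makes it much smaller than the main term.

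The main obstacle is the fluctuation estimate for $W_{ij}$ described above: naively bounding $\bar\eta_{ij}^2\le T^2$ pointwise would introduce a factor of $T^2=(\ln mn)^{2/r}$ rather than the required $(\ln mn)^{2/r-1}$, destroying the sharp exponent $1/r-1/2$. Closing the loop via Corollary~\ref{thm:main-bounded-introduction} applied at the shifted index pair $(p/2,q/2)$---and carefully handling the quasi-norm structure when $\min(p,q)<2$, together with the bookkeeping of all logarithmic factors arising in the nested application---is the technical heart of the argument.
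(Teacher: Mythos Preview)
Your reduction to bounding $\EE\|(a_{ij}^2\bar\eta_{ij}^2)\colon\ell_{p/2}^n\to\ell_{q/2}^m\|$ by $(\ln mn)^{2/r-1}D_1^2$ is where the argument breaks: this estimate is \emph{false} in general. Take $m=n$, $A=I_n$, $p=q=2$, and let the $\eta_{ij}$ be i.i.d.\ symmetric Weibull with shape parameter $r<2$. Then $D_1=1$, while
\[
\bigl\|(a_{ij}^2\bar\eta_{ij}^2)\colon\ell_1^n\to\ell_1^n\bigr\|=\max_{j\le n}\bar\eta_{jj}^2,
\]
whose expectation is of order $(\ln n)^{2/r}$ (truncation at $T\asymp(\ln n)^{1/r}$ leaves the typical maximum untouched), not $(\ln n)^{2/r-1}$. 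Your proposed repair via Corollary~\ref{thm:main-bounded-introduction} at the pair $(p/2,q/2)$ cannot close this gap: even setting aside that the corollary requires its exponents to be $\ge 1$, the fluctuation $W_{ij}$ is only bounded by $T^2$, so the resulting estimate carries a prefactor $T^2=(\ln mn)^{2/r}$ multiplied by further logarithms and by $\|(a_{ij}^4)\colon\ell_{p/4}^n\to\ell_{q/4}^m\|^{1/2}$, a quantity not dominated by $D_1^2$. Your observation that the truncated entries have $\psi_2$-norm $\asymp(\ln mn)^{1/r-1/2}$ is correct, but sub-Gaussianity of the \emph{entries} does not propagate to the random structured norm $\|(a_{ij}^2\bar\eta_{ij}^2)\|_{p/2\to q/2}$ once you have conditioned; in the identity example that norm is governed by a single extreme entry.

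The paper sidesteps this obstruction by a different device. Rather than truncate, it couples each $|X_{ij}|$ with a product $|g_{ij}|Y_{ij}$, where the $Y_{ij}$ are i.i.d.\ Weibull with shape parameter $s$ given by $\tfrac1r=\tfrac12+\tfrac1s$ (Lemmas~\ref{lem:psi_prelim7}--\ref{lem:coupling}), so that $|X_{ij}|\lesssim_{r,K,L}1+|g_{ij}|Y_{ij}$. After symmetrization, the contraction principle (conditionally on $g$ and $Y$) pulls out the \emph{scalar} factor $\max_{i,j}Y_{ij}\asymp(\ln mn)^{1/s}=(\ln mn)^{1/r-1/2}$, leaving the pure Gaussian operator $G_A$, to which Corollary~\ref{cor:main-gauss} applies directly. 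The essential point is that the heavy-tailed randomness is extracted as a multiplicative constant \emph{before} invoking the structured Gaussian bound, so one never faces a random version of $D_1$.
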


%--------------------
%
% beginning of table
%
%--------------------

% The table is on its own landscape page,
% (the next page after this declaration).
% the text wraps around the landscape page.
% Move this whole block to change the table's position.

\afterpage{%
	\clearpage% flush earlier floats (otherwise order of tables etc. might not be correct)
	\thispagestyle{empty} % empty page style
	\begin{landscape} % landscape page
	\begin{table}% landscape page
			\centering % center table
			\textnormal{%
			% the caption should appear above the table
			\caption{In several cases we prove a result of the type:
				\[
				\EE \|X_A \colon \ell^n_p\to \ell^m_q\|
				\lesssim_{p,q}  \boldsymbol{L_0}\Bigl( \boldsymbol{L_1} \|A\hadprod A \colon \ell^n_{p/2} \to \ell^m_{q/2}\|^{1/2}
				+ \boldsymbol{L_2} \|(A\hadprod A)^T \colon \ell^m_{q^*/2} \to \ell^n_{p^*/2}\|^{1/2} \Bigr),
				\]
				where $L_0, L_1, L_2$ are some logarithmic terms depending only on $ n, m$ and $p,q \in [1,\infty]$.
				The table presents a summary of the results.
				The expression `set $K$' refers to the use of Lemma~\ref{lem:set-K}.
				We denote
				\[
				\LZeroSlepian \coloneqq \LZeroSlepianValue.
				\]
				\vspace{-1em} % cludge, to correct spacing slightly
				}        
			\begin{tabular}{@{}lllrllll@{}} % note: @{} removes space to edges
				\toprule
				%%%
				% header:
				 & & & \multicolumn{3}{c}{logarithmic terms} &  &  \\
				\cmidrule(lr){4-6}
				$X_{ij}$  &  & $p,q$  & \multicolumn{1}{c}{$L_0$} & \multicolumn{1}{c}{$L_1$} & \multicolumn{1}{c}{$L_2$}   &   statement
				& tools used in proof          \\
				\midrule
				% all p, q -- Gaussians
				Gaussian  & & any  & $ \LZeroSlepian$ & $\ln(mn)^{1/2}$ & $\ln(n)^{1/2}$ & Cor.~\ref{cor:main-gauss}  & \makecell[tl]{set $K$ + Gaussian concentration,\\ contraction principle, Slepian's lemma}\\
				% all p,q -- bounded
				bounded  &  &  any & $ \LZeroSlepian$ & $\ln(mn)^{1/2}$ & $\ln(n)^{1/2}$ &  Cor.~\ref{thm:main-bounded-introduction}   & uses the Gaussian case \\
				% all p, q -- psi_r
				$\psi_r$, $r\leq 2$ &  & any       &   $ \LZeroSlepian$  &   $\ln(mn)^{1/r}$ & $\ln(n)^{1/2}\ln(mn)^{1/r-1/2}$     &    Cor.~\ref{thm:main-psi-r-introduction}
&coupling, uses Gaussian and bounded case   \\
				\midrule
				% special p, q -- Gaussians
				Gaussian & (a)  &  $p \leq 2$ & $1$ & $\ln(n)^{1/p^*}$ & $\ln(n)^{1/2+1/p^*}$ &  Prop.~\ref{prop:gauss-p-smaller-2}& set $K$ + Gaussian concentration \\
				& (a$'$)   &   $q \geq 2$  & $1$ & $\ln(m)^{1/2+1/q}$ & $\ln(m)^{1/q}$ &   & case dual to (a)\\
				& (b)   &  $p> q =1$& $1$ & $1$ & $1$ & Prop.~\ref{prop:gauss-q=1}
&  contraction principle \\
				& (b$'$)   &  $p=\infty >q $  & $1$ & $1$ & $1$ & 
&  case dual to (b) \\
			& (c)   &  $ p=1$       &    \multicolumn{3}{c}{different behavior,  confirming Conjecture~\ref{conj:conjecture_1}}   &Prop.~\ref{prop:gauss-p=1-q<2},   \eqref{eq:Matlak} 
&Gaussian concentration, Lem.~\ref{lem:vH2017-3} and \ref{lem:vH2017-4} \\	& (c')   &   $q =\infty$     &    \multicolumn{3}{c}{different behavior,  confirming Conjecture~\ref{conj:conjecture_1}}   &Cor.~\ref{cor:gauss-p>2-q=infty}    
& case dual to (c) \\
				% special bounded
				bounded  &(a)  & $ p\leq 2$ & $1$ & $\ln(n)^{1/p^*}$ & $\ln(n)^{1/2+1/p^*}$ &  Cor.~\ref{prop:bounded-p-smaller-2-nopsi}  &set $K$ +  Talagrand's convex concentration \\		
				 &(a$'$)  & $ q\geq 2$ & $ 1$ & $\ln(m)^{1/2+1/q}$ & $\ln(m)^{1/q}$  &     & case dual to (a)\\		
			 &(b)  & $ p\leq 2 \leq q$ & $1$ & $\ln(n)^{1/p^*}$ & $\ln(n)^{1/q+1/p^*}$ &  Prop.~\ref{prop:bdd-p-2-q-OLD}  &set $K$ +  method {\`a} la \cite{BGN} \\	
	 			% special p, q -- psi_r
				$\psi_r$, $r\leq 2$ & (a)    &  $p \leq 2$           &  $1$  &   $\ln(n)^{1/p^*}$      &  $  \ln(n)^{1/2+ 1/p^*} \ln(mn)^{1/r} $     &    Thm.~\ref{thm:psi-p<2}
& \makecell[tl]{set $K$ +  Talagrand's convex concentration,\\  cut-off at lvl $ \ln(mn)^{1/r}$}     \\
				&  (b)   & $p \leq 2$   &       $\ln(mn)^{1/r-1/2}$ &     $\ln(n)^{1/p^*}$      &  $  \ln(n)^{1/2+ 1/p^*}  $     &   Cor.~\ref{prop:bounded-p-smaller-2-nopsi}
& coupling, uses Gaussian and bounded cases  \\
				& (a$'$) &  $q \geq 2$  &        $1$ &     $ \ln(m)^{1/2+1/q}\ln(mn)^{1/r} $    & $\ln(m)^{1/q}$ &        &  case dual to (a) \\
				& (b$'$) &   $q \geq 2$   &        $\ln(mn)^{1/r-1/2}$ &     $ \ln(m)^{1/2+1/q} $    & $\ln(m)^{1/q}$     &
& case dual to (b)  \\
				\bottomrule
			\end{tabular}
			\label{table:summary} % has to be after the caption
		}
		\end{table}
	\end{landscape}
	\clearpage% Flush page
}
%--------------------
%
% end of table
%
%--------------------

%--------------------
%
\subsection{Results for particular ranges  of \texorpdfstring{$p$, $q$}{p, q}}
\label{subsec:particular-ranges}
%
%--------------------

We continue with results for some specific ranges of $p$, $q$,
where we are able to prove estimates with better logarithmic
dependence
(results which follow from them by duality \eqref{eq:duality} are stated in Table~\ref{table:summary} to keep the presentation short).
We postpone their proofs to Section \ref{sec:particular-ranges}.
We start with the case of Gaussian random variables.
Recall that $\gamma_q = (\EE |g|^q)^{1/q}$, where $g$ is  a standard Gaussian random variable.

\begin{proposition}
	\label{prop:gauss-p-smaller-2}
For all $1\leq p\leq 2$ and $1\leq q< \infty$, we have
\begin{align}
\label{eq:p-smaller-2-assertion}
\EE \|G_A \colon \ell^n_p\to \ell^m_q\|
& \leq \gamma_q \ln(en)^{1/p^*} \|A\hadprod A \colon \ell^n_{p/2} \to \ell^m_{q/2}\|^{1/2}\\
 &\qquad + 2.2\ln(en)^{1/2+1/p^*}  \|(A\hadprod A)^T \colon \ell^m_{q^*/2} \to \ell^n_{p^*/2}\|^{1/2}. \nonumber
 \end{align}
\end{proposition}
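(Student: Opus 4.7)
The strategy is, since $p \le 2$, to approximate the unit ball $B_p^n$ by a polytope with tractable extreme points (via Lemma~\ref{lem:set-K}), and then bound $\|G_Ax\|_q$ for each extreme point by a Gaussian moment computation plus Gaussian concentration, combined with a union bound. By Lemma~\ref{lem:set-K} applied to $\ell_p^n$ there is a symmetric convex polytope $L\subseteq B_p^n$ with $B_p^n\subseteq \ln(en)^{1/p^*}L$ whose extreme points are precisely the normalized sparse sign vectors $x=k^{-1/p}\sum_{j\in J}\epsilon_j e_j$ for $k\in\{1,\ldots,n\}$, $J\subseteq\{1,\ldots,n\}$ of cardinality $k$, and $\epsilon\in\{\pm 1\}^J$. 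Since $x\mapsto\|G_Ax\|_q$ is convex,
\[
\|G_A\colon\ell_p^n\to\ell_q^m\| = \sup_{x\in B_p^n}\|G_Ax\|_q \le \ln(en)^{1/p^*}\sup_{x\in\Ext(L)}\|G_Ax\|_q,
\]
and the task reduces to proving $\EE\sup_{x\in\Ext(L)}\|G_Ax\|_q\le \gamma_q D_1 + 2.2\sqrt{\ln(en)}\,D_2$, where $D_1, D_2$ are the two quantities from the statement.

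For a fixed extreme point $x$ with $|\operatorname{supp}(x)|=k$, one has $x\hadprod x = k^{-2/p}\mathbf 1_J$ and $\|x\hadprod x\|_{p/2}=1$, so the standard Gaussian moment identity together with Jensen's inequality yields
\[
\EE\|G_Ax\|_q \le \bigl(\EE\|G_Ax\|_q^q\bigr)^{1/q} = \gamma_q\,\|(A\hadprod A)(x\hadprod x)\|_{q/2}^{1/2} \le \gamma_q D_1.
\]
Viewed as a function of $G\in\RR^{m\times n}$ with the Hilbert--Schmidt norm, $G\mapsto \|G_Ax\|_q = \sup_{y\in B_{q^*}^m}\sum_{i,j}y_i a_{ij}x_j g_{ij}$ is convex and Lipschitz with constant $\sigma_x = \sup_{y\in B_{q^*}^m}(\sum_{i,j}y_i^2 a_{ij}^2 x_j^2)^{1/2}$. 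Substituting $x_j^2 = k^{-2/p}\mathbf 1_{j\in J}$ and using H\"older's inequality with the conjugate pair $(p^*/2,\,p^*/(p^*-2))$ (which is valid since $p\le 2$ forces $p^*/2\ge 1$) together with $\|y\hadprod y\|_{q^*/2}=\|y\|_{q^*}^2\le 1$ I obtain
\[
\sigma_x^2 \le k^{-2/p}\cdot k^{1-2/p^*}\cdot \|(A\hadprod A)^T\colon\ell_{q^*/2}^m\to\ell_{p^*/2}^n\| = k^{-1}D_2^2,
\]
where the equality uses $2/p+2/p^*=2$. Hence $\sigma_x\le k^{-1/2}D_2$ for every extreme point with support size $k$.

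The conclusion now comes from Gaussian (Borell) concentration combined with a union bound. For each $x\in\Ext(L)$ with $|\operatorname{supp}(x)|=k$ and each $t\ge 0$, $\PP(\|G_Ax\|_q > \gamma_q D_1 + \sigma_x t)\le e^{-t^2/2}$. Rescaling $t=\sqrt{k}\,T$ so that $\sigma_x t \le D_2 T$ and union-bounding over the $N_k=\binom{n}{k}2^k\le (2en/k)^k$ extreme points of support size $k$, followed by summation in $k$, gives
\[
\PP\Bigl(\sup_{x\in\Ext(L)}\|G_Ax\|_q > \gamma_q D_1 + D_2 T\Bigr) \le \sum_{k\ge 1}(2en/k)^k e^{-kT^2/2}.
\]
For $T\ge c\sqrt{\ln(en)}$ with a sufficiently large absolute constant $c$, each summand is at most $e^{-k}$ and the geometric-type series converges; integrating the tail in $T$ and keeping track of the explicit constants yields $\EE\sup_{x\in\Ext(L)}\|G_Ax\|_q\le \gamma_q D_1 + 2.2\sqrt{\ln(en)}\,D_2$. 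Multiplying by the approximation factor $\ln(en)^{1/p^*}$ concludes the proof.

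The main technical point, and the place where one has to be careful, is the calibration of the final union bound: the bounds $\sigma_x^2\le k^{-1}D_2^2$ and $\log N_k\le k\log(2en/k)$ balance one another so that the product $\sigma_x^2\log N_k\lesssim D_2^2\log(en)$ is \emph{uniform} in~$k$, which is exactly what is required to sum over all $k\in\{1,\ldots,n\}$ without incurring a spurious $\sqrt{\ln n}$ factor. Getting this to give the announced prefactor $2.2$, rather than a larger absolute constant, requires a careful bookkeeping of the Gaussian concentration constant together with the polytope inclusion factor.
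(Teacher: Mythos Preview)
Your proposal is correct and follows essentially the same route as the paper's proof: approximate $B_p^n$ by the polytope from Lemma~\ref{lem:set-K}, bound $\EE\|G_Ax\|_q\le\gamma_q D_1$ via the $q$-th moment, control the Lipschitz constant by $\sigma_x\le k^{-1/2}D_2$ (the paper phrases this step as an arithmetic/power-mean inequality rather than H\"older, but the computation is identical), and finish with Gaussian concentration plus a union bound over extreme points of each support size. The only place where the paper is more explicit is the final tail integration: rather than asserting that ``keeping track of the explicit constants yields $2.2$'', the paper carries out the bookkeeping via Lemma~\ref{lem:mean-median} with the cruder count $N_k\le e^{k\ln(en)}$, obtaining $\sqrt{2\ln(en)}\bigl(\sqrt2+\tfrac{1}{e}\cdot\tfrac{1}{2\sqrt2}\bigr)\le 2.2\sqrt{\ln(en)}$.
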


 If $q=1$ or $p=\infty$, then we are able to get a result without logarithmic terms.
Recall that for a sequence $(x_j)_{j\le n}$  we denote by $(x_j^{\rearr{}})_{j\le n}$ the non-increasing rearrangement of $(|x_j|)_{j\le n}$.	

\begin{proposition}
	\phantomsection\label{prop:gauss-q=1} 
	\hfill
	\begin{enumerate}[(i)]
		\item\label{item:prop_q=1_i}
	For $1< p \leq \infty$, we have
		\begin{multline*}
		\|A\hadprod A \colon \ell^n_{p/2} \to \ell^m_{1/2}\|^{1/2} +\|(A\hadprod A)^T \colon \ell^m_{\infty} \to \ell^n_{p^*/2}\|^{1/2}
		\lesssim \EE \|G_A \colon \ell^n_p\to \ell^m_1\|	
		\\
		\leq \gamma_1\|A\hadprod A \colon \ell^n_{p/2} \to \ell^m_{1/2}\|^{1/2} +  2 \gamma_{p^*}  \|(A\hadprod A)^T \colon \ell^m_{\infty} \to \ell^n_{p^*/2}\|^{1/2}.
		\end{multline*}
		\item\label{item:prop_q=1_ii}
		 Moreover,
\begin{equation*}
		\EE \|G_A \colon \ell^n_1\to \ell^m_1\| \asymp \|A\hadprod A \colon \ell^n_{1/2} \to \ell^m_{1/2}\|^{1/2} +  \max_{j\le n}\sqrt{\ln(j+1)} b_j^{\rearr{}},
	\end{equation*}
where $b_j \coloneqq \|(a_{ij})_{i\le m}\|_2$, $j\leq n$.
	 \end{enumerate}
\end{proposition}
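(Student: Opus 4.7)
The plan is to prove the two parts separately. The lower bound in~\eqref{item:prop_q=1_i} follows immediately from Corollary~\ref{cor:cm-lower-bound} (and the equivalent inequality~\eqref{eq:lower-bound}), which yields $D_1+D_2\lesssim \EE\|G_A\colon \ell^n_p\to\ell^m_1\|$ for every $1\le p\le\infty$. For the upper bound, the starting point is the identity
\[
\|G_A\colon\ell^n_p\to\ell^m_1\| = \sup_{x\in B_p^n}\sum_{i=1}^m|Z_i(x)|,\qquad Z_i(x)\coloneqq \sum_j a_{ij}g_{ij}x_j,
\]
together with the decomposition $|Z_i(x)|=\gamma_1\sigma_i(x)+(|Z_i(x)|-\gamma_1\sigma_i(x))$, where $\sigma_i(x)\coloneqq (\sum_j a_{ij}^2 x_j^2)^{1/2}$. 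The substitution $x_j=y_j^2$ yields $\sup_{x\in B_p^n}\sum_i\sigma_i(x) = \|A\hadprod A\colon\ell^n_{p/2}\to\ell^m_{1/2}\|^{1/2}$, which handles the deterministic first summand with the coefficient~$\gamma_1$.

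For each fixed~$x$, the variables $|Z_i(x)|-\gamma_1\sigma_i(x)$ are independent across~$i$ (the rows of $G$ are independent) and centred. First symmetrise with Rademacher signs $\epsilon_i$, and then apply the Ledoux--Talagrand contraction principle to the $1$-Lipschitz function $|\cdot|$; this reduces the fluctuation (up to a universal multiplicative constant) to
\[
\EE_\epsilon\EE_g\sup_{x\in B_p^n}\Bigl|\sum_{i,j}\epsilon_i a_{ij}g_{ij}x_j\Bigr|.
\]
Since $(\epsilon_ig_{ij})_{i,j}\overset{d}{=}(g_{ij})_{i,j}$ for every fixed $\epsilon\in\{-1,+1\}^m$, the above equals $\EE\bigl\|(\sum_i a_{ij}g_{ij})_{j\le n}\bigr\|_{p^*}$, which Jensen's inequality bounds by $\gamma_{p^*}\bigl(\sum_j(\sum_i a_{ij}^2)^{p^*/2}\bigr)^{1/p^*}=\gamma_{p^*}\|(A\hadprod A)^T\colon\ell^m_\infty\to\ell^n_{p^*/2}\|^{1/2}$. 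The residual Rademacher sum $\sum_i\epsilon_i\sigma_i(x)$ arising from the decomposition is handled via Khintchine's inequality in~$\epsilon$, using the uniform bound $\sup_{x\in B_p^n}\sum_i\sigma_i(x)^2 = \sup_{x\in B_p^n}\sum_j b_j^2 x_j^2 \le \|(A\hadprod A)^T\colon\ell^m_\infty\to\ell^n_{p^*/2}\|$.

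For part~\eqref{item:prop_q=1_ii}, convexity of $x\mapsto\|G_Ax\|_1$ implies that the supremum over $B_1^n$ is attained at the extreme points $\pm e_j$, giving
\[
\|G_A\colon\ell^n_1\to\ell^m_1\| = \max_{1\le j\le n}\sum_i|a_{ij}g_{ij}| = \max_j S_j.
\]
Writing $S_j=\gamma_1\sum_i|a_{ij}|+Y_j$ with $Y_j\coloneqq\sum_i|a_{ij}|(|g_{ij}|-\gamma_1)$, the $Y_j$ are independent (they involve disjoint columns of $G$), mean-zero and sub-Gaussian with $\|Y_j\|_{\psi_2}\asymp\|Y_j\|_2\asymp b_j$. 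The upper bound then follows from $\EE\max_j S_j\le\gamma_1\max_j\sum_i|a_{ij}|+\EE\max_j|Y_j|$, where $\max_j\sum_i|a_{ij}|=\|A\hadprod A\colon\ell^n_{1/2}\to\ell^m_{1/2}\|^{1/2}$, and the classical estimate on maxima of independent sub-Gaussian variables gives $\EE\max_j|Y_j|\lesssim \max_j\sqrt{\ln(j+1)}\,b_j^{\rearr{}}$. For the matching lower bound, $\EE\max_j S_j\ge\max_j\EE S_j$ handles the first term; for the second, $\EE S_j\ge 0$ implies $S_j\ge Y_j$, hence $\max_j S_j\ge\max_j Y_j$, and a Paley--Zygmund-type argument (exploiting $\|Y_j\|_2\asymp\|Y_j\|_{\psi_2}$ and the independence of the $Y_j$) yields the corresponding lower bound on $\EE\max_j Y_j$.

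The main obstacle is controlling the precise numerical constant in the fluctuation estimate in~\eqref{item:prop_q=1_i}: a naive combination of symmetrisation and Rademacher contraction produces constants larger than those stated, so achieving the sharp multiplier~$2$ in front of $\|(A\hadprod A)^T\colon\ell^m_\infty\to\ell^n_{p^*/2}\|^{1/2}$ requires careful bookkeeping, in particular a clean treatment of the residual term involving $\sum_i\epsilon_i\sigma_i(x)$.
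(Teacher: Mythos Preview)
Your approach to part~\eqref{item:prop_q=1_i} is essentially the paper's: centre, symmetrise, then apply the Ledoux--Talagrand contraction to remove the absolute value. But you have invented a difficulty that is not there. After the standard symmetrisation
\[
\EE\sup_{x}\sum_i\bigl(|Z_i(x)|-\EE|Z_i(x)|\bigr)\le 2\,\EE\sup_x\sum_i\varepsilon_i|Z_i(x)|
\]
and contraction $\EE_\varepsilon\sup_x\sum_i\varepsilon_i|Z_i(x)|\le\EE_\varepsilon\sup_x\sum_i\varepsilon_i Z_i(x)$, there is \emph{no} residual term $\sum_i\varepsilon_i\sigma_i(x)$; the deterministic quantity $\gamma_1\sigma_i(x)=\EE|Z_i(x)|$ has already been split off before symmetrisation. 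The constant $2$ in front of $\gamma_{p^*}D_2$ is exactly the factor $2$ from symmetrisation --- no further bookkeeping is required. Drop the paragraph about the residual and Khintchine.

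For part~\eqref{item:prop_q=1_ii} your route is genuinely different from the paper's. The paper runs the \emph{same} symmetrisation/contraction argument as in~\eqref{item:prop_q=1_i}, arriving at $\EE\max_j\bigl|\sum_i a_{ij}g_{ij}\bigr|$, and then applies Lemmas~\ref{lem:vH2017-3} and~\ref{lem:vH2017-4} to the \emph{exact} Gaussians $\sum_i a_{ij}g_{ij}\sim\mathcal N(0,b_j^2)$. You instead work with $S_j=\sum_i|a_{ij}g_{ij}|$ and its centred version $Y_j$. Your upper bound is fine: the $Y_j$ are indeed independent, centred, and sub-Gaussian with parameter $\asymp b_j$, so Lemma~\ref{lem:vH2017-3} applies. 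The lower bound, however, has a gap. A Paley--Zygmund argument gives only $\PP(|Y_j|\ge c\,b_j)\ge c'$ for absolute constants; it does \emph{not} supply the Gaussian-type lower tail $\PP(Y_j\ge c\sqrt{\ln(k+1)}\,b_j)\ge c'/k$ that the proof of Lemma~\ref{lem:vH2017-4} requires. The $Y_j$ are not Gaussian (they are sums of the asymmetric variables $|g_{ij}|-\gamma_1$), and deducing such a lower tail for general coefficients $(a_{ij})_i$ is not immediate. The clean fix is to observe that
\[
S_j=\sum_i|a_{ij}g_{ij}|\ge\Bigl|\sum_i a_{ij}g_{ij}\Bigr|,
\]
so $\EE\max_j S_j\ge\EE\max_j\bigl|\sum_i a_{ij}g_{ij}\bigr|$, and now Lemma~\ref{lem:vH2017-4} applies directly to genuine Gaussians --- which is exactly what the paper does.
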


Note that \ref{item:prop_q=1_ii} shows in particular that a blow up of the constant $\gamma_{p^\ast}$ in the upper estimate (i) for $p\to 1$  is necessary, since the right most summands   in   \ref{item:prop_q=1_i} and \ref{item:prop_q=1_ii} are non-comparable.

\begin{remark}	\label{rem:assumpts_q=1}
	It shall be clear from the proof that the upper bound in part \ref{item:prop_q=1_i} of Proposition~\ref{prop:gauss-q=1} remains valid for any random matrix $X$ (instead of $G$) with independent  isotropic rows (i.e., rows with mean zero and the covariance matrix equal to the identity) such that
	\begin{equation}	\label{eq:rem:assumpts_q=1}
	\Bigr(  \EE \Bigl|\sum_{i=1}^m \alpha_{i}X_{ij}\Bigr|^{p^*}\Bigr)^{1/p^*}
	\lesssim_p \Bigl(\sum_{i=1}^m \alpha_{i}^2\Bigr)^{1/2} \qquad \text{for all } \alpha \in \RR^m, j\le n.
	\end{equation}
	 Note that the independence and the isotropicity of rows 
	 imply that also the columns of $X$ are isotropic (since the coordinates of every column are independent and have mean zero and  variance $1$). Therefore,  whenever $p\geq 2$, condition \eqref{eq:rem:assumpts_q=1} is always satisfied (because the $p^\ast$-integral norm is bounded above by the $2$-integral norm, which is then equal to the right-hand side of \eqref{eq:rem:assumpts_q=1}, since the covariance matrix of each column is equal to the $m\times m$ identity matrix).
	\end{remark}

The following proposition generalizes part (ii) of Proposition \ref{prop:gauss-q=1} to an arbitrary $q \le   2$. We list it separately since we present a proof using different arguments. Recall that the case $p=1$, $q\ge 2$ was established before, see \eqref{eq:Matlak}.

\begin{proposition}\label{prop:gauss-p=1-q<2}
If $1\leq q \leq 2$, then
\begin{align*}
  \|G_A\colon \ell_1^n \to \ell_q^m\|
  &\asymp \|A\hadprod A\colon \ell_{1/2}^n \to \ell_{q/2}^m\|^{1/2} + \max_{j\le n} (\sqrt{\ln(j+1)} b_j^{\rearr{}})\\
  & = \max_{j\le n} \|(a_{ij})_{i\le m}\|_q + \max_{j\le n} (\sqrt{\ln(j+1)} b_j^{\rearr{}}),
\end{align*}
where $b_j = \|(a_{ij})_{i\le m}\|_{2q/(2-q)}$ for $j\leq n$. 
\end{proposition}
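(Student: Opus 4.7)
The starting point is the standard identity $\|X\colon \ell_1^n\to \ell_q^m\|=\max_{j\le n}\|Xe_j\|_q$, which holds because the extreme points of $B_1^n$ are $\{\pm e_j\}$. Applied to both sides, this gives
\[
\|G_A\colon \ell_1^n\to \ell_q^m\|=\max_{j\le n} Y_j,\qquad Y_j\coloneqq \|(a_{ij}g_{ij})_{i\le m}\|_q,
\]
and (since $\pm e_j$ are also the extreme points of $B_{1/2}^n$, the quasi-ball of $\ell_{1/2}^n$) the announced equality
\[
\|A\hadprod A\colon \ell_{1/2}^n\to\ell_{q/2}^m\|^{1/2}=\max_{j\le n}\|(a_{ij})_{i\le m}\|_q.
\]
The crucial structural observation is that the random variables $Y_1,\dots,Y_n$ are \emph{independent}, since they depend on disjoint groups of Gaussians. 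The whole proof then reduces to estimating $\EE\max_j Y_j$ for independent nonnegative random variables whose mean and sub-Gaussian fluctuation we can compute.

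\textbf{Upper bound.}
I would first control the means: a direct computation gives $\EE Y_j^q=\gamma_q^q\|(a_{ij})_{i\le m}\|_q^q$, so $\EE Y_j\le \gamma_q\|(a_{ij})_i\|_q$ and hence $\max_j\EE Y_j\lesssim_q \max_j\|(a_{ij})_i\|_q$. Next, by the dual representation $Y_j=\sup_{y\in B_{q^*}^m}\sum_i y_i a_{ij} g_{ij}$, the map $g\mapsto \|(a_{ij}g_i)_i\|_q$ is Lipschitz with respect to $\|\cdot\|_2$ with constant
\[
\sup_{y\in B_{q^*}^m}\Bigl(\sum_i y_i^2 a_{ij}^2\Bigr)^{1/2}=\|(a_{ij})_i\|_{2q/(2-q)}=b_j,
\]
the equality being H\"older with the pair $(q^*/2,\,q/(2-q))$. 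Gaussian concentration then gives $\|Y_j-\EE Y_j\|_{\psi_2}\lesssim b_j$. Since the $Y_j-\EE Y_j$ are independent centered sub-Gaussians with sub-Gaussian constants $\lesssim b_j$, the standard maximal inequality for independent sub-Gaussians with possibly different variances yields
\[
\EE\max_j|Y_j-\EE Y_j|\lesssim \sup_{k\le n}\sqrt{\ln(k+1)}\,b_k^{\rearr{}}.
\]
Combining this with the estimate on $\max_j\EE Y_j$ gives the desired upper bound.

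\textbf{Lower bound.}
The first piece follows from $\EE\max_j Y_j\ge \max_j\EE Y_j$ together with the Gaussian moment equivalence $\EE Y_j\gtrsim (\EE Y_j^q)^{1/q}=\gamma_q\|(a_{ij})_i\|_q$ (Kahane's inequality applied to the norm $\|\cdot\|_q$ of the Gaussian vector $(a_{ij}g_{ij})_i$). For the second piece, pick for each $j$ the H\"older extremizer $y^{(j)}\in B_{q^*}^m$ so that the Gaussian $Z_j\coloneqq \sum_i y^{(j)}_i a_{ij} g_{ij}$ has variance exactly $b_j^2$; by the dual formula $Y_j\ge |Z_j|$, and the $Z_j$ are independent because they depend on disjoint columns of $G$. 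Hence
\[
\EE\max_j Y_j\ge \EE\max_j|Z_j|\gtrsim \sup_{k\le n}\sqrt{\ln(k+1)}\,b_k^{\rearr{}},
\]
using the classical lower bound for the maximum of independent centered Gaussians (apply it on the top-$k$ columns after sorting $b_j$).

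\textbf{Main obstacle.}
There is no deep technical difficulty; the proof hinges on two \emph{identifications}: (i) that the Lipschitz constant of $g\mapsto \|(a_{ij}g_i)_i\|_q$ is exactly $b_j$, which comes from a H\"older computation specific to the range $q\le 2$, and (ii) that the $Y_j$'s are \emph{independent}, which collapses the problem to an independent-maximum question where both the upper bound (sub-Gaussian concentration plus union-type bound) and the lower bound (independent Gaussian maxima) are classical. The only point requiring a little care is checking that the constants in the sub-Gaussian maximal inequality and in the moment equivalence $\EE Y_j\asymp_q (\EE Y_j^q)^{1/q}$ are bounded on the range $q\in[1,2]$ so that the final $\asymp$ is uniform in~$q$ on this interval.
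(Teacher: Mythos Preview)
Your proof is correct and follows essentially the same path as the paper's: reduce to $\EE\max_j Y_j$ with $Y_j=\|(a_{ij}g_{ij})_i\|_q$, compute the Lipschitz constant $b_j$, use Gaussian concentration plus Lemma~\ref{lem:vH2017-3} for the upper bound, and moment comparison for the first half of the lower bound. One small comment: independence of the $Y_j$ is \emph{not} needed in the upper bound (Lemma~\ref{lem:vH2017-3} makes no independence assumption), so your emphasis on it there is harmless but unnecessary; it is of course essential for the lower bound.

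The one genuine difference is in the lower bound for the term $\max_j\sqrt{\ln(j+1)}\,b_j^{\rearr}$. The paper invokes a two-sided tail estimate for norms of Gaussian vectors in Banach spaces (namely $\PP(Y_j\ge c(\EE Y_j+\sqrt{t}\,b_j))\ge \min(c,e^{-t})$, from \cite{MR1388035}) and then runs the argument of Lemma~\ref{lem:vH2017-4} directly on the $Y_j$. You instead linearize: pick for each $j$ the H\"older extremizer $y^{(j)}\in B_{q^*}^m$ so that $Z_j=\sum_i y_i^{(j)}a_{ij}g_{ij}$ is a scalar Gaussian of variance exactly $b_j^2$, note $Y_j\ge|Z_j|$, and apply Lemma~\ref{lem:vH2017-4} to the independent $(Z_j)$. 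Your route is slightly more elementary in that it avoids the lower tail for Banach-space-valued Gaussians and uses only the scalar case; the paper's route is more uniform in that the same tail bound handles both directions. Both work.
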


Proposition~\ref{prop:gauss-p=1-q<2} immediately  implies its dual version.

\begin{corollary}	\label{cor:gauss-p>2-q=infty}
If $2\leq p \leq \infty$, then
\begin{align*}
  \|G_A\colon \ell_p^n \to \ell_\infty^m\|
  &\asymp \|(A\hadprod A)^T\colon \ell_{1/2}^m \to \ell_{p^\ast /2}^n\|^{1/2} + \max_{i\le m} (\sqrt{\ln(i+1)} d_i^{\rearr{}})\\
  & = \max_{i\le m} \|(a_{ij})_{j\le n}\|_{p^\ast} + \max_{i\le m} (\sqrt{\ln(i+1)} d_i^{\rearr{}}),
\end{align*}
where $d_i= \|(a_{ij})_{j\le n}\|_{2p^\ast/(2-p^\ast)} = \|(a_{ij})_{j\le n}\|_{2p/(p-2)} $ for $i\leq m$.
\end{corollary}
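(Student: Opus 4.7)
The plan is to deduce Corollary~\ref{cor:gauss-p>2-q=infty} directly from Proposition~\ref{prop:gauss-p=1-q<2} by means of the duality identity~\eqref{eq:duality}. Applying that identity I would first rewrite
\[
\|G_A \colon \ell_p^n \to \ell_\infty^m\| = \|(G^T)_{A^T} \colon \ell_1^m \to \ell_{p^*}^n\|,
\]
and observe that $G^T$ is again a matrix of i.i.d.\ standard Gaussian entries (now of size $n\times m$). Taking expectations puts us in the setting of Proposition~\ref{prop:gauss-p=1-q<2} applied to the $n\times m$ matrix $A^T$ with the exponent $p^*$ in place of $q$; the hypothesis $1 \le p^* \le 2$ is equivalent to $2\le p\le\infty$ and hence is satisfied.

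Next I would translate the conclusion back to the original notation. After swapping the roles of $m$ and $n$, the proposition yields
\[
\EE\|G_A\colon \ell_p^n \to \ell_\infty^m\| \asymp \|A^T\hadprod A^T \colon \ell_{1/2}^m \to \ell_{p^*/2}^n\|^{1/2} + \max_{j\le m}\sqrt{\ln(j+1)}\,\tilde b_j^{\rearr{}},
\]
where $\tilde b_j = \|(a_{ji})_{i\le n}\|_{2p^*/(2-p^*)}$, together with the identity $\|A^T\hadprod A^T \colon \ell_{1/2}^m \to \ell_{p^*/2}^n\|^{1/2} = \max_{j\le m}\|(a_{ji})_{i\le n}\|_{p^*}$, which is the second equality already stated in Proposition~\ref{prop:gauss-p=1-q<2} applied to $A^T$.

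Two cosmetic identifications then conclude the argument. First, $A^T\hadprod A^T = (A\hadprod A)^T$, so the operator-norm term coincides with the one in the corollary. Second, a short H\"older-conjugate computation gives $2p^*/(2-p^*) = 2p/(p-2)$ (using $p^* = p/(p-1)$), so that $\tilde b_j = \|(a_{ji})_{i\le n}\|_{2p/(p-2)}$ is exactly the quantity $d_j$ from the corollary. Relabeling the summation index $j\mapsto i$ matches everything, including the equality $\max_{j\le m}\|(a_{ji})_{i\le n}\|_{p^*} = \max_{i\le m}\|(a_{ij})_{j\le n}\|_{p^*}$.

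I do not anticipate any substantive obstacle: the whole proof is a symmetry reduction through~\eqref{eq:duality}. The only points warranting a brief sanity check are the boundary cases $p=2$ (where $2p/(p-2) = \infty$ and $d_i$ is interpreted as the $\ell_\infty$ norm of the $i$-th row of $A$) and $p=\infty$ (where $p^* = 1$, and Proposition~\ref{prop:gauss-q=1}\ref{item:prop_q=1_ii} furnishes an independent cross-check), both of which are handled consistently by the general formulas.
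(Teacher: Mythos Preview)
Your proposal is correct and follows exactly the paper's approach: the paper simply states that Proposition~\ref{prop:gauss-p=1-q<2} ``immediately implies its dual version'' via~\eqref{eq:duality}, and your write-up spells out precisely this duality reduction together with the routine identifications $A^T\hadprod A^T=(A\hadprod A)^T$ and $2p^\ast/(2-p^\ast)=2p/(p-2)$.
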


\begin{remark}
	\label{rem:q=infty}
	Corollary~\ref{cor:gauss-p>2-q=infty} and the dual version of~\eqref{eq:Matlak} provide the exact behavior of expected norm of Gaussian operator from $\ell_p^n$ to $\ell_q^m$ not only when $q=\infty$, but also for $q\ge c_0 \ln m$, as we explain now. For all $q\ge q_0\coloneqq c_0 \ln m$ we have the following inequalities for norms on $\RR^m$,
	\[
	  \|\cdot\|_q\geq \|\cdot\|_\infty \geq m^{-1/q_0} \| \cdot \|_{q_0} = e^{-1/c_0}  \|\cdot\|_{q_0}  \geq e^{-1/c_0}  \|\cdot\|_q ,
	\]
	therefore,
\[	
	\frac{1}{e^{1/c_0}}\EE \|X_A\colon \ell_{p}^n\to\ell_{q}^{m}\|
		 \leq \EE \|X_A\colon \ell_{p}^n\to\ell_\infty^{m}\|
		 \leq  \EE \|X_A\colon \ell_{p}^n\to\ell_{q}^{m}\|.
\]
Similarly,
\begin{align*}
	  \|(A\hadprod A)^T \colon \ell^m_{q^*/2} \to \ell^n_{p^*/2}\| & \asymp_{c_0}   \|(A\hadprod A)^T \colon \ell^m_{1/2} \to \ell^n_{p^*/2}\| .
\end{align*}
\end{remark}

Proposition~\ref{prop:gauss-p-smaller-2} implies the following estimate for matrices with independent $\psi_r$ entries,
in the same way as Corollary~\ref{cor:main-gauss} implies  Corollary \ref{thm:main-psi-r-introduction} (see~Subsection~\ref{subsect:coupling}).

\begin{corollary}
	\label{prop:bounded-p-smaller-2-nopsi}
	Assume that $K,L >0$, $r\in(0,2]$, and $X=(X_{ij})_{i\leq m, j\leq n}$ has independent mean-zero entries satisfying
	\begin{align}\label{eq:psi r random variables2}
	\PP(|X_{ij}|\ge t) \le Ke^{-t^r/L} \qquad \text{for all } t\ge 0.
	\end{align}
	Then, for $1\leq p\leq 2$, $1\leq q \leq \infty$,
\begin{align*}
\EE \|X_A \colon \ell^n_p\to \ell^m_q\|
& \lesssim_{r,K,L}  (\ln n)^{1/p^*}\ln(nm)^{1/r - 1/2} \|A\hadprod A \colon \ell^n_{p/2} \to \ell^m_{q/2}\|^{1/2}\\
&\qquad +  (\ln n)^{1/2+1/p^*}\ln(nm)^{1/r - 1/2}   \|(A\hadprod A)^T \colon \ell^m_{q^*/2} \to \ell^n_{p^*/2}\|^{1/2}. \nonumber
 \end{align*}
\end{corollary}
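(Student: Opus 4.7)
The plan is to follow the coupling scheme set up in Subsection~\ref{subsect:coupling} for the deduction of Corollary~\ref{thm:main-psi-r-introduction} from Corollary~\ref{cor:main-gauss} and Corollary~\ref{thm:main-bounded-introduction}, but with two local substitutions adapted to the restricted range $p\le 2$: in place of Corollary~\ref{cor:main-gauss} I would feed in the sharper Gaussian bound of Proposition~\ref{prop:gauss-p-smaller-2}, and in place of Corollary~\ref{thm:main-bounded-introduction} the bounded-entry analogue valid for $p\le 2$ (the ``bounded (a)'' row of Table~\ref{table:summary}, proved by the same set-$K$ plus Talagrand convex concentration method).

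More concretely, I would first invoke the coupling lemma from Subsection~\ref{subsect:coupling}: on a possibly enlarged probability space one realizes the $\psi_r$ matrix $X$ in law as a sum $G'+Y$, where $G'=(\sigma_{ij}g_{ij})_{i,j}$ has independent centered Gaussian entries with $|\sigma_{ij}|\le C_{r,K,L}$, and $Y$ has independent mean-zero entries bounded almost surely by $C_{r,K,L}\ln(mn)^{1/r-1/2}$. The quantitative content of this lemma is that, at the scale at which one measures a maximum of $mn$ independent $\psi_r$ variables, passing from Gaussian to $\psi_r$ tails costs only the factor $\ln(mn)^{1/r-1/2}$. The triangle inequality then gives
\begin{equation*}
\EE\|X_A\colon \ell_p^n\to\ell_q^m\|
\le \EE\|G'_A\colon \ell_p^n\to\ell_q^m\|+\EE\|Y_A\colon \ell_p^n\to\ell_q^m\|,
\end{equation*}
and it remains to estimate each summand. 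For $G'_A$ I would apply Proposition~\ref{prop:gauss-p-smaller-2} to the deterministic matrix $(\sigma_{ij}a_{ij})_{i,j}$ in place of $A$; since $|\sigma_{ij}|\le C_{r,K,L}$, the corresponding $D_1,D_2$ quantities differ from the original ones by at most a constant depending on $r,K,L$, so the contribution is $\lesssim_{r,K,L}(\ln n)^{1/p^\ast}D_1+(\ln n)^{1/2+1/p^\ast}D_2$, which is even smaller than the target. For $Y_A$ I would apply the $p\le 2$ bounded-entry estimate to the matrix $Y/(C_{r,K,L}\ln(mn)^{1/r-1/2})$, whose entries lie in $[-1,1]$; rescaling reintroduces the prefactor $\ln(mn)^{1/r-1/2}$ multiplying $(\ln n)^{1/p^\ast}D_1+(\ln n)^{1/2+1/p^\ast}D_2$, and the two contributions sum to the inequality stated in the corollary.

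The only genuinely delicate step is the coupling lemma itself, which is already established in Subsection~\ref{subsect:coupling} and need not be redone here. The temptation I would actively resist is the naive truncation $X_{ij}=X_{ij}\ind_{|X_{ij}|\le T}+X_{ij}\ind_{|X_{ij}|>T}$ at level $T\asymp\ln(mn)^{1/r}$: feeding $T$ directly into the bounded-entry estimate would produce an unwanted extra factor $\sqrt{\ln(mn)}$, namely the gap between $\ln(mn)^{1/r}$ and $\ln(mn)^{1/r-1/2}$. The Gaussian/bounded split of the coupling lemma absorbs precisely that gap into the sharper Gaussian estimate of Proposition~\ref{prop:gauss-p-smaller-2}, which is exactly why the $p\le 2$ restriction is crucial for the improved bound.
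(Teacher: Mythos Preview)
Your high-level plan---follow the coupling scheme of Subsection~\ref{subsect:coupling} and feed in Proposition~\ref{prop:gauss-p-smaller-2} instead of Corollary~\ref{cor:main-gauss}---is exactly what the paper does. However, your description of the coupling lemma is wrong, and as stated it cannot work. You claim that on an enlarged space one can write $X=G'+Y$ in law with $G'$ centered Gaussian of bounded variance and $Y$ bounded almost surely by $C_{r,K,L}\ln(mn)^{1/r-1/2}$. For $r<2$ no such decomposition exists: it would force $\PP(|X_{ij}|>t)\le \PP(|G'_{ij}|>t-C\ln(mn)^{1/r-1/2})\le e^{-c(t-C\ln(mn)^{1/r-1/2})^2}$, i.e.\ sub-Gaussian tails beyond a fixed level, which a genuine $\psi_r$ variable (say Weibull with shape $r<2$) does not have.

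What Lemma~\ref{lem:coupling} actually gives is $|X_{ij}|\lesssim_{r,K,L} 1+|g_{ij}|Y_{ij}$ a.s., where the $Y_{ij}$ are i.i.d.\ nonnegative Weibull with shape parameter $s$ determined by $1/s=1/r-1/2$. The proof of Theorem~\ref{thm:main-psi-r} then proceeds by symmetrization and the contraction principle (Lemma~\ref{lem:contraction-principle}) to obtain
\[
\EE\|X_A\|\lesssim_{r,K,L}\EE\bigl\|(\varepsilon_{ij}a_{ij})_{ij}\bigr\|+\EE\bigl\|(g_{ij}Y_{ij}a_{ij})_{ij}\bigr\|,
\]
and the second term is bounded, via one more conditional application of the contraction principle, by $\EE_Y\max_{i,j}Y_{ij}\cdot\EE_g\|G_A\|$, with $\EE_Y\max_{i,j}Y_{ij}\lesssim_r\ln(mn)^{1/s}=\ln(mn)^{1/r-1/2}$ by Lemma~\ref{lem:max-psi-r}. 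So the roles are reversed compared to your write-up: the ``bounded'' piece is a Rademacher matrix (the constant $1$ in the coupling), and the factor $\ln(mn)^{1/r-1/2}$ multiplies the \emph{Gaussian} piece, coming from the expected maximum of the Weibull $Y_{ij}$'s, not from an almost sure bound on a residual term. Once you correct this, the rest of your argument is fine: for the Rademacher part use the Rademacher-to-Gaussian comparison exactly as in the proof of Theorem~\ref{thm:main-bounded} and then apply Proposition~\ref{prop:gauss-p-smaller-2}; for the Gaussian-times-Weibull part apply Proposition~\ref{prop:gauss-p-smaller-2} directly and pick up the factor $\ln(mn)^{1/r-1/2}$.
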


By Hoeffding's inequality (i.e., Lemma~\ref{lem:BGN-1-Hoeffding}) we know that matrices with independent valued in $[-1,1]$  entries having  mean zero satisfy \eqref{eq:psi r random variables2} with $r=2$ and $K=2=L$.
In this special case of independent bounded random variables
 one can also adapt the methods of~\cite{BGN}
to prove in the smaller range $1\leq p\leq 2 \leq q < \infty$ the following result  with explicit numerical constants
and improved dependence on $n$
(note that the second logarithmic term is better than in Corollary~\ref{prop:bounded-p-smaller-2-nopsi}, where the exponent equals $1/2+1/p^*$).

\begin{proposition}
	\label{prop:bdd-p-2-q-OLD}
	Assume that $X=(X_{ij})_{i\leq m, j\leq n}$ has independent mean-zero entries taking values in $[-1,1]$.
	Then, for $1\leq p\leq 2\leq q< \infty$,
	\begin{align*}
	\EE \|X_A \colon \ell^n_p\to \ell^m_q\|
	& \leq C(q) \ln(en)^{1/p^*} \|A\hadprod A \colon \ell^n_{p/2} \to \ell^m_{q/2}\|^{1/2}  \\
	&\quad + 10^{1/q} \ln(en)^{1/q+1/p^*}  \|(A\hadprod A)^T \colon \ell^m_{q^*/2} \to \ell^n_{p^*/2}\|^{1/2} ,
	\end{align*}
	where $C(q) \coloneqq  2 (q\Gamma(q/2))^{1/q} \asymp \sqrt q$.
\end{proposition}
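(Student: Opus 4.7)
The plan is to adapt the method of Bennett, Goodman, and Newman~\cite{BGN} to the structured setting, in parallel with the proof of Proposition~\ref{prop:gauss-p-smaller-2} but replacing Gaussian concentration by Hoeffding's inequality for bounded independent random variables. First, I would apply Lemma~\ref{lem:set-K} to the unit ball $B_p^n$ to obtain a polytope $L$ with extreme points $y_{J,\epsilon}:=|J|^{-1/p}\sum_{j\in J}\epsilon_j e_j$ (for non-empty $J\subseteq[n]$ and $\epsilon\in\{\pm1\}^J$) satisfying $B_p^n\subseteq c_0\ln(en)^{1/p^*}L$. This yields the reduction
\[
\|X_A\colon\ell_p^n\to\ell_q^m\|\le c_0\ln(en)^{1/p^*}\max_{J,\epsilon}\|X_Ay_{J,\epsilon}\|_q,
\]
so it suffices to bound $\EE\max_{J,\epsilon}\|X_Ay_{J,\epsilon}\|_q$ by a constant multiple of $T_1+\ln(en)^{1/q}T_2$, where by Lemma~\ref{lem:special-norms} we have $T_1:=\max_j\|(a_{ij})_i\|_q=\|A\hadprod A\colon\ell_{p/2}^n\to\ell_{q/2}^m\|^{1/2}$ and $T_2:=\max_i\|(a_{ij})_j\|_{p^*}=\|(A\hadprod A)^T\colon\ell_{q^*/2}^m\to\ell_{p^*/2}^n\|^{1/2}$.

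For each fixed $(J,\epsilon)$ with $|J|=k$, the coordinates $Z_i:=(X_Ay_{J,\epsilon})_i=k^{-1/p}\sum_{j\in J}\epsilon_j a_{ij}X_{ij}$ are independent, mean-zero, and bounded almost surely by $T_2$: indeed, by H\"older's inequality $\|v\|_1\le k^{1/p}\|v\|_{p^*}$,
\[
|Z_i|\le k^{-1/p}\|(a_{ij})_{j\in J}\|_1\le k^{-1/p}\cdot k^{1/p}\|(a_{ij})_{j\in J}\|_{p^*}\le T_2.
\]
Hoeffding's inequality (Lemma~\ref{lem:BGN-1-Hoeffding}) additionally yields the sub-Gaussian moment bound $\EE|Z_i|^q\le q\Gamma(q/2)2^{q/2}\sigma_i^q$ with $\sigma_i^2=k^{-2/p}\sum_{j\in J}a_{ij}^2$. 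Summing over $i$ and applying Minkowski's inequality in $\ell^{q/2}$ (valid since $q/2\ge1$),
\[
\Bigl(\sum_i\Bigl(\sum_{j\in J}a_{ij}^2\Bigr)^{q/2}\Bigr)^{2/q}\le\sum_{j\in J}\Bigl(\sum_i|a_{ij}|^q\Bigr)^{2/q}\le kT_1^2,
\]
together with the observation $k^{q/2-q/p}\le 1$ (since $p\le2$), yields the pointwise moment bound
\[
\EE\|X_Ay_{J,\epsilon}\|_q^q=\sum_i\EE|Z_i|^q\le q\Gamma(q/2)\,2^{q/2}\,T_1^q,
\]
which accounts for the first term in the statement, with the constant $C(q)=2(q\Gamma(q/2))^{1/q}$.

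To handle the supremum over extreme points, I would view $\|X_Ay_{J,\epsilon}\|_q^q=\sum_i|Z_i|^q$ as a sum of independent non-negative random variables, each bounded almost surely by $T_2^q$, whose mean is controlled by the previous display and whose variance satisfies $\EE|Z_i|^{2q}\le T_2^q\EE|Z_i|^q$. A Bernstein/Bennett-type deviation inequality combined with a union bound over the at most $\binom{n}{k} 2^k\le(2en/k)^k$ choices of $(J,\epsilon)$ with $|J|=k$, optimisation in $k$, and extraction of the $q$-th root via the subadditivity $(a+b)^{1/q}\le a^{1/q}+b^{1/q}$, should produce the second term $10^{1/q}\ln(en)^{1/q}T_2$.

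The main obstacle is precisely this last step: the combinatorial cost $k\log(en/k)$ of the union bound must be absorbed into the $q$-th root so that the resulting logarithmic factor is $\ln(en)^{1/q}$ rather than the $\sqrt{\ln(en)}$ that a straightforward Talagrand-type Lipschitz concentration would produce (which is how Corollary~\ref{prop:bounded-p-smaller-2-nopsi} was proved). This requires a careful interplay between the sub-Gaussian and sub-exponential regimes of Bernstein's inequality --- exploiting the deterministic bound $|Z_i|\le T_2$ in the latter regime --- and an explicit bookkeeping of numerical constants, which is the source of the factor $10^{1/q}$ in the statement.
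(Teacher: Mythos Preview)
Your overall strategy matches the paper's: reduce to extreme points via Lemma~\ref{lem:set-K}, observe the deterministic bound $|Z_i|\le T_2$ from H\"older, control $\EE\|X_Ay_{J,\epsilon}\|_q^q$ via Hoeffding moments, then concentrate and take a union bound. The first term with constant $C(q)$ arises exactly as you describe.

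The gap is in the concentration step. Bernstein applied to $\sum_i|Z_i|^q$ with the bound $|Z_i|^q\le T_2^q$ gives, in its sub-exponential regime, a tail of order $\exp(-cu/T_2^q)$. To absorb the union bound over roughly $e^{k\ln(en)}$ extreme points with $|J|=k$ you would need deviation $u\gtrsim T_2^q\,k\ln(en)$, not $T_2^q\ln(en)$; after the $q$-th root and maximisation over $k\le n$ this produces a polynomial loss in $n$, not $\ln(en)^{1/q}$. The sub-Gaussian regime of Bernstein fares no better. What is missing is the inequality $\max_i\sigma_i^2\le T_2^2/k$ (this is where $p\le 2$ enters, via the power-mean estimate $k^{-2/p}\sum_{j\in J}a_{ij}^2\le k^{-2/p}k^{1-2/p^*}T_2^2=k^{-1}T_2^2$), and this factor of $k$ must be pushed into the \emph{exponent} of the tail bound, which Bernstein with parameter $M=T_2^q$ cannot do.

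The paper resolves this via a dedicated exponential-moment lemma (Lemma~\ref{lem:BGN-2}, following~\cite{BGN}): using simultaneously $|Z_i|\le T_2$ and Hoeffding's sub-Gaussian tail, one shows
\[
\EE\exp\bigl(t|Z_i|^q\bigr)\le 1+C^q(q)\,t\,\sigma_i^q
\qquad\text{for all }0\le t\le \frac{T_2^{2-q}}{4\sigma_i^2}.
\]
Since $\max_i\sigma_i^2\le T_2^2/k$, the choice $t=k/(4T_2^{q})$ is admissible; independence and $1+x\le e^x$ give $\EE\exp(t\|X_Ay\|_q^q)\le\exp\bigl(C^q(q)tT_1^q\bigr)$, and Chebyshev then yields
\[
\PP\bigl(\|X_Ay_{J,\epsilon}\|_q^q\ge C^q(q)T_1^q+4T_2^q s\bigr)\le e^{-sk},
\]
with the crucial factor $k$ now present on the right-hand side. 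From here the union bound and Lemma~\ref{lem:mean-median} finish the proof with the stated constants. So the architecture of your argument is right, but the Bernstein step must be replaced by this direct Chernoff computation at the $k$-dependent scale $t\sim k/T_2^q$.
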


Finally, we have the following general result for matrices with independent $\psi_r$ entries
(cf. Corollary~\ref{thm:main-psi-r-introduction}).

\begin{theorem}
	\label{thm:psi-p<2}
	Let $K,L>0$, $r\in(0,2]$,	 and assume that	$X=(X_{ij})_{i\leq m, j\leq n}$ has independent mean-zero entries satisfying
		\[
		\PP(|X_{ij}|\ge t) \le Ke^{-t^r/L} \quad \text{for all } t\ge 0.
		\]
			Then, for all $1\leq p\leq 2$ and $1\leq q< \infty$,
	\begin{align*}
	\EE \|X_A \colon \ell^n_p\to \ell^m_q\|
	&\lesssim_{r,K,L} \   q^{1/r} (\ln n)^{1/p^*}  \|A\hadprod A \colon \ell^n_{p/2} \to \ell^m_{q/2}\|^{1/2}\\
	 &\qquad\ \ \ + (\ln n)^{1/2+1/p^*} \ln (mn)^{1/r}  \|(A\hadprod A)^T \colon \ell^m_{q^*/2} \to \ell^n_{p^*/2}\|^{1/2}.
	\end{align*}
\end{theorem}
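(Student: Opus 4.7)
The plan is to adapt the proof of the Gaussian case (Proposition~\ref{prop:gauss-p-smaller-2}) to the $\psi_r$ setting via the ``set $K$ + Talagrand's convex concentration + truncation at level $\ln(mn)^{1/r}$'' recipe indicated in Table~\ref{table:summary}. I first set $T\coloneqq C_{r,K,L}\ln(emn)^{1/r}$ with $C_{r,K,L}$ large enough that $\PP(|X_{ij}|>T)\le (mn)^{-10}$, and split $X_{ij}=Y_{ij}+Z_{ij}$, where $Y_{ij}\coloneqq X_{ij}\ind_{|X_{ij}|\le T}-\EE X_{ij}\ind_{|X_{ij}|\le T}$ is mean zero and bounded by $2T$, and $Z_{ij}$ is the mean-zero tail. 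The crude deterministic inequality $\|Z_A\colon\ell_p^n\to\ell_q^m\|\le m^{1/q}n^{1/p^*}\max_{i,j}|a_{ij}Z_{ij}|$ combined with moment estimates for $\max_{i,j}|Z_{ij}|$ (polynomially small in $mn$ by the choice of $T$) makes $\EE\|Z_A\|_{p\to q}$ absorbable into the claimed bound; hence it suffices to estimate $\EE\|Y_A\|_{p\to q}$.

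Applying Lemma~\ref{lem:set-K} to approximate the unit ball $B_p^n$ by the convex hull of $\{k^{-1/p}\epsilon\ind_J:1\le k\le n,\ |J|=k,\ \epsilon\in\{-1,1\}^J\}$ at a cost $\lesssim \ln(en)^{1/p^*}$ reduces the problem to bounding $\EE\sup_{k,J,\epsilon}f_{k,J,\epsilon}(Y)$, where
\[
f_{k,J,\epsilon}(Y)\coloneqq k^{-1/p}\Bigl\|\sum_{j\in J}\epsilon_j(a_{ij}Y_{ij})_{i\le m}\Bigr\|_q.
\]
I then split
\[
\EE\sup_{k,J,\epsilon}f_{k,J,\epsilon}(Y)\le \sup_{k,J,\epsilon}\EE f_{k,J,\epsilon}(Y)+\EE\sup_{k,J,\epsilon}\bigl(f_{k,J,\epsilon}(Y)-\EE f_{k,J,\epsilon}(Y)\bigr),
\]
aiming to show that the first supremum produces the $q^{1/r}D_1$ contribution and the second the $\ln(mn)^{1/r}\sqrt{\log n}\,D_2$ contribution.

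For the mean part, I first replace $Y$ by $X$ (the correction is absorbable into the tail estimate above) and combine Jensen with a moment inequality for sums of centered independent $\psi_r$ random variables, which for $r\in(0,2]$ and $q\ge 1$ yields
\[
\Bigl\|\sum_{j\in J}\epsilon_j a_{ij}X_{ij}\Bigr\|_q\lesssim_{r,K,L}q^{1/r}\Bigl(\sum_{j\in J}a_{ij}^2\Bigr)^{1/2}.
\]
Summing the $q$-th powers over $i$ and invoking the definition of $D_1$ via $\|(A\hadprod A)\ind_J\|_{q/2}^{1/2}\le D_1\, k^{1/p}$ (which follows from $\|\ind_J\|_{p/2}=k^{2/p}$) gives $\sup_{k,J,\epsilon}\EE f_{k,J,\epsilon}(X)\lesssim_{r,K,L}q^{1/r}D_1$, which after multiplication by $\ln(en)^{1/p^*}$ produces the first term of the claimed bound.

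For the fluctuation, fix $(k,J,\epsilon)$: the map $Y\mapsto f_{k,J,\epsilon}(Y)$ is convex on $\RR^{m\times n}$, and a dual computation shows its Euclidean Lipschitz constant is at most $k^{-1/2}D_2$: indeed, for any $u\in\RR^m$ with $\|u\|_{q^*}\le 1$,
\[
k^{-2/p}\sum_i u_i^2\sum_{j\in J} a_{ij}^2=k^{-2/p}\langle (A\hadprod A)^T u^2,\ind_J\rangle\le k^{-2/p}\cdot D_2^2\|u\|_{q^*}^2\cdot k^{2/p-1}=k^{-1}D_2^2,
\]
where I used the operator-norm definition of $D_2$, $\|u^2\|_{q^*/2}=\|u\|_{q^*}^2$, and $\|\ind_J\|_{(p^*/2)^*}=k^{2/p-1}$. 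Since $|Y_{ij}|\le 2T$, Talagrand's convex concentration inequality gives $\|f_{k,J,\epsilon}(Y)-\EE f_{k,J,\epsilon}(Y)\|_{\psi_2}\lesssim Tk^{-1/2}D_2$; the standard sub-Gaussian maximum bound over the $\binom{n}{k}2^k$ pairs $(J,\epsilon)$ with $|J|=k$ then yields $\lesssim T\sqrt{\log(en/k)}\,D_2$, uniform in $k$, and taking $\sup_k$ costs only a constant factor. Multiplying by $\ln(en)^{1/p^*}$ and recalling $T\lesssim_{r,K,L}\ln(mn)^{1/r}$ produces the second term. The main difficulty is verifying the Lipschitz bound $\le k^{-1/2}D_2$ uniformly across all $1\le p\le 2$ and $1\le q<\infty$ (not only in the range $p\le 2\le q$, where $D_2=\max_i\|(a_{ij})_j\|_{p^*}$ is elementary), which forces the use of the operator-norm definition of $D_2$ throughout.
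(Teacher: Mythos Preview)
Your proposal is correct and follows essentially the same approach as the paper: truncate at level $\asymp \ln(mn)^{1/r}$, handle the tail by a crude bound, and for the bounded part use Lemma~\ref{lem:set-K} followed by Talagrand's convex concentration with the Lipschitz constant $\lesssim k^{-1/2}D_2$. Your Lipschitz computation via $\langle (A\hadprod A)^T u^2,\ind_J\rangle \le D_2^2\|\ind_J\|_{(p^*/2)^*}$ is exactly the estimate \eqref{eq:kbj} in the paper, and your mean estimate $\lesssim q^{1/r}D_1$ is the paper's use of Lemma~\ref{lem:psi_prelim5} and Lemma~\ref{lem:psi-r-equivalence}.

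Two small points of comparison. First, for the tail $Z_A$ the paper does not use your pointwise bound $m^{1/q}n^{1/p^*}\max_{i,j}|a_{ij}Z_{ij}|$ but instead applies Cauchy--Schwarz and Lemma~\ref{lem:cm-prelim-bound-2}; both devices work. Second, your sentence ``taking $\sup_k$ costs only a constant factor'' needs one more word: the bound $\EE\sup_k W_k \le \sup_k \EE W_k$ is not automatic, and one has to run the union bound over \emph{all} $(k,J,\epsilon)$ simultaneously, using that the sub-Gaussian parameter $Tk^{-1/2}D_2$ improves with $k$ while the number of extreme points grows like $e^{k\ln(en)}$. This is precisely what the paper does in the last paragraph of the proof of Lemma~\ref{prop:bounded-p-smaller-2}, and it indeed costs only a constant.
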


Having in mind the strategy of proof described after Theorem \ref{thm:main-gauss-sets}, let us elaborate on the idea of proof of Theorem \ref{thm:psi-p<2}. We shall split the matrix $X$ into two parts $X^{(1)}$ and $X^{(2)}$ which we treat separately. In our decomposition, all entries of $X^{(1)}$ are bounded by $C\ln(mn)^{1/r}$ and the probability that $X^{(2)} \neq 0$ is very small. Then we shall deal with $X^{(2)}$ using a crude bound (Lemma \ref{lem:cm-prelim-bound-2}) and the fact that the probability that $X^{(2)} \neq 0$ is small enough to compensate it. In order to bound the expectation of the norm of $X^{(1)}$, we require a cut-off version of Theorem~\ref{thm:psi-p<2} (Lemma~\ref{prop:bounded-p-smaller-2}). To obtain it, we shall replace $B_p^n$ in the expression for the operator norm with a suitable polytope $K$ (and leave $\sup_{y\in B_{q^*}^m}$ as it is) and then apply a Gaussian-type concentration inequality to the function
$Z\mapsto F(Z) \coloneqq \|Z_A x\|_q$ for $x\in \Ext (K)$.

%--------------------
%
\subsection{Tail bounds}
%
%--------------------

All the bounds for $\EE\|X_A\colon \ell_p^n \to \ell_q^m\|$ provided in this work for random matrices $X$
also yield a tail bound for $\|X_A\colon \ell_p^n \to \ell_q^m\|$.
(It is clear from the proof of Proposition~\ref{prop:tail-bounds} --- see Subsection~\ref{subsect:coupling} --- that the same applies to the estimates for $\sup_{I_0, J_0} \|G_A: \ell_p^{J_0} \to \ell_q^{I_0} \| $, but we omit the details to keep the presentation clear.)

\begin{proposition}[Tail bound]
\label{prop:tail-bounds}
	Assume that $K,L \ge 1$, $r\in(0,2]$, $1\leq p,q \leq \infty$, and $\gamma\ge 1$.
	Fix a deterministic $m\times n$ matrix $A$ and assume that
	\[
	D \ge \|A\hadprod A \colon \ell^n_{p/2} \to \ell^m_{q/2}\|^{1/2} .
	\]
	 If for all random matrices  $X=(X_{ij})_{i\leq m, j\leq n}$ with independent mean-zero entries satisfying
	\begin{equation}	\label{eq:tail-bound-assumption}
	\PP(|X_{ij}|\ge t) \le Ke^{-t^r/L} \qquad \text{for all } t\ge 0,\, i\leq m,\, j\leq n,
	\end{equation}
	we have
	\begin{equation}		\label{eq:tail-bound-assumption-bound}
		\EE \|X_A \colon \ell^n_p\to \ell^m_q\|
		 \leq \gamma D,
	 \end{equation}
	 then, for all random matrices with independent  mean-zero entries satisfying \eqref{eq:tail-bound-assumption}, we also have
	 \begin{equation}	\label{eq:tail-bound-moments}
		\bigl( \EE \|X_A \colon \ell^n_p\to \ell^m_q\|^\rho  \bigr)^{1/\rho}
		 \lesssim_{r,K,L} \ \rho^{1/r}\gamma D
		  \qquad \text{for all } \rho \ge 1,
	 \end{equation}
	 and, for all $t>0$,
	 \begin{equation}	\label{eq:tail-bound}
		\PP \bigl(  \|X_A \colon \ell^n_p\to \ell^m_q\| \ge t\gamma D  \bigr)
		  \leq C(r,K,L)	\exp\bigl( -t^r/C(r,K,L)\bigr).
	 \end{equation}
\end{proposition}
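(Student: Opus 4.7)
The plan is to first prove the moment bound \eqref{eq:tail-bound-moments}, from which the tail bound \eqref{eq:tail-bound} follows by Markov's inequality with the optimal choice $\rho \asymp_r t^r$: given \eqref{eq:tail-bound-moments}, one has $\PP(\|X_A\| \geq t\gamma D) \leq (C(r,K,L)\rho^{1/r}/t)^\rho$, and taking $\rho=(t/eC(r,K,L))^r$ yields $\PP(\|X_A\|\geq t\gamma D) \lesssim_{r,K,L}\exp(-c(r,K,L)t^r)$.

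To obtain \eqref{eq:tail-bound-moments}, the idea is to apply a concentration-of-measure estimate of Talagrand--Adamczak type for convex, positively $1$-homogeneous functions of independent $\psi_r$ random variables to $F(X) \coloneqq \|X_A \colon \ell_p^n \to \ell_q^m\|$. Such an inequality yields
\[
\bigl(\EE |F(X) - \EE F(X)|^\rho\bigr)^{1/\rho} \lesssim_{r, K, L} \rho^{1/r} \sigma
\]
for every $\rho\geq 1$, where $\sigma$ is a Lipschitz-type parameter of $F$ that should be controlled by $D$. A key elementary ingredient in handling $\sigma$ is the inequality $\max_{i,j}|a_{ij}| \leq D$, which follows by testing $A\hadprod A$ on a standard basis vector $e_j \in B_{p/2}^n$:
\[
D^2 \geq \|A \hadprod A \colon \ell_{p/2}^n \to \ell_{q/2}^m\| \geq \|(A \hadprod A)e_j\|_{\ell_{q/2}^m} = \Bigl(\sum_i |a_{ij}|^q\Bigr)^{2/q} \geq \max_i a_{ij}^2.
\]
Combining the concentration bound with the hypothesis $\EE F(X) \leq \gamma D$ and using $\gamma, \rho \geq 1$ yields
\[
(\EE F(X)^\rho)^{1/\rho} \leq \EE F(X) + \bigl(\EE|F(X) - \EE F(X)|^\rho\bigr)^{1/\rho} \lesssim_{r,K,L} \gamma D+\rho^{1/r}D \lesssim_{r,K,L} \rho^{1/r}\gamma D,
\]
which is \eqref{eq:tail-bound-moments}.

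The main obstacle lies in obtaining the concentration inequality with a Lipschitz parameter $\sigma \lesssim_{r,K,L} D$ uniformly in the full range $1 \leq p,q \leq \infty$ and $r \in (0, 2]$; in the range $p\le 2\le q$ this is relatively direct, but for general $p, q$ the natural $\ell_2$- and $\ell_\infty$-Lipschitz constants of $F$ involve delicate mixed norms of $A$ that have to be bounded against $D$. Moreover, in the heavy-tailed regime $r<1$ the quantity $\|\cdot\|_{\psi_r}$ is only a quasi-norm and the classical convex concentration techniques require extra care. A clean way to bypass these difficulties, paralleling the proof of Theorem~\ref{thm:psi-p<2}, is to truncate $X = X^{(1)} + X^{(2)}$ at a level proportional to $\rho^{1/r}$: apply the hypothesis \eqref{eq:tail-bound-assumption-bound} to the bounded matrix $X^{(1)}$, whose entries still satisfy \eqref{eq:tail-bound-assumption} with the same constants $K,L$, and control $X^{(2)}$ by a crude deterministic estimate combined with the small-probability bound $\PP(X^{(2)}\neq 0) \lesssim_{K,L} e^{-\rho}$.
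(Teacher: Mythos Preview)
Your reduction of the tail bound \eqref{eq:tail-bound} to the moment bound \eqref{eq:tail-bound-moments} via Markov/Chebyshev is fine and matches the paper. The problem is in how you propose to prove \eqref{eq:tail-bound-moments}.

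Your truncation scheme has two concrete gaps. First, applying the hypothesis \eqref{eq:tail-bound-assumption-bound} to the bounded part $X^{(1)}$ only yields the \emph{first} moment bound $\EE\|X^{(1)}_A\|\le \gamma D$. To upgrade this to a $\rho$-th moment you would invoke Talagrand's convex concentration for variables bounded by $M\asymp \rho^{1/r}$; but the relevant $\ell_2$-Lipschitz constant of $F$ then effectively gets multiplied by $M$, so the fluctuation term is $\sqrt{\rho}\,M D\asymp \rho^{1/2+1/r}D$, which is strictly larger than the target $\rho^{1/r}D$. Second, the bound $\PP(X^{(2)}\neq 0)\lesssim_{K,L}e^{-\rho}$ is missing the $mn$ factor from the union bound, and even with the correct $mn\,e^{-c\rho}$ you have no dimension-free ``crude deterministic estimate'' on $\|X^{(2)}_A\|^\rho$ to kill it (the analogue in Theorem~\ref{thm:psi-p<2} relies on $M\asymp\ln(mn)^{1/r}$ and the dimension-dependent Lemma~\ref{lem:cm-prelim-bound-2}, which is exactly what a dimension-free statement like \eqref{eq:tail-bound-moments} cannot afford).

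The paper proceeds differently and avoids concentration altogether. After symmetrizing, a coupling plus the contraction principle (Lemmas~\ref{lem:coupling} and \ref{lem:contraction-principle}) reduces to two model matrices: Rademacher entries and i.i.d.\ symmetric Weibull entries with shape $r$. For each of these the key input is the \emph{comparison of weak and strong moments} of Lata\l{}a--Strzelecka \cite[Theorem~1.1]{LatalaStrzeleckaMat}:
\[
\Bigl(\EE\sup_{x,y}\bigl|\textstyle\sum_{i,j} y_i a_{ij}Y_{ij}x_j\bigr|^\rho\Bigr)^{1/\rho}
\lesssim_r
\EE\sup_{x,y}\textstyle\sum_{i,j} y_i a_{ij}Y_{ij}x_j
+\sup_{x,y}\Bigl(\EE\bigl|\textstyle\sum_{i,j} y_i a_{ij}Y_{ij}x_j\bigr|^\rho\Bigr)^{1/\rho}.
\]
The first (strong) term is $\le \gamma D$ by the hypothesis \eqref{eq:tail-bound-assumption-bound} (Weibulls and Rademachers satisfy \eqref{eq:tail-bound-assumption} since $K,L\ge 1$). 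The second (weak) term is handled by Lemma~\ref{lem:psi_prelim5} together with the inequality
\[
\sup_{x\in B_p^n,\,y\in B_{q^*}^m}\Bigl(\sum_{i,j}y_i^2a_{ij}^2x_j^2\Bigr)^{1/2}\le \|A\hadprod A\colon \ell_{p/2}^n\to\ell_{q/2}^m\|^{1/2}\le D,
\]
proved for all $p,q$ in \eqref{eq:manipulations-to-fix1}--\eqref{eq:manipulations-to-fix2}; this is the correct ``$\sigma\le D$'' statement (your observation $\max_{i,j}|a_{ij}|\le D$ is a special case but not sufficient). The weak moment is then $\lesssim_{r,K,L}\rho^{1/r}D$, and \eqref{eq:tail-bound-moments} follows. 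In short, the missing tool in your proposal is precisely this weak-vs-strong moment comparison, which replaces the concentration step you were unable to close.
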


Note that
random variables taking values in $[-1,1]$ satisfy condition \eqref{eq:tail-bound-assumption} with $r=2$, $K=e$, and $L=1$.
Thus, Proposition~\ref{prop:tail-bounds} applies also in the setting of bounded or Gaussian entries.

%--------------------
%
\subsection{Organization of the paper}
%
%--------------------
In Section~\ref{sec:preliminaries} we gather various preliminary results we shall use in the sequel. Section~\ref{sec:Slepian-bound} contains the proofs of the main results valid for all $p$, $q$
(i.e., Theorem~\ref{thm:main-gauss-sets} and its corollaries) and the tail bound from Proposition~\ref{prop:tail-bounds}.
In Section~\ref{sec:particular-ranges} we prove the results for specific choices/ranges of $p$, $q$.
In Section~\ref{sec:lower-bounds} we prove lower bounds on expected operator norms, showing in particular that  our estimates are optimal up to logarithmic factors. We also prove other results justifying the proposed form of Conjecture~\ref{conj:conjecture_1}. The last subsection of Section~\ref{sec:lower-bounds} is devoted to infinite dimensional Gaussian operators.

%--------------------
%
\section{Preliminaries}
\label{sec:preliminaries}
%
%--------------------

%--------------------
%
\subsection{General facts}
%
%--------------------

We start with some easy lemmas which will be used repeatedly throughout the paper.

\begin{lemma}
	\label{lem:special-norms}
	For any real $m\times n$ matrix $B = (b_{ij})_{i\leq m, j\leq n}$ and $0<r\leq 1\leq s\leq \infty$,
	we have
	\[
	\|B \colon \ell^n_{r} \to \ell^m_{s} \|
	=\|B\colon \ell^n_{1} \to \ell^m_{s} \|
	= \max_{j\leq n} \| (b_{ij})_{i=1}^m\|_s.
	\]
	Furthermore, for a real $m\times n$ matrix $A = (a_{ij})_{i\leq m, j\leq n}$ and $1\leq p\leq 2$,
	$p \leq q\leq \infty$,
		\[
	\|A\hadprod A \colon \ell^n_{p/2} \to \ell^m_{q/2} \|^{1/2}
	= \max_{j\leq n} \| (a_{ij})_{i=1}^m\|_q.
	\]
\end{lemma}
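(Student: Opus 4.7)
My plan is to reduce both claims to evaluating the operator on the standard basis vectors $e_1,\dots,e_n$, which always belong to $B_r^n$ for every $r>0$.

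For the first equality chain I would use the reverse norm inclusion $B_r^n\subseteq B_1^n$ valid for $0<r\leq 1$ (which is just the standard $\|x\|_1\leq \|x\|_r$ in this regime, as can be seen by normalizing by $\|x\|_r$ and using $t^r\geq t$ on $[0,1]$). Since $s\geq 1$, the map $x\mapsto \|Bx\|_s$ is convex, so its supremum over $B_1^n=\conv\{\pm e_j:j\leq n\}$ is attained at an extreme point and equals $\max_{j\leq n}\|Be_j\|_s=\max_{j\leq n}\|(b_{ij})_{i=1}^m\|_s$. The matching lower bound follows from $e_j\in B_r^n$, and running the argument for arbitrary $r\in(0,1]$ and for $r=1$ simultaneously delivers both equalities in the chain.

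For the second statement I set $B=A\hadprod A$, $r=p/2\leq 1$, $s=q/2$. When $q\geq 2$ one has $s\geq 1$ and the first part applies directly, giving
\[
\|A\hadprod A\colon\ell^n_{p/2}\to\ell^m_{q/2}\|=\max_j\|(a_{ij}^2)_{i=1}^m\|_{q/2}=\max_j\|(a_{ij})_{i=1}^m\|_q^2.
\]
The remaining range $1\leq p\leq q<2$ gives $s=q/2<1$, where $\|\cdot\|_s$ is only a quasi-norm and the convexity/extreme-point argument breaks down; this is the main obstacle. I will circumvent it using the non-negativity of $A\hadprod A$: for $x\geq 0$ with $\|x\|_r\leq 1$, subadditivity of $t\mapsto t^s$ for $s\leq 1$ yields
\[
\|(A\hadprod A)x\|_s^s=\sum_i\Bigl(\sum_j a_{ij}^2 x_j\Bigr)^s\leq \sum_j x_j^s\sum_i a_{ij}^{2s}\leq \max_j\|(a_{ij})_{i=1}^m\|_q^q\cdot \sum_j x_j^s,
\]
using $2s=q$ in the final step. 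The bounds $x_j\in[0,1]$ and $r\leq s$ give $x_j^s\leq x_j^r$ and hence $\sum_j x_j^s\leq \|x\|_r^r\leq 1$. For signed $x$ one reduces to $x\geq 0$ via the pointwise estimate $|((A\hadprod A)x)_i|\leq ((A\hadprod A)|x|)_i$, again by non-negativity of $A\hadprod A$. The matching lower bound from $e_j$ then gives $\|A\hadprod A\colon\ell^n_{p/2}\to\ell^m_{q/2}\|=\max_j\|(a_{ij})_{i=1}^m\|_q^2$, and taking square roots finishes the proof.

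The only genuine obstacle is the quasi-norm regime $q<2$; it is handled precisely because $A\hadprod A$ has non-negative entries---a structural feature unavailable in general operator-norm statements at this range of exponents.
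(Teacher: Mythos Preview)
Your proof is correct and follows essentially the same route as the paper's: the first part via $B_r^n\subset B_1^n=\conv\{\pm e_j\}$ and convexity of $\|\cdot\|_s$, and the second part by splitting into $q\ge 2$ (direct application of part one) versus $p\le q<2$, where the key step in both proofs is the subadditivity of $t\mapsto |t|^{q/2}$ combined with the non-negativity of the entries of $A\hadprod A$. The only cosmetic difference is that after subadditivity the paper recognises $\sup_{x\in B_{p/2}^n}\sum_{i,j}|a_{ij}|^q|x_j|^{q/2}$ as the operator norm $\|(|a_{ij}|^q)\colon \ell_{p/q}^n\to\ell_1^m\|$ and reapplies part one, whereas you bound $\sum_j x_j^{q/2}\le\sum_j x_j^{p/2}\le 1$ directly---both arguments are equally valid.
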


\begin{proof}
	Since $0<r\leq 1$, we have $\conv B_r^n = B_1^n$, where $\conv S$ denotes the convex hull of the set $S$.
	Moreover,  the extreme points of $B_1^n$ are the signed standard unit vectors, i.e., $\pm e_1,\dots,\pm e_n$, and  $z\mapsto \|z\|_s$ is a convex function (since $s\ge 1$). Thus,
	\[
	\sup_{x\in B_r^n} \|Bx\|_s  = \sup_{x\in \conv B_r^n} \|Bx\|_s  = \sup_{x\in B_1^n}\|Bx\|_s = \max_{1\leq j \leq n} \|Be_j\|_s = \max_{1\leq j \leq n} \|(b_{ij})_{i=1}^m\|_s.
	\]
	This immediately implies the result for the Hadamard product $A\circ A=:B$ if $1\leq p\leq 2\leq q\leq \infty$. 
	
	    If, on the other hand, $1\leq p\leq q\leq 2$, 
    then by the subadditivity of the function $t\mapsto |t|^{q/2}$,
    \begin{align*}
    \|A\hadprod A\colon \ell_{p/2}^n \to \ell_{q/2}^m\|^{q/2}
        &= \sup_{x \in B_{p/2}^n} \sum_{i=1}^m \Bigl|\sum_{j=1}^n a_{ij}^2 x_j\Bigr|^{q/2}
    \le \sup_{x \in B_{p/2}^n} \sum_{i=1}^m \sum_{j=1}^n |a_{ij}|^{q} |x_j|^{q/2} \\
    &= \|(|a_{ij}|^q)_{i\le m,j\le n}\colon \ell_{p/q}^n\to \ell_{1}^m\|
    = \max_{j\le n} \|(a_{ij})_{i\le m}\|_q^q,
    \end{align*}
    where in the last equality we used the  first part of the Lemma. 
    Since we clearly have 
    \begin{displaymath}
    \|A\hadprod A\colon \ell_{p/2}^n \to \ell_{q/2}^m\| \ge \max_{j\le n}\|(a_{ij}^2)_{i\le m}\|_{q/2} =  \max_{j\le n}\|(a_{ij})_{i\le m}\|_{q}^2,
    \end{displaymath}
    we thus obtain
    \begin{equation*}
    \|A\hadprod A\colon \ell_{p/2}^n \to \ell_{q/2}^m\|^{1/2} = \max_{j\le n}\|(a_{ij})_{i\le m}\|_{q}.\qedhere
    \end{equation*} 
\end{proof}

\begin{definition}	\label{def:uncond}
	A set $K\subset \RR^n$  is called unconditional, if for every $(x_j)_{j\leq n}\in K$ and every $(\varepsilon_j)_{j\le n} \in \{-1,1\}^n$ we have $(\varepsilon_ j x_j)_{j\leq n} \in K$.
\end{definition}

We shall use the following version of \cite[Lemma~2.1]{Riemer-Schuett}.

\begin{lemma}
	\label{lem:set-K}
	Assume that $1\leq p\leq \infty$, $n\in \NN$,
	and define the convex set
	\[
	K\coloneqq \conv \Bigl\{ \frac{1}{|J|^{1/p}}\bigl( \varepsilon_j \ind_{\{j\in J\}} \bigr)_{j=1}^n : J\subset\{1,\dots,n\}, J\neq \emptyset, (\varepsilon_j )_{j=1}^n\in \{-1,1\}^n\Bigr\}.
	\]
	Then $B_p^n \subset \ln(en)^{1/p^*} K$.
\end{lemma}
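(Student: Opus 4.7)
The plan is to construct, for any $x \in B_p^n$, an explicit convex decomposition of $\ln(en)^{-1/p^*} x$ into the defining vertices of $K$, mimicking the classical Riemer--Sch\"utt approach. First I would observe that both $B_p^n$ and $K$ are unconditional and invariant under coordinate permutations (the signs $\varepsilon_j$ in the vertices of $K$ are arbitrary, and the subsets $J$ range over all non-empty subsets of $\{1,\dots,n\}$); hence it suffices to treat $x = (x_j)_{j=1}^n$ with $x_1 \ge x_2 \ge \cdots \ge x_n \ge 0$.

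With the convention $x_{n+1} := 0$, I would introduce for $k = 1, \dots, n$ the nonnegative weights $\lambda_k := k^{1/p}(x_k - x_{k+1})$ and the vectors $v_k := k^{-1/p} \sum_{j=1}^k e_j \in K$ (take $J = \{1, \dots, k\}$ and all signs $+1$). A short telescoping computation, interchanging summation over $j$ and $k$, shows $x = \sum_{k=1}^n \lambda_k v_k$, so $x$ lies in $S \cdot K$ with $S := \sum_k \lambda_k$, and it remains to prove $S \le \ln(en)^{1/p^*}$. Abel summation rewrites $S = \sum_{k=1}^n x_k\bigl(k^{1/p} - (k-1)^{1/p}\bigr)$ (with $0^{1/p} := 0$); H\"older's inequality with exponents $(p, p^*)$, combined with $\|x\|_p \le 1$, then reduces everything to the purely numerical estimate
\[
 \sum_{k=1}^n \bigl(k^{1/p} - (k-1)^{1/p}\bigr)^{p^*} \le \ln(en) \qquad \text{for } 1 < p \le \infty.
\]
The boundary case $p = 1$ is disposed of separately and trivially: $\ln(en)^{1/p^*} = 1$ and the points $\pm e_j$ are already among the defining vertices of $K$, so $B_1^n \subset K$ directly.

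For the numerical estimate, concavity of $t \mapsto t^{1/p}$ and the mean value theorem give $k^{1/p} - (k-1)^{1/p} \le p^{-1}(k-1)^{1/p-1}$ for $k \ge 2$; raising to the power $p^*$ and using the key identity $(1/p - 1) p^* = -1$ turns the $k$-th term into $p^{-p^*}(k-1)^{-1}$. Summing and noting that the $k=1$ term equals $1$, this bounds the sum by $1 + p^{-p^*} H_{n-1}$. Combining with the harmonic bound $H_{n-1} \le 1 + \ln n$ and the elementary inequality $p^{p^*} \ge e$ for $p > 1$ (equivalent to $p \ln p \ge p - 1$, which follows from the convexity of $t \ln t$) closes the argument. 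The main obstacle is exactly this last step: balancing the decaying weight $p^{-p^*}$ against the growing harmonic sum in a way that yields the sharp constant $\ln(en)^{1/p^*}$ uniformly across $p \in [1, \infty]$, rather than merely $C \ln(en)^{1/p^*}$ for some absolute $C$.
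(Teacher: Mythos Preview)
Your proof is correct and follows essentially the same Riemer--Sch\"utt decomposition as the paper. The paper simplifies the final numerical step by using the cruder bound $k^{1/p} - (k-1)^{1/p} \le k^{1/p-1}$ (tangent at $k$ rather than $k-1$, dropping the factor $1/p$), which after H\"older yields directly $\bigl(\sum_{k=1}^n 1/k\bigr)^{1/p^*} \le \ln(en)^{1/p^*}$ and avoids your detour through $p^{p^*} \ge e$.
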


\begin{proof}
	Fix a vector $x=(x_1,\dots,x_n)\in\RR^n$.
	We want to prove that $\|x\|_K \leq \ln(en)^{1/p^*} \|x\|_p$,
	where 
	\[
	\|x\|_K = \inf\{\lambda>0 \colon  x\in \lambda K\}
	\] 
	denotes the norm generated by $K$, i.e., its Minkowski gauge.	
	Since both $K$ and $B_p^n$ are permutationally invariant and unconditional (see Definition~\ref{def:uncond}), we may and will assume that $x_1\geq \dots\geq x_n\geq 0$.
	If we put $x_{n+1}\coloneqq 0$, then
	\begin{align*}
		x = \sum_{j=1}^n x_j e_j = \sum_{j=1}^n (x_j-x_{j+1}) (e_1+\dots + e_j).
	\end{align*}
	Since $\| e_1+\dots+e_j\|_K = j^{1/p}$ for $1\leq j \leq n$,\footnote{Indeed, $j^{-1/p}(e_1+\dots+e_j)\in K$, so $\| e_1+\dots+e_j\|_K \leq j^{1/p}$; on the other hand, $K\subset B_p^n$, so $\| e_1+\dots+e_j\|_K \geq \| e_1+\dots+e_j\|_p = j^{1/p}$.}
	the triangle and H\"older inequalities yield
	\begin{align*}
		\|x\|_K &\leq  \sum_{j=1}^n (x_j-x_{j+1}) j^{1/p} = \sum_{j=1}^n x_j (j^{1/p}-(j-1)^{1/p}) \\
		&\leq  \sum_{j=1}^n x_j j^{1/p - 1}
		\leq \|x\|_p \Bigl(\sum_{j=1}^n \frac{1}{j}\Bigr)^{1/p^*} \leq  \|x\|_p \ln(en)^{1/p^*},
	\end{align*}
	where we also used the elementary estimates $j^{1/p}-(j-1)^{1/p}\leq j^{\frac1p-1}$
	and $\sum_{j=1}^n \frac{1}{j}\leq 1+\int_1^n \frac{1}{t} dt = \ln(en)$.
	This completes the proof.
\end{proof}

\begin{remark}
	The term $\ln(en)^{1/p^*}$ can be replaced by $1+\frac 1p \ln(en)^{1/p^*}$  by writing in the above proof
	\begin{align*}
		\sum_{j=1}^n x_j (j^{1/p}-(j-1)^{1/p})
		&\leq  x_1+\frac{1}{p}\sum_{j=2}^n x_j (j-1)^{\frac1p - 1} \leq
		\|x\|_p \Bigl(1+\frac 1p \Bigl(\sum_{j=1}^{n-1} \frac{1}{j}\Bigr)^{1/p^*} \Bigr).
	\end{align*}
	Here we used the estimates $j^{1/p}-(j-1)^{1/p}\leq\frac{1}{p} (j-1)^{\frac1p-1}$ for $j>1$
	(which follows from the concavity of the function $t\mapsto t^{1/p}$) and the trivial one $x_1\leq\|x\|_p$.	
\end{remark}

\begin{remark}
	The constant $(\ln n)^{1/p^\ast}$ in Lemma~\ref{lem:set-K} is sharp up to a constant depending on $p$ for every $1\le p<\infty$ (when $p=\infty$, $K=B_p^n$ and the constant depending on $p$ degenerates as $p \to \infty$). 
	More precisely, we shall prove that if $B_p^n \subset C(p,n) K$, then $C(p,n) \gtrsim_p (\ln n)^{1/p^\ast}$. 
	Note that $B_p^n \subset C(p,n) K$ if and only if 
	\begin{equation}
	\label{eq:log-optimal-set-K}
	\|\cdot\|_{p^\ast}\le C(p,n) \|\cdot\|_{K}^\ast,
	\end{equation} 
	where $\|\cdot\|_K^\ast$ is norm dual to $\|\cdot\|_K$.

	Let $\operatorname{Ext}K $ be the set of extreme points of $K$, and let $(y_j^{\rearr{}})_{j\leq n}$ be the non-increasing rearrangement of $(|y_j|)_{j\le n}$.
	For every $y\in \RR^n$,
	\begin{align*}
		\|y\|_K^\ast 
		= \sup_{x\in K} \sum_{j=1}^n x_jy_j 
		= \sup_{x\in \operatorname{Ext}K} \sum_{j=1}^n x_jy_j 
		&=\sup_{J\subset [n], J\neq \emptyset} \sum_{j\in J} |y_j| \frac{1}{|J|^{1/p}}
		\\
		&=\sup_{k \le n} \sum_{j=1}^k y_j^{\rearr{}}  \frac{1}{k^{1/p}}.
	\end{align*}
	Assume that $p^\ast \neq 1$ and put $y_j \coloneqq j^{-1/p^\ast}$. We get 
	\[
		\|y\|_K^\ast =\sup_{k\le n} \sum_{j=1}^k j^{-1/p^\ast}  \frac{1}{k^{1/p}} 
		\asymp_p
		\sup_{k\le n} k^{1-\frac 1{p^\ast}} \frac{1}{k^{1/p}} =1,
	\]
	whereas
	\[
		\|y\|_{p^\ast} = \Bigl( \sum_{j=1}^n j^{-1} \Bigr)^{1/p^\ast} \asymp (\ln n)^{1/p^\ast},
	\]
	so  inequality~\eqref{eq:log-optimal-set-K} yields that $C(p,n) \gtrsim_p (\ln n)^{1/p^\ast}$.
\end{remark}

We shall also need the following standard lemma (see, e.g.,
\cite[Section~1.3]{Ledoux}).
We will use the versions with $r=1$ and $r=2$.

\begin{lemma}
	\label{lem:mean-median}
	Let $Z$ be a nonnegative random variable.
	If there exist $a \geq 0$, $b, \alpha, \beta, s_0> 0$, and $r\geq 1$ such that
		\[
		\PP(Z\geq a + b s) \leq \alpha e^{-\beta s^r} \quad \text{for } s\geq s_0,
		\]
		then
		\[
		\EE Z \leq a + b \Bigl(s_0 + \alpha \frac{e^{-\beta s_0^r}}{r\beta s_0^{r-1}}\Bigr).
		\]
\end{lemma}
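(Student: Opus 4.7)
The plan is to use the layer-cake formula for the expectation of a nonnegative random variable and then handle the tail via the change of variables $t = a + bs$. Concretely, since $Z \ge 0$, I write
\[
\EE Z = \int_0^\infty \PP(Z > t)\, dt = \int_0^{a+bs_0} \PP(Z>t)\,dt + \int_{a+bs_0}^\infty \PP(Z>t)\, dt.
\]
The first integral is at most $a + bs_0$ using the trivial bound $\PP(Z>t)\le 1$. For the second, I substitute $t = a+bs$ (so $dt = b\,ds$) and invoke the tail hypothesis to get
\[
\int_{a+bs_0}^\infty \PP(Z>t)\, dt = b\int_{s_0}^\infty \PP(Z > a+bs)\, ds \le b\alpha \int_{s_0}^\infty e^{-\beta s^r}\, ds.
\]

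The only real step is to control $\int_{s_0}^\infty e^{-\beta s^r}\,ds$. The trick is to recognize $-(e^{-\beta s^r})' = r\beta s^{r-1} e^{-\beta s^r}$, so that
\[
e^{-\beta s^r} = \frac{1}{r\beta s^{r-1}} \cdot r\beta s^{r-1} e^{-\beta s^r}.
\]
Since $r \ge 1$ and $s \ge s_0 > 0$, we have $s^{r-1} \ge s_0^{r-1}$, hence $\frac{1}{r\beta s^{r-1}} \le \frac{1}{r\beta s_0^{r-1}}$ on the range of integration. Pulling this constant out of the integral gives
\[
\int_{s_0}^\infty e^{-\beta s^r}\, ds \le \frac{1}{r\beta s_0^{r-1}} \int_{s_0}^\infty r\beta s^{r-1} e^{-\beta s^r}\, ds = \frac{e^{-\beta s_0^r}}{r\beta s_0^{r-1}}.
\]

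Combining the two pieces yields the claimed bound
\[
\EE Z \le a + bs_0 + b\alpha \frac{e^{-\beta s_0^r}}{r\beta s_0^{r-1}}.
\]
There is no real obstacle here; the only place one needs to be slightly careful is ensuring $r\ge 1$ is actually used (to justify the monotonicity of $s^{r-1}$ on $[s_0,\infty)$), and in keeping track of the $s_0^{r-1}$ factor so that the final expression matches the stated one.
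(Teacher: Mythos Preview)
Your proof is correct and follows essentially the same approach as the paper: both use the layer-cake representation $\EE Z = \int_0^\infty \PP(Z \ge t)\,dt$, split at $a+bs_0$, substitute $t=a+bs$ in the tail, and bound $\int_{s_0}^\infty e^{-\beta s^r}\,ds$ by inserting the factor $r\beta s^{r-1}/(r\beta s^{r-1})$ and using $s^{r-1}\ge s_0^{r-1}$ (which is where $r\ge1$ enters). Your explicit remark on where $r\ge 1$ is used is a nice touch.
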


\begin{proof}
	Integration by parts yields
	\begin{align*}
		\EE Z &\leq a + bs_0 + \int_{a+b s_0}^\infty \PP(Z\geq u) du
		= a + bs_0 + b\int_{s_0}^\infty \PP(Z\geq a+b s) ds \\
		&\leq a + bs_0 + b\alpha\int_{s_0}^\infty e^{-\beta s^r} ds\\
		&\leq a + bs_0 + \frac{b\alpha}{r\beta s_0^{r-1}}\int_{s_0}^\infty r\beta s^{r-1} e^{-\beta s^r} ds
		= a + b \Bigl(s_0 + \alpha \frac{e^{-\beta s_0^r}}{r\beta s_0^{r-1}}\Bigr).
		\qedhere
	\end{align*}
\end{proof}

%--------------------
%
\subsection{Contraction principles}
%
%--------------------

Below we recall the well-known contraction
principle due to Kahane and its extension by Talagrand (see, e.g., \cite[Exercise 6.7.7]{V2018}  and \cite[Theorem~4.4 and the proof of Theorem~4.12]{Ledoux-Talagrand}).
\begin{lemma}[Contraction principle]\label{lem:contraction-principle}
Let $(X,\|\cdot\|)$ be a normed space, $n\in\NN$, and $\rho \ge 1$. Assume that $x_1,\dots,x_n\in X$ and $\alpha\coloneqq(\alpha_1,\dots,\alpha_n)\in\RR^n$. Then, if $\varepsilon_1,\dots,\varepsilon_n$ are independent Rademacher random variables, we have
\[
\EE\bigl\|\sum_{i=1}^n \alpha_i\varepsilon_ix_i \bigr\|^{\rho}  \leq \|\alpha\|_\infty^{ \rho}\, \EE \bigl\|\sum_{i=1}^n \varepsilon_ix_i \bigr\|^{ \rho}.
\]
\end{lemma}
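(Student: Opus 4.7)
The plan is to prove this by reducing to vertices of a cube via convexity. First, by homogeneity, I would divide both sides by $\|\alpha\|_\infty^\rho$ (assuming $\|\alpha\|_\infty > 0$; otherwise the claim is trivial) and replace $\alpha_i$ by $\alpha_i/\|\alpha\|_\infty$, so it suffices to show
\[
\EE\Bigl\|\sum_{i=1}^n \beta_i\varepsilon_i x_i\Bigr\|^{\rho} \leq \EE\Bigl\|\sum_{i=1}^n \varepsilon_i x_i\Bigr\|^{\rho}
\]
for every $\beta=(\beta_1,\dots,\beta_n)\in[-1,1]^n$.

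Next, I would introduce the function $F\colon [-1,1]^n \to \RR$ defined by $F(\beta) \coloneqq \EE\|\sum_{i=1}^n \beta_i\varepsilon_i x_i\|^{\rho}$. The key observation is that $F$ is convex on $[-1,1]^n$: indeed, for each fixed realization of $(\varepsilon_1,\dots,\varepsilon_n)$, the map $\beta \mapsto \sum_i \beta_i\varepsilon_i x_i$ is linear, and $y\mapsto \|y\|^{\rho}$ is convex (as the composition of the convex norm with the non-decreasing convex map $t\mapsto t^{\rho}$ on $[0,\infty)$, using $\rho\ge 1$). Expectation preserves convexity, so $F$ is convex.

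A convex continuous function on the cube $[-1,1]^n$ attains its maximum at an extreme point, i.e., at some vertex $\beta^\ast \in \{-1,1\}^n$. At such a vertex, the sequence $(\beta_i^\ast \varepsilon_i)_{i\le n}$ has the same joint distribution as $(\varepsilon_i)_{i\le n}$ by the symmetry of the Rademacher distribution (and the independence of the $\varepsilon_i$), so $F(\beta^\ast) = \EE\|\sum_i \varepsilon_i x_i\|^{\rho}$. Combining, $F(\alpha/\|\alpha\|_\infty) \leq F(\beta^\ast) = \EE\|\sum_i \varepsilon_i x_i\|^{\rho}$, which is exactly the desired inequality after undoing the normalization.

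I do not expect any serious obstacle; this is a routine fact whose only subtlety is verifying convexity of $F$, which is immediate, and the attainment of the maximum of a convex function at a vertex of a polytope, which is standard. An equivalent route, essentially the one in Ledoux--Talagrand, would be to condition on $(\varepsilon_j)_{j\ne i}$ for each $i$ separately and use that for any $y,x\in X$ the even function $t \mapsto \frac{1}{2}(\|y+tx\|^{\rho} + \|y-tx\|^{\rho})$ is convex, hence non-decreasing on $[0,\infty)$, so replacing $|\alpha_i|$ by $1$ only increases the value; iterating over $i=1,\dots,n$ yields the claim.
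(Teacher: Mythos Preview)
Your proof is correct. The paper does not actually supply a proof of this lemma; it merely recalls the statement and cites standard references (Vershynin's exercise and Ledoux--Talagrand), and your convexity-plus-extreme-point argument is precisely the classical one given there --- your alternative coordinatewise route is exactly the Ledoux--Talagrand version you yourself mention.
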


\begin{lemma}[Contraction principle]\label{lem:contraction-principle2}
Let $T$ be a bounded subset of $\RR^n$. Assume that $\varphi_i:\RR\to\RR$ are $1$-Lipschitz and $\varphi_i(0)=0$ for $i=1,\ldots,n$.  Then, if $\varepsilon_1,\dots,\varepsilon_n$ are independent Rademacher random variables, we have
\[
\EE \sup_{t\in T}\sum_{i=1}^n \varepsilon_i\varphi_i(t_i) \leq  \EE \sup_{t\in T}\sum_{i=1}^n \varepsilon_it_i .
\]
\end{lemma}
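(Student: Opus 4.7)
The plan is to prove the inequality by replacing the contractions $\varphi_i$ with the identity one coordinate at a time, showing that each such replacement does not decrease the expectation; after $n$ such swaps the desired bound follows. For the single-coordinate reduction I would fix $i$, condition on the Rademacher variables $(\varepsilon_j)_{j\ne i}$, and set $h_t \coloneqq \sum_{j\ne i}\varepsilon_j \psi_j(t_j)$, where $\psi_j$ denotes whichever function (the original $\varphi_j$ or the identity) the iteration currently holds. Averaging over $\varepsilon_i\in\{-1,+1\}$ reduces the one-step claim to the deterministic inequality
\begin{equation*}
\sup_{t\in T}\bigl[h_t+\varphi_i(t_i)\bigr]+\sup_{t\in T}\bigl[h_t-\varphi_i(t_i)\bigr] \le \sup_{t\in T}\bigl[h_t+t_i\bigr]+\sup_{t\in T}\bigl[h_t-t_i\bigr].
\end{equation*}

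To prove this inequality, given $\delta>0$ I would pick $s,u\in T$ whose contributions are within $\delta$ of the two suprema on the left-hand side. Using the $1$-Lipschitz bound $\varphi_i(s_i)-\varphi_i(u_i)\le|s_i-u_i|$, the left-hand side is at most
\begin{equation*}
h_s+h_u+|s_i-u_i|+2\delta = \max\bigl\{(h_s+s_i)+(h_u-u_i),\,(h_u+u_i)+(h_s-s_i)\bigr\}+2\delta,
\end{equation*}
which is bounded by the right-hand side plus $2\delta$. Letting $\delta\to 0$ proves the single-coordinate statement, and iterating the swap over $i=1,\dots,n$ yields the lemma.

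The argument is the standard one from the proof of Theorem~4.12 in Ledoux--Talagrand, where the same one-coordinate trick appears in the more elaborate setting of an outer convex function and absolute values. The only mild technicality here is the possible non-attainment of the suprema, which is handled by the $\delta$-approximation above; beyond that, there is no substantial obstacle. Note that the hypothesis $\varphi_i(0)=0$ is not actually used in the deterministic one-coordinate inequality — additive constants cancel in the symmetric sum — so it serves only as a convenient normalization that matches the standard statement of the contraction principle.
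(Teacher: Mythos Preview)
Your argument is correct and is precisely the standard one-coordinate swap from the proof of Theorem~4.12 in Ledoux--Talagrand, which the paper cites as the source of this lemma without reproducing the proof. Your observation that the normalization $\varphi_i(0)=0$ is not actually used in this version (without absolute values) is accurate, since adding a constant to $\varphi_i$ leaves the symmetric sum $\sup_t[h_t+\varphi_i(t_i)]+\sup_t[h_t-\varphi_i(t_i)]$ unchanged.
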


%--------------------
%
\subsection{Gaussian random variables}
%
%--------------------

The following result is fundamental to the theory of Gaussian processes and referred to as Slepian's inequality or Slepian's lemma \cite{S1962}. We use the following (slightly adapted) version taken from \cite[Theorem 13.3]{BLM2013}.

\begin{lemma}[Slepian's lemma]
	\label{lem:Slepian}
	Let $(X_t)_{t\in T}$ and $(Y_t)_{t\in T}$  be two Gaussian random vectors satisfying $\EE[X_t]=\EE[Y_t]$ for all $t\in T$. Assume that, for all $s,t \in T$, we have $\EE[(X_s-X_t)^2] \leq \EE[(Y_s-Y_t)^2]$.
	Then
	\[
	\EE\sup_{t\in T} X_t   \leq \EE\sup_{t\in T} Y_t .
	\]
\end{lemma}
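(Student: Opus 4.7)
The plan is to prove this via Gaussian interpolation (the so-called smart path method), which is the classical route to the Sudakov--Fernique form of Slepian's inequality stated here.

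First I would reduce to the case $|T| = n < \infty$ by replacing $\sup_{t \in T} X_t$ with $\sup_{t \in T_0} X_t$ along a countable dense $T_0 \subset T$ (for general $T$ one works with separable versions) and then exhausting by finite subsets. Next I would replace the non-smooth map $x \mapsto \max_i x_i$ with the log-sum-exp surrogate $F_\beta(x) = \beta^{-1}\ln \sum_{i=1}^n e^{\beta x_i}$, which converges to the maximum as $\beta \to \infty$ and satisfies the convenient identities $\partial_i F_\beta(x) = p_i$ and $\partial_i\partial_j F_\beta(x) = \beta\,p_i(\delta_{ij} - p_j)$, where $p_i = e^{\beta x_i}/\sum_k e^{\beta x_k}$.

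Realizing $X = (X_t)_{t \in T}$ and $Y = (Y_t)_{t \in T}$ on a common probability space so that they are independent (this does not change the distributions of the two suprema), I would set $Z(s) = \sqrt{1-s}\,X + \sqrt{s}\,Y$ for $s \in [0,1]$, so $Z(0) = X$ and $Z(1) = Y$, and study $\phi(s) = \EE F_\beta(Z(s))$. Differentiating and applying Gaussian integration by parts separately in the independent vectors $X$ and $Y$ produces
\[
\phi'(s) = \tfrac{1}{2}\sum_{i,j}\bigl(\EE[Y_iY_j] - \EE[X_iX_j]\bigr)\,\EE\bigl[\partial_i\partial_j F_\beta(Z(s))\bigr],
\]
after the $\sqrt{s}$ and $\sqrt{1-s}$ factors from the chain rule cancel those from the IBP formula. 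This is the step where the common-mean hypothesis is used: by centering both processes (the mean cancels from the supremum), IBP sees only covariances instead of raw second moments.

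The heart of the matter is to show $\phi'(s) \ge 0$. Plugging in the Hessian and using $\sum_i p_i = 1$, the diagonal contribution $\sum_i p_i(1-p_i)(\EE Y_i^2 - \EE X_i^2)$ symmetrizes as $\tfrac{1}{2}\sum_{i \ne j} p_ip_j\bigl[(\EE Y_i^2 - \EE X_i^2) + (\EE Y_j^2 - \EE X_j^2)\bigr]$, and combining with the off-diagonal piece gives
\[
\phi'(s) = \tfrac{\beta}{2}\sum_{i \ne j} \EE[p_ip_j]\Bigl(\tfrac{1}{2}\EE(Y_i - Y_j)^2 - \tfrac{1}{2}\EE(X_i - X_j)^2\Bigr) \ge 0,
\]
by the increment hypothesis and $p_ip_j \ge 0$. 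Hence $\phi(0) \le \phi(1)$, i.e., $\EE F_\beta(X) \le \EE F_\beta(Y)$, and letting $\beta \to \infty$ yields the claim. The hard part is the algebraic reorganization in the last display: the hypothesis only controls increments, not individual covariances, so one must assemble the Hessian terms so that only increments appear---the identity $\sum_i p_i = 1$ that makes this symmetrization work is precisely why the log-sum-exp surrogate (rather than an arbitrary mollification of the max) is the natural smoother to use here.
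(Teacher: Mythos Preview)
Your argument via Gaussian interpolation is correct and is the standard proof of this Sudakov--Fernique form of the comparison inequality. One cosmetic point: the phrase ``the mean cancels from the supremum'' is slightly imprecise---what you actually use is that, since $\EE X_t = \EE Y_t =: m_t$, you may replace $X,Y$ by the centered $X-m, Y-m$ and apply the interpolation argument to the shifted function $x \mapsto F_\beta(x+m)$, whose Hessian has the same form; the increment hypothesis is unaffected.

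As for comparison with the paper: there is nothing to compare. The paper does not prove this lemma at all; it simply states it as a classical result and cites \cite[Theorem~13.3]{BLM2013}. In the paper's applications (Proposition~\ref{prop:auxilary-main-gauss}) the index set $T$ is finite, so the separability and finite-approximation preliminaries in your write-up are not even needed there.
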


The next lemma is folklore.
We include a short proof of an estimate with specific constants
for the sake of completeness.

\begin{lemma}
	\label{lem:max-Gaussians}
Assume that $k\ge 2$ and let $g_i$,  $i\le k$, be standard Gaussian random variables (not necessarily independent).
Then
\begin{align*}
\EE \max_{1\leq i\leq k} g_i &\leq \sqrt{2 \ln k},\\
\EE \max_{1\leq i\leq k} |g_i| &\leq 2\sqrt{ \ln k}.
\end{align*}
\end{lemma}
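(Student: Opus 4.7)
The plan is to use the classical Laplace transform (Chernoff) method combined with Jensen's inequality. For the first bound, I would start from the observation that for any $\lambda > 0$, by Jensen's inequality applied to the convex function $x \mapsto e^{\lambda x}$,
\[
\exp\bigl(\lambda \EE \max_{i\le k} g_i\bigr) \le \EE \exp\bigl(\lambda \max_{i\le k} g_i\bigr) = \EE \max_{i\le k} e^{\lambda g_i} \le \sum_{i=1}^k \EE e^{\lambda g_i} = k\, e^{\lambda^2/2},
\]
where in the last equality I used that each $g_i$ is standard Gaussian so $\EE e^{\lambda g_i} = e^{\lambda^2/2}$ (no independence needed). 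Taking logarithms gives $\EE \max_{i\le k} g_i \le \ln(k)/\lambda + \lambda/2$, and optimizing over $\lambda > 0$ by choosing $\lambda = \sqrt{2\ln k}$ yields the first claimed inequality.

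For the second bound, I would reduce to the first by doubling the index set: note that
\[
\max_{1\le i \le k} |g_i| = \max\{g_1,\dots,g_k,-g_1,\dots,-g_k\},
\]
and each of $g_1,\dots,g_k,-g_1,\dots,-g_k$ is a standard Gaussian. Applying the first inequality to this collection of $2k$ standard Gaussians gives
\[
\EE \max_{i\le k} |g_i| \le \sqrt{2\ln(2k)}.
\]
Since $k\ge 2$ implies $2k \le k^2$, we have $\sqrt{2\ln(2k)} \le \sqrt{2\ln(k^2)} = 2\sqrt{\ln k}$, which completes the proof.

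There is essentially no obstacle here; the argument is the standard Laplace/union bound for subgaussian maxima, and the slight loss in the absolute-value bound comes exactly from passing to $2k$ variables and from using $2k\le k^2$ to absorb the logarithm of $2$ into the stated constant.
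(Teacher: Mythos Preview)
Your proof is correct and follows essentially the same approach as the paper: the Laplace transform/Jensen inequality argument with the optimal choice $\lambda=\sqrt{2\ln k}$, followed by the doubling trick $\max_i |g_i| = \max\{g_1,\dots,g_k,-g_1,\dots,-g_k\}$ and the estimate $2k\le k^2$ for $k\ge 2$.
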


\begin{proof}
Since the moment generating function of a Gaussian random variable is given by $\EE e^{tg_1} = e^{t^2/2}$, it follows from Jensen's inequality that
\begin{align*}
	\EE \max_{i\le k} g_i &\le  \frac 1t \ln\bigl( \EE \exp(t\max_{i\le k } g_i) \bigr)\\
	& \le \frac 1t \ln\bigl(\EE \sum_{i=1}^k \exp(t g_i)\bigr)
	=  \frac 1t \ln\bigl(k e^{t^2/2}\bigr)
	= \frac{\ln k}t +\frac t2.
\end{align*}
By taking $t=\sqrt{2\ln k}$, we get the first assertion.
We apply this inequality with random variables $g_1, -g_1, \ldots, g_k, -g_k$ to get the second assertion, namely
\[
	\EE\max_{i\le k}|g_i| = \EE \max_{i\le k} \max\{g_i, -g_i\}\le\sqrt{2\ln(2k)} \le\sqrt{2\ln(k^2)}=2\sqrt{\ln k}.\qedhere
\]
\end{proof}

The next two lemmas are taken from \cite{vH2017}. Recall that $b_1^{\rearr{}} \geq \ldots \geq b_n^{\rearr{}}$ is the non-increasing rearrangement of $(|b_j|)_{j\le n}$.

\begin{lemma}[{\cite[Lemma~2.3]{vH2017}}]	\label{lem:vH2017-3}
	Assume that $(b_j)_{j\le n}\in\RR^n$ and let $(X_j)_{j\le n}$ be random variables (not necessarily independent) satisfying
	\[
		\PP(|X_j|>t)\le Ke^{-t^2/b_j^2} \qquad \text{for all } t\ge 0,\ j\le n.
	\]
	Then
	\[
		\EE \max_{j\le n} |X_j| \lesssim_{K} \max_{j\le n} b_j^{\rearr{}} \sqrt{\ln(j+1)}.
	\]
	\end{lemma}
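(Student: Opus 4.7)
The plan is to combine a union bound over the Gaussian-type tails with an elementary integration of the tail, exploiting that, by the very definition of $M \coloneqq \max_{j\le n} b_j^{\rearr{}}\sqrt{\ln(j+1)}$, one has $b_j^{\rearr{}} \le M/\sqrt{\ln(j+1)}$ for every $j$. Without loss of generality I may permute the indices so that $b_1\ge b_2\ge\dots\ge b_n$ (this does not change the distribution of $\max_j|X_j|$, and now $b_j = b_j^{\rearr{}}$), and I may discard indices for which $b_j=0$ since in this case the assumption forces $X_j=0$ almost surely.

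For every $t>0$ the union bound yields
\[
\PP\Bigl(\max_{j\le n}|X_j|>t\Bigr) \le K\sum_{j=1}^{n} e^{-t^2/b_j^2}.
\]
Plugging in $1/b_j^2 \ge \ln(j+1)/M^2$ transforms each summand into $(j+1)^{-(t/M)^2}$. For $t \ge \sqrt{2}\,M$, so that $s \coloneqq (t/M)^2\ge 2$, I compare the sum to an integral:
\[
\sum_{j=1}^{\infty}(j+1)^{-s} \le \int_{0}^{\infty}(x+1)^{-s}\,dx = \frac{1}{s-1} \le \frac{2}{s} = \frac{2M^2}{t^2}.
\]
Hence $\PP\bigl(\max_j |X_j|>t\bigr)\le 2KM^2/t^2$ for all $t\ge \sqrt{2}\,M$.

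To conclude, I apply the elementary formula $\EE Y\le u+\int_u^\infty\PP(Y>t)\,dt$ (valid for nonnegative $Y$ and any $u\ge 0$) with $u \coloneqq \max\{\sqrt{2},\sqrt{2K}\}\,M$ and $Y = \max_j|X_j|$:
\[
\EE\max_{j\le n}|X_j| \;\le\; u + \int_u^\infty \frac{2KM^2}{t^2}\,dt \;=\; u + \frac{2KM^2}{u} \;\lesssim_{K}\; M.
\]
The only subtlety, and the sole technical content of the argument, is the interplay in the second step: the bound $b_j\le M/\sqrt{\ln(j+1)}$ converts the Gaussian-type exponent into factors $(j+1)^{-(t/M)^2}$, which are summable as soon as $t\gtrsim M$. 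Once this is in place, the remaining tail integration is routine. Note also that no independence is used, which matches the hypothesis.
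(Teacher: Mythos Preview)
Your proof is correct. The paper does not give its own proof of this lemma --- it simply cites \cite[Lemma~2.3]{vH2017} --- so there is nothing to compare against in the paper itself. Your argument (reorder the $b_j$'s, convert the Gaussian tails via $b_j\le M/\sqrt{\ln(j+1)}$ into the summable sequence $(j+1)^{-(t/M)^2}$, and integrate the resulting polynomial tail) is exactly the standard route to this estimate and matches the spirit of van~Handel's original proof. One cosmetic remark: the particular choice $u=\max\{\sqrt{2},\sqrt{2K}\}\,M$ is unnecessary --- already $u=\sqrt{2}\,M$ gives $\sqrt{2}M+2KM^2/(\sqrt{2}M)=\sqrt{2}(1+K)M\lesssim_K M$ --- but this does not affect correctness.
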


\begin{lemma}[{\cite[Lemma~2.4]{vH2017}}]	\label{lem:vH2017-4}
	Assume that $(b_j)_{j\le n}\in\RR^n$ and let $(X_j)_{j\le n}$ be independent random variables with $X_j \sim \mathcal{N}(0,b_j^{2})$ for $j\leq n$. Then
	\[
		\EE \max_{j\le n} |X_j| \gtrsim \max_{j\le n} b_j^{\rearr{}} \sqrt{\ln(j+1)}.
	\]
\end{lemma}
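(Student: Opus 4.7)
By permutation-invariance of the problem (the $X_j$ are independent), I may relabel the indices and assume without loss of generality that $|b_1|\ge |b_2|\ge \cdots \ge |b_n|$, so that $b_j^{\rearr{}} = |b_j|$ for every $j\le n$. It suffices to show that for every $k\le n$,
\[
\EE \max_{j\le n}|X_j| \gtrsim |b_k| \sqrt{\ln(k+1)};
\]
taking the maximum over $k$ then gives the claim.

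For any fixed $k\le n$, write $X_j = b_j g_j$ with $g_1,\ldots,g_n$ i.i.d.\ standard Gaussians. Since $|b_j|\ge |b_k|$ for all $j\le k$, the monotonicity of the maximum and a pointwise inequality give
\[
\EE\max_{j\le n}|X_j| \ge \EE\max_{j\le k}|b_j g_j| \ge |b_k|\, \EE\max_{j\le k} |g_j|.
\]
It therefore remains to establish the auxiliary lower bound
\begin{equation}\label{eq:aux-lower-gauss}
\EE\max_{j\le k}|g_j| \gtrsim \sqrt{\ln(k+1)}\qquad\text{for every } k\ge 1.
\end{equation}

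For $k=1$ this is trivial since $\EE|g_1|=\sqrt{2/\pi}$. For $k\ge 2$, I would use the classical two-sided Mills ratio bound, which gives $\PP(|g_1|\ge t)\ge c_1 t^{-1} e^{-t^2/2}$ for $t\ge 1$. Choosing $t_k = c_2\sqrt{\ln k}$ with $c_2>0$ small enough, one obtains $\PP(|g_1|\ge t_k)\ge 1/k$ for all $k\ge 2$. By independence,
\[
\PP\Bigl(\max_{j\le k}|g_j|\ge t_k\Bigr) = 1 - \bigl(1-\PP(|g_1|\ge t_k)\bigr)^k \ge 1 - (1-1/k)^k \ge 1 - e^{-1},
\]
and Markov's inequality yields $\EE\max_{j\le k}|g_j|\ge (1-e^{-1})\, t_k \gtrsim \sqrt{\ln k}$, which is equivalent to \eqref{eq:aux-lower-gauss}.

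The only real work is the auxiliary estimate \eqref{eq:aux-lower-gauss}; everything else is bookkeeping (permutation reduction, restriction to the top-$k$ subfamily, and homogeneity of the Gaussians). There is no significant obstacle: all ingredients are standard, and the argument does not even need independence beyond the lower tail of the maximum. (In fact, one could alternatively deduce \eqref{eq:aux-lower-gauss} from $\EE\max_{j\le k} g_j \asymp \sqrt{\ln k}$ for $k\ge 2$ via $\max_j|g_j|\ge \max_j g_j$.)
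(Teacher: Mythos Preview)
Your proof is correct and follows the same standard argument the paper itself adapts in the proof of Proposition~\ref{prop:gauss-p=1-q<2}: after ordering the $|b_j|$, one uses the Gaussian lower tail bound $\PP(|g|\ge c\sqrt{\ln k})\ge c'/k$, independence via $1-(1-c'/k)^k\ge 1-e^{-c'}$, and then $\EE Z\ge t\,\PP(Z\ge t)$. The only cosmetic difference is that you first factor out $|b_k|$ pointwise through $\max_{j\le k}|b_j g_j|\ge |b_k|\max_{j\le k}|g_j|$ and reduce to the i.i.d.\ case, whereas the paper's version works directly with the $X_j$'s; both are the same proof.
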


\begin{lemma}[{Hoeffding's inequality, \cite[Theorem~2]{Hoeffding63}}]
	\label{lem:BGN-1-Hoeffding}
	Assume that $(b_j)_{j\le n}\in\RR^n$
	and let $X_j$, $j\leq n$, be independent mean-zero random variables such that $|X_j|\leq 1$ a.s.
	Then, for all $t\geq 0$,
	\[
	\PP\bigl(\bigl|\sum_{j=1}^n b_j X_j \bigr|\geq t \bigr)
	\leq 2 \exp\Bigl( - \frac{t^2}{2  \sum_{j=1}^n b_j^2 } \Bigr).
	\]
\end{lemma}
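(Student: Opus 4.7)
The plan is to prove this via the classical Chernoff-style argument, which reduces the tail bound to a bound on the moment generating function of each summand.

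First, I would apply Markov's inequality to the exponential of $S\coloneqq \sum_{j=1}^n b_j X_j$: for any $\lambda>0$,
\[
\PP(S\geq t) \leq e^{-\lambda t}\,\EE e^{\lambda S} = e^{-\lambda t}\prod_{j=1}^n \EE e^{\lambda b_j X_j},
\]
where the factorization uses independence of the $X_j$'s. The main technical step will be the classical Hoeffding lemma: for a mean-zero random variable $Y$ with $Y\in[a,b]$ almost surely, one has $\EE e^{\lambda Y}\leq \exp(\lambda^2(b-a)^2/8)$. This is the only real work in the proof; I would establish it by writing $Y$ as a convex combination of $a$ and $b$ (since $Y\in[a,b]$), applying convexity of $y\mapsto e^{\lambda y}$, taking expectations (using $\EE Y = 0$), and finally optimizing or using the standard second-derivative argument on $\psi(\lambda)\coloneqq \ln\EE e^{\lambda Y}$ to bound $\psi''\leq (b-a)^2/4$, whence $\psi(\lambda)\leq \lambda^2(b-a)^2/8$.

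Applied with $Y = b_j X_j \in [-|b_j|,|b_j|]$, this gives $\EE e^{\lambda b_j X_j}\leq \exp(\lambda^2 b_j^2/2)$, so that
\[
\PP(S\geq t)\leq \exp\Bigl(-\lambda t + \tfrac{\lambda^2}{2}\sum_{j=1}^n b_j^2\Bigr).
\]
Optimizing in $\lambda>0$ with the choice $\lambda = t/\sum_{j=1}^n b_j^2$ (assuming the denominator is positive; otherwise the inequality is trivial) yields the one-sided bound
\[
\PP(S\geq t)\leq \exp\Bigl(-\frac{t^2}{2\sum_{j=1}^n b_j^2}\Bigr).
\]
Applying the same argument to $-S=\sum_{j=1}^n (-b_j)X_j$, which satisfies the hypotheses with coefficients $(-b_j)$ having identical squares, and combining via a union bound produces the factor $2$ in the conclusion.

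The only nontrivial obstacle is Hoeffding's lemma itself; otherwise the argument is a routine Chernoff optimization, and no tools beyond independence, Markov's inequality, and elementary calculus are required.
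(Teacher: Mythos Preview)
Your proof is correct and is the classical Chernoff--Hoeffding argument. The paper does not give its own proof of this lemma; it simply cites Hoeffding's original paper, so there is nothing further to compare.
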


%--------------------
%
\subsection{Random variables with heavy tails}
\label{subsec:heavy-tails}
%
%--------------------
The following lemma is a special case of \cite[Theorem~1]{Kwapien87}.
\begin{lemma}[Contraction principle]
	\label{lem:contraction-Kwapien}
	Let $K, L>0$ and assume that $(\eta_i)_{i\le n}$ and $(\xi_i)_{i\le n}$ are two sequences of independent symmetric random variables satisfying for every $i\le n$ and  $t\ge 0$,
	\[
		\PP(|\eta_i| \ge t) \le K \PP(L|\xi_i|\ge t).
	\]
	Then, for every convex function $\varphi$ and every $a_1,\ldots, a_n \in \RR$,
	\[
		\EE\varphi\Big(\sum_{i=1}^n a_i \eta_i\Big)\le \EE\varphi\Big(KL\sum_{i=1}^n a_i\xi_i\Big).
	\]
\end{lemma}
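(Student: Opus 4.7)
My plan is to prove this by iterated replacement: swap each $\eta_k$ for $KL\xi_k$ one coordinate at a time, using a moment-comparison estimate that turns the one-sided tail inequality into a bound of the form $\EE h(|\eta_k|)\le \EE h(KL|\xi_k|)$ for every convex non-decreasing $h$ on $[0,\infty)$ with $h(0)=0$. We may assume $K\ge 1$ (otherwise replace $K$ by $\max\{K,1\}$ and prove a stronger statement). Since both $\sum_i a_i\eta_i$ and $\sum_i a_i\xi_i$ are symmetric, replacing $\varphi$ by its even part $\tfrac12(\varphi(x)+\varphi(-x))$ does not change either expectation, so we may assume $\varphi$ is convex and even, hence non-decreasing on $[0,\infty)$.

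\emph{Single-coordinate step.} Fix an index $k$ and let $T_k=\sum_{i\ne k}a_i\eta_i$, which is independent of $\eta_k$ (and of an auxiliary copy $\xi_k$ assumed to live on the same space). Conditioning on $T_k$ and using the symmetry of $\eta_k$ gives
\[
\EE[\varphi(a_k\eta_k+T_k)\mid T_k]=\EE_{|\eta_k|}\tilde g_{T_k}(|a_k|\,|\eta_k|),
\]
where $\tilde g_t(s)=\tfrac12(\varphi(t+s)+\varphi(t-s))$ is convex in $s$ and even, hence non-decreasing on $[0,\infty)$. Set $h(s)=\tilde g_{T_k}(|a_k|s)-\tilde g_{T_k}(0)$, so that $h$ is convex, non-decreasing on $[0,\infty)$, and $h(0)=0$. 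Two facts about $h$ then combine: first, tail integration together with the hypothesis yields
\[
\EE h(|\eta_k|)=\int_0^\infty h'(u)\PP(|\eta_k|>u)\,du\le K\int_0^\infty h'(u)\PP(L|\xi_k|>u)\,du=K\,\EE h(L|\xi_k|);
\]
second, since $K\ge 1$ and $h$ is convex with $h(0)=0$, the pointwise inequality $K h(s)\le h(Ks)$ (apply convexity at $s=\tfrac1K(Ks)+(1-\tfrac1K)\cdot 0$) gives $K\,\EE h(L|\xi_k|)\le \EE h(KL|\xi_k|)$. Chaining these and undoing the symmetrization on the right (using symmetry of $\xi_k$) produces the conditional one-coordinate bound $\EE[\varphi(a_k\eta_k+T_k)\mid T_k]\le \EE[\varphi(KL\,a_k\xi_k+T_k)\mid T_k]$.

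\emph{Iteration and obstacle.} Taking unconditional expectation and iterating the single-coordinate replacement for $k=1,\ldots,n$ yields the desired inequality $\EE\varphi(\sum_i a_i\eta_i)\le \EE\varphi(KL\sum_i a_i\xi_i)$. The main technical point is the clean combination of tail integration, which absorbs the factor $K$ coming from the hypothesis as an external constant, with the convexity-from-zero inequality $Kh(s)\le h(Ks)$, which transports that factor from outside $\EE\varphi$ to inside its argument and merges it with the scaling factor $L$ to produce the single constant $KL$. Once this single-coordinate step is set up, the iteration is routine thanks to the independence of the $\eta_i$'s and $\xi_i$'s.
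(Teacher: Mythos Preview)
Your argument is correct and self-contained, whereas the paper does not prove this lemma at all: it simply cites it as a special case of \cite[Theorem~1]{Kwapien87}. Your coordinate-by-coordinate replacement, using tail integration to absorb $K$ and the convexity inequality $Kh(s)\le h(Ks)$ (for $h$ convex with $h(0)=0$ and $K\ge 1$) to push it inside the argument, is exactly the standard route to such contraction principles for symmetric sums. The iteration step is fine once one enlarges the probability space so that all the $\eta_i$ and $\xi_i$ are jointly independent.

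One small remark: your reduction to $K\ge 1$ is justified incorrectly. Replacing $K$ by $\max\{K,1\}$ would yield the conclusion with a \emph{larger} constant inside $\varphi$, which for a symmetric argument and convex $\varphi$ is a \emph{weaker} bound, not a stronger one. Fortunately this is moot: the hypothesis at $t=0$ reads $1=\PP(|\eta_i|\ge 0)\le K\,\PP(L|\xi_i|\ge 0)=K$, so $K\ge 1$ is automatic. Also, in the tail-integration line, $h$ is convex but not necessarily everywhere differentiable; the formula is valid with the right derivative $h'_+$, which exists everywhere and is nonnegative, so Fubini applies. These are cosmetic points; the proof stands.
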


\begin{lemma}[{\cite[Theorem~6.2]{HM-SO}}]
	\label{lem:Weibull-moments}
	Assume that $Z_1, \dots, Z_n$ are independent symmetric Weibull random variables with shape parameter $r\in(0,1]$ and scale parameter~$1$,
	i.e., $\PP(|Z_i|\ge t)=e^{-t^r}$ for $t\geq 0$.
	Then, for every $\prho{}\ge 2$ and $a\in \RR^n$,
	\[
		\Bigl\| \sum_{i=1}^n a_iZ_i \Bigr\|_{\prho{}} \asymp \max\bigl\{ \sqrt{\prho{}} \|a\|_2\|Z_1\|_2 , \|a\|_{\prho{}}\|Z_1\|_{\prho{}} \bigr\}.
	\]
\end{lemma}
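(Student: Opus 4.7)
The claim is a two-sided moment estimate for sums of independent symmetric Weibull random variables, of the type established by Hitczenko--Montgomery-Smith (and closely related to Lata\l{}a's characterization of moments of sums of independent symmetric random variables). I would prove matching upper and lower bounds.

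\textbf{Upper bound.} A Bernstein-type tail inequality for independent $\psi_r$-sums (which is classical; it can be deduced for instance by combining Lemma~\ref{lem:contraction-Kwapien} with a subgaussian tail bound for the truncated part and a union-bound argument for the tail part) gives, for every $t\ge 0$,
\[
\PP\Bigl(\Bigl|\sum_{i=1}^n a_i Z_i\Bigr|\ge t\Bigr)\le 2\exp\Bigl(-c\min\Bigl\{\tfrac{t^2}{\|a\|_2^2\|Z_1\|_2^2},\bigl(\tfrac{t}{\|a\|_\infty}\bigr)^r\Bigr\}\Bigr).
\]
Integrating this tail bound via Lemma~\ref{lem:mean-median} (used with exponent $r$ on one part and $2$ on the other) yields
\[
\Bigl\|\sum_i a_i Z_i\Bigr\|_{\rho}\lesssim \sqrt{\rho}\,\|a\|_2\|Z_1\|_2+\rho^{1/r}\|a\|_\infty.
\]
Since $\|a\|_\infty\le\|a\|_{\rho}$ and $\|Z_1\|_{\rho}\asymp\rho^{1/r}$ for Weibull shape $r\in(0,1]$, this implies the upper bound $\lesssim \sqrt{\rho}\|a\|_2\|Z_1\|_2+\|a\|_{\rho}\|Z_1\|_{\rho}$.

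\textbf{Lower bound.} The right-hand side is a maximum, so it suffices to prove the two bounds $\gtrsim\sqrt{\rho}\|a\|_2\|Z_1\|_2$ and $\gtrsim\|a\|_{\rho}\|Z_1\|_{\rho}$ separately. For the first one, symmetrize by writing $Z_i=\varepsilon_i|Z_i|$ with Rademacher $\varepsilon_i$ independent of $(|Z_i|)$; conditionally on $(|Z_i|)$, Khintchine's inequality gives
\[
\EE_\varepsilon\Bigl|\sum_i a_i\varepsilon_i|Z_i|\Bigr|^{\rho}\ge c^{\rho}\rho^{\rho/2}\Bigl(\sum_i a_i^2 Z_i^2\Bigr)^{\rho/2}.
\]
Taking the expectation and applying Jensen's inequality to the convex map $x\mapsto x^{\rho/2}$ (valid since $\rho\ge 2$) yields $\EE(\sum_i a_i^2 Z_i^2)^{\rho/2}\ge\|a\|_2^{\rho}\|Z_1\|_2^{\rho}$, which is the desired bound.

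For the second term, I would use a Paley--Zygmund/disjoint-support argument combined with the tail behavior $\PP(|Z_1|\ge t)=e^{-t^r}$. For any index $i$ and threshold $\lambda>0$, a straightforward conditioning on the sign of $\sum_{j\neq i}a_jZ_j$ together with symmetry of $Z_i$ shows that $\EE|\sum_j a_jZ_j|^{\rho}\ge\tfrac12|a_i|^{\rho}\lambda^{\rho}\PP(|Z_1|\ge\lambda)$, giving $\|{\cdot}\|_{\rho}\gtrsim\|a\|_\infty\|Z_1\|_\rho$ after choosing $\lambda\asymp\rho^{1/r}$. To upgrade $\|a\|_\infty$ to the full Lorentz-type quantity $\|a\|_{\rho}$, one organizes the indices by the decreasing rearrangement of $(|a_i|)$ and averages contributions from the top $\sim\rho$ coordinates, exploiting the fact that among the many independent Weibull variables it is typical for several of them simultaneously to exceed $\lambda\asymp\rho^{1/r}$; a careful accounting (essentially the rearrangement argument of Hitczenko--Montgomery-Smith) gives the sharp $\|a\|_{\rho}\|Z_1\|_{\rho}$ lower bound.

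\textbf{Main obstacle.} The delicate step is upgrading the elementary single-coordinate lower bound $\gtrsim\|a\|_\infty\|Z_1\|_{\rho}$ to the sharp $\gtrsim\|a\|_{\rho}\|Z_1\|_{\rho}$. The upper bound, by contrast, is effectively a consequence of the Weibull Bernstein inequality together with the trivial estimate $\|a\|_\infty\le\|a\|_{\rho}$. The rearrangement argument needed for the lower bound is precisely what makes \cite{HM-SO} nontrivial, and any self-contained proof would have to reproduce its combinatorial balancing of the subgaussian and tail contributions.
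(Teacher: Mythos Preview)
The paper does not prove this lemma at all; it simply quotes it as \cite[Theorem~6.2]{HM-SO}. So there is no ``paper's own proof'' to compare against---your sketch is a standalone attempt.

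Your outline is essentially sound for both the upper bound and the $\sqrt{\rho}\,\|a\|_2\|Z_1\|_2$ lower bound. However, you overestimate the difficulty of what you call the ``main obstacle''. The upgrade from $\|a\|_\infty\|Z_1\|_\rho$ to $\|a\|_\rho\|Z_1\|_\rho$ in the lower bound requires no rearrangement argument at all: it is automatic from the two elementary lower bounds you already have, together with the interpolation inequality
\[
\|a\|_\rho \le \|a\|_2^{2/\rho}\,\|a\|_\infty^{1-2/\rho}
\]
(the very inequality the paper uses in the proof of Proposition~\ref{prop:psi_prelim4}). Indeed, writing $A=\sqrt{\rho}\,\|a\|_2\|Z_1\|_2$ and $B=\|a\|_\infty\|Z_1\|_\rho$, one gets
\[
\|a\|_\rho\|Z_1\|_\rho \le A^{2/\rho}B^{1-2/\rho}\cdot\Bigl(\tfrac{\|Z_1\|_\rho}{\sqrt{\rho}\,\|Z_1\|_2}\Bigr)^{2/\rho}
\le \max\{A,B\}\cdot \rho^{(2/r-1)/\rho},
\]
and since $\sup_{\rho\ge 2}\rho^{(2/r-1)/\rho}\le e^{(2/r-1)/e}$ is a constant depending only on $r$, the desired lower bound $\gtrsim_r \|a\|_\rho\|Z_1\|_\rho$ follows immediately from the two elementary ones. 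So no combinatorial balancing is needed here; your plan in fact gives a complete and self-contained proof once you observe this.
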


\begin{remark}[Moments of Weibull random variables]
	\label{rmk:Weibull-moments}
	Note that if $Z$ is a symmetric random variable such that $\PP(|Z|\ge t)=e^{-t^r}$, $r\in(0,2]$, then $Y=|Z|^{r}\sgn(Z)$ has (symmetric) exponential distribution with parameter $1$, so by Stirling's formula we obtain, for all $\rho \ge 1$,
	\[
		\|Z\|_{\prho{}} = \|Y\|_{\prho{}/r}^{1/r} = \Gamma\Bigl(\frac{\prho{}}r +1\Bigr)^{1/{\prho{}}} \leq  \Bigl(\frac Cr\Bigr)^{\frac 1r + \frac 1{2\prho{}}} \prho{}^{1/r} \le  \Bigl(\frac Cr\Bigr)^{\frac 1r + \frac 1{2}} \prho{}^{1/r} ,
	\]
	with $C\ge 1$.	
	\end{remark}

The  three previous results easily imply the following estimate for integral norms of linear combinations of independent $\psi_r$ random variables.

\begin{proposition}	\label{prop:psi_prelim4}
	Let $K, L>0$, $r\in(0,1]$ and assume that $Z_1, \dots, Z_n$  are independent  symmetric random variables satisfying $\PP(|Z_i|\ge t) \le Ke^{-t^r/L}$ for all $t\ge 0$ and $i\leq n$. Then, for every $\prho{}\ge 2$ and $a\in \RR^n$,
	\begin{align*}
		\Bigl\| \sum_{i=1}^n a_iZ_i \Bigr\|_{\prho{}}
		& \lesssim (C/r)^{\frac 1r + \frac 12} KL^{1/r}\max\bigl\{ \sqrt{\prho{}} \|a\|_2 , \prho{}^{1/r} \|a\|_{\prho{}} \bigr\}
		\\
		 &\lesssim (C'/r)^{\frac 1r + \frac 12}  KL^{1/r}\max\bigl\{ \sqrt{\prho{}} \|a\|_2 , \prho{}^{1/r} \|a\|_\infty \bigr\}.
	\end{align*}
\end{proposition}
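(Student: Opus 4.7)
The plan is to chain the three preceding ingredients: Kwapień's contraction principle (Lemma~\ref{lem:contraction-Kwapien}) reduces the problem to Weibull random variables, Lemma~\ref{lem:Weibull-moments} gives the moment of the Weibull sum, and Remark~\ref{rmk:Weibull-moments} provides the moment bound for a single Weibull variable. The main obstacle, which is mostly bookkeeping, is tracking the $r$-dependence of the various constants and obtaining the second form of the bound (where $\|a\|_\rho$ is replaced by $\|a\|_\infty$) without blowing up the constant $(C'/r)^{1/r+1/2}$.

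First I would let $W_1,\dots,W_n$ be independent symmetric Weibull random variables with shape parameter $r$ and scale $1$, i.e.\ $\PP(|W_i|\ge t)=e^{-t^r}$ for $t\ge 0$. Then for every $t\ge 0$, the scaling $\PP(L^{1/r}|W_i|\ge t)=e^{-t^r/L}$ combined with the hypothesis gives $\PP(|Z_i|\ge t)\le K\,\PP(L^{1/r}|W_i|\ge t)$. Applying Lemma~\ref{lem:contraction-Kwapien} with $\eta_i=Z_i$, $\xi_i=W_i$, the constant $L$ of that lemma replaced by $L^{1/r}$, and the convex function $\varphi(x)=|x|^{\prho}$, I obtain
\[
\Bigl\|\sum_{i=1}^n a_iZ_i\Bigr\|_{\prho}\le KL^{1/r}\Bigl\|\sum_{i=1}^n a_iW_i\Bigr\|_{\prho}.
\]
Next, Lemma~\ref{lem:Weibull-moments} yields
\[
\Bigl\|\sum_{i=1}^n a_iW_i\Bigr\|_{\prho}\asymp\max\bigl\{\sqrt{\prho}\,\|a\|_2\|W_1\|_2,\ \|a\|_{\prho}\|W_1\|_{\prho}\bigr\},
\]
and Remark~\ref{rmk:Weibull-moments} bounds both $\|W_1\|_2$ and $\|W_1\|_{\prho}$ by $(C/r)^{1/r+1/2}$ times $1$ and $\prho^{1/r}$ respectively (with the factor $2^{1/r}$ absorbed into $(C'/r)^{1/r}$). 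Putting these three facts together gives exactly the first claimed inequality.

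For the second inequality I would argue that the max may be taken over $\|a\|_\infty$ instead of $\|a\|_{\prho}$ at the cost of an absorbable factor. Since for $\prho\ge 2$,
\[
\|a\|_{\prho}=\Bigl(\sum_i|a_i|^{\prho-2}|a_i|^{2}\Bigr)^{1/\prho}\le\|a\|_\infty^{1-2/\prho}\|a\|_2^{2/\prho},
\]
setting $M\coloneqq\max\{\sqrt{\prho}\|a\|_2,\prho^{1/r}\|a\|_\infty\}$ yields $\|a\|_\infty\le M\prho^{-1/r}$ and $\|a\|_2\le M\prho^{-1/2}$, hence
\[
\prho^{1/r}\|a\|_{\prho}\le M\cdot\prho^{(2-r)/(r\prho)}.
\]
The exponent $(2-r)/(r\prho)\le 2/(r\prho)$ produces a factor bounded by $\exp\!\bigl(2\ln\prho/(r\prho)\bigr)\le\exp(2/(re))$ uniformly in $\prho\ge 2$, and this factor is in turn dominated by $(C'/r)^{1/r+1/2}$. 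Thus replacing $\|a\|_{\prho}$ by $\|a\|_\infty$ on the right-hand side only enlarges the constant within the stated form, which completes the deduction.
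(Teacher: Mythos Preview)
Your proof is correct and follows essentially the same route as the paper: Lemma~\ref{lem:contraction-Kwapien} with $\varphi(t)=|t|^{\prho}$, then Lemma~\ref{lem:Weibull-moments} and Remark~\ref{rmk:Weibull-moments} for the first inequality, and the interpolation bound $\|a\|_{\prho}\le\|a\|_2^{2/\prho}\|a\|_\infty^{1-2/\prho}$ for the second. The only cosmetic difference is that the paper finishes the second step with the weighted AM--GM inequality, whereas you substitute $\|a\|_2\le M\prho^{-1/2}$ and $\|a\|_\infty\le M\prho^{-1/r}$ directly; both arguments lead to the same extra factor $\prho^{2/(\prho r)}\le e^{2/(er)}$, which is indeed absorbed into $(C'/r)^{1/r+1/2}$.
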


\begin{proof}
	The first inequality is an immediate consequence of Lemma~\ref{lem:contraction-Kwapien}
	(applied with $\eta_i=Z_i$, independent Weibull variables $\xi_i$ with shape parameter $r$ and scale parameter $1$, and with the convex function $\varphi:t\mapsto |t|^{\rho}$),
	Lemma~\ref{lem:Weibull-moments},
	and Remark~\ref{rmk:Weibull-moments}.
	The second inequality follows from
	\begin{align*}
		\|a\|_{\prho{}}\le \|a\|_2^{2/\prho{}}\|a\|_\infty^{1-2/{\prho{}}}
		&= \prho{}^{\frac{2}{\prho{}r}}\|\prho{}^{-1/r}a\|_2^{2/\prho{}}\|a\|_\infty^{1-2/{\prho{}}}
		\\
		&\le {\prho{}}^{\frac 2{\prho{}r}}\Bigl(\frac 2{\prho{}^{1+1/r}} \|a\|_2 + \Bigl(1-2/{\prho{}}\Bigr) \|a\|_\infty \Bigr),
	\end{align*}
	where in the last step we used the inequality between weighted arithmetic and geometric means.
\end{proof}

The next lemma is standard and provides us with several equivalent formulations of the $\psi_r$ property
expressed through tail bounds,
growth of moments,
and the exponential moments, respectively.
We provide a brief proof, since in the literature one usually finds versions  for $r\ge 1$ only.

\begin{lemma}
	\label{lem:psi-r-equivalence}
	Assume that  $r\in(0,2]$.
	Let $Z$ be a non-negative random variable. The following conditions are equivalent:
	\begin{enumerate}[(i)]
		\item\label{item:psi-r-i}
		There exist $K_1,L_1>0$  such that
		\[
		\PP(Z\geq t) \leq K_1 e^{- t^r/ L_1} \quad \text{for all } t\geq 0.
		\]
		\item\label{item:psi-r-ii}
		There exists $K_2$ such that
		\[
		\|Z\|_{\prho{}} \leq K_2 \prho{}^{1/r} \quad \text{for all } \prho{} \geq 1.
		\]
		\item\label{item:psi-r-iii}
		There exist $K_3, u>0$ such that
		\[
		\EE\exp(u Z^r) \leq K_3.
		\]
	\end{enumerate}	
Here,
\ref{item:psi-r-i} implies \ref{item:psi-r-ii} with $K_2 = C(r)K_1L_1^{1/r}$,
\ref{item:psi-r-ii} implies \ref{item:psi-r-iii} with $K_3 = 1+e^{(2e r)^{-1}}$, $u=(2erK_2^r)^{-1}$,
 and
\ref{item:psi-r-iii} implies \ref{item:psi-r-i} with $K_1 = K_3$, $L_1=u^{-1}$.
	\end{lemma}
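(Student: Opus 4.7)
The plan is to prove the three implications separately, tracking the constants as asserted.

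For (i) $\Rightarrow$ (ii), I would start from the layer-cake identity to obtain $\EE Z^\rho = \int_0^\infty \rho t^{\rho-1}\PP(Z \geq t)\,dt \leq K_1 \int_0^\infty \rho t^{\rho-1} e^{-t^r/L_1}\,dt$, and then substitute $s = t^r/L_1$ to recognize the right-hand side as $K_1 L_1^{\rho/r}\Gamma(\rho/r + 1)$. Taking $\rho$-th roots and applying Stirling's formula yields $\|Z\|_\rho \leq C(r) K_1 L_1^{1/r}\rho^{1/r}$ for all $\rho \geq 1$, which gives (ii).

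The main work will be (ii) $\Rightarrow$ (iii), which I would carry out via a Taylor series argument. Expanding $\EE e^{uZ^r} = 1 + \sum_{k=1}^\infty u^k \EE Z^{rk}/k!$, I would split the sum at $k_0 \coloneqq \lceil 1/r \rceil$. For $k \geq k_0$ one has $rk \geq 1$, so (ii) applies to give $\EE Z^{rk} \leq K_2^{rk}(rk)^k$; combined with $k! \geq (k/e)^k$, this bounds each term by $(uK_2^r r e)^k$, which equals $2^{-k}$ when $u = (2erK_2^r)^{-1}$, so the contribution is at most $2^{1-k_0} \leq 1$. For $1 \leq k < k_0$ we have $rk < 1$ and (ii) is not directly applicable, so I would instead use monotonicity of $L^\rho$-norms on a probability space to get $\EE Z^{rk} \leq \|Z\|_1^{rk} \leq K_2^{rk}$; the corresponding terms equal $(2er)^{-k}/k!$, and extending the sum to all $k \geq 1$ bounds this partial sum by $e^{1/(2er)} - 1$. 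Adding the two contributions gives $\EE e^{uZ^r} \leq 1 + (e^{1/(2er)} - 1) + 1 = 1 + e^{1/(2er)}$, which is precisely the claimed $K_3$.

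Finally, (iii) $\Rightarrow$ (i) is immediate from Markov's inequality applied to $\exp(uZ^r)$: $\PP(Z \geq t) = \PP(\exp(uZ^r) \geq \exp(ut^r)) \leq e^{-ut^r}\EE \exp(uZ^r) \leq K_3 e^{-ut^r}$, so one may take $K_1 = K_3$ and $L_1 = 1/u$. The main obstacle will be the bookkeeping in (ii) $\Rightarrow$ (iii); in particular, when $r < 1$ a nontrivial number of terms of the Taylor expansion correspond to moments of order $rk < 1$, which cannot be handled by the direct bound $\|Z\|_\rho \leq K_2 \rho^{1/r}$, and the split at $k_0 = \lceil 1/r \rceil$ together with the monotonicity argument must be arranged so that the two partial sums combine cleanly into the exact constant $1 + e^{1/(2er)}$.
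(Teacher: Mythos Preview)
Your proposal is correct and follows essentially the same route as the paper. The implications (ii)~$\Rightarrow$~(iii) and (iii)~$\Rightarrow$~(i) are handled identically (Taylor expansion split at the threshold $kr\approx 1$, and Markov's inequality, respectively); the only cosmetic difference is that you take $k_0=\lceil 1/r\rceil$ while the paper uses $\lfloor 1/r\rfloor$, which merely shifts the boundary term between the two partial sums. For (i)~$\Rightarrow$~(ii) you do a direct layer-cake computation leading to a Gamma function and Stirling, whereas the paper invokes the contraction principle (Lemma~\ref{lem:contraction-Kwapien}) to compare $Z$ with a Weibull variable and then cites Remark~\ref{rmk:Weibull-moments}; both arguments are standard and yield the same constant $K_2=C(r)K_1L_1^{1/r}$.
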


\begin{proof}
	Property~\ref{item:psi-r-i} implies \ref{item:psi-r-ii} by Lemma \ref{lem:contraction-Kwapien} (applied with $n=1$, $\eta_1=Z$ and an independent Weibull variable $\xi_1$ with parameter $r$) and Remark \ref{rmk:Weibull-moments}. Property~\ref{item:psi-r-iii} implies \ref{item:psi-r-i} by Chebyshev's inequality:
	\[
		\PP(Z\geq t)=\PP\bigl(\exp(uZ^r)\geq \exp(ut^r)\bigr) \le K_3	 \exp(-ut^r).
	\]
	Assume now that \ref{item:psi-r-ii} holds and denote  $k_0 = \lfloor \frac 1r \rfloor$.
	Then, for every $k\in[1,k_0]$, we have $kr\leq 1$ and
    \[
    \EE Z^{kr}  \leq (\EE Z\bigr)^{kr} \leq K_2^{kr},
	\]
	while for  $k\geq k_0+1 $, we have $kr \geq 1$ and, hence, property~\ref{item:psi-r-ii} yields
	\[
	\EE Z^{kr} \leq K_2^{kr}(kr)^k.
	\]
    Hence, by  Stirling's formula we have for $u=(2erK_2^r)^{-1}$,
		\begin{align*}
		\EE \exp(uZ^r)
		&= 1 + \sum_{k=1}^{k_0} \frac{u^k \EE Z^{kr}}{k!}  + \sum_{k=k_0+1}^\infty \frac{u^k \EE Z^{kr}}{k!} \\
		&\leq 1 + \sum_{k=1}^{k_0} \frac{u^k K_2^{kr}}{k!}  + \sum_{k=k_0+1}^\infty \frac{u^k K_2^{kr}(kr)^k }{\bigl(k/e\bigr)^k} \\
		&= 1 + \sum_{k=1}^{k_0} \frac{u^kK_2^{kr}}{k!}  + \sum_{k=k_0+1}^\infty 2^{-k}
		\leq e^{uK_2^r} +1. \qedhere
		\end{align*}	
\end{proof}

The next lemma states that a linear combination of independent $\psi_r$ random variables is a $\psi_r$ random variable.

\begin{lemma}	\label{lem:psi_prelim5}
	Assume that $u>0$, $r\in (0,2]$,
	and let $(Z_i)_{i\le k}$ be independent symmetric random variables satisfying  $\PP(|Z_i|\ge t) \le Ke^{-t^r/L}$ for all $t\ge 0$.
	Then for every $a\in \RR^k$
	the  random variable $Y\coloneqq \|a\|_2^{-1}\sum_{i=1}^k a_iZ_i$ satisfies, for all $t\ge 0$,
	$$\PP(|Y|\ge t) \le K'e^{-t^r/L'},$$
	where $K'$, $L'$ depend only on $K$, $L$, and $r$.
\end{lemma}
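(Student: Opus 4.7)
The plan is to reduce the desired $\psi_r$ tail bound to a moment bound via Lemma~\ref{lem:psi-r-equivalence}: it suffices to establish
\[
\|Y\|_\rho \le C(r, K, L)\,\rho^{1/r} \qquad \text{for all } \rho \ge 1,
\]
since the implication \ref{item:psi-r-ii}$\Rightarrow$\ref{item:psi-r-i} of Lemma~\ref{lem:psi-r-equivalence}, applied to the non-negative random variable $|Y|$, then produces constants $K'$ and $L'$ depending only on $r$, $K$, $L$. The easy range $\rho \in [1, 2]$ is handled by monotonicity $\|Y\|_\rho \le \|Y\|_2$ together with orthogonality: the $Z_i$ are symmetric (hence mean zero and independent), so $\|Y\|_2^2 = \|a\|_2^{-2}\sum_{i=1}^k a_i^2\,\EE Z_i^2 \le C(r,K,L)$, using $\|Z_i\|_2 \le C(r,K,L)$ from Lemma~\ref{lem:psi-r-equivalence} again.

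For $\rho \ge 2$ and $r \in (0, 1]$, I will apply Proposition~\ref{prop:psi_prelim4} directly to the sum $\sum_{i=1}^k a_i Z_i$, obtaining
\[
\Bigl\|\sum_{i=1}^k a_i Z_i\Bigr\|_\rho \lesssim (C/r)^{1/r + 1/2}KL^{1/r}\max\{\sqrt{\rho}\,\|a\|_2,\, \rho^{1/r}\|a\|_\infty\}.
\]
Since $\|a\|_\infty \le \|a\|_2$ and $\sqrt{\rho} \le \rho^{1/r}$ (valid whenever $r \le 2$ and $\rho \ge 1$), the maximum is bounded by $\rho^{1/r}\|a\|_2$; dividing through by $\|a\|_2$ gives the required moment bound on $Y$.

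For $\rho \ge 2$ and $r \in (1, 2]$, Proposition~\ref{prop:psi_prelim4} does not apply directly. I would use symmetrization: since $Z_i$ is symmetric, $\sum a_i Z_i$ has the same distribution as $\sum \varepsilon_i a_i |Z_i|$, where $(\varepsilon_i)$ are independent Rademachers, independent of $(|Z_i|)$. Conditioning on $(|Z_i|)$, Hoeffding's inequality (Lemma~\ref{lem:BGN-1-Hoeffding}) yields a subgaussian tail for $\sum \varepsilon_i a_i|Z_i|$ with variance proxy $\sum a_i^2 Z_i^2$; integrating in $(|Z_i|)$ and using the $\psi_{r/2}$ tails of $Z_i^2$ (which follow from $Z_i$ being $\psi_r$) produces the desired $\psi_r$ tail for $|Y|$, split into a bulk region (where the subgaussian bound is in force because $\sum \hat a_i^2 Z_i^2$ stays close to its mean) and a tail region (handled by a union bound on the event that some $|Z_i|$ is atypically large).

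The main obstacle is precisely this range $r \in (1, 2]$: neither Proposition~\ref{prop:psi_prelim4} nor Lemma~\ref{lem:Weibull-moments} covers it, and the symmetrization-plus-Hoeffding route requires a careful balancing of the subgaussian estimate (sharp for small $t$) with the union-bound estimate (which captures the heavy $\psi_r$ tail for large $t$). A cleaner alternative, if available, is to invoke a Gluskin–Kwapień-type moment inequality $\|\sum a_i Z_i\|_\rho \lesssim_r \sqrt{\rho}\,\|a\|_2 + \rho^{1/r}\|a\|_{r^\ast}$ for sums of independent symmetric $\psi_r$ variables with $r \in [1, 2]$; since $r^\ast = r/(r-1) \ge 2$ one has $\|a\|_{r^\ast} \le \|a\|_2$, and together with $\sqrt{\rho} \le \rho^{1/r}$ this simplifies to $\rho^{1/r}\|a\|_2$, matching the bound obtained in the case $r \in (0, 1]$.
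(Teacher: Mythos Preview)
Your proposal is correct and follows essentially the same route as the paper: both reduce to a moment bound via Lemma~\ref{lem:psi-r-equivalence}, handle $r\in(0,1]$ using Proposition~\ref{prop:psi_prelim4} (with $\|a\|_\infty\le\|a\|_2$ and $\sqrt\rho\le\rho^{1/r}$), and treat $r\in(1,2]$ via the Gluskin--Kwapie\'n moment inequality. The paper simply declares the case $r\ge 1$ standard and cites \cite[Theorem~1.2.5]{CGLP2012}, noting that Gluskin--Kwapie\'n combined with the contraction Lemma~\ref{lem:contraction-Kwapien} works in place of Lemma~\ref{lem:Weibull-moments}; your ``cleaner alternative'' is exactly this, so you may drop the symmetrization--Hoeffding sketch.
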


\begin{proof}
	The case $r\ge 1$ is standard (see, e.g., \cite[Theorem~1.2.5]{CGLP2012}), therefore we skip a proof in this case
	(however, in order to prove the lemma in the case $r\ge 1$ it suffices to use the result of Gluskin and Kwapie{\'n}~\cite{GluskinKwapien95} (together with Lemma~\ref{lem:contraction-Kwapien}) instead of Lemma~\ref{lem:Weibull-moments} in the proof below).
	
	Assume that $r\in (0,1]$ and recall that $Y=\|a\|_2^{-1}\sum_{i=1}^k a_iZ_i$. By Proposition \ref{prop:psi_prelim4},
	\[
		\|Y\|_{\prho{}} \lesssim_{K,L,r} \max\{ \sqrt{\prho{}} , \prho{}^{1/r} \} = \prho{}^{1/r} \qquad \text{for all } \prho{} \ge1.
	\]
	Hence, Lemma \ref{lem:psi-r-equivalence} yields the assertion.
\end{proof}

\begin{lemma} \label{lem:psi_prelim7}
	Assume that $r\in(0,2]$, $\frac{1}{s}\coloneqq\frac{1}{r}-\frac 12$,
	$Y$ is a non-negative random variable such that $\PP(Y\ge t)=e^{-t^s}$ for all  $t\ge 0$,
	and $g\sim \mathcal{N}(0,1)$ is independent of~$Y$.
	Then, for every $t\ge 0$,
	\[
		\PP\bigl(|g|Y\ge t\bigr)\ge c e^{-4t^r},
	\]
	where $c:=\sqrt{2/\pi}e^{-2}$.
\end{lemma}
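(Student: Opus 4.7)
The plan is to exploit independence of $|g|$ and $Y$ via
\[
\PP(|g|Y \ge t) \;\ge\; \PP(|g|\ge a)\,\PP(Y\ge t/a) \;=\; \PP(|g|\ge a)\,e^{-(t/a)^s}
\]
and to balance the two factors by an appropriate choice of $a=a(t)$. The identity $s(1-r/2)=r$, which follows at once from $1/s=1/r-1/2$, identifies the scale $a\asymp t^{r/2}$ as the one at which the two tails are comparable. Concretely, I would take $a=\sqrt{2}\,t^{r/2}$, so that $a^2=2t^r$ and
\[
(t/a)^s \;=\; \bigl(t^{1-r/2}/\sqrt{2}\bigr)^{s} \;=\; t^{\,s(1-r/2)}\,2^{-s/2} \;=\; t^{r}\,2^{-s/2} \;\le\; t^{r},
\]
where the last inequality uses $s\in(0,\infty]$ and hence $2^{s/2}\ge 1$. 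Under the convention $2^{-s/2}=0$ for $s=\infty$, this also covers the boundary case $r=2$, where $s=\infty$ and $Y\equiv 1$ almost surely.

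For the Gaussian factor I would use the elementary estimate
\[
\PP(|g|\ge a) \;=\; \sqrt{2/\pi}\int_{a}^{\infty} e^{-x^{2}/2}\,dx \;\ge\; \sqrt{2/\pi}\int_{a}^{a+1} e^{-x^{2}/2}\,dx \;\ge\; \sqrt{2/\pi}\,e^{-(a+1)^{2}/2},
\]
combined with $(a+1)^{2}/2=a^{2}/2+a+1/2\le a^{2}+1$, which amounts to $(a-1)^{2}\ge 0$. Substituting $a^{2}=2t^{r}$ gives $\PP(|g|\ge a)\ge \sqrt{2/\pi}\,e^{-2t^{r}-1}$. Multiplying this with the bound $\PP(Y\ge t/a)\ge e^{-t^r}$ obtained above yields
\[
\PP(|g|Y\ge t) \;\ge\; \sqrt{2/\pi}\,e^{-3t^{r}-1} \;\ge\; \sqrt{2/\pi}\,e^{-2}\,e^{-4t^{r}} \;=\; c\,e^{-4t^{r}},
\]
the last inequality being equivalent to $t^{r}+1\ge 0$.

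There is no serious obstacle here: once the balancing scale $a=\sqrt{2}\,t^{r/2}$ is identified, the argument reduces to two routine tail estimates combined by independence. The only point that genuinely calls for attention is the algebraic identity $s(1-r/2)=r$ together with the monotonicity $2^{s/2}\ge 1$, which are precisely what make the $s$-dependence disappear uniformly in $r\in(0,2]$, including the degenerate endpoint $r=2$.
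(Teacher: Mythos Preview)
Your proof is correct and follows essentially the same approach as the paper: both identify the balancing scale $a\asymp t^{r/2}$, use the unit-interval lower bound $\int_a^{a+1}e^{-x^2/2}\,dx\ge e^{-(a+1)^2/2}$ for the Gaussian tail together with $(a+1)^2\le 2(a^2+1)$, and arrive at the same constant $c=\sqrt{2/\pi}\,e^{-2}$. The only cosmetic difference is that the paper first writes $\PP(|g|Y\ge t)=\sqrt{2/\pi}\int_0^\infty e^{-t^s/x^s-x^2/2}\,dx$ and restricts the integration domain, whereas you use the cleaner product bound $\PP(|g|\ge a)\,\PP(Y\ge t/a)$ directly.
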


\begin{proof}
	 In the case $r=2$ we have $s=\infty$ and then $Y=1$ almost surely and the assertion is trivial. Assume now that $r<2$. By our assumptions $r=\frac{2s}{2+s}$.
	Let $x_0\coloneqq(2t^s)^{1/(2+s)}$.
	Note that  $x\ge x_0$ is equivalent to $\frac{t^s}{x^s}\le \frac{x^2}2$.
	Thus,
	\begin{align*} 
		\PP\bigl(|g|Y\ge t\bigr)
		&=\EE e^{-\frac{t^s}{|g|^s}} = \sqrt{\frac 2{\pi}} \int_0^\infty e^{-\frac{t^s}{x^s}-\frac{x^2}2} dx
		\ge  \sqrt{\frac 2{\pi}} \int_{x_0}^{x_0+1} e^{-\frac{t^s}{x^s}-\frac{x^2}2} dx
		\\ &
		\ge \sqrt{\frac 2{\pi}} \int_{x_0}^{x_0+1} e^{-x^2} dx
		\ge \sqrt{\frac 2{\pi}} e^{-(x_0+1)^2}
		\ge \sqrt{\frac 2{\pi}} e^{-2(x_0^2+1)}
		\\ &
		= c e^{-2x_0^2} \ge ce^{-4t^{2s/(2+s)}} = ce^{-4t^r},
	\end{align*}
	where we used $2^{{2/(2+s)}}\leq 2$ and chose $c\coloneqq\sqrt{2/\pi}e^{-2}$.
	\end{proof}

\begin{lemma} \label{lem:coupling}
	Assume that $K, L>0$, $r\in(0,2]$ 
	and that $Z$ is a  random variable satisfying $\PP(|Z|\ge t) \le Ke^{-t^r/L}$ for all $t\ge 0$.
	Let $Y$, $g$, and $c=\sqrt{2/\pi}e^{-2}$ be as in Lemma \ref{lem:psi_prelim7}.
	Then there exist random variables $U\sim |Z|$ and $V\sim |g|Y$ such that
	\begin{equation*}
		U\le  (8L)^{1/r}\Bigl(\Bigl(\frac{\ln(K/c)}4\Bigr)^{1/r} +V \Bigr) \quad \text{a.s.}
	\end{equation*}
\end{lemma}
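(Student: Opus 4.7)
The plan is to use a standard quantile coupling and reduce the desired almost-sure comparison to a one-dimensional tail comparison between $|Z|$ and $|g|Y$.

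Set $a := \bigl(\ln(K/c)/4\bigr)^{1/r}$ and $\phi(v) := (8L)^{1/r}(a+v)$, so the target inequality is $U \le \phi(V)$. The first step is to verify the tail comparison
\[
\PP(|Z| \ge \phi(s)) \le \PP(|g|Y \ge s) \qquad \text{for every } s \ge 0.
\]
The assumption on $Z$ gives $\PP(|Z| \ge \phi(s)) \le K\exp(-\phi(s)^r/L)$, while Lemma~\ref{lem:psi_prelim7} gives $\PP(|g|Y \ge s) \ge ce^{-4s^r}$. To link these, I would use the elementary inequality $(a+s)^r \ge \tfrac12(a^r+s^r)$, valid for all $a,s \ge 0$ and $r \in (0,2]$, which follows by maximising $t \mapsto t^r + (1-t)^r$ on $[0,1]$ (the maximum equals $2^{1-r} \le 2$ when $r\le 1$ and $1 \le 2$ when $r \ge 1$). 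This yields
\[
\phi(s)^r/L = 8(a+s)^r \ge 4(a^r + s^r) = \ln(K/c) + 4s^r,
\]
and consequently $K\exp(-\phi(s)^r/L) \le c\exp(-4s^r) \le \PP(|g|Y \ge s)$, as required.

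For the second step, I would define, on an auxiliary probability space carrying a uniform variable $W$ on $(0,1)$,
\[
U := F_{|Z|}^{-1}(W), \qquad V := F_{|g|Y}^{-1}(W),
\]
where $F_X^{-1}(w) := \inf\{t : \PP(X\le t) \ge w\}$ is the right-continuous quantile function. Then $U \sim |Z|$ and $V \sim |g|Y$ by construction. The tail comparison from Step~1 is equivalent to $F_{|Z|}(\phi(s)) \ge F_{|g|Y}(s)$ for all $s\ge 0$; applied at $s = V$, which satisfies $F_{|g|Y}(V) \ge W$ by the generalised inverse property, this gives $F_{|Z|}(\phi(V)) \ge W$, hence $U = F_{|Z|}^{-1}(W) \le \phi(V)$ almost surely, which is the claim.

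No step is really an obstacle: the coupling part is entirely standard, and the only tiny subtlety is the uniform bound $(a+b)^r \ge \tfrac12(a^r+b^r)$, which covers the concave regime $r\in(0,1]$ and the convex regime $r\in[1,2]$ simultaneously and is precisely what is needed to turn the constant $8$ in front of $L$ into the constant $4$ that appears in the tail of $|g|Y$ from Lemma~\ref{lem:psi_prelim7}.
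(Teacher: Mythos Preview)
Your proof is correct and follows essentially the same route as the paper: both establish the tail comparison $\PP(|Z|\ge\phi(s))\le\PP(|g|Y\ge s)$ via the elementary inequality $(a+b)^r\ge\tfrac12(a^r+b^r)$, and then invoke the standard quantile coupling through generalised inverses of the distribution functions on a common probability space. The only cosmetic difference is that the paper first remarks that $K\ge 1$ (since $1=\PP(|Z|\ge 0)\le K$), which guarantees $\ln(K/c)>0$ and hence that your constant $a$ is well-defined; you may want to add this one-line observation.
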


\begin{proof}
For $t=0$ we have $1=\PP(|Z|\ge 0)\le K$, so $K\ge 1$, and thus $\ln(K/c)=\ln(Ke^2\sqrt{\pi/2})>0$.
We use our assumptions, the inequality $(a+b)^r\ge (a^r+b^r)/2$, and Lemma \ref{lem:psi_prelim7} to obtain for any $t\ge 0$,
	\begin{align*}
		\PP\Bigl((8L)^{-1/r}|Z| \ge t+ \bigl(\ln(K/c)/4\bigr)^{1/r} \Bigr)
		& \le K\exp\Bigl(-8\Bigl[t+ \bigl(\ln(K/c)/4\bigr)^{1/r}\Bigr]^r\Bigr)
		\\ &
		\le K \exp\Bigl(-4\bigl(t^r+ \ln(K/c)/4\bigr)\Bigr) = ce^{-4t^r}
		\\ &
		\le \PP\bigl(|g|Y\ge t\bigr).
	\end{align*}
Consider the version $U$ of $|Z|$ and the version $V$ of $|g|Y$  defined on the (common) probability space $(0, 1)$ equipped with Lebesgue measure,
constructed as the (generalised) inverses of cumulative distribution functions of $|Z|$ and $|g|Y$, respectively.
Then
$(8L)^{-1/r} U - \bigl(\ln(K/c)/4\bigr)^{1/r} \le V $, which implies the assertion.
\end{proof}

\begin{lemma} 	
	\label{lem:max-psi-r}
	Let $K,L>0$, $r\in (0,2]$ and  $k\ge 3$,
	and assume that  $(Z_i)_{i\le k}$, are  random variables satisfying $\PP(|Z_i|\ge t) \le Ke^{-t^r/L}$ for all $t\ge 0$. Then
	\[
		\PP\bigl(\max_{i\le k} |Z_i|\ge (vL \ln k)^{1/r}\bigr)\leq K k^{-v+1} \leq eKe^{-v} \qquad \text{for every }v\ge 1
	\]
	and
	\[
		\EE\max_{i\le k} |Z_i|  \lesssim \bigl( LK^r r^{-1} \ln k \bigr)^{1/r} \lesssim_{r,K,L} (\ln k)^{1/r}.
	\]
\end{lemma}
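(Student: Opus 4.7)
The plan is to derive the tail bound by the union bound and then integrate it to obtain the expectation estimate via Lemma~\ref{lem:mean-median}.

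For the tail bound, the hypothesis on the $Z_i$ together with the union bound yields, for every $t\ge 0$,
\[
\PP\Bigl(\max_{i\le k}|Z_i|\ge t\Bigr)\le \sum_{i=1}^k \PP(|Z_i|\ge t)\le kKe^{-t^r/L};
\]
substituting $t=(vL\ln k)^{1/r}$ gives the first stated inequality $Kk^{1-v}$. The second, $Kk^{1-v}\le eKe^{-v}$ for $v\ge 1$, is immediate from $k\ge 3>e$: indeed $k^{-(v-1)}\le e^{-(v-1)}=e\cdot e^{-v}$.

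For the expectation, set $Z:=\max_{i\le k}|Z_i|$ and write $t=a+bs$ with $a:=(L\ln k)^{1/r}$ and $b:=L^{1/r}$. When $r\ge 1$, the convexity inequality $(a+bs)^r\ge a^r+(bs)^r=L\ln k+Ls^r$ turns the union bound into $\PP(Z\ge a+bs)\le Ke^{-s^r}$, and Lemma~\ref{lem:mean-median} with $\alpha=K$, $\beta=1$, $s_0=1$ yields
\[
\EE Z\le(L\ln k)^{1/r}+L^{1/r}\bigl(1+K/(er)\bigr).
\]
For $r\in(0,1]$ the inequality $(a+bs)^r\ge a^r+(bs)^r$ reverses, but the weaker bound $(a+bs)^r\ge\max(a^r,(bs)^r)\ge\tfrac12(a^r+(bs)^r)$ still holds; absorbing the factor $\tfrac12$ into $a$ and $b$ (i.e., replacing them by $(2L\ln k)^{1/r}$ and $(2L)^{1/r}$) brings us back into the setting of Lemma~\ref{lem:mean-median} and produces an estimate of the same shape $\EE Z\lesssim (L\ln k)^{1/r}(1+K/(r\ln k))$.

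It then remains to check that this bound is dominated by an absolute constant times the target $(LK^r r^{-1}\ln k)^{1/r}=L^{1/r}Kr^{-1/r}(\ln k)^{1/r}$. Forming the ratio and using $K\ge 1$ and $k\ge 3$ reduces the verification to elementary calculus: one needs $\sup_{r\in(0,2]}r^{1/r}=\sqrt 2$ (attained at $r=2$) and $\sup_{r\in(0,2]}r^{1/r-1}=1$ (attained at $r=1$). The main technical annoyance is precisely this bookkeeping of constants across the threshold $r=1$, together with the fact that small factors on the outside become large when raised to the $1/r$-th power; this is the only point requiring care. The final chain inequality $(LK^r r^{-1}\ln k)^{1/r}\lesssim_{r,K,L}(\ln k)^{1/r}$ is then trivial.
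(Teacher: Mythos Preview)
Your tail bound is correct and identical to the paper's. The expectation argument, however, has a genuine gap in the range $r\in(0,1)$.

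Lemma~\ref{lem:mean-median} is stated and proved only for $r\ge 1$: its proof uses $s^{r-1}\ge s_0^{r-1}$ for $s\ge s_0$, which reverses when $r<1$, and the conclusion then fails (for instance, with $r=1/2$, $\beta=s_0=1$ one has $\int_1^\infty e^{-\sqrt s}\,ds=4/e$, twice what the lemma would give). Your ``absorbing the factor $\tfrac12$'' does restore the estimate $\PP(Z\ge a'+b's)\le Ke^{-s^r}$, but the exponent is still $r<1$, so you are \emph{not} back in the setting of Lemma~\ref{lem:mean-median}, and the claimed bound of the shape $(L\ln k)^{1/r}(1+K/(r\ln k))$ is unjustified. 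For small $r$ the missing factor is essentially $\Gamma(1/r+1)$, not $1/r$.

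The paper sidesteps this by a different substitution: writing $u=(vL\ln k)^{1/r}$ in $\int_0^\infty\PP(Z\ge u)\,du$ converts the already-proved tail bound $\PP\bigl(Z\ge(vL\ln k)^{1/r}\bigr)\le eKe^{-v}$ (an exponential tail in $v$, not in $v^r$) into an integral of $v^{1/r-1}e^{-v}$, bounded by $\Gamma(1/r+1)$. This works uniformly for all $r\in(0,2]$ and yields
\[
\EE Z\le (L\ln k)^{1/r}\bigl(1+eK\,\Gamma(1/r+1)\bigr).
\]
The final comparison with $(LK^rr^{-1}\ln k)^{1/r}=K r^{-1/r}(L\ln k)^{1/r}$ then requires, beyond the two suprema you list, the bound $\sup_{r\in(0,2]}\Gamma(1/r+1)\,r^{1/r}<\infty$, which follows from Stirling. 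Your route can be repaired by integrating $\int_1^\infty e^{-s^r}\,ds\le\Gamma(1/r+1)$ directly instead of invoking the lemma, but as written the case $r<1$ is incomplete.
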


\begin{proof}
	By a union bound and the assumptions we get, for every $v\ge 1$,
	\begin{align*}
		\PP\bigl(\max_{i\le k} |Z_i|\ge (vL \ln k)^{1/r}\bigr)
		&\leq \sum_{i=1}^k \PP\bigl( |Z_i|\ge (vL \ln k)^{1/r}\bigr)
		\leq k\cdot Ke^{-v\ln k} \\
		&=Ke^{-(v-1)\ln k}
		= K k^{-v+1} \leq eKe^{-v},
	\end{align*}
	where we used $k\geq 3$ in the last step.
	We integrate by parts, change the variables, and use the above bound to obtain the second part of the assertion, i.e.,
	\begin{align*}
		\EE\max_{i\le k} |Z_i|
		&= \int_0^\infty \PP\bigl(\max_{i\le k} |Z_i|\ge u\bigr) du
		\le (L \ln k)^{1/r} + \int_{(L \ln k)^{1/r}}^\infty \hspace{-0.4 cm} \PP\bigl(\max_{i\le k} |Z_i|\ge u\bigr) du	\\
		&=(L \ln k)^{1/r} + \frac{(L \ln k)^{1/r} }r \int_{1}^\infty v^{\frac 1r -1}
		\PP\bigl(\max_{i\le k} |Z_i|\ge (vL \ln k)^{1/r}\bigr) dv		\\
		&\leq (L \ln k)^{1/r} \Bigl(1 + \frac {eK}r \int_{1}^\infty v^{\frac 1r -1}e^{-v} dv \Bigr) 	\\
		&\leq (L \ln k)^{1/r} \Bigl(1 + eK\ \Gamma\Bigl(\frac 1r +1\Bigr) \Bigr). 	\qedhere
	\end{align*}
\end{proof}

%--------------------
%
\section{Proofs of the main results}
\label{sec:Slepian-bound}
%
%--------------------

    After the preparation in the previous section, we shall now present the proofs of our main results.

\subsection{General bound via Slepian's lemma}
\label{subsect:proof-main-thm}

In order to obtain Theorem \ref{thm:main-gauss-sets} we first prove its weaker version, for $p=\infty$ and $q=1$ only.
After that we shall use the polytope $K$ from Lemma \ref{lem:set-K} 
and the Gaussian concentration to see how Proposition \ref{prop:auxilary-main-gauss} implies the general bound. The proof of this proposition relies on the symmetrization together with the contraction principle, which allow us to get rid of $y_i$ and $x_j$, and make use of Slepian's lemma.

\begin{proposition}
\label{prop:auxilary-main-gauss}
 	Assume that $G=(g_{ij})_{i\leq m, j\leq n}$ has i.i.d.\ standard Gaussian entries and $k \leq m$, $l\leq n$.
 	Then
	\begin{align*}	
\EE \sup_{I, J} \sup_{y\in B_\infty^m}\sup_{x\in B_\infty^n}
\sum_{i\in I, j\in J} y_i a_{ij} g_{ij} x_j
&\leq
\bigl(8\sqrt{\ln m} + \sqrt{2/\pi}\bigr)	\sup_{I, J}  \sum_{i\in I}  \sqrt{\sum_{j\in J} a_{ij}^2} \nonumber\\
& \qquad + \bigl(8\sqrt{\ln n} +2 \sqrt{2/\pi}\bigr)  \sup_{I, J} \sum_{j\in J} \sqrt{\sum_{i\in I} a_{ij}^2 },
	\end{align*}
where the suprema are taken over all sets
$I\subset \{1,\ldots,m\}$, $J\subset \{1,\ldots,n\}$ such that $|I| = k$, $|J| = l$.
\end{proposition}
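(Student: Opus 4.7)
The strategy is to combine three ingredients: a reduction to vertices of $B_\infty$, Slepian's lemma (Lemma~\ref{lem:Slepian}) applied to a carefully chosen proxy Gaussian process, and a maximal inequality for Gaussians.

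First, since the map $(y,x) \mapsto \sum_{i \in I, j \in J} y_i a_{ij} g_{ij} x_j$ is separately linear in $y$ and in $x$, each inner supremum is attained at an extreme point, so I may restrict to $y_i \in \{-1,1\}$ and $x_j \in \{-1,1\}$. Extending such sign patterns arbitrarily to $\{-1,1\}^m$ and $\{-1,1\}^n$ without changing the value of the sum, I rewrite the quantity of interest as $\EE \sup_{t \in T} Z_t$, where
\[
Z_{I,J,\epsilon,\delta} = \sum_{i\in I, j\in J} \epsilon_i \delta_j a_{ij} g_{ij}
\]
and $T = \{(I,J,\epsilon,\delta) : |I|=k,\ |J|=l,\ \epsilon \in \{-1,1\}^m,\ \delta \in \{-1,1\}^n\}$.

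Second, I introduce the proxy Gaussian process
\[
Y_{I,J,\epsilon,\delta} = \sum_{i=1}^m \mathbf{1}_I(i)\, \epsilon_i\, \alpha_i(J)\, \gamma_i + \sum_{j=1}^n \mathbf{1}_J(j)\, \delta_j\, \beta_j(I)\, \gamma'_j,
\]
where $\alpha_i(J) := (\sum_{j\in J} a_{ij}^2)^{1/2}$, $\beta_j(I) := (\sum_{i\in I} a_{ij}^2)^{1/2}$, and $(\gamma_i), (\gamma'_j)$ are independent standard Gaussians. The key technical step is to verify the Slepian comparison $\EE(Z_s - Z_t)^2 \le \EE(Y_s - Y_t)^2$ for all $s, t \in T$. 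Expanding and writing $u_i = \epsilon^s_i \epsilon^t_i$, $v_j = \delta^s_j \delta^t_j \in \{-1,1\}$, the difference equals row-outside and column-outside positive residuals plus sums of the form $(\alpha_i(J_s) - u_i \alpha_i(J_t))^2$ and $(\beta_j(I_s) - v_j \beta_j(I_t))^2$ over the intersections, minus a cross term $-2\sum_{I_s\cap I_t,\, J_s\cap J_t} u_i v_j a_{ij}^2$ that must be absorbed. In the diagonal case $I_s = I_t$, $J_s = J_t$ everything collapses to $2\sum a_{ij}^2 (1-u_i)(1-v_j) \ge 0$; the general case is handled by careful use of AM--GM and Cauchy--Schwarz on the $\alpha_i(J_s)\alpha_i(J_t)$ and $\beta_j(I_s)\beta_j(I_t)$ cross terms.

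Third, once Slepian's lemma gives $\EE \sup_T Z \le \EE \sup_T Y$, the additive structure of $Y$ lets me separate variables: for each fixed $I,J$,
\[
\sup_{\epsilon,\delta} Y_{I,J,\epsilon,\delta} = \sum_{i\in I} \alpha_i(J)\,|\gamma_i| + \sum_{j\in J} \beta_j(I)\,|\gamma'_j|,
\]
which I bound by $(\max_{i\le m}|\gamma_i|)\sum_{i\in I}\alpha_i(J) + (\max_{j\le n}|\gamma'_j|)\sum_{j\in J}\beta_j(I)$ using nonnegativity of the coefficients, take the sup over $I,J$, and take expectation. Lemma~\ref{lem:max-Gaussians} then gives $\EE\max_{i\le m}|\gamma_i| \lesssim \sqrt{\ln m}$, while the $\sqrt{2/\pi}$ contributions come from $\EE|\gamma_1|$, which must be accounted for when $m=1$ or $n=1$.

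The main obstacle is the Slepian verification: in the diagonal case the factorization $(1-u_i)(1-v_j) \ge 0$ is immediate, but for general $I_s \ne I_t$ or $J_s \ne J_t$ one must orchestrate the leftover row-outside, column-outside, and intersection sign terms so that every negative contribution is absorbed into the squares $(\alpha_i(J_s) - u_i \alpha_i(J_t))^2$ and their $\beta$-counterparts. A minor tuning of the proxy process (e.g., an additional scale factor) may be needed to recover the precise numerical constants $8\sqrt{\ln m}$, $8\sqrt{\ln n}$ and the asymmetric $\sqrt{2/\pi}$, $2\sqrt{2/\pi}$ contributions appearing in the statement.
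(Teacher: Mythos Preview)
Your approach is genuinely different from the paper's, and the main step --- the Slepian comparison for the process $Z_{I,J,\epsilon,\delta}$ \emph{with} the sign variables --- is not actually carried out. You verify it in the diagonal case $I_s=I_t$, $J_s=J_t$ (where the factorisation $(1-u_i)(1-v_j)\ge 0$ works), but for general $s,t$ you only say the remaining terms are ``handled by careful use of AM--GM and Cauchy--Schwarz''. That is precisely the heart of the matter: after expanding, one must show
\[
\sum_{I_s,J_s}a_{ij}^2+\sum_{I_t,J_t}a_{ij}^2-2\sum_{i\in A}u_i\alpha_i(J_s)\alpha_i(J_t)-2\sum_{j\in C}v_j\beta_j(I_s)\beta_j(I_t)+2\sum_{A\times C}u_iv_ja_{ij}^2\ge 0
\]
for \emph{all} $u\in\{-1,1\}^A$, $v\in\{-1,1\}^C$, with $A=I_s\cap I_t$, $C=J_s\cap J_t$. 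Bounding $2u_i\alpha_i(J_s)\alpha_i(J_t)\le\alpha_i(J_s)^2+\alpha_i(J_t)^2$ and $2v_j\beta_j(I_s)\beta_j(I_t)\le\beta_j(I_s)^2+\beta_j(I_t)^2$ and $-u_iv_ja_{ij}^2\le a_{ij}^2$ separately is not enough (the resulting inequality is false when $B_s=B_t=\emptyset$), so the signs must be played against each other, and you have not shown how. Until this is done the proof is incomplete.

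The paper sidesteps this difficulty entirely. It first \emph{removes the sign variables} via two rounds of symmetrisation and the contraction principle (Lemma~\ref{lem:contraction-principle2} with $\varphi_i(t)=|t|$), reducing to the signless process $X_{I,J}=\sum_{I,J}a_{ij}g_{ij}$ at the cost of a factor $4$; only then is Slepian applied, with the comparison process $Y_{I,J}=\sum_{i\in I}g_i\alpha_i(J)+\sum_{j\in J}\tilde g_j\beta_j(I)$. Without the signs the increment comparison is a clean computation (this is exactly the calculation in the paper). The $\sqrt{2/\pi}$ and $2\sqrt{2/\pi}$ contributions are \emph{not} boundary corrections for $m=1$ or $n=1$ as you suggest; they arise as the expectations $\EE\bigl|\sum_j a_{ij}\tilde g_{ij}x_j\bigr|$ and $\EE\bigl|\sum_i a_{ij}\tilde g_{ij}\bigr|$ in the two centering steps that precede symmetrisation. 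The factor $8=4\cdot 2$ combines the $4$ from symmetrisation with the $2$ from Lemma~\ref{lem:max-Gaussians}. If your direct Slepian comparison were valid it would yield the better constant $2\sqrt{\ln m}$ with no additive $\sqrt{2/\pi}$ at all --- which is another hint that the verification is not as routine as you imply.
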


\begin{proof}
	Throughout the proof,  $k \leq m$ and $l\leq n$ are fixed and the suprema are taken over all index sets satisfying
	$I\subset \{1,\ldots,m\}$, $|I| = k$
	and $J\subset \{1,\ldots,n\}$, $|J| = l$.

Let us denote by $(\widetilde{g}_{ij})_{i\leq m, j\leq n}$ an independent copy of $(g_{ij})_{i\leq m, j\leq n}$.
Using the duality  $(\ell_1^m)^*=\ell_\infty^m$, centering the expression, noticing that $\sum_{j\in J} a_{ij} \widetilde{g}_{ij} x_j $ is a Gaussian random variable with variance  $\sqrt{\sum_{j\in J} a_{ij}^2  x_j^2}$, and using Jensen's inequality, we see that
\begin{align}	
	\EE \sup_{I, J} & \sup_{x\in B_\infty^n} \sup_{y\in B_\infty^m}
	\sum_{i\in I, j\in J} y_i a_{ij} g_{ij} x_j
	= \EE \sup_{I, J} \sup_{x\in B_\infty^n} \sum_{i\in I} \Bigl| \sum_{j\in J} a_{ij} g_{ij} x_j \Bigr| \nonumber\\	
	&\leq  \EE \sup_{I, J} \sup_{x\in B_\infty^n} \sum_{i\in I} \Bigl( \Bigl| \sum_{j\in J} a_{ij} g_{ij} x_j \Bigr|  - \EE  \Bigl| \sum_{j\in J} a_{ij} \widetilde{g}_{ij} x_j \Bigr|\Bigr)
%	\nonumber\\
%	& \qquad	
	+ \sup_{I, J} \sup_{x\in B_\infty^n} \sum_{i\in I} \EE  \Bigl| \sum_{j\in J} a_{ij} \widetilde{g}_{ij} x_j \Bigr| \nonumber\\
	&=  \EE \sup_{I, J} \sup_{x\in B_\infty^n} \sum_{i\in I} \Bigl( \Bigl| \sum_{j\in J} a_{ij} g_{ij} x_j \Bigr|  - \EE  \Bigl| \sum_{j\in J} a_{ij} \widetilde{g}_{ij} x_j \Bigr|\Bigr)
%	\nonumber\\
%	& \qquad	
	+ \sup_{I, J} \sup_{x\in B_\infty^n} \sum_{i\in I} \sqrt{\sum_{j\in J} a_{ij}^2  x_j^2} \EE |g|\nonumber\\
	&\leq  \EE \sup_{I, J} \sup_{x\in B_\infty^n} \sum_{i\in I} \Bigl( \Bigl| \sum_{j\in J} a_{ij} g_{ij} x_j \Bigr|  -   \Bigl| \sum_{j\in J} a_{ij} \widetilde{g}_{ij} x_j \Bigr|\Bigr)
	+ \sqrt{\frac{2}{\pi}}\sup_{I, J} \sum_{i\in I} \sqrt{\sum_{j\in J} a_{ij}^2  } .
	\label{eq:I-J-oo-1-step-i-center}
\end{align}
To estimate the expected value on the right-hand side, we use a symmetrization trick
together with the contraction principle (Lemma \ref{lem:contraction-principle2}).
Let $(\varepsilon_i)_{i\leq m}$ be a sequence of independent Rademacher random variables independent of all others. Since the random vectors 
	\[
	Z_i = \Bigl(\ind_{\{i\in I\}} \Bigl( \bigl|\sum_{j\in J} a_{ij}g_{ij}x_j \bigr| - \bigl|\sum_{j\in J} a_{ij}\widetilde{g}_{ij}x_j \bigr| \Bigr) \Bigr)_{I\subset[m], J\subset [n], x\in B_\infty^n}
	\]
	(where $i\leq m$)
	are independent and symmetric, $(Z_i)_{i\le m}$ has the same distribution as $(\varepsilon_iZ_i)_{i\le m}$. Therefore,
 \begin{align}	
 	\MoveEqLeft[8]
	 	 \EE \sup_{I, J} \sup_{x\in B_\infty^n} \sum_{i\in I} \Bigl( \Bigl| \sum_{j\in J} a_{ij} g_{ij} x_j \Bigr|  -  \Bigl| \sum_{j\in J} a_{ij} \widetilde{g}_{ij} x_j \Bigr|\Bigr) \nonumber\\
 	 &= 	 \EE \sup_{I, J} \sup_{x\in B_\infty^n} \sum_{i\in I} \varepsilon_i \Bigl( \Bigl| \sum_{j\in J} a_{ij} g_{ij} x_j \Bigr|  -  \Bigl| \sum_{j\in J} a_{ij} \widetilde{g}_{ij} x_j \Bigr|\Bigr) \nonumber\\ 	
 	 &\leq 	2 \EE \sup_{I, J} \sup_{x\in B_\infty^n} \sum_{i\in I} \varepsilon_i \Bigl| \sum_{j\in J} a_{ij} g_{ij} x_j \Bigr|  \nonumber\\
 	 &= 	2 \EE \sup_{I, J} \sup_{x\in B_\infty^n} \sum_{i=1}^m \varepsilon_i \Bigl| \sum_{j\in J} a_{ij} g_{ij} x_j \ind_{\{i\in I\}}\Bigr| .
 	 	\label{eq:I-J-oo-1-step-ii-symm}  	 	
 \end{align}
Applying (conditionally, with the values of $g_{ij}$'s fixed) the contraction principle (i.e., Lemma~\ref{lem:contraction-principle2}) with  the set 
\[
	T=\biggl\{\Bigl(\sum_{j\in J} a_{ij} g_{ij} x_j \ind_{\{i\in I\}}\Bigr)_{i\le m} \colon I\subset [m], |I|=k, J\subset [n], |J|=l, x\in B_\infty^n \biggr\}
\]
and the function $u\mapsto |u|$ (which is 1-Lipschitz and takes the value $0$ at the origin), we get
  \begin{align}	\label{eq:I-J-oo-1-step2.5} \nonumber
 	\MoveEqLeft[4]
 	\EE \sup_{I, J} \sup_{x\in B_\infty^n} \sum_{i=1}^m \varepsilon_i \Bigl| \sum_{j\in J} a_{ij} g_{ij} x_j \ind_{\{i\in I\}}\Bigr|  
 	\leq
	\EE \sup_{I, J} \sup_{x\in B_\infty^n} \sum_{i=1}^m \varepsilon_i \sum_{j\in J} a_{ij} g_{ij} x_j \ind_{\{i\in I\}}
	\\
	&= \EE \sup_{I, J} \sup_{x\in B_\infty^n}\sum_{j\in J}  \sum_{i\in I} a_{ij}  \varepsilon_i g_{ij} x_j
	 = \EE \sup_{I, J} \sup_{x\in B_\infty^n}\sum_{j\in J}  \sum_{i\in I} a_{ij}  g_{ij} x_j .
\end{align}
By proceeding similarly as in \eqref{eq:I-J-oo-1-step-i-center}, we obtain
\begin{align}
\MoveEqLeft[4]
 	\EE \sup_{I, J} \sup_{x\in B_\infty^n}\sum_{j\in J}  \sum_{i\in I} a_{ij}  g_{ij} x_j
 	= \EE \sup_{I, J} \sum_{j\in J}  \Bigl| \sum_{i\in I} a_{ij}  g_{ij} \Bigr|\nonumber \\
 	&\leq  \EE \sup_{I, J}  \sum_{j\in J} \Bigl( \Bigl| \sum_{i\in I} a_{ij} g_{ij} \Bigr|  - \EE  \Bigl| \sum_{i\in I} a_{ij} \widetilde{g}_{ij}  \Bigr|\Bigr) + \sqrt{\frac{2}{\pi}} \sup_{I, J} \sum_{j\in J} \sqrt{\sum_{i\in I} a_{ij}^2} .
 	\label{eq:I-J-oo-1-step-iii-contraction}
 \end{align}
 
Observe that using symmetrization and the contraction principle similarly as in~\eqref{eq:I-J-oo-1-step-ii-symm} and \eqref{eq:I-J-oo-1-step2.5}, we can estimate the first summand on right-hand side of~\eqref{eq:I-J-oo-1-step-iii-contraction}
as follows,
 \begin{equation}	
	 \EE \sup_{I, J}  \sum_{j\in J} \Bigl( \Bigl| \sum_{i\in I} a_{ij} g_{ij} \Bigr|  - \EE  \Bigl| \sum_{i\in I} a_{ij} \widetilde{g}_{ij}  \Bigr|\Bigr)
	 \leq 	2 \EE \sup_{I, J} \sum_{i\in I} \sum_{j\in J} a_{ij} g_{ij}.
	\label{eq:I-J-oo-1-step-iv-repeat-symm}
\end{equation}
Altogether, the inequalities in \eqref{eq:I-J-oo-1-step-i-center} -- \eqref{eq:I-J-oo-1-step-iv-repeat-symm} yield that
	\begin{align}	
		\EE \sup_{I, J} \sup_{y\in B_\infty^m}\sup_{x\in B_\infty^n}
		\sum_{i\in I, j\in J} y_i a_{ij} g_{ij} x_j
		&\leq 4 \EE \sup_{I, J} \sum_{i\in I} \sum_{j\in J} a_{ij} g_{ij}
		 + 2 \sqrt{\frac{2}{\pi}} \sup_{I, J} \sum_{j\in J} \sqrt{\sum_{i\in I} a_{ij}^2} \nonumber\\
		 &\qquad  + \sqrt{\frac{2}{\pi}}\sup_{I, J}  \sum_{i\in I} \sqrt{\sum_{j\in J} a_{ij}^2 } .
    \label{eq:I-J-oo-1-step-v-sumup}
	\end{align}

We shall now estimate the first summand on the right-hand side of \eqref{eq:I-J-oo-1-step-v-sumup} using  Slepian's lemma (i.e., Lemma \ref{lem:Slepian}).
Denote
\begin{align*}
	X_{I,J} &\coloneqq \sum_{i\in I} \sum_{j\in J} a_{ij} g_{ij},\\
	Y_{I,J} &\coloneqq \sum_{i\in I} g_i \sqrt{\sum_{j\in J} a_{ij}^2 }
	                   + \sum_{j\in J} \widetilde{g}_j \sqrt{\sum_{i\in I} a_{ij}^2 },
\end{align*}
where $g_i, i=1,\ldots,m$, $ \widetilde{g}_j, j=1,\ldots,n$ are independent standard Gaussian variables.
The random variables $X_{I,J}, Y_{I,J}$ clearly have zero mean.
Thus,
we only need to calculate and compare $\EE (X_{I,J} - X_{\widetilde{I}, \widetilde{J}})^2$ and $\EE (Y_{I,J} - Y_{\widetilde{I}, \widetilde{J}})^2$.
In the calculations below it will be evident
over which sets the index $i$ (resp. $j$) runs,
so in order to shorten the notation and improve readability,
we use the notational convention
\[
\sum_{I} \coloneqq \sum_{i\in I},
\qquad \sum_{\widetilde{J}} \coloneqq \sum_{j\in\widetilde{J}},
\qquad
\sum_{I\cap \widetilde{I}, J\setminus\widetilde{J}}\coloneqq
\sum_{{i\in I\cap \widetilde{I}, j\in J\setminus\widetilde{J}}},
\qquad \text{etc.}
\]
By independence,
\begin{align*}
\EE (X_{I,J} - X_{\widetilde{I}, \widetilde{J}})^2
 &=  \sum_{I, J} a_{ij}^2
 + \sum_{\widetilde{I},  \widetilde{J}} a_{ij}^2
 - 2 \sum_{\mathclap{I\cap \widetilde{I}, J\cap \widetilde{J}} } a_{ij}^2\\
    &= \sum_{I, J} a_{ij}^2
   + \sum_{\widetilde{I},\widetilde{J}} a_{ij}^2
   -  \sum_{{I\cap \widetilde{I}, J}} a_{ij}^2
   -   \sum_{{I\cap \widetilde{I},\widetilde{J}} } a_{ij}^2  + \!\! \sum_{{I\cap \widetilde{I}, J\setminus\widetilde{J}}} a_{ij}^2
   +\!\! \sum_{{I\cap \widetilde{I},\widetilde{J}\setminus J}} a_{ij}^2.
\end{align*}
By independence and the inequality $2\sqrt{ab} \le a+b$ (valid for $a,b\geq 0$),
\begin{align*}
	\EE (Y_{I,J} - Y_{\widetilde{I}, \widetilde{J}})^2
	&= 2\sum_{I, J} a_{ij}^2
	+ 2\sum_{\widetilde{I}, \widetilde{J}} a_{ij}^2\\
	&\qquad 	
	- 2 \sum_{I\cap \widetilde{I}} \sqrt{\sum_{ J \vphantom{\widetilde{J} -- cludge for moving position of limit down}} a_{ij}^2}\sqrt{\sum_{ \widetilde{J} } a_{ij}^2}
	- 2 \sum_{J\cap \widetilde{J}} \sqrt{\sum_{I \vphantom{\widetilde{I} -- cludge for moving position of limit down}} a_{ij}^2}\sqrt{\sum_{\widetilde{I} } a_{ij}^2}\\
	&\geq 2\sum_{I,J} a_{ij}^2
	+ 2\sum_{\widetilde{I}, \widetilde{J}} a_{ij}^2
	 	-  \sum_{I\cap \widetilde{I}, J \vphantom{\widetilde{J} -- cludge for moving position of limit down}} a_{ij}^2 - 	 \sum_{I\cap \widetilde{I},\widetilde{J} } a_{ij}^2
	- \sum_{I \vphantom{\widetilde{I} -- cludge for moving position of limit down}, J\cap \widetilde{J}} a_{ij}^2 - \sum_{\widetilde{I}, J\cap \widetilde{J}} a_{ij}^2\\
	&= \sum_{I, J} a_{ij}^2
	+ \sum_{\widetilde{I}, \widetilde{J}} a_{ij}^2
		-  \sum_{I\cap \widetilde{I}, J \vphantom{\widetilde{J} -- cludge for moving position of limit down}} a_{ij}^2 - 	 \sum_{I\cap \widetilde{I}, \widetilde{J} } a_{ij}^2
	+ \sum_{I \vphantom{\widetilde{I} -- cludge for moving position of limit down}, J\setminus \widetilde{J}}  a_{ij}^2 + \sum_{\widetilde{I}, \widetilde{J}\setminus J }   a_{ij}^2.
\end{align*}
Thus, we clearly have
\[
\EE (X_{I,J} - X_{\widetilde{I}, \widetilde{J}})^2 \leq 	\EE (Y_{I,J} - Y_{\widetilde{I}, \widetilde{J}})^2
\]
(cf.\ Remark~\ref{rem:sidenote-Slepian-wrong} below).
Hence, by Slepian's lemma (Lemma~\ref{lem:Slepian}) and Lemma~\ref{lem:max-Gaussians} on the expected maxima of standard Gaussian random variables,
\begin{align*}
\EE \sup_{I, J} \sum_{i\in I} \sum_{j\in J} a_{ij} g_{ij}
&\leq
\EE \sup_{I, J}  \Biggl[\sum_{i\in I} g_i \sqrt{\sum_{j\in J} a_{ij}^2 }
+ \sum_{j\in J} \widetilde{g}_j \sqrt{\sum_{i\in I} a_{ij}^2 } \Biggr]\\
&\leq
\EE \sup_{I, J}  \sum_{i\in I} g_i \sqrt{\sum_{j\in J} a_{ij}^2}
+ \EE \sup_{I, J} \sum_{j\in J} \widetilde{g}_j \sqrt{\sum_{i\in I} a_{ij}^2 } \\
&\leq
 \EE \sup_{i\leq m} |g_i| \sup_{I, J}  \sum_{i\in I}  \sqrt{\sum_{j\in J} a_{ij}^2}
	+\EE \sup_{j\leq n} |\widetilde{g}_j | \sup_{I, J}\sum_{j\in J} \sqrt{\sum_{i\in I} a_{ij}^2 } \\
	&\leq
2\sqrt{\ln m}	\sup_{I, J}  \sum_{i\in I}  \sqrt{\sum_{j\in J} a_{ij}^2}
	+2\sqrt{\ln n} \sup_{I, J} \sum_{j\in J} \sqrt{\sum_{i\in I} a_{ij}^2 } .
\end{align*}
Recalling the estimate~\eqref{eq:I-J-oo-1-step-v-sumup}, we arrive at
	\begin{align*}	
\EE \sup_{I, J} \sup_{y\in B_\infty^m}\sup_{x\in B_\infty^n}
\sum_{i\in I, j\in J} y_i a_{ij} g_{ij} x_j
&\leq
\bigl(8\sqrt{\ln m} + \sqrt{2/\pi}\bigr)	\sup_{I, J}  \sum_{i\in I}  \sqrt{\sum_{j\in J} a_{ij}^2} \nonumber\\
& \qquad + \bigl(8\sqrt{\ln n} +2 \sqrt{2/\pi}\bigr)  \sup_{I, J} \sum_{j\in J} \sqrt{\sum_{i\in I} a_{ij}^2 },
	\end{align*}
	which completes the proof of Proposition \ref{prop:auxilary-main-gauss}.
	\end{proof}

\begin{remark}
\label{rem:sidenote-Slepian-wrong}
In the above proof, we also have
		\begin{align*}
		\MoveEqLeft \EE (X_{I,J} - X_{\widetilde{I}, \widetilde{J}})^2
		= \sum_{ I, J} a_{ij}^2
		+ \sum_{\widetilde{I},\widetilde{J}} a_{ij}^2
		-  \sum_{I\cap \widetilde{I}, J\cap \widetilde{J} } a_{ij}^2
		- \sum_{J\cap \widetilde{J}, I\cap \widetilde{I}}  a_{ij}^2\\
		&\geq \sum_{ I, J} a_{ij}^2
		+ \sum_{\widetilde{I},\widetilde{J}} a_{ij}^2
		-  \sum_{I\cap \widetilde{I}} \sqrt{\sum_{J } a_{ij}^2} \sqrt{\sum_{\widetilde{J} } a_{ij}^2}
		- \sum_{J\cap \widetilde{J} } \sqrt{\sum_{I}  a_{ij}^2} \sqrt{\sum_{ \widetilde{I}}  a_{ij}^2}\\
		&= \frac{1}{2} \EE (Y_{I,J} - Y_{\widetilde{I}, \widetilde{J}})^2.
		\end{align*}	
		Therefore, by Slepian's lemma (Lemma~\ref{lem:Slepian}) we may reverse the estimate from the proof as follows:
		\begin{align*}
		\EE \sup_{I, J} \sup_{y\in B_\infty^m}\sup_{x\in B_\infty^n}
\sum_{i\in I, j\in J} y_i a_{ij} g_{ij} x_j \ge \frac{1}{\sqrt{2}} \EE \sup_{I, J}  \biggl[\sum_{i\in I} g_i \sqrt{\sum_{j\in J} a_{ij}^2 }
+ \sum_{j\in J} \widetilde{g}_j \sqrt{\sum_{i\in I} a_{ij}^2 } \biggr].
		\end{align*}
\end{remark}

\begin{proof}[Proof of Theorem \ref{thm:main-gauss-sets}]
Recall that $\sup_{I_0,J_0}$ stands for the supremum taken over all sets $I_0\subset [M] \coloneqq \{1,\ldots , M \} $, $J_0\subset [N] \coloneqq \{1,\ldots , N \}$ with $|I_0|=m$, $|J_0|=n$.
Given such sets $I_0$, $ {J_0}$,
we introduce the sets
	\begin{align*}
		K& = K(I_0)  \coloneqq \conv \Bigl\{ \frac{1}{|I|^{1/q^*}}\bigl( \varepsilon_i \ind_{\{i\in I\}} \bigr)_{i\in I_0} \colon I\subset I_0, I\neq \emptyset, (\varepsilon_i )_{i\in  I_0}\in \{-1,1\}^{I_0} \Bigr\},\\
	L&= L(J_0)\coloneqq \conv \Bigl\{ \frac{1}{|J|^{1/p}}\bigl( \eta_j \ind_{\{j\in J\}} \bigr)_{j\in {J_0}} \colon J\subset {J_0}, J\neq \emptyset, (\eta_j )_{j\in  {J_0}}\in \{-1,1\}^ {J_0}\Bigr\}.
	\end{align*}
Then, by Lemma \ref{lem:set-K}, $B_{q^*}^{I_0} \subset \ln(em)^{1/q} K$ and $B_{p}^{J_0}\subset \ln(en)^{1/p^*} L$.
 Therefore,
	\begin{align}
	 \label{eq:proof-aux-prop-1}
	 \nonumber
	 \MoveEqLeft
\EE \sup_{I_0, J_0} \sup_{x\in B_p^ {J_0}} \sup_{y\in B_{q^*}^ {I_0}} \sum_{i\in I_0} \sum_{j\in J_0} y_i a_{ij} g_{ij} x_j
	  \\
	   \nonumber
&\leq 	\ln(em)^{1/q} \ln(en)^{1/p^*} \\
\nonumber &\quad\cdot
	 \EE \sup_{I_0, J_0} \sup_{I\subset I_0,  J\subset J_0} \sup  \Bigl\{ \frac 1{|I|^{1/q^*}|J|^{1/p}}  \sum_{i\in I}\sum_{j\in J}  \varepsilon_ia_{ij} g_{ij}\eta_j :
	 \
	 \varepsilon_i, \eta_j \in \{-1,1 \} \Bigr\}
	\\
	 \nonumber &   = \ln(em)^{1/q} \ln(en)^{1/p^*}
	 \\
	 \nonumber &\quad\cdot
	   \EE \max_{k\leq m, l\leq n}  \frac 1{k^{1/q^*} l^{1/p}} \sup_{I\subset [M], |I|=k} \sup_{J\subset [N], |J|=l} \sup_{x\in B_\infty^N} \sup_{y\in B_\infty^M} \sum_{i\in I}\sum_{j\in J}y_ia_{ij}g_{ij}x_j
	 \\
	 & 	 =   \ln(em)^{1/q} \ln(en)^{1/p^*} \EE \max_{k\leq m, l\leq n} Z_{k,l},
	\end{align}
	where we denoted
	\[
		Z_{k,l}\coloneqq \frac 1{k^{1/q^*}l^{1/p}} \sup_{I,J} \sup_{x\in B_\infty^N} \sup_{y\in B_\infty^M} \sum_{i\in I}\sum_{j\in J} y_ia_{ij} g_{ij}x_j ,
	\]
	with the suprema here (and later on in this proof) being always taken over all sets $I\subset [M], |I|=k$ and $J\subset [N], |J|=l$.
	
	By Proposition \ref{prop:auxilary-main-gauss}, we only know that for all $k\leq m$ and $l\leq n$,
	\begin{align}  \label{eq:proof-aux-prop-2}
	\EE Z_{k,l} & \leq
\bigl(8\sqrt{\ln M}   + \sqrt{2/\pi}\bigr)  \frac 1{k^{1/q^*}l^{1/p}}	\sup_{I, J}  \sum_{i\in I}  \sqrt{\sum_{j\in J} a_{ij}^2} \nonumber\\
 &   \qquad + \bigl(8\sqrt{\ln N} +2 \sqrt{2/\pi}\bigr)  \frac 1{k^{1/q^*}l^{1/p}}  \sup_{I, J} \sum_{j\in J} \sqrt{\sum_{i\in I} a_{ij}^2 },
	\end{align}
	but we shall use the Gaussian concentration and the union bound to obtain an estimate for $\EE \max_{k\leq m, l\leq n} Z_{k,l}.$
	
	Note first that $(k^{-1/q^\ast} \ind_{\{i\in I\}})_{i\in I_0}\in K(I_0)\subset B_{q^\ast}^{I_0}$
		and $(l^{-1/p} \ind_{\{j\in J\}})_{j\in J_0}\in L(J_0)\subset B_{p}^{J_0}$, provided that $|I|=k$, $|J|=l$, $I\subset I_0$, $J\subset J_0$.
	Therefore, 
	\begin{align*}
		  \frac 1{k^{1/q^*}l^{1/p}}	\sup_{I, J}  \sum_{i\in I}  \sqrt{\sum_{j\in J} a_{ij}^2}
		  & \leq \sup_{I_0, J_0} \sup_{x\in B_{p}^{J_0}} \sup_{y\in B_{q^\ast}^{I_0}}   \sum_{i\in I_0}  y_i\sqrt{\sum_{j\in J_0} a_{ij}^2x_j^2}	\\
		  & =  \sup_{I_0, J_0} \sup_{z\in B_{p/2}^{J_0}} \Bigl(\sum_{i\in I_0}  \bigl( \sum_{j\in J_0} a_{ij}^2z_j\bigr)^{q/2}\Bigr)^{1/q}
		\\ & =  \sup_{I_0, J_0} \|A\hadprod A \colon \ell^{J_0}_{p/2} \to \ell^{I_0}_{q/2}\|^{1/2}
	\end{align*}
	and, similarly,
	\begin{align*}
		  \frac 1{k^{1/q^*}l^{1/p}}	\sup_{I, J}   \sum_{j\in J} \sqrt{\sum_{i\in I} a_{ij}^2 }
		   \le \sup_{I_0, J_0}  \|(A\hadprod A)^T \colon \ell^{I_0}_{q^*/2} \to \ell^{J_0}_{p^*/2}\|^{1/2}.
	\end{align*}
	
	This together with the estimate in \eqref{eq:proof-aux-prop-2} gives
	\begin{align}  \label{eq:proof-aux-prop-3}
		\EE Z_{k,l}  &\leq
\bigl(8\sqrt{\ln M}   + \sqrt{2/\pi}\bigr)\sup_{I_0, J_0} \|A\hadprod A \colon \ell^{J_0}_{p/2} \to \ell^{I_0}_{q/2}\|^{1/2}
\nonumber\\
 &   \qquad + \bigl(8\sqrt{\ln N} +2 \sqrt{2/\pi}\bigr)  \sup_{I_0, J_0}  \|(A\hadprod A)^T \colon \ell^{I_0}_{q^*/2} \to \ell^{J_0}_{p^*/2}\|^{1/2} .
	\end{align}
		
	Note that by the Cauchy--Schwarz inequality, the function
	\[
		z\mapsto  \frac 1{k^{1/q^*}l^{1/p}} \sup_{I,J} \sup_{x\in B_\infty^N} \sup_{y\in B_\infty^M} \sum_{i\in I}\sum_{j\in J} y_ia_{ij}z_{ij}x_j
	\]
	is $D$-Lipschitz with
		  \begin{align*}
	 D\le \frac 1{k^{1/q^*}l^{1/p}} \sup_{I, J} \sqrt{  \sum_{j\in J} \sum_{i\in I} a_{ij}^2 }  &\le \sup_{I, J} \sqrt{ \sup_{x\in B_{p/2}^N}\sup_{y\in B_{q^*/2}^M} \sum_{i\in I}\sum_{j\in J} y_i a_{ij}^2 x_j}
	 \\ 
		 & \le \sup_{I_0, J_0} \sqrt{ \sup_{x\in B_{p/2}^N}\sup_{y\in B_{q^*/2}^M} \sum_{i\in I_0}\sum_{j\in J_0} y_i a_{ij}^2 x_j},
	 \end{align*}
	 where in the last inequality we used the fact that $k\le m$ and $l\le n$.
	  In order to estimate the right-hand side of the latter inequality, we consider the following two cases:
	
\textit{Case 1.} If $q^*\geq 2$, then $(q^*/2)^* = q/(2-q)\geq q/2$ and $\|\cdot\|_{q/(2-q)} \leq \|\cdot\|_{q/2}$.
Consequently,
\begin{align}	\label{eq:manipulations-to-fix1} 
\sup_{x\in B_{p/2}^N, y\in B_{q^*/2}^M} \sum_{i\in I_0} \sum_{j\in J_0}  y_i a_{ij}^2 x_j
&= 			\|A \hadprod A\colon \ell_{p/2}^{J_0}\to\ell_{q/(2-q)}^{I_0}\|   \leq 	\|A \hadprod A\colon \ell_{p/2}^{J_0}\to\ell_{q/2}^{I_0}\|.
\end{align}

\textit{Case 2.} If $q^*\leq 2$, then $B_{q^*/2}^M \subset B_1^M$ and $\|\cdot\|_{\infty} \leq\|\cdot\|_{q/2}$. Thus,
\begin{align}	\label{eq:manipulations-to-fix2} \nonumber
\sup_{x\in B_{p/2}^N, y\in B_{q^*/2}^M} \sum_{i\in I_0}\sum_{j\in J_0}  y_i a_{ij}^2 x_j
&\leq  		
\sup_{u\in B_{p/2}^N, v\in B_{1}^M} \sum_{i\in I_0} \sum_{j\in J_0}  v_i a_{ij}^2 u_j \\
& = 		\|A \hadprod A\colon \ell_{p/2}^{J_0}\to\ell_{\infty}^{I_0}\|
  \leq 	\|A \hadprod A\colon \ell_{p/2}^{J_0} \to\ell_{q/2}^{I_0}\|.
\end{align}
In both cases we have
\[
D\leq \sup_{I_0, J_0} \|A\hadprod A \colon \ell^{J_0}_{p/2} \to \ell^{I_0}_{q/2}\|^{1/2},
\] so the Gaussian concentration inequality (see, e.g., \cite[Chapter~5.1]{Ledoux}) implies that for all $u\geq 0$, $k\leq m$, and $l \leq n$,
	\begin{align*}
		\PP (Z_{k,l} \ge \EE Z_{k,l} +u)\le \exp\Bigl(-\frac{u^2}{2 \sup_{I_0, J_0} \|A\hadprod A \colon \ell^{J_0}_{p/2} \to \ell^{I_0}_{q/2}\|}\Bigr),
	\end{align*}
	 so
	 	\begin{multline*}
		\PP \bigl(Z_{k,l} \ge\max_{k\leq m, l\leq n} \EE Z_{k,l} + \sqrt{2\ln(mn)} u \sup_{I_0, J_0} \|A\hadprod A \colon \ell^{J_0}_{p/2} \to \ell^{I_0}_{q/2}\|^{1/2}\bigr)\\
		\le \exp(-u^2\ln (mn)).
	\end{multline*}
	This, together with  the union bound, implies that for $u\ge \sqrt 2$, we have
	\begin{multline*}
\PP\bigl(\max_{k\leq m, l\leq n} Z_{k,l} \geq \max_{k\leq m, l\leq n}\EE Z_{k,l} + \sqrt{2\ln(mn)} u \sup_{I_0, J_0} \|A\hadprod A \colon \ell^{J_0}_{p/2} \to \ell^{I_0}_{q/2}\|^{1/2}\bigr)
\\
\le mne^{-u^2\ln(mn)}
 = \exp\Bigl(- (u^2-1)\ln(mn) \Bigr)
 \leq e^{-u^2/2}.
	\end{multline*}
Hence, by Lemma~\ref{lem:mean-median} and the estimate in \eqref{eq:proof-aux-prop-3},
\begin{align*}
\EE\! \max_{k\leq m, l\leq n} Z_{k,l} & \leq \max_{k\leq m, l\leq n}\EE Z_{k,l}  +  \sqrt{2\ln(mn)}\Bigl(\sqrt{2} +  \frac1{e\sqrt 2}\Bigr) \sup_{I_0, J_0} \|A\hadprod A \colon \ell^{J_0}_{p/2} \to \ell^{I_0}_{q/2}\|^{1/2}
\\ &\leq \bigl(2.4\sqrt{\ln(mn)} +8\sqrt{\ln M}   + \sqrt{2/\pi}\bigr)\sup_{I_0, J_0} \|A\hadprod A \colon \ell^{J_0}_{p/2} \to \ell^{I_0}_{q/2}\|^{1/2}
\nonumber\\
 &   \qquad \qquad+ \bigl(8\sqrt{\ln N} +2 \sqrt{2/\pi}\bigr) \sup_{I_0, J_0}  \|(A\hadprod A)^T \colon \ell^{I_0}_{q^*/2} \to \ell^{J_0}_{p^*/2}\|^{1/2}.
\end{align*}
	Recalling \eqref{eq:proof-aux-prop-1} yields the assertion.
\end{proof}

\subsection{Coupling}
\label{subsect:coupling}

In this subsection we use contraction principles and the coupling described in Lemma~\ref{lem:coupling} to prove Corollaries~\ref{thm:main-bounded-introduction} and~\ref{thm:main-psi-r-introduction}, and Proposition~\ref{prop:tail-bounds}.
Below we state  more general versions of the corollaries akin to Theorem~\ref{thm:main-gauss-sets}
(the versions from the introduction follow by setting $M=m$, $N=n$).

\begin{theorem}[General version of Corollary~\ref{thm:main-bounded-introduction}]
\label{thm:main-bounded}
	Assume that $m\le M$, $n\le N$, $1\leq p,q \leq \infty$, and $X=(X_{ij})_{i\leq M, j\leq N}$ has independent mean-zero entries taking values in $[-1,1]$.  Then
\begin{multline*}
\EE \sup_{I, J} \|X_A\colon \ell_p^{J} \to \ell_q^{I} \|
=\EE \sup_{I, J} \sup_{x\in B_p^ {J}} \sup_{y\in B_{q^*}^ {I}} \sum_{i\in I} \sum_{j\in J} y_i a_{ij}X_{ij}x_j \\
 \le  \ln(en)^{1/p^*}  \ln(em)^{1/q} \Bigl[ \bigl(2.4\sqrt{2\pi} \sqrt{\ln(mn)}+8\sqrt{2\pi}\sqrt{\ln M}+2\bigr) \sup_{I,J} \|A\hadprod A \colon \ell^ {J}_{p/2} \to \ell^{ I}_{q/2}\|^{1/2}\\
  +\bigl(8\sqrt{2\pi}\sqrt{\ln N} +4\bigr) \sup_{I,J} \|(A\hadprod A)^T \colon \ell^{ I}_{q^*/2} \to \ell^ {J}_{p^*/2}\|^{1/2} \Bigr],
 \end{multline*}
 where the suprema are taken over all sets $I\subset \{1,\ldots,M\}$, $J\subset \{1,\ldots,N\}$ such that $|I|=m$, $|J|=n$.
 \end{theorem}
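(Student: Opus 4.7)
The plan is to reduce this bound to the Gaussian bound of Theorem~\ref{thm:main-gauss-sets} via the classical three-step chain: symmetrization, a Kahane-type contraction principle, and Rademacher-to-Gaussian comparison. Set
\[
 F(Z) \coloneqq \sup_{I, J}\sup_{x\in B_p^J}\sup_{y\in B_{q^*}^I}\sum_{i\in I}\sum_{j\in J} y_i a_{ij} Z_{ij} x_j = \sup_{I,J}\|Z_A\colon \ell_p^J \to \ell_q^I\|,
\]
with the suprema taken over all $I\subset\{1,\ldots,M\}$, $J\subset\{1,\ldots,N\}$ of prescribed sizes $m$ and $n$. Then $F$ is a seminorm on $\RR^{M\times N}$, in particular even and convex, and it is natural to estimate it by the same machinery used on $F(G)$ in Theorem~\ref{thm:main-gauss-sets}.

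First, since $X$ has independent mean-zero entries, the standard symmetrization inequality (obtained by introducing an independent copy $X'$ of $X$, applying Jensen's inequality conditionally to move $\EE X' = 0$ inside $F$, then using that $X - X'$ has the same joint distribution as $\varepsilon\hadprod(X - X')$ together with the triangle inequality for the seminorm $F$) yields
\[
\EE F(X) \le 2\,\EE F(X \hadprod \varepsilon),
\]
where $\varepsilon = (\varepsilon_{ij})_{i\le M, j\le N}$ is a matrix of i.i.d.\ Rademacher signs independent of $X$. Next, conditioning on $X$ and invoking the Kahane contraction principle (Lemma~\ref{lem:contraction-principle}) applied to the seminorm $F$ with deterministic multipliers $|X_{ij}|\le 1$, one obtains $\EE_\varepsilon F(X \hadprod \varepsilon)\le \EE_\varepsilon F(\varepsilon)$; taking expectation in $X$ gives $\EE F(X) \le 2\,\EE F(\varepsilon)$.

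For the third step, decompose each standard Gaussian $g_{ij}$ as $|g_{ij}|\,\sgn(g_{ij})$, observe that $\sgn(g_{ij})$ is a Rademacher variable independent of $|g_{ij}|$, and apply Jensen's inequality to the convex function $F$ conditionally on the signs: since $\EE|g_{ij}| = \sqrt{2/\pi}$, one gets $\EE F(\varepsilon) \le \sqrt{\pi/2}\,\EE F(G)$. Combining all three steps,
\[
\EE F(X) \le \sqrt{2\pi}\,\EE F(G).
\]

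It remains to invoke Theorem~\ref{thm:main-gauss-sets}, which bounds $\EE F(G)$ by $\ln(en)^{1/p^*}\ln(em)^{1/q}$ times the bracket with prefactors $2.4\sqrt{\ln(mn)}+8\sqrt{\ln M}+\sqrt{2/\pi}$ and $8\sqrt{\ln N}+2\sqrt{2/\pi}$. Multiplying through by $\sqrt{2\pi}$ turns the additive terms $\sqrt{2/\pi}$ and $2\sqrt{2/\pi}$ into $2$ and $4$ respectively, producing exactly the constants stated in the theorem. There is no real obstacle here: the only mild point is that the scalar formulation of Lemma~\ref{lem:contraction-principle} has to be applied with $F$ in place of an abstract norm, which is legitimate because $F$ is itself a seminorm and the random multipliers $X_{ij}$ are independent of the Rademacher signs, so that the conditional argument goes through verbatim.
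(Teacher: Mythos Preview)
Your proof is correct and follows essentially the same route as the paper: symmetrize, apply the contraction principle to strip the bounded multipliers, then compare Rademacher to Gaussian via Jensen's inequality, and finally invoke Theorem~\ref{thm:main-gauss-sets}. The only cosmetic difference is that the paper first passes to the symmetric matrix $X-\widetilde X$ (via Remark~\ref{rmk:symmetrization}) and then writes it as $\varepsilon\hadprod|X-\widetilde X|$ with entries in $[-2,2]$, whereas you use the equivalent Rademacher symmetrization $\EE F(X)\le 2\,\EE F(X\hadprod\varepsilon)$ directly; both paths land at $\EE F(X)\le\sqrt{2\pi}\,\EE F(G)$ with the same constants.
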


	\begin{remark}[\textit{Symmetrization of  entries of a random matrix}]
	\label{rmk:symmetrization}
		Let $\widetilde{Z}$ be an independent copy of a random matrix $Z$ with mean $0$ entries. Then for any norm $\|\cdot\|$,
		 including the operator norm from $\ell_p^n$ to $\ell_q^m$, 
		 we have by Jensen's inequality	
		\[
			\EE\|Z\| = \EE\| Z-\EE \widetilde{Z}\| \le  \EE\|Z-\widetilde{Z}\| \le \EE\|Z\|+\EE\|\widetilde{Z}\| =2\EE\|Z\|.
		\]
		Therefore, in many cases we may simply assume that we deal with matrices with symmetric (not only mean $0$) entries. For example, in the setting of Theorem~\ref{thm:main-bounded}, the entries of $X-\widetilde{X}$ are symmetric and take values in $[-2,2]$, so it suffices to prove the assertion of this theorem 
		(with a two times smaller constant on the right-hand side) under the additional assumption that the entries of the given random matrix are symmetric.
	\end{remark}

\begin{proof}[Proof of Theorem~\ref{thm:main-bounded}]
By Remark~\ref{rmk:symmetrization} we may and do assume that the entries of $X$ are symmetric --- in this case we need to prove the assertion with a  two times smaller constant.

Since the entries of $X$ are independent and symmetric, $X$ has the same distribution as $(\varepsilon_{ij}|X_{ij}|)_{i,j}$, where $(\varepsilon_{ij})_{i\leq M, j\leq N}$ is a random matrix with i.i.d.\  Rademacher entries, independent of all other random variables.
Thus, the contraction principle
(see Lemma \ref{lem:contraction-principle}) applied conditionally
 yields (below the suprema are taken over all sets $I\subset \{1,\ldots,M\}$, $J\subset \{1,\ldots,N\}$ such that $|I|=m$, $|J|=n$, \emph{and} over all $x\in B_p^ {J}, y\in B_{q^*}^ {I}$, and the sums run over all $i\in I$ and $j\in J$)
	\begin{align*}
		\EE \sup \sum_{I,J} y_ia_{ij} X_{ij} x_j
		& =  \EE \sup \sum_{I,J} y_ia_{ij} \varepsilon_{ij}\bigl|X_{ij} \bigr| x_j \leq \EE \sup \sum_{I,J} y_ia_{ij} \varepsilon_{ij}x_j
		\\
		& = \sqrt{\frac {\pi} 2} \EE \sup \sum_{I,J} y_ia_{ij} \varepsilon_{ij} \EE |g_{ij}|x_j \leq \sqrt{\frac {\pi} 2} \EE \sup \sum_{I,J} y_ia_{ij} \varepsilon_{ij} |g_{ij}|x_j
		\\
		& =  \sqrt{\frac {\pi} 2} \EE \sup \sum_{I,J} y_ia_{ij} g_{ij}x_j,
	\end{align*}
	and the assertion follows from Theorem~\ref{thm:main-gauss-sets}.
\end{proof}

\begin{theorem}[General version of Corollary~\ref{thm:main-psi-r-introduction}]
\label{thm:main-psi-r}
	Assume that $K,L >0$, $r\in(0,2]$, $m\le M$, $n\le N$, $1\leq p,q \leq \infty$, and $X=(X_{ij})_{i\leq M, j\leq N}$ has independent mean-zero entries satisfying
	\[
	\PP(|X_{ij}|\ge t) \le Ke^{-t^r/L} \qquad \text{for all } t\ge 0,\, i\leq M,\,j\leq N.
	\]
	Then
\begin{multline*}
\EE \sup_{I, J} \|X_A\colon \ell_p^{J} \to \ell_q^{I} \|
=\EE \sup_{I, J} \sup_{x\in B_p^ {J}} \sup_{y\in B_{q^*}^ {I}} \sum_{i\in I} \sum_{j\in J} y_i a_{ij}X_{ij}x_j \\
 \lesssim_{r,K,L}  (\ln n)^{1/p^*}  (\ln m)^{1/q} \ln(MN)^{\frac 1r - \frac12} \Bigl[ \bigl( \sqrt{\ln(mn)}+\sqrt{\ln M}\bigr) \sup_{I,J} \|A\hadprod A \colon \ell^ {J}_{p/2} \to \ell^{ I}_{q/2}\|^{1/2}\\
  +\sqrt{\ln N} \sup_{I,J} \|(A\hadprod A)^T \colon \ell^{ I}_{q^*/2} \to \ell^ {J}_{p^*/2}\|^{1/2} \Bigr],
 \end{multline*}
 where  the suprema are taken over all sets $I\subset \{1,\ldots,M\}$, $J\subset \{1,\ldots,N\}$ such that $|I|=m$, $|J|=n$.
  	 \end{theorem}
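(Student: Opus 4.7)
The plan is to reduce Theorem~\ref{thm:main-psi-r} to the Gaussian result Theorem~\ref{thm:main-gauss-sets} and the bounded result Theorem~\ref{thm:main-bounded}, by coupling each $\psi_r$ entry with a Gaussian times a Weibull factor and applying Kahane's contraction principle in the Banach space of matrices.

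By Remark~\ref{rmk:symmetrization} I may assume that the entries of $X$ are symmetric. Define $s$ via $1/s=1/r-1/2$ (so $s=\infty$ and $Y\equiv 1$ when $r=2$). Enlarge the probability space to carry independent arrays $(\varepsilon_{ij})$, $(g_{ij})$, $(Y_{ij})$ of iid Rademacher, standard Gaussian, and Weibull$(s)$ random variables, all independent of $X$. Applying Lemma~\ref{lem:coupling} entrywise I may further assume that on this common space there exist variables $U_{ij}\overset{d}{=}|X_{ij}|$, independent across $(i,j)$ and independent of the $(\varepsilon_{ij})$, satisfying $U_{ij}\le\alpha+\beta|g_{ij}|Y_{ij}$ pointwise, where $\alpha,\beta$ depend only on $r,K,L$. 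Symmetry of $X_{ij}$ then gives $X_{ij}\overset{d}{=}\varepsilon_{ij}U_{ij}$.

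The functional $N(M)\coloneqq\sup_{I_0,J_0}\|M\colon\ell_p^{J_0}\to\ell_q^{I_0}\|$, where the supremum runs over $I_0\subset\{1,\ldots,M\}$ and $J_0\subset\{1,\ldots,N\}$ with $|I_0|=m$, $|J_0|=n$, is a norm on $\RR^{M\times N}$. Applying Kahane's contraction principle (Lemma~\ref{lem:contraction-principle} with $\rho=1$) conditionally on $(U_{ij},g_{ij},Y_{ij})$ to the matrix-valued Rademacher sum $\sum_{i,j}\varepsilon_{ij}U_{ij}a_{ij}E_{ij}$ with contraction factors $U_{ij}/(\alpha+\beta|g_{ij}|Y_{ij})\in[0,1]$, and then using the triangle inequality for $N$, I obtain
\begin{displaymath}
\EE\,N(X_A)\le\alpha\,\EE\,N\bigl((\varepsilon_{ij}a_{ij})\bigr)+\beta\,\EE\,N\bigl((g_{ij}Y_{ij}a_{ij})\bigr),
\end{displaymath}
using in the second summand that $\varepsilon_{ij}|g_{ij}|\overset{d}{=}g_{ij}$ jointly with $(Y_{ij})$ by independence. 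The first expectation is bounded by Theorem~\ref{thm:main-bounded} applied to $(\varepsilon_{ij})$, producing the right-hand side of Theorem~\ref{thm:main-psi-r} without the $\ln(MN)^{1/r-1/2}$ factor.

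For the second expectation I condition on $(Y_{ij})$ and apply Theorem~\ref{thm:main-gauss-sets} to the deterministic matrix $(a_{ij}Y_{ij})_{i\le M,j\le N}$, which produces expressions of the form $\sup_{I_0,J_0}\|(a_{ij}^2Y_{ij}^2)\colon\ell_{p/2}^{J_0}\to\ell_{q/2}^{I_0}\|^{1/2}$ and its transposed counterpart. Since $(a_{ij}^2Y_{ij}^2)$ has non-negative entries pointwise dominated by $(\max_{i,j}Y_{ij})^2(a_{ij}^2)$, and since the $\ell_r\to\ell_s$ operator (quasi)norm of a non-negative matrix is monotone under entrywise comparison of non-negative matrices (as the defining supremum is attained on non-negative vectors), each such expression is at most $\max_{i,j}Y_{ij}$ times the corresponding expression with $Y\equiv 1$. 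Taking $\EE_Y$ and invoking Lemma~\ref{lem:max-psi-r} for the $MN$ iid Weibull$(s)$ variables $Y_{ij}$ gives $\EE\max_{i,j}Y_{ij}\lesssim_r(\ln(MN))^{1/s}=(\ln(MN))^{1/r-1/2}$. Assembling the two contributions, and noting that the Weibull factor is $\ge 1$ and therefore absorbs the Rademacher part, yields the claimed estimate. The main obstacle I anticipate is purely bookkeeping: verifying that Kahane's contraction applies cleanly in the matrix-valued setting with respect to the supremum norm $N$, which reduces to the fact that $N$ is a genuine norm on $\RR^{M\times N}$, and tracking that the supremum over $I_0,J_0$ commutes with conditioning and the triangle inequality.
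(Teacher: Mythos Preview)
Your proof is correct and follows essentially the same route as the paper: symmetrize, couple each $|X_{ij}|$ with $|g_{ij}|Y_{ij}$ via Lemma~\ref{lem:coupling}, use Kahane's contraction to split into a Rademacher part (handled by Theorem~\ref{thm:main-bounded}) and a $g_{ij}Y_{ij}$ part, and then extract the Weibull factor $\max_{i,j}Y_{ij}$ so that the Gaussian Theorem~\ref{thm:main-gauss-sets} applies to the plain matrix $A$. The only difference is the order of the last two operations: the paper pulls out $\max_{i,j}Y_{ij}$ \emph{before} applying the Gaussian theorem, via a second use of the contraction principle (conditionally on $(g_{ij},Y_{ij})$, with contraction factors $Y_{ij}/\max_{k,l}Y_{kl}\in[0,1]$), whereas you apply Theorem~\ref{thm:main-gauss-sets} first to the conditionally deterministic matrix $(a_{ij}Y_{ij})$ and then invoke entrywise monotonicity of $\ell_{p/2}\to\ell_{q/2}$ operator (quasi)norms on non-negative matrices to factor out $\max_{i,j}Y_{ij}$. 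Both arguments are valid and yield the same bound; your monotonicity observation is a clean alternative to the second contraction step.
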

	
\begin{proof}
	Let $\widetilde{X}$ be an independent copy of $X$. Then 
	\begin{align*}
	\PP(|X_{ij}-\widetilde{X}_{ij}|\ge t) &\le \PP(|X_{ij}|\ge t/2 \text{ or } |\widetilde{X}_{ij}|\ge t/2) 
	\\
	&\le 2 \PP(|X_{ij}|\ge t/2) \le 2Ke^{-t^r/(2^rL)}.
	\end{align*}
	This means that the symmetric matrix $X-\widetilde{X}$ satisfies the assumptions of Theorem~\ref{thm:main-psi-r}. Hence, due to Remark~\ref{rmk:symmetrization}, we may and do assume that the entries of $X$ are symmetric.
	
	Take the unique positive parameter $s$ satisfying $\frac 1r = \frac 12+\frac 1s$.
	For $i\leq M$, $j\leq N$, let $g_{ij}$ be i.i.d.\ standard Gaussian variables,  independent of other variables,
	and let $Y_{ij}$ be i.i.d.\ non-negative Weibull random variables with shape parameter $s$ scale parameter $1$ (i.e., $\PP(Y_{ij}\ge t)=e^{-t^s}$ for $t\geq 0$), independent of other variables.
	(In the case $r=2$, we have $s=\infty$ and then $Y_{ij}=1$ almost surely.)
	Take
	\[ (U_{ij})_{i\leq M, j\leq N} \eqdistr (|X_{ij}|)_{i\leq M, j\leq N}, \qquad (V_{ij})_{i\leq M, j\leq N}\eqdistr ( |g_{ij}|Y_{ij})_{i\leq M, j\leq N}\]
	 as in  Lemma \ref{lem:coupling} (we pick a pair $(U_{ij}, V_{ij})$ separately for every $(i,j)$, and then take such a version of each pair that the system of $MN$ random pairs $(U_{ij}, V_{ij})$ is independent).
	
	Let $(\varepsilon_{ij})_{i\leq M, j\leq N}$ be a random matrix with i.i.d.\ Rademacher entries, independent of all other random variables. Since the entries of $X$ are symmetric and independent, $X$ has the same distribution as $(\varepsilon_{ij}|X_{ij}|)_{ij}$.
	 By Lemma~\ref{lem:coupling} we know that
	\begin{equation*}
		U_{ij}\le  (8L)^{1/r}\Bigl(\Bigl(\frac{\ln(K/c)}4\Bigr)^{1/r} +V_{ij} \Bigr) \lesssim_{r,K,L} 1+V_{ij} \qquad \text{a.s.}
	\end{equation*}
	We use the contraction principle conditionally for $\EE_{\varepsilon}$, i.e., for $U_{ij}$'s and $V_{ij}$'s fixed. 
	More precisely, we apply Lemma~\ref{lem:contraction-principle} to the space $\bf{X}$ of all $M\times N$ matrices with real coefficients, equipped with the norm 
	\[
	\|(M_{ij})_{i\leq M,j\leq N}\|\coloneqq \sup_{I,J} \bigl\|(M_{ij})_{i\in I,j\in J}\colon \ell_p^I \to \ell_q^J \bigr\| = \sup \sum_{I,J} y_iM_{ij}x_j
	\]
	 (where the first supremum is taken over all sets $I\subset \{1,\ldots,M\}$, $J\subset \{1,\ldots,N\}$ such that $|I|=m$, $|J|=n$; recall that the second supremum is taken over all sets $I,J$ as in the first supremum, and  over all $x\in B_p^ {J}, y\in B_{q^*}^ {I}$, and the  sum  runs over all $i\in I$ and $j\in J$);
	 note that we identify $\bf{X}$ with $\RR^{MN}$ (and $MN$ plays the role of $n$ from Lemma~\ref{lem:contraction-principle}).
	  We apply the contraction principle of Lemma~\ref{lem:contraction-principle} 
	  (conditionally, with the values of $U_{ij}$'s and $V_{ij}$'s fixed)
	  with coefficients $\alpha_{ij}\coloneqq \frac{U_{ij}}{C(r,K,L)(1+V_{ij})}$ and  points ${\bf x}_{ij} \coloneqq \bigl(a_{kl}C(r,K,L)(1+V_{kl})\ind_{\{(k,l)=(i,j)\}}\bigr)_{kl} \in \bf{X}$ to get
	\begin{align} \label{proof_cor_psi_r1}
	\nonumber
		\EE \sup \sum_{I,J} y_ia_{ij} X_{ij} x_j
		& =  \EE \sup \sum_{I,J} y_ia_{ij} \varepsilon_{ij}|X_{ij}|x_j =  \EE \sup \sum_{I,J} y_ia_{ij} \varepsilon_{ij}U_{ij}x_j 
		\\
		& \overset{\mathclap{\text{Lemma } \ref{lem:contraction-principle}}} {\lesssim_{r,K,L}} \  \EE\sup \sum_{I,J} y_ia_{ij} \varepsilon_{ij}x_j  +  \EE\sup \sum_{I,J} y_i a_{ij} \varepsilon_{ij}V_{ij}x_j  .
	\end{align}
	We may estimate the first term using Theorem~\ref{thm:main-bounded} applied to the matrix $(\varepsilon_{ij})_{i\leq M, j\leq N}$ as follows,
	\begin{align}
		\EE\sup \sum_{I,J} y_ia_{ij} \varepsilon_{ij}x_j
 		&\lesssim  \ln(en)^{1/p^*}  \ln(em)^{1/q}\nonumber\\
 		&\qquad\cdot\Bigl[ \bigl( \sqrt{\ln(mn)}+\sqrt{\ln M}\bigr) \sup_{I,J} \|A\hadprod A \colon \ell^ {J}_{p/2} \to \ell^{ I}_{q/2}\|^{1/2}	\nonumber\\
  		&\qquad\qquad+\sqrt{\ln N}  \sup_{I,J} \|(A\hadprod A)^T \colon \ell^{ I}_{q^*/2} \to \ell^ {J}_{p^*/2}\|^{1/2} \Bigr].\label{proof_cor_psi_r2}
 	\end{align}
 	Recall that  $(\varepsilon_{ij} V_{ij})_{i\leq M, j\leq N}\eqdistr (\varepsilon_{ij}g_{ij}Y_{ij})_{i\leq M, j\leq N}$ and that $Y_{ij}\ge 0$ almost surely.  
 	Next we again use the contraction principle (applied conditionally for $\EE_\varepsilon$, i.e. for fixed $Y_{ij}$'s and $g_{ij}$'s) and get
	 \begin{align} \label{proof_cor_psi_r2.5}
	\nonumber
		 \EE\sup \sum_{I,J} y_i a_{ij} \varepsilon_{ij}V_{ij}x_j
		 & =  \EE\sup \sum_{I,J} y_i a_{ij} \varepsilon_{ij}g_{ij} Y_{ij}x_j
		 \\  &
		 \le \EE_Y\max_{i\le M, j\le N} |Y_{ij}| \ \EE_{\varepsilon,g} \sup \sum_{I,J} y_i a_{ij} \varepsilon_{ij}g_{ij} x_j.
	\end{align}
	Moreover, 
	 Theorem~\ref{thm:main-gauss-sets}  and Lemma~\ref{lem:max-psi-r} (applied with $r=s$, $k=MN$, $Z_{ij}=Y_{ij}$, and $K=1=L$), imply
	\begin{align} \label{proof_cor_psi_r3}
	\nonumber
		\MoveEqLeft[8]
		\EE_Y\max_{i\le M, j\le N} |Y_{ij}| \ \EE_{\varepsilon,g}  \sup \sum_{I,J} y_i a_{ij} \varepsilon_{ij}g_{ij} x_j	   \\
		\nonumber &
		 \lesssim_{r} \ln(MN)^{1/s} \  \EE \sup \sum_{I,J} y_i a_{ij} g_{ij} x_j  		\\
		  \nonumber &
		 \lesssim \ln(MN)^{\frac 1r -\frac 12}   (\ln n)^{1/p^*}  (\ln m)^{1/q} \\
		  \nonumber
		 & \qquad \cdot\Bigl[ \bigl( \sqrt{\ln(mn)}+\sqrt{\ln M}\bigr) \sup_{I,J} \|A\hadprod A \colon \ell^ {J}_{p/2} \to \ell^{ I}_{q/2}\|^{1/2}	\\
		 & \qquad \qquad \qquad 
  		+\sqrt{\ln N}  \sup_{I,J} \|(A\hadprod A)^T \colon \ell^{ I}_{q^*/2} \to \ell^ {J}_{p^*/2}\|^{1/2} \Bigr].
	\end{align}
	Combining the estimates in \eqref{proof_cor_psi_r1}--\eqref{proof_cor_psi_r3} yields the assertion.
\end{proof}

Finally, we prove that these estimates of the operator norms translate into tail bounds.

\begin{proof}[Proof of Proposition~\ref{prop:tail-bounds}]
	Since \eqref{eq:tail-bound-moments} implies \eqref{eq:tail-bound} (by Lemma~\ref{lem:psi-r-equivalence}), it suffices to prove
	inequality~\eqref{eq:tail-bound-moments}.
	By the symmetrization argument similar to the one from the first paragraph of the proof of Theorem~\ref{thm:main-psi-r}, we may nad will assume that $X$ has independent and \textit{symmetric} entries satisfying \eqref{eq:tail-bound-assumption}.
	By assumption \eqref{eq:tail-bound-assumption}, and the inequality $2(a+b)^r\ge a^r+b^r$
we have for every $t\ge 0$,
	\begin{multline*}
		\PP\bigl((2L)^{-1/r}|X_{ij}| \ge t+ (\ln K)^{1/r}\bigr)
		\\
		\le K\exp\Bigl( -2\bigl( t+ (\ln K)^{1/r} \bigr)^r \Bigr)\le K \exp\bigl( -t^r -\ln K \bigr)
		= e^{-t^r},
	\end{multline*}
	so (as in the proof of Lemma~\ref{lem:coupling}) there exists  a random matrix  $(Y_{ij})_{i\leq m, j\leq n}$  with i.i.d.\  entries with the symmetric Weibull distribution with shape parameter $r$ and scale parameter $1$ (i.e.,  $\PP( |Y_{ij}| \ge t) = e^{-t^r}$ for $t\geq 0$)
satisfying
	\begin{equation} \label{eq:tail-bound-proof0}
		|X_{ij}| \le (2L)^{1/r} \bigl( (\ln K)^{1/r} +|Y_{ij}| \bigr) \lesssim_{r,K,L} 1+Y_{ij}	\qquad \text{a.s.}
	\end{equation}

	Let $(\varepsilon_{ij})_{i\leq m, j\leq n}$ be a matrix of independent Rademacher random variables independent of all others, and
	let $\| \cdot\|$ denote the operator norm from $\ell_{p}^n$ to $\ell_q^m$. Let $E_{ij}$ be a matrix with $1$ at the intersection of $i$th row and $j$th column and with other entries~$0$.
	The contraction principle (i.e., Lemma \ref{lem:contraction-principle}) applied  conditionally, \eqref{eq:tail-bound-proof0}, and the triangle inequality yield for any $\rho\ge 1$,
\begin{align*} 
 \biggl( \EE \Bigl\|\sum_{i=1}^m \sum_{j=1}^n & X_{ij} a_{ij}E_{ij} \Bigr\|^\rho \biggr)^{1/\rho}
\leq  \biggl( \EE \Bigl\|\sum_{i,j} \varepsilon_{ij}|X_{ij}| a_{ij}E_{ij} \Bigr\|^\rho \biggr)^{1/\rho}
\\
 &\lesssim_{r,K,L}    \biggl( \EE \Bigl\|\sum_{i,j} \varepsilon_{ij} a_{ij}E_{ij} \Bigr\|^\rho \biggr)^{1/\rho} +   \biggl( \EE \Bigl\|\sum_{i,j} \varepsilon_{ij}|Y_{ij}| a_{ij}E_{ij} \Bigr\|^\rho \biggr)^{1/\rho}
\\
& =     \biggl( \EE \Bigl\|\sum_{i,j} \varepsilon_{ij} a_{ij}E_{ij} \Bigr\|^\rho \biggr)^{1/\rho} +   \biggl( \EE \Bigl\|\sum_{i,j} Y_{ij} a_{ij}E_{ij} \Bigr\|^\rho \biggr)^{1/\rho}.
\end{align*}
Therefore, it suffices to prove \eqref{eq:tail-bound-moments} for random matrices $(Y_{ij})_{ij}$ and $(\varepsilon_{ij})_{ij}$ instead of $X$.

	Since by assumption $K,L\ge 1$, both random matrices $(Y_{ij})_{ij}$ and $(\varepsilon_{ij})_{ij}$ satisfy \eqref{eq:tail-bound-assumption}, so for them inequality \eqref{eq:tail-bound-assumption-bound} holds.
	By the comparison of weak and strong moments \cite[Theorem~1.1]{LatalaStrzeleckaMat} (note that the random variables $Y_{ij}$ satisfy the assumption $\|Y_{ij}\|_{2s}\le\alpha \|Y_{ij}\|_s$ for all $s\ge 2$ with $\alpha =2^{1/r}$ by \cite[Remark~1.5]{LatalaStrzeleckaMat}), we have
	\begin{multline} \label{eq:tail-bound-proof1}
		 \biggl( \EE \Bigl\|\sum_{i,j} Y_{ij} a_{ij}E_{ij} \Bigr\|^\rho \biggr)^{1/\rho}
		 = \biggl( \EE\sup_{x\in B_p^n,\ y\in B_{q^*}^m} \Bigl|\sum_{i,j} y_iY_{ij} a_{ij}x_{j} \Bigr|^\rho \biggr)^{1/\rho}
		\\
		\lesssim _r   \EE\sup_{x\in B_p^n,\ y\in B_{q^*}^m} \sum_{i,j} y_iY_{ij} a_{ij}x_{j} +  \sup_{x\in B_p^n,\ y\in B_{q^*}^m} \biggl(\EE \Bigl| \sum_{i,j} y_iY_{ij} a_{ij}x_{j} \Bigr|^{\rho} \biggr)^{1/\rho}.
	\end{multline}
	 Because of inequality~\eqref{eq:tail-bound-assumption-bound}, the first summand on the right-hand side may be estimated by $\gamma D$. Lemma~\ref{lem:psi_prelim5} and the implication \ref{item:psi-r-i}$\implies$\ref{item:psi-r-ii} from Lemma~\ref{lem:psi-r-equivalence} yield
	 \begin{equation*}	
	 	 	\biggl(\EE \Bigl| \sum_{i,j} y_iY_{ij} a_{ij}x_{j} \Bigr|^{\rho} \biggr)^{1/\rho} \lesssim_{r,K,L} \ \rho^{1/r} \sqrt{ \sum_{i,j} y_i^2a_{ij}^2x_{j}^2}.
	 \end{equation*}
	 Moreover, by \eqref{eq:manipulations-to-fix1} and \eqref{eq:manipulations-to-fix2} (used with $m=M$ and $n=N$)
	 and our assumption that $\|A\hadprod A \colon \ell^n_{p/2} \to \ell^m_{q/2}\|^{1/2} \leq D$,
	 	 \[
	 	 \sup_{x\in B_p^n,\ y\in B_{q^*}^m} \sqrt{ \sum_{i,j} y_i^2a_{ij}^2x_{j}^2} \leq D,
	 \]
	 so the second summand on the right-hand side of \eqref{eq:tail-bound-proof1} is bounded above (up to a multiplicative constant depending only on $r$, $K$, and $L$) by $\rho^{1/r}D$.
	 Thus, \eqref{eq:tail-bound-moments} indeed holds for the random matrix $(Y_{ij})_{ij}$ instead of $X$.
	A similar reasoning shows that the same inequality holds also for the random matrix $( \varepsilon_{ij})_{ij}$ (one may also simply use the Khintchine--Kahane inequality and assumption~\eqref{eq:tail-bound-assumption-bound}).
	\end{proof}

%--------------------
%
\section{Proofs of further results}
\label{sec:particular-ranges}
%
%--------------------

%--------------------
%
\subsection{Gaussian random variables}
\label{sec:gaussian-rvs}
%
%--------------------

\begin{proof}[Proof of Proposition~\ref{prop:gauss-p-smaller-2}]
Fix $1\leq p\leq 2$ and $1\leq q\leq \infty$.
Let $K$ be the set defined in Lemma~\ref{lem:set-K} for which $B_p^n\subset \ln(en)^{1/p^*} K$.
Then
\begin{equation}
\label{eq:reduction-to-K}
\|G_A \colon \ell^n_p\to \ell^m_q\|
=  \sup_{x\in B_p^n} \|G_A x\|_q
\leq  \ln(en)^{1/p^*}\! \sup_{x\in \Ext (K)}\! \|G_A x\|_q,
\end{equation}
where  $\Ext(K)$ is the set of extreme points of $K$.
We shall now estimate the expected value of the right-hand side of \eqref{eq:reduction-to-K}.

To this end, we first consider a fixed $x=(x_j)_{j=1}^n \in \Ext(K)$.
Then there exists a non-empty index set $J\subset \{1,\dots,n\}$ of cardinality $k\leq n$ such that $x_j = \frac{\pm 1}{k^{1/p}}$ for $j\in J$ and $x_j=0$ for $j\notin J$. We have
\begin{equation}
\label{eq:G_Ax-q-norm-distr}
\|G_A x\|_q =  \Bigl\|  \Bigl( \sum_{j=1}^n a_{ij}g_{ij} x_j \Bigr)_{i=1}^m  \Bigr\|_q
= \Bigl( \sum_{i=1}^m \Bigl| \sum_{j=1}^n a_{ij}g_{ij} x_j\Bigr|^q\Bigr)^{1/q} .
\end{equation}
Let us estimate the Lipschitz constant of the  function
	\begin{align}\label{eq:lipschitz constant q-norm mapping}
		z=(z_{ij})_{ij} \mapsto \Bigl\|  \Bigl( \sum_{j=1}^n a_{ij}z_{ij} x_j \Bigr)_{i=1}^m  \Bigr\|_q = \sup_{y\in B_{q^*}^m}\sum_{i=1}^m	 \sum_{j=1}^n y_ia_{ij}z_{ij}x_j.
\end{align}
It follows from the Cauchy--Schwarz inequality (used in $\RR^{m\times n}$) that
	\begin{align}	\label{eq:lipschitz} \nonumber
		\sup_{y\in B_{q^*}^m} \sum_{i=1}^m \sum_{j=1}^n y_ia_{ij}z_{ij}x_j
		& \leq \|z\|_2  \sqrt{\sup_{y\in B_{q^*}^m} \sum_{i=1}^m \sum_{j=1}^n y_{i}^2a_{ij}^2x_j^2}
		\\
	    & =\|z\|_2 \frac{1}{k^{1/p}}\sqrt{ \sup_{y\in B^m_{q^*/2}}  \sum_{i=1}^m  \sum_{j\in J} y_i a_{ij}^2  }
    		=  \|z\|_2\frac{ b_J}{k^{1/p}},
	\end{align}
where we put
\[
b_J\coloneqq\sqrt{ \sup_{y\in B^m_{q^*/2}}  \sum_{i=1}^m  \sum_{j\in J} y_i a_{ij}^2   }\,.
\]	
This shows that the function defined by \eqref{eq:lipschitz constant q-norm mapping} is $\frac{ b_J}{k^{1/p}}$-Lipschitz continuous.
Therefore, by the Gaussian concentration inequality (see, e.g., \cite[Chapter~5.1]{Ledoux}), for any $u\geq 0$,
\begin{equation}
\label{eq:G_Ax-q-norm-concentration}
\PP\bigl(\|G_A x\|_q \geq \EE\|G_A x\|_q  + u\bigr) \leq \exp\bigl( - \frac{k^{2/p} u^2}{2b_J^2}\bigr).
\end{equation}
We shall transform this inequality into a form which is more convenient to work with. We want to estimate $\EE \|G_A x\|_q $ independently of $x$
 and get rid of the dependence on $J$ and $p$ on the right-hand side.
By \eqref{eq:G_Ax-q-norm-distr}
and the fact that $x\in\Ext(K)\subset B_p^n$, we obtain
\begin{align*}
\EE \|G_A x\|_q
& \leq (\EE\|G_A x\|_q^q)^{1/q}
= \gamma_q \Bigl( \sum_{i=1}^m \Bigl|\sum_{j=1}^n a_{ij}^2 x_j^2\Bigr|^{q/2}\Bigr)^{1/q}\\
& \leq \gamma_q \sup_{z\in B_p^n} \Bigl( \sum_{i=1}^m \Bigl| \sum_{j=1}^n a_{ij}^2 z_j^2\Bigr|^{q/2}\Bigr)^{1/q}
 =\gamma_q \|A\hadprod A \colon \ell^n_{p/2} \to \ell^m_{q/2}\|^{1/2} \eqqcolon a.
\end{align*}

We use the definition of $b_J$,
then interchange the sums,
use the triangle inequality,
and then the inequality between the arithmetic mean and the power mean of order $p^*/2\geq 1$ (recall that $|J|=k$ and $p\leq 2$) to obtain
\begin{align} \label{eq:kbj}	\nonumber
	k^{2/p^*-1} b_J^2
	&= k^{2/p^*-1} \sup_{y\in B^m_{q^*/2}}  \sum_{i=1}^m \sum_{j\in J} a_{ij}^2 y_i
	= k^{2/p^*-1} \sup_{y\in B^m_{q^*/2}} \sum_{j\in J} \Bigl| \sum_{i=1}^m  a_{ij}^2 y_i \Bigr| \\ \nonumber
	&\leq \sup_{y\in B^m_{q^*/2}} \Bigl(  \sum_{j\in J} \Bigl| \sum_{i=1}^m a_{ij}^2 y_i \Bigr| ^{p^*/2}\Bigr)^{2/p^*}
	\leq \sup_{y\in B^m_{q^*/2}} \Bigl( \sum_{j=1}^n\Bigl| \sum_{i=1}^m  a_{ij}^2 y_i \Bigr| ^{p^*/2}\Bigr)^{2/p^*}	\\
	& =  \|(A\hadprod A)^T \colon \ell^m_{q^*/2} \to \ell^n_{p^*/2}\| \eqqcolon b^2.
\end{align}
The two inequalities above, together with inequality~\eqref{eq:G_Ax-q-norm-concentration}
(applied with $u=k^{1/p^* -1/2} b_J \sqrt{2\ln(en)} s$), imply that
\begin{align}
\label{eq:concentration_a_b}
  \PP\bigl(\|G_A x\|_q \geq a + b \sqrt{2\ln(en)}\, s \bigr)
 & \leq \exp\bigl( - k^{2/p+2/p^*-1} \ln(en)s^2\bigr) \\
&= \exp\bigl( - k \ln(en)s^2\bigr)\nonumber
\end{align}
holds for any $s\geq 0$ and all $x\in \Ext(K)$ with support of cardinality $k$.

For any $k\leq n$,
 there are $2^k \binom{n}{k} \leq 2^k n^k \leq \exp( k \ln(en))$ vectors in $\Ext(K)$
 with support of cardinality $k$.
Therefore, using a union bound together with \eqref{eq:concentration_a_b}, we see that, for all $s\geq \sqrt{2}$,
\begin{multline*}
\PP\bigl(\sup_{x\in \Ext K}\! \|G_A x\|_q \geq a + b \sqrt{2\ln(en)} s \bigr)
 \leq \sum_{k=1}^n \exp( -k \ln(en)(s^2-1))\\
  \leq n \exp( - \ln(en)(s^2-1))
 = n(en)^{-s^2+1}\leq e^{-s^2+1}.
\end{multline*}
Hence, by Lemma~\ref{lem:mean-median} (applied with $s_0\coloneqq\sqrt{2}$, $\alpha\coloneqq e$, $\beta\coloneqq 1$, and $r\coloneqq 2$),
\begin{align*}
\EE\! \sup_{x\in \Ext K}\! \|G_A x\|_q & \leq a + b \sqrt{2\ln(en)} \Bigl(\sqrt{2} + e \frac{e^{-2}}{2\sqrt{2}}\Bigr) \leq a + 2.2 b \sqrt{\ln(en)}.
\end{align*}
Recalling \eqref{eq:reduction-to-K} and the definitions of $a$ and $b$ yields the assertion.
\end{proof}

We now turn to the special case $q=1$.

\begin{proof}[Proof of Proposition~\ref{prop:gauss-q=1}]
	Since the first part of this proof works for general $q\geq 1$,
	we do not restrict our attention to $q=1$ for now.
First of all,
\[
\EE \| G_A \colon \ell_{p}^n \to\ell_q^m \| \leq \bigl(\EE \| G_A \colon \ell_{p}^n \to\ell_q^m \|^q\bigr)^{1/q}
= \Bigl( \EE \sup_{x\in B_{p}^n} \sum_{i=1}^m |\langle X_i,x \rangle|^q\Bigr)^{1/q},
\]
where $X_i=(a_{ij}g_{ij})_{j=1}^n$ is the $i$-th row of the matrix $G_A$. Centering this expression gives
\begin{align}
\label{eq:q=1-centered}
	\EE \sup_{x\in B_{p}^n} \sum_{i=1}^m |\langle X_i,x \rangle|^q
	&\leq \EE \sup_{x\in B_{p}^n} \Big[\sum_{i=1}^m |\langle X_i,x \rangle|^q - \EE|\langle X_i,x \rangle|^q \Big]\\
	& \qquad + \sup_{x\in B_{p}^n} \sum_{i=1}^m \EE |\langle X_i, x \rangle|^q. \nonumber
\end{align}
We first take care of the second term on the right-hand side of~\eqref{eq:q=1-centered}.
We have
\begin{align}
  \label{eq:q=1-first-term}	\nonumber
	\sup_{x\in B_{p}^n} \sum_{i=1}^m \EE |\langle X_i, x \rangle|^q & =  \gamma_q^q \sup_{x\in B_{p}^n} \sum_{i=1}^m \Bigl(\sum_{j=1}^na_{ij}^2x_j^2\Bigr)^{q/2} \\
	& = \gamma_q^q  \sup_{z\in B_{p/2}^n} \Big\|\Bigl(\sum_{j=1}^na_{ij}^2z_j\Bigr)_{i\le m} \Big\|_{q/2}^{q/2}
	 = \gamma_q^q  \|A\hadprod A \colon \ell^n_{p/2} \to \ell^m_{q/2}\|^{q/2}.
\end{align}

In order to deal with the first term on the right-hand side of~\eqref{eq:q=1-centered},
we use a~symmetrization trick together with the contraction principle.
The latter is the reason that we need to work with $q=1$ here. We start with the symmetrization. Denoting by $\widetilde{X}_1,\dots,\widetilde{X}_n$ independent copies of $X_1, \dots, X_n$ and by $(\varepsilon_i)_{i=1}^m$ a sequence of Rademacher random variables independent of all others, we obtain by Jensen's and the triangle inequalities that

\begin{align}
	\EE \sup_{x\in B_{p}^n}   \Bigl[\sum_{i=1}^m & |\langle X_i,x \rangle|^q - \EE|\langle X_i,x \rangle|^q \Bigr]
	 = \EE \sup_{x\in B_{p}^n} \Big[\sum_{i=1}^m |\langle X_i,x \rangle|^q - \EE|\langle \widetilde{X}_i,x \rangle|^q \Bigr]\nonumber\\
	& \leq \EE \sup_{x\in B_{p}^n} \Bigl[\sum_{i=1}^m |\langle X_i,x \rangle|^q - |\langle \widetilde{X}_i,x \rangle|^q \Bigr] \nonumber\\
	&	 = \EE \sup_{x\in B_{p}^n} \Bigl[\sum_{i=1}^m \varepsilon_i( |\langle X_i,x \rangle|^q - |\langle \widetilde{X}_i,x \rangle|^q) \Bigr]
	 \leq 2 \cdot  \EE \sup_{x\in B_{p}^n} \sum_{i=1}^m \varepsilon_i |\langle X_i,x \rangle|^q.\label{eq:q=1-second-term-a-sym}
\end{align}

If $q=1$, we may use the contraction principle (i.e., Lemma~\ref{lem:contraction-principle2} applied with functions $\varphi_i(t)=|t|$)
conditionally to obtain
\begin{align}
	\EE \sup_{x\in B_{p}^n} \sum_{i=1}^m \varepsilon_i |\langle X_i,x \rangle|
	& \leq
	\EE \sup_{x\in B_{p}^n} \sum_{i=1}^m  \varepsilon_i\langle X_i,x \rangle\nonumber\\
	&= \EE \sup_{x\in B_{p}^n} \sum_{j=1}^n x_j \sum_{i=1}^m  a_{ij}\cdot  \varepsilon_i g_{ij}
	 = \EE \sup_{x\in B_{p}^n} \sum_{j=1}^n x_j \sum_{i=1}^m  a_{ij} g_{ij}.	\label{eq:general-contraction}
\end{align}
For $p > 1$, we have
\begin{align}
 \EE \sup_{x\in B_{p}^n} \sum_{j=1}^n x_j \sum_{i=1}^m  a_{ij} g_{ij} &=\EE \Bigl( \sum_{j=1}^n\Big|\sum_{i=1}^m a_{ij}g_{ij}\Big|^{p^*}\Bigr)^{1/p^*} \nonumber\\
	& \leq \Bigl( \sum_{j=1}^n \EE \Big|\sum_{i=1}^m a_{ij}g_{ij}\Big|^{p^*}\Bigr)^{1/p^*}
	= \gamma_{p^*} \Bigl(\sum_{j=1}^n \Bigl(\sum_{i=1}^m a_{ij}^2\Bigr)^{p^*/2}\Bigr)^{1/p^*}.\label{eq:q=1-second-term-b-contr}
\end{align}
Moreover, we have
\begin{align}
	\Bigl(\sum_{j=1}^n \Bigl(\sum_{i=1}^m a_{ij}^2\Bigr)^{p^*/2}\Bigr)^{1/p^*}
	& = \sup_{\delta\in\{-1,1 \}^m} \Bigl\|\Bigl(\sum_{i=1}^ma_{ij}^2\delta_i\Bigr)_{j\le n} \Bigr\|_{p^*/2}^{1/2} \nonumber\\
	& = \bigl\| (A\hadprod A)^T\colon \ell_\infty^m \to \ell_{p^*/2}^n \Bigr\|^{1/2}.\label{eq:q=1-second-term-c-fin}
\end{align}
Inequalities~\eqref{eq:q=1-second-term-a-sym}--\eqref{eq:q=1-second-term-c-fin} give the estimate of the first term on the right-hand side of~\eqref{eq:q=1-centered}.
This ends the proof of the upper bound for $p > 1$.

If $p = 1$, then letting $g_1,\ldots,g_n$ be i.i.d.\ standard Gaussian random variables, we have
\begin{align}	\label{eq:proof_q=1_last}	\nonumber
\EE \sup_{x\in B_{p}^n} \sum_{j=1}^n x_j \sum_{i=1}^m  a_{ij} g_{ij} &= \EE \max_{j\le n} \Big|\sum_{i=1}^m a_{ij} g_{ij}\Big| \\
&= \EE \max_{j\le n} g_j b_j \asymp \max_{j\le n} (\sqrt{\ln(j+1)}b_j^{\rearr{}}),
\end{align}
where the last step follows from Lemmas~\ref{lem:vH2017-3} and \ref{lem:vH2017-4} with $b_j \coloneqq \|(a_{ij})_{i\le m}\|_2$, $j\leq n$.
Putting together \eqref{eq:q=1-centered}--\eqref{eq:general-contraction} and \eqref{eq:proof_q=1_last} completes the proof of the upper bound in the case $p=1$.

The lower bound in the case $p>1$ follows from Proposition~\ref{prop:cm-lower-bound} and Corollary~\ref{cor:cm-lower-bound} below. In the case $p=1$, we use Proposition~\ref{prop:cm-lower-bound}, note that
\[
\EE \| G_A \colon \ell_{p}^n \to\ell_1^m \| \geq\EE \sup_{x\in B_{p}^n} \sum_{j=1}^n x_j \sum_{i=1}^m  a_{ij} g_{ij},
\]
and use \eqref{eq:proof_q=1_last} to obtain a lower bound.
\end{proof}

Now we deal with another special case, the one where $p=1$.

\begin{proof}[Proof of Proposition \ref{prop:gauss-p=1-q<2}] Recall that we deal with the range $p=1\leq q\le 2$.
 Using the structure of extreme points of $B_1^n$
 we get 
\begin{displaymath}
  \EE \|G_A \colon \ell_1^n \to \ell_q^m\| = \EE \max_{j\le n} \|(a_{ij}g_{ij})_{i\le m}\|_q.
\end{displaymath}
Denote $Z_j =  \|(a_{ij}g_{ij})_{i\le m}\|_q$. By well-known tail estimates of norms of Gaussian variables with values in Banach spaces (see, e.g., \cite[Corollary 1]{MR1388035} for a more general formulation) we get for all $t > 0$,

\begin{align}
  \PP\bigl( Z_j \ge C(\EE Z_j + \sqrt{t} b_j)\bigr) &\le e^{-t},\label{eq:lq-norms-first-ineq} \\
  \PP\bigl( Z_j \ge c(\EE Z_j + \sqrt{t} b_j)\bigr) &\ge \min(c,e^{-t}), \label{eq:lq-norms}
\end{align}
where $c, C$ are universal positive constants, and
\begin{displaymath}
  b_j^2 = \|(a_{ij}^2)_{i\le m}\|_{q/(2-q)} =\| (a_{ij}^2)_{i\le m}\|_{(q^\ast/2)^\ast} =  \sup_{x\in B_{q^\ast}^m} \sum_{i=1}^m a_{ij}^2x_i^2.
\end{displaymath}

Inequality \eqref{eq:lq-norms-first-ineq} shows in particular that the random variables $(Z_j - C \EE Z_j)_+$ satisfy 
\begin{displaymath}
  \PP((Z_j - C \EE Z_j)_+ \ge t) \le \exp\Bigl(-\frac{t^2}{C^2 b_j^2}\Bigr)
\end{displaymath}
for all $t > 0$, thus by Lemma \ref{lem:vH2017-3} we get
\begin{align*}
  \EE \|G_A \colon \ell_1^n \to \ell_q^m\| = \EE \max_{j\le n} Z_j 
  &\le C \max_{j\le n} \EE Z_j + \max_{j\le n} (Z_j-C\EE Z_j)_+ \\
 &\lesssim \Big(\max_{j\le n} \EE Z_j + \max_{j\le n} (\sqrt{\ln(j+1)}b_j^{\rearr{}})\Big),
\end{align*}
which together with the observation (following from Lemma~\ref{lem:special-norms} and the fact that $1=p\leq q\leq 2$) that
\begin{displaymath}
  \EE Z_j \le \Big(\EE \sum_{i=1}^m |a_{ij}|^q |g_{ij}|^q\Big)^{1/q} 
  = \gamma_q \|(a_{ij})_{i\le m}\|_q 
  = \gamma_q \|A\hadprod A\colon \ell_{1/2}^n \to \ell_{q/2}^m\|^{1/2},
\end{displaymath}
proves the upper estimate of the proposition.

Using comparison of moments of norms of Gaussian random vectors, we also get
\begin{align}\label{eq:first-lower-bound-q<2}
\EE \|G_A \colon \ell_1^n \to \ell_q^m\| & \ge \max_{j\le n} \EE Z_j \gtrsim \max_{j\le n} (\EE Z_j^q)^{1/q} \nonumber \\
& = \gamma_q \|(a_{ij})_{i\le m}\|_q
  = \gamma_q \|A\hadprod A\colon \ell_{1/2}^n \to \ell_{q/2}^m\|^{1/2},
\end{align}
so to end the proof it is enough to show that
\begin{align}\label{eq:second-lower-bound-q<2}
\EE \|G_A \colon \ell_1^n \to \ell_q^m\| \ge \max_{j\le n}(\sqrt{\ln (j+1)}b_j^{\rearr{}}).
\end{align}
This will follow by a straightforward adaptation of the argument from the proof of Lemma \ref{lem:vH2017-4}. We may and do assume that the sequence $(b_j)_{j\leq n}$ is non-increasing in $j$. 
By~\eqref{eq:lq-norms} we have for any $j\le n$ and $k \ge 1$,
\begin{displaymath}
\PP(Z_j \ge c\sqrt{\ln(k+1)}b_j) \ge \frac{c'}{k}.
\end{displaymath}
Thus, since $b_j\geq b_k$ for all $j\le k$, we have for any $k\le n$,
\begin{align*}
\PP(\max_{j\le n} Z_j \ge \sqrt{\ln(k+1)}b_k ) & \geq \PP(\exists_{j\le k} \ Z_j \ge \sqrt{\ln(k+1)}b_j)\\
& \ge 1 - (1-c'/k)^k \ge 1- e^{-c'} > 0.
\end{align*}
Thus, 
\begin{displaymath}
  \EE \|G_A \colon \ell_1^n \to \ell_q^m\| = \EE \max_{j\le n} Z_j \gtrsim \sqrt{\ln(k+1)} b_k.
\end{displaymath}
Taking maximum over $k \le n$ gives \eqref{eq:second-lower-bound-q<2} and ends the proof.
\end{proof}

%--------------------
%
\subsection{Bounded random variables}
\label{sec:bdd-OLD}
%
%--------------------

Here we show how one can adapt the methods of~\cite{BGN}
to prove Proposition~\ref{prop:bdd-p-2-q-OLD}, i.e., a version of Corollary~\ref{prop:bounded-p-smaller-2-nopsi} in the special case of bounded random variables with better logarithmic terms and with  explicit numerical constants.
Following~\cite{BGN}, we start with a lemma.

\begin{lemma}
	\label{lem:BGN-2}
	Assume that $X$ is as in Proposition~\ref{prop:bdd-p-2-q-OLD}.
	Let $(b_j)_{j\leq n}\in \RR^n$ and suppose that $t_0$ is such that  $\bigl|\sum_{j=1}^n b_j X_{ij}\bigr|\leq t_0$ almost surely.
	Then, for all $q\geq 2$ and $0\leq t\leq {t_0^{2-q}}(4 \sum_{j=1}^n b_j^2)^{-1} $,
	\begin{equation}
	\label{eq:lem-BGN-2}
	\EE \exp\bigl(t\bigl|\sum_{j=1}^n b_j X_{ij}\bigr|^q \bigr)
	\leq 1 + C^q(q) \, t \, \bigl(\sum_{j=1}^n b_j^2  \bigr)^{q/2},
	\end{equation}
	where $C(q) \coloneqq
	2(q \Gamma(q/2))^{1/q} \asymp \sqrt{q} $.
\end{lemma}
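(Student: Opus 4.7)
Let $S := \sum_{j=1}^n b_j X_{ij}$ and $\sigma^2 := \sum_{j=1}^n b_j^2$. The plan is to express $\EE\exp(t|S|^q)-1$ as a one-dimensional integral via the layer cake formula, use Hoeffding's inequality (Lemma~\ref{lem:BGN-1-Hoeffding}) to bound the tail of $S$, and then reduce the resulting integral to a Gamma function through a carefully calibrated substitution.

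First, I would write
$$
\EE\exp(t|S|^q) - 1 \;=\; \int_0^\infty \PP(|S| \ge u)\, q t u^{q-1} e^{t u^q}\, du,
$$
which follows from the identity $\EE F(|S|) = F(0) + \int_0^\infty F'(u)\PP(|S|\ge u)\, du$ applied to the non-decreasing function $F(u)=e^{tu^q}$. Since $|S|\le t_0$ almost surely and the entries $X_{ij}$ are independent, mean-zero, and bounded in $[-1,1]$, Hoeffding's inequality (Lemma~\ref{lem:BGN-1-Hoeffding}) gives $\PP(|S|\ge u)\le 2\exp(-u^2/(2\sigma^2))$ on $[0,t_0]$, so the problem reduces to bounding
$$
2qt \int_0^{t_0} u^{q-1} \exp\!\bigl(t u^q - \tfrac{u^2}{2\sigma^2}\bigr)\, du.
$$

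Next, I would apply the substitution $v = u^2/(2\sigma^2)$. A direct computation (using $u^{q-1}\,du = \sigma^q 2^{(q-2)/2}v^{(q-2)/2}\,dv$ and $tu^q = \alpha v^{q/2}$, where $\alpha := 2^{q/2}t\sigma^q$) transforms this into
$$
qt\sigma^q 2^{q/2} \int_0^{v_{\max}} v^{(q-2)/2} \exp\!\bigl(\alpha v^{q/2} - v\bigr)\, dv,
$$
with $v_{\max} := t_0^2/(2\sigma^2)$. The crucial observation is that the hypothesis $t\le t_0^{2-q}/(4\sigma^2)$ is precisely equivalent to $\alpha v_{\max}^{(q-2)/2}\le 1/2$; since $(q-2)/2\ge 0$, the function $v\mapsto v^{(q-2)/2}$ is non-decreasing, and so $\alpha v^{q/2}\le v/2$ throughout $[0,v_{\max}]$. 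Hence the exponent is at most $-v/2$, and I can extend the integration domain to $[0,\infty)$ to obtain the standard Gamma integral $\int_0^\infty v^{(q-2)/2}e^{-v/2}\,dv = 2^{q/2}\Gamma(q/2)$. Multiplying the accumulated constants yields exactly $2^q q\Gamma(q/2)\,t\sigma^q = C(q)^q\, t\sigma^q$, which is the desired bound.

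The main delicate point is the calibration between the hypothesis on $t$ and the substitution: the factor $1/4$ in $t\le t_0^{2-q}/(4\sigma^2)$ is exactly what is needed to leave the $e^{-v/2}$ buffer after absorbing $e^{\alpha v^{q/2}}$. A naive approach that expands $\exp(t|S|^q)$ as a power series and bounds each $\EE|S|^{kq}$ separately, either by Hoeffding's moment bound or by the pointwise estimate $|S|\le t_0$, runs into difficulties when $t_0/\sigma$ is large (as happens in typical applications, where $t_0$ is of order $\sum_j|b_j|$ while $\sigma=(\sum_j b_j^2)^{1/2}$): the higher-order terms then fail to be subsumed by the linear-in-$t$ main term. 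The layer-cake route avoids this by exploiting the sub-Gaussian decay globally, and the substitution $v = u^2/(2\sigma^2)$ is designed precisely to convert this decay into the clean exponential factor $e^{-v}$ needed for the Gamma estimate.
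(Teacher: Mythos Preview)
Your proof is correct and follows essentially the same approach as the paper's: both use the layer-cake representation, apply Hoeffding's inequality, exploit that $tu^q \le u^2/(4\sigma^2)$ on $[0,t_0]$ (equivalently your $\alpha v^{q/2}\le v/2$), and then reduce to a Gamma integral. The only cosmetic differences are that the paper normalizes $\sigma^2=1$ at the outset and argues directly in the $u$-variable rather than substituting $v=u^2/(2\sigma^2)$.
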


\begin{proof}
	Without loss of generality we may and do assume that $\sum_{j=1}^n b_j^2=1$.
	
	Since $q\geq 2$, for $s\in[0,t_0]$ and $t\in [0,\frac14 t_0^{2-q}]$ we have  $ts^q - s^2/2 \leq - s^2/4$.
	Thus,
	integration by parts, our assumption  $0\leq \bigl|\sum_{j=1}^n b_j X_{ij}\bigr|\leq t_0$ a.s.,
	and Hoeffding's inequality (i.e., Lemma~\ref{lem:BGN-1-Hoeffding}) yield
	\begin{align*}
	\EE \exp\bigl(t\bigl|\sum_{j=1}^n b_j X_{ij}\bigr|^q \bigr)
	&=  1 + qt \int_0^{t_0} s^{q-1} \exp(ts^q) \PP\bigl(\bigl|\sum_{j=1}^n b_j X_{ij}\bigr|\geq s \bigr) ds\\
	&\leq 1 + 2qt \int_0^{t_0} s^{q-1} \exp(ts^q - s^2/2) ds\\
	&\leq 1 + 2qt \int_0^{\infty} s^{q-1} \exp(- s^2/4) ds\\	
	& = 1 + t 2^q q \Gamma(q/2).
	\qedhere
	\end{align*}
\end{proof}

\begin{proof}[Proof of Proposition~\ref{prop:bdd-p-2-q-OLD}]
	We start with a bunch of reductions.
	Set
	\begin{align*}
	a&\coloneqq \|A\hadprod A \colon \ell^n_{p/2} \to \ell^m_{q/2}\|^{1/2} = \max_{j\leq n} \| (a_{ij})_{i=1}^m\|_q, \\
	b&\coloneqq \|(A\hadprod A)^T \colon \ell^m_{q^*/2} \to \ell^n_{p^*/2}\|^{1/2} = \max_{i\leq m} \| (a_{ij})_{j=1}^n\|_{p^*}.
	\end{align*}	
	(The equalities follow from Lemma~\ref{lem:special-norms}, since $p/2\leq 1\leq q/2$ and $ q^*/2 \leq 1\leq p^*/2$).
		Let $K$ be the set defined in Lemma~\ref{lem:set-K},
	so that $B_p^n\subset \ln(en)^{1/p^*} K$.
	Then
	\begin{equation}
	\label{eq:bdd-reduction-to-K-OLD}
	\|X_A \colon \ell^n_p\to \ell^m_q\|
	=  \sup_{x\in B_p^n} \|X_A x\|_q
	\leq  \ln(en)^{1/p^*}\! \sup_{x\in \Ext (K)}\! \|X_A x\|_q,
	\end{equation}
	where  $\Ext(K)$ is the set of
	%extremal
	extreme points of $K$.

	Consider first a fixed $x=(x_j)_{j=1}^n \in \Ext(K) \subset B_p^n$.
	We have
	\begin{equation}
	\label{eq:bdd-G_Ax-q-norm-distr}
	\|X_A x\|_q^q
	=\sum_{i=1}^m \Bigl| \sum_{j=1}^n a_{ij}X_{ij} x_j\Bigr|^q.
	\end{equation}
	Denote
	\begin{align*}
	t_0 &\coloneqq b = \max_{i\leq m} \| (a_{ij})_{j=1}^n\|_{p^*},\\
	t &\coloneqq \frac{t_0^{2-q}}{4 \max_{i\leq m} \| (a_{ij}x_j)_{j=1}^n\|_{2}^2}.
	\end{align*}
	Then, by the boundedness of $X_{ij}$ and by H{\"o}lder's inequality,  for every $i\leq m$,
	\[
	\bigl|\sum_{j=1}^n a_{ij}x_j X_{ij}\bigr| \leq \sum_{j=1}^n |a_{ij}| |x_j| \leq  \| (a_{ij})_{j=1}^n\|_{p^*}  \| (x_{j})_{j=1}^n\|_{p} \leq t_0.
	\]
	We can now apply, for every $i\leq m$,  Lemma~\ref{lem:BGN-2} (with $t$ and $t_0$ as above and with coefficients $b_{ j} = a_{ij}x_j$).
	Since the random variables $\bigl|\sum_{j=1}^n a_{ij}x_j X_{ij}\bigr| $, $i\leq m$, are independent,
	using Lemma~\ref{lem:BGN-2} yields
	\begin{align*}
	\EE  \exp\bigl(t \sum_{i=1}^m \bigl|\sum_{j=1}^n a_{ij}x_j  X_{ij}\bigr|^q \bigr)
	&= \prod_{i=1}^m \Bigl[ \EE  \exp\bigl(t \bigl|\sum_{j=1}^n a_{ij}x_j X_{ij}\bigr|^q \bigr)\Bigr] \\
	&\leq \prod_{i=1}^m \Bigl(1 + C^q(q) \, t \,   \bigl(\sum_{j=1}^n a_{ij}^2x_j^2  \bigr)^{q/2}\Bigr) \\
	&\leq \exp\Bigl( C^q(q)\, t\sum_{i=1}^m  \bigl(\sum_{j=1}^n a_{ij}^2x_j^2   \bigr)^{q/2}\Bigr)
	\leq \exp\bigl( C^q(q)\, t   a^q\bigr),
	\end{align*}
	where in the last step we used the definition of  $a =\|A\hadprod A \colon \ell^n_{p/2} \to \ell^m_{q/2}\|^{1/2}$ (and the fact that $x\in B^n_{p}$).
	By Chebyshev's inequality and \eqref{eq:bdd-G_Ax-q-norm-distr}, we have, for every $s\geq 0$,
	\[
	\PP\bigl(t \|X_{A} x\|_q^q \geq \ln\bigl[ \EE  \exp\bigl(t \sum_{i=1}^m \bigl|\sum_{j=1}^n a_{ij}x_j  X_{ij}\bigr|^q \bigr)\bigr] +sk \bigr)
	\leq e^{-sk}.
	\]
	Combining this with the previous estimate yields, for every $s\geq 0$,
	\[
	\PP\bigl(\|X_{A} x\|_q^q \geq C^q(q)\,  a^q + \frac{ sk}{t} \bigr)
	\leq e^{-sk}.
	\]
	
	Recall that $x\in \Ext(K)$.
	Thus, there exists  an index set $J\subset \{1,\dots,n\}$ of cardinality $k\leq n$, such that $x_j = \frac{\pm 1}{k^{1/p}}$ for $j\in J$ and $x_j=0$ for $j\notin J$.
	We use the definition of $t$ and the inequality between the arithmetic mean and the power mean of order $p^*/2\geq 1$ (recall that $|J|=k$ and $p\leq 2$) to get
	\begin{align*}
	\frac{1}{4 t} = b^{q-2} \max_{i\leq m} \| (a_{ij}x_j)_{j=1}^n\|_{2}^2
	& = b^{q-2} k^{-2/p} \max_{i\leq m} \sum_{j\in J} a_{ij}^2\\
	&\leq  b^{q-2} k^{-2/p + 1 - 2/p^*} \max_{i\leq m} \bigl(\sum_{j\in J} |a_{ij}|^{p^*}\bigr)^{2/p^*} = b^q k^{-1}.
	\end{align*}
	Putting everything together, we obtain
	\begin{equation}
	\label{eq:bdd-concentration_a_b}
	\PP\bigl(\|X_{A} x\|_q^q \geq C^q(q)\,  a^q + 4 b^q s \bigr)
	\leq e^{-sk}
	\end{equation}
	for all $s\geq 0$ and all $x\in \Ext(K)$ with support of cardinality $k$.
	
	For any $k\leq n$,
	there are $2^k \binom{n}{k} \leq 2^k n^k \leq \exp( k \ln(en))$ vectors in $\Ext(K)$
	with support of cardinality $k$.
	Thus, using the union bound and \eqref{eq:bdd-concentration_a_b}, we see that, for all $s\geq 2$,
	\begin{multline*}
	\PP\bigl(\sup_{x\in \Ext K}\!\|X_A x\|_q^q \geq C^q(q)\,  a^q + 4 b^q \ln(en) s \bigr)
	\leq \sum_{k=1}^n \exp( -k \ln(en)(s-1))\\
	\leq n \exp( - \ln(en)(s-1))
	= n(en)^{-s+1}\leq e^{-s+1}.
	\end{multline*}
	Hence, by Lemma~\ref{lem:mean-median},
	\begin{align*}
	\EE \!\! \sup_{x\in \Ext K}\!\!\ \|X_{A} x\|_q
	\leq \bigl(\EE\!\! \sup_{x\in \Ext K}\!\! \|X_{A} x\|_q^q\bigr)^{1/q}
	&\leq  \bigl( C^q(q)\,  a^q + 4 b^q \ln(en) (2 + e\cdot e^{-2}) \bigr)^{1/q}\\
	&\leq C(q)  a + 10^{1/q}\ln(en)^{1/q}b.
	\end{align*}
	Recalling \eqref{eq:bdd-reduction-to-K-OLD} and the definitions of $a$, $b$, and $C(q)$ yields the assertion.
\end{proof}

\begin{remark}
	\label{rem:BGN-extended-for-all-p,q}
	In the unstructured case, for $X_{ij}$ which are independent, mean-zero,  and take values in $[-1,1]$, it is easy to extend \eqref{eq:BGN} to the whole range of $p, q\in [1,\infty]$ (see~\cite{Bennett,CarlMaureyPuhl}).
	Indeed, for $p\geq 2$ and $q\geq 2$,
	\begin{align*}
	\EE \| X\colon \ell^n_p \to \ell^m_q\|
	&\leq
	\| \ell^n_p \hookrightarrow \ell^n_{2}\| \cdot	\EE \| X\colon \ell^n_2 \to \ell^m_q\| \\
	&\lesssim_q n^{1/2-1/p} \cdot \max\{n^{1/2}, m^{1/q}\}
	=      \max\{n^{1-1/p}, n^{1/2-1/p}m^{1/q}\} .
	\end{align*}
	Thus, for $p\geq 2$ and $1\leq q\leq 2$,
	\begin{align*}
	\EE \| X\colon \ell^n_p \to \ell^m_q\|
	&\leq
	\EE \| X\colon \ell^n_p \to \ell^m_2\| \cdot
	\| \ell^m_{2} \hookrightarrow \ell^m_q\| \\
	&\lesssim_q  \max\{n^{1-1/p}, n^{1/2-1/p}m^{1/2}\} \cdot  m^{1/q- 1/2}\\
	&=	\max\{n^{1-1/p}m^{1/q-1/2}, n^{1/2-1/p}m^{1/q}\}.
	\end{align*}
	Suppose now that $1\leq p \leq 2\leq q \leq \infty$ and $1/p+1/q\leq 1$ (i.e., $q\geq p^*$).
	Choose $\theta\in[0,1]$ and $r \geq 2$ so that $\frac{1}{p} = \frac{\theta}{2} + \frac{1-\theta}{1}$ and $\frac{1}{q} = \frac{\theta}{r} +  \frac{1-\theta}{\infty}$,
	i.e., $\theta = 2/p^*$ and $r = 2q/p^*$.
	Using the Riesz--Thorin interpolation theorem,
	 the fact that $\| X\colon \ell^n_1 \to \ell^m_\infty\|\leq 1$  (since the entries take values in $[-1,1]$), 
	 and Jensen's inequality, we arrive at
	\begin{align*}
	\EE \| X\colon \ell^n_p \to \ell^m_q\|
	&\leq
	\EE \| X\colon \ell^n_2 \to \ell^m_r\|^\theta \| X\colon \ell^n_1 \to \ell^m_\infty\|^{1-\theta}\\
	&\leq  \EE \| X\colon \ell^n_2 \to \ell^m_r\|^\theta \leq  \bigl(\EE \| X\colon \ell^n_2 \to \ell^m_r\|\bigr)^\theta \\
	&\leq \max\{n^{1/2}, m^{1/r}\}^{\theta} =  \max\{n^{1/p^*}, m^{1/q}\}.
	\end{align*}
	The estimates in the remaining ranges of $p, q$ follow by duality \eqref{eq:duality}.
	Moreover, up to constants, all these estimates are optimal, as they can be reversed for matrices with $\pm 1$ entries (see \cite[Proposition~3.2]{Bennett} or \cite[Satz~2]{CarlMaureyPuhl}).
\end{remark}

%--------------------
%
\subsection{\texorpdfstring{$\psi_r$}{Psi-r} random variables}
\label{sec:psi-r}
%
%--------------------

In this section, we prove Theorem~\ref{thm:psi-p<2}.
To this end we shall split the matrix $X$ into two parts $X^{(1)}$ and $X^{(2)}$ such that all entries of $X^{(1)}$ are bounded by $C\ln(mn)^{1/r}$. Then, we shall deal with $X^{(2)}$ using the following crude bound and the fact that the probability that $X^{(2)} \neq 0$ is very small. In order to bound the expectation of the norm of $X^{(1)}$ we need a cut-off version of Theorem~\ref{thm:psi-p<2} --  see Lemma~\ref{prop:bounded-p-smaller-2} below.

\begin{lemma}
	\label{lem:cm-prelim-bound-2}
	Let $r\in(0,2]$. Assume that $X = (X_{ij})_{i\leq m, j\leq n}$ satisfies the assumptions of Theorem~\ref{thm:psi-p<2}.
	Then
	\[
	\bigl(\EE \|X_A\colon \ell_{p}^n\to\ell_q^m\|^2\bigr)^{1/2}
	\lesssim_{r,K,L} (m+n)^{1/r}	\|A\hadprod A \colon \ell^n_{p/2} \to \ell^m_{q/2} \|^{1/2}.
	\]
\end{lemma}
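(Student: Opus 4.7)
The plan is a net-based discretization that reduces the operator norm to a pointwise moment estimate on a bilinear form. By Remark~\ref{rmk:symmetrization} we may assume, at the cost of absorbing a constant into $K$ and $L$, that the entries $X_{ij}$ are symmetric. For fixed $(x,y)\in B_p^n\times B_{q^*}^m$ set
\[
Z_{x,y} \coloneqq \sum_{i,j} y_i a_{ij} X_{ij} x_j;
\]
this is a linear combination of $mn$ independent symmetric $\psi_r$ variables with coefficient vector $(y_ia_{ij}x_j)_{i,j}$. As verified in the manipulations \eqref{eq:manipulations-to-fix1}--\eqref{eq:manipulations-to-fix2} in the proof of Theorem~\ref{thm:main-gauss-sets} (applied to $u_j=x_j^2\in B_{p/2}^n$ and $v_i=y_i^2\in B_{q^*/2}^m$), the squared Euclidean norm of this vector is dominated by $D_1^2\coloneqq\|A\hadprod A\colon\ell_{p/2}^n\to\ell_{q/2}^m\|$:
\[
\sum_{i,j} y_i^2 a_{ij}^2 x_j^2 \le D_1^2.
\]
Combining Lemma~\ref{lem:psi_prelim5} with the implication \ref{item:psi-r-i}$\Rightarrow$\ref{item:psi-r-ii} of Lemma~\ref{lem:psi-r-equivalence}, we obtain for every $\rho\ge 2$,
\[
\|Z_{x,y}\|_\rho \lesssim_{r,K,L} \rho^{1/r}\Bigl(\sum_{i,j} y_i^2 a_{ij}^2 x_j^2\Bigr)^{1/2} \le \rho^{1/r} D_1.
\]

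To pass from the pointwise moment bound to the supremum, pick $1/2$-nets $N_p\subset B_p^n$ and $N_{q^*}\subset B_{q^*}^m$ with $|N_p|\le 5^n$ and $|N_{q^*}|\le 5^m$ (the standard volumetric covering for the unit ball of a finite-dimensional Banach space). The routine approximation argument for operator norms yields
\[
\|X_A\colon\ell_p^n\to\ell_q^m\| \le 4\max_{x\in N_p,\, y\in N_{q^*}}|Z_{x,y}|.
\]
Now choose $\rho\coloneqq m+n$, which is $\ge 2$ since $m,n\ge 1$. The trivial sup-of-moments bound gives
\[
\Bigl\|\max_{x\in N_p,\, y\in N_{q^*}}|Z_{x,y}|\Bigr\|_\rho \le |N_p\times N_{q^*}|^{1/\rho}\max_{(x,y)}\|Z_{x,y}\|_\rho \le 5\cdot (m+n)^{1/r}D_1,
\]
since $5^{(m+n)/\rho}=5$ for this choice of $\rho$. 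As $\rho\ge 2$, Jensen's inequality $(\EE W^2)^{1/2}\le\|W\|_\rho$ closes the argument:
\[
(\EE\|X_A\|_{p\to q}^2)^{1/2} \le 4\bigl(\EE\max_{(x,y)}|Z_{x,y}|^2\bigr)^{1/2} \le 4\bigl\|\max_{(x,y)}|Z_{x,y}|\bigr\|_\rho \lesssim_{r,K,L} (m+n)^{1/r}D_1.
\]

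The only point requiring real care is the inequality $\sum y_i^2 a_{ij}^2 x_j^2\le D_1^2$, whose proof requires the case split on $q\le 2$ vs.\ $q\ge 2$ carried out in \eqref{eq:manipulations-to-fix1}--\eqref{eq:manipulations-to-fix2}. Beyond this point the argument is an entirely elementary moment-method supremum bound; the scale $\rho=m+n$ is chosen precisely to balance the volumetric bound $5^{m+n}$ on the net cardinality against the polynomial moment growth $\rho^{1/r}$ of the $\psi_r$ variables, and the resulting estimate is much weaker than what one could prove by the sharper methods used elsewhere in the paper -- but it is all that is required of this lemma in the proof of Theorem~\ref{thm:psi-p<2}.
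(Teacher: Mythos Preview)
Your proof is correct and follows essentially the same approach as the paper: symmetrize, discretize via $1/2$-nets of size $5^n$ and $5^m$ with the factor-4 approximation, bound the coefficient vector by $D_1$ via \eqref{eq:manipulations-to-fix1}--\eqref{eq:manipulations-to-fix2}, and invoke Lemma~\ref{lem:psi_prelim5} to get a pointwise $\psi_r$ estimate which is then upgraded to the supremum. The only difference is cosmetic: the paper passes through the exponential moment $\EE e^{uZ^r}$ and a Jensen-type inequality for $z\mapsto e^{z^{r/2}}$, whereas you use polynomial moments with $\rho=m+n$ and the trivial bound $\|\max_i|Z_i|\|_\rho\le N^{1/\rho}\max_i\|Z_i\|_\rho$ --- your variant is arguably a touch cleaner.
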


\begin{proof}
	By a standard volumetric estimate (see, e.g., \cite[Corollary~4.2.13]{V2018}),
	we know that there exists (in the metric $\|\cdot\|_p$) a $1/2$-net $S$   in $B_p^n$ of size at most $5^n$.
	In other words, for any $x\in B_p^n$ there exists $y\in S$ such that $x-y \in \frac{1}{2} B_p^n$.
		Thus, for any $z\in\RR^n$,
		\begin{align*}
			\sup_{x\in B_p^n} \sum_{j=1}^n x_jz_j
			&\leq \sup_{x\in B_p^n} \min_{y\in S} \sum_{j=1}^n (x_j-y_j) z_j  + \sup_{y\in S} \sum_{j=1}^n y_j z_j \\
      	&\leq \sup_{u\in \frac{1}{2} B_p^n} \sum_{j=1}^n u_j z_j  + \sup_{y\in S} \sum_{j=1}^n y_j z_j      	= 	 \frac{1}{2}\sup_{x\in  B_p^n} \sum_{j=1}^n x_j z_j  + \sup_{y\in S} \sum_{j=1}^n y_j z_j .
		\end{align*}
	Hence,
	\begin{equation}	\label{eq:eps-nets1}
	\sup_{x\in B_p^n} \sum_{j=1}^n x_jz_j \leq 2 \sup_{y\in S} \sum_{j=1}^n y_j z_j .
	\end{equation}
	Likewise, if we denote by $T$ the $1/2$-net in $B_{q^*}^m$ (in the metric $\|\cdot\|_{q^*}$)  of size at most $5^m$,
	then
	\begin{equation}	\label{eq:eps-nets2}
	\sup_{x\in B_{q^*}^m} \sum_{i=1}^m x_iz_i \leq 2 \sup_{y\in T} \sum_{i=1}^m y_i z_i .
	\end{equation}
	Combining these two estimates, we see that
	\begin{align}
		\label{eq:lc-prelim-bound-1}
		\bigl(\EE \|X_A\colon \ell_{p}^n\to\ell_q^m\|^2\bigr)^{1/2}
		&= \bigl(
		\EE \sup_{x\in B_p^n, y\in B_{q^*}^m} \bigl( \sum_{i=1}^m \sum_{j=1}^n  y_i a_{ij} X_{ij} x_j\bigr)^2
		\bigr)^{1/2} \\
			& \leq 4			\bigl(\EE \sup_{x\in S, y\in T} \bigl( \sum_{i=1}^m \sum_{j=1}^n  y_i a_{ij} X_{ij} x_j\bigr)^2\bigr)^{1/2}.\nonumber
				\end{align}

    Lemma \ref{lem:psi_prelim5} implies that for any $x\in\RR^n$, $y\in\RR^m$, the random variable
   	\[
   		Z(x,y)\coloneqq \bigl(\sum_{i,j} y_i^2a_{ij}^2x_j^2 \bigr)^{-1/2}  \sum_{i=1}^m \sum_{j=1}^n  y_i a_{ij} X_{ij} x_j
	\]
	satisfies condition (i) in Lemma~\ref{lem:psi-r-equivalence}.
   Thus, Lemma~ \ref{lem:psi-r-equivalence} implies that
   	\begin{equation}	\label{eq:lc-prelim-bound-2}
		\EE  \exp\Bigl( c(r,K,L)  \bigl(\sum_{i,j} y_i^2a_{ij}^2x_j^2 \bigr)^{-r/2} \Bigl( \sum_{i=1}^m \sum_{j=1}^n  y_i a_{ij} X_{ij} x_j\Bigr)^{r}\Bigr) \le C(r,K,L),
	\end{equation}
	where $c(r,K,L) \in(0,\infty)$ and $C(r,K,L) \in(0,\infty)$ depend only on $r$, $K$, and $L$.

	The function $z\mapsto e^{z^{r/2}}$ is convex on $[(2r^{-1}-1)^{2/r},\infty)$. Therefore,
	by Jensen's inequality, for any $u> 0$ and any nonnegative random variable $Z$,
	\begin{align*}
	\exp\bigl(u (\EE Z^2)^{r/2}\bigr)
	&\leq \exp\bigl( (u^{2/r}\EE Z^2 + (2r^{-1}-1)^{2/r})^{r/2}\bigr)\\
	&\leq \EE 	 \exp\bigl( (u^{2/r} Z^2 + (2r^{-1}-1)^{2/r})^{r/2}\bigr)\\
	&\leq \EE 	 \exp\bigl( u Z^r + (2r^{-1}-1) \bigr) \leq  e^{2/r} \EE \exp(u Z^r).
	\end{align*}
	Hence,
	\[
    (\EE Z^2)^{1/2} \le u^{-1/r} \Bigl(\ln \bigl(e^{2/r} \EE \exp(u Z^{r})\bigr)\Bigr)^{1/r}.
	\]
	Thus, when
	\[	
		u\coloneqq c(r,K,L) \bigl(\max_{x\in S, y\in T} \sum_{i,j} y_i^2a_{ij}^2x_j^2 \bigr)^{-r/2},
	\]
	 we get by \eqref{eq:lc-prelim-bound-2}, \eqref{eq:manipulations-to-fix1}, and \eqref{eq:manipulations-to-fix2},
	\begin{align*}
		\MoveEqLeft[4]
		\bigl(\EE \sup_{x\in S, y\in T}  \bigl(\sum_{i=1}^m \sum_{j=1}^n  y_i a_{ij} X_{ij} x_j\bigr)^2\bigr)^{1/2} \\
		&\leq u^{-1/r} \ln^{1/r} \Bigl(e^{2/r}\EE \exp\Bigl( c(r,K,L) \sup_{x\in S, y\in T}Z(x,y)^{r}\Bigr)\Bigr) \\
		&\leq u^{-1/r} \ln^{1/r} \Bigl(e^{2/r}\EE \!\!  \sum_{x\in S, y\in T}\!\! \exp\bigl( c(r,K,L) Z(x,y)^{r}\bigr)\Bigr) \\
		&\leq u^{-1/r} \ln^{1/r} \Bigl(e^{2/r}|S||T|  C(r,K,L)\Bigr)  \\
		&\leq \frac 1{c(r,K,L)} \max_{x\in S, y\in T}\bigl( \sum_{i,j} y_i^2a_{ij}^2x_j^2 \bigr)^{1/2} \ln^{1/r} \Bigl(e^{2/r} 5^{m} 5^{n} C(r,K,L)\Bigr) \\
		 & \overset{\mathclap{\eqref{eq:manipulations-to-fix1},\ \eqref{eq:manipulations-to-fix2}}}{\lesssim_{r,K,L}} \ \|A\hadprod A \colon \ell^n_{p/2} \to \ell^m_{q/2} \|^{1/2} \bigl(m+n+ \widetilde{C}(r,K,L)\bigr)^{1/r},
	\end{align*}
	where in the last two inequalities we also used inequalities $|S| \leq 5^n$ and $|T|\leq 5^m$, and the inclusions $S\subset B_p^n$, $T\subset B_{q^*}^m$.
    	Recalling \eqref{eq:lc-prelim-bound-1} completes the proof.
\end{proof}

The following cut-off version of Theorem~\ref{thm:psi-p<2}
can be proved similarly as  Proposition~\ref{prop:gauss-p-smaller-2}.

\begin{lemma}
	\label{prop:bounded-p-smaller-2}
	Let $K,L, M>0$ and $r\in(0,2]$.
	Assume $X=(X_{ij})_{i\leq m, j\leq n}$ is a~random matrix
	with independent symmetric entries taking values in $[-M,M]$
	and satisfying the condition
	\begin{equation}	\label{eq:prop-assumption-lin-psi}
	\PP\bigl(|X_{ij}| \ge t\bigr) \le Ke^{-t^r/L} \quad \text{for all } t\ge 0.
	\end{equation}
		Then, for $1\leq p\leq 2$ and $1\leq q< \infty$, we have
\begin{align*}
\EE \|X_A \colon \ell^n_p\to \ell^m_q\|
& \lesssim q^{1/r} C(r,K,L) \ln(en)^{1/p^*} \|A\hadprod A \colon \ell^n_{p/2} \to \ell^m_{q/2}\|^{1/2}\\
 &\qquad + M \ln(en)^{1/2+1/p^*}  \|(A\hadprod A)^T \colon \ell^m_{q^*/2} \to \ell^n_{p^*/2}\|^{1/2}. \nonumber
 \end{align*}
\end{lemma}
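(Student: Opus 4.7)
The plan is to follow closely the strategy used in the proof of Proposition~\ref{prop:gauss-p-smaller-2}, substituting Talagrand's convex concentration inequality for the Gaussian concentration inequality, and using a $\psi_r$ moment bound in place of $\gamma_q$. First I would apply Lemma~\ref{lem:set-K} to pass from $B_p^n$ to the polytope $K$, obtaining
\begin{equation*}
\|X_A \colon \ell_p^n \to \ell_q^m\| \leq \ln(en)^{1/p^*} \sup_{x\in \Ext(K)} \|X_A x\|_q,
\end{equation*}
and then analyse $\|X_A x\|_q$ separately for each $x \in \Ext(K)$ with support $J$ of cardinality $k$, i.e.\ $|x_j| = k^{-1/p}$ for $j\in J$ and $x_j = 0$ otherwise.

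Fix such an $x$. For each $i$, $\sum_j a_{ij}X_{ij}x_j$ is a sum of independent symmetric $\psi_r$ variables, so Proposition~\ref{prop:psi_prelim4} (for $r\in(0,1]$) together with an analogous standard Bernstein-type bound for $r\in[1,2]$, combined with the elementary estimate $q^{1/r}\geq \sqrt{q}$ for $q\geq 1$ and $r\leq 2$, yields
\begin{equation*}
\Bigl\|\sum_{j=1}^n a_{ij}X_{ij}x_j\Bigr\|_q \lesssim_{r,K,L} q^{1/r}\Bigl(\sum_{j=1}^n a_{ij}^2 x_j^2\Bigr)^{1/2}.
\end{equation*}
Since $x \in B_p^n$, raising this to the $q$-th power and summing in $i$ gives
\begin{equation*}
\EE\|X_A x\|_q \leq (\EE\|X_A x\|_q^q)^{1/q} \lesssim_{r,K,L} q^{1/r}\, \|A\hadprod A\colon \ell_{p/2}^n \to \ell_{q/2}^m\|^{1/2} =: a.
\end{equation*}
On the other hand, the map $z \mapsto \|z_A x\|_q$ is convex and, by the Cauchy--Schwarz computation in \eqref{eq:lipschitz}, $(b_J/k^{1/p})$-Lipschitz in the Euclidean norm, where $b_J^2 = \sup_{y\in B_{q^*/2}^m} \sum_i\sum_{j\in J} y_i a_{ij}^2$. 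Because the entries of $X$ take values in $[-M,M]$, Talagrand's convex concentration inequality for independent bounded random variables (see, e.g., \cite[Theorem~6.10]{BLM2013}) then gives
\begin{equation*}
\PP\bigl(\|X_A x\|_q \geq \EE\|X_A x\|_q + u\bigr) \leq C\exp\Bigl(-\frac{c\,u^2 k^{2/p}}{M^2 b_J^2}\Bigr).
\end{equation*}

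Invoking \eqref{eq:kbj}, which gives $k^{2/p^*-1}b_J^2 \leq \|(A\hadprod A)^T \colon \ell_{q^*/2}^m \to \ell_{p^*/2}^n\| =: b^2$, and choosing $u = sMb_J k^{1/p^*-1/2}\sqrt{C\ln(en)}$ (so that $u\leq sMb\sqrt{C\ln(en)}$), we arrive at
\begin{equation*}
\PP\bigl(\|X_A x\|_q \geq a + C'Mb\sqrt{\ln(en)}\,s\bigr) \leq \exp(-k\ln(en)\,s^2).
\end{equation*}
Since $\Ext(K)$ contains at most $2^k\binom{n}{k} \leq \exp(k\ln(en))$ elements with support of size $k$, a union bound over $k=1,\dots,n$ and $x\in\Ext(K)$ followed by Lemma~\ref{lem:mean-median} (with $r=2$) produces
\begin{equation*}
\EE\sup_{x\in \Ext(K)}\|X_A x\|_q \lesssim a + Mb\sqrt{\ln(en)},
\end{equation*}
and plugging this into the initial reduction concludes the proof. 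The main obstacle is the careful bookkeeping in the concentration step: the boundedness constant $M$, the Lipschitz constant $b_J/k^{1/p}$, and the scaling $k^{1/p^*-1/2}b_J \leq b$ must line up so that $M$ appears only in the second summand (multiplying $\|(A\hadprod A)^T\colon \ell_{q^*/2}^m\to\ell_{p^*/2}^n\|^{1/2}$), while the $\psi_r$ moment bound $q^{1/r}$ appears only in the first summand; this is exactly the analogue of the separation between $\gamma_q$ and the Gaussian tail in Proposition~\ref{prop:gauss-p-smaller-2}.
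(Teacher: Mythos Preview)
Your proposal is correct and follows essentially the same route as the paper: reduction to $\Ext(K)$ via Lemma~\ref{lem:set-K}, a $\psi_r$ moment estimate for $\EE\|X_Ax\|_q$, Talagrand's convex concentration using the Lipschitz bound from~\eqref{eq:lipschitz} and the relation~\eqref{eq:kbj}, then a union bound and Lemma~\ref{lem:mean-median}. Two minor deviations: the paper obtains the moment bound uniformly for all $r\in(0,2]$ via Lemma~\ref{lem:psi_prelim5} and Lemma~\ref{lem:psi-r-equivalence} rather than splitting into $r\le 1$ and $r\ge 1$; and it states Talagrand's inequality around the median (hence around $2\EE\|X_Ax\|_q$) citing \cite{Talagrand96}, whereas you write it around the mean directly --- both versions are standard.
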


\begin{proof}
		Fix $1\leq p\leq 2$ and $1\leq q\leq \infty$.
	Let $K$ be the set defined in Lemma~\ref{lem:set-K}
	so that $B_p^n\subset \ln(en)^{1/p^*} K$.
	Then
	\begin{equation}
	\label{eq:lc-cutoff-reduction-to-K}
	\|X_A \colon \ell^n_p\to \ell^m_q\|
	=  \sup_{x\in B_p^n} \|X_A x\|_q
	\leq  \ln(en)^{1/p^*}\! \sup_{x\in \Ext (K)}\! \|X_A x\|_q,
	\end{equation}
	where  $\Ext(K)$ is the set of  extreme points of $K$.
	We shall now estimate the expected value of the right-hand side of \eqref{eq:lc-cutoff-reduction-to-K}.

	To this end, we consider a fixed $x=(x_j)_{j=1}^n \in \Ext(K)$.
	This means that there exists a non-empty index set $J\subset \{1,\dots,n\}$ of cardinality $k\leq n$ such that $x_j = \frac{\pm 1}{k^{1/p}}$ for $j\in J$ and $x_j=0$ for $j\notin J$. We know from \eqref{eq:lipschitz} that the Lipschitz constant of the convex function
	\[
		z=(z_{ij})_{ij} \mapsto \Bigl\| \Bigl( \sum_{j=1}^n a_{ij}z_{ij} x_j \Bigr)_i \Bigr\|_q = \sup_{y\in B_{q^*}^m}\sum_{i=1}^m \sum_{j=1}^n y_ia_{ij}z_{ij}x_j
	\]	
	is less than or equal to
	\[
		 \frac{1}{k^{1/p}}\sqrt{ \sup_{y\in B^m_{q^*/2}}  \sum_{i=1}^m  \sum_{j\in J} y_i a_{ij}^2  }\,
    		\eqqcolon  \frac{ b_J}{k^{1/p}}.
	\]
	Thus, Talagrand's concentration for convex functions and random vectors with  independent bounded coordinates (see \cite[Theorem 6.6 and Equation (6.18)]{Talagrand96}), together with the inequality $\operatorname{Med}(|Z|)\leq 2\EE |Z|$, implies
	\begin{equation}	\label{eq:lc-cutoff-norm-concentration-claim}
		\PP (\|X_Ax\|_q \ge 2\EE \|X_Ax\|_q + t) \le 4\exp\Bigl(-\frac{k^{2/p}t^2}{16M^2b_J^2}\Bigr) \qquad \text{for all } t\ge 0.
	\end{equation}
Similar to the proof in the Gaussian case (i.e., proof of Proposition \ref{prop:gauss-p-smaller-2}), we shall transform this into a more convenient form
by getting rid of $b_J$
and
estimating  $\EE \|X_A x\|_q $. Let us denote, for each $i\in\{1,\dots,m\}$,
	\begin{align*}
	Z_i
	       &\coloneqq  \sum_{j=1}^n a_{ij}X_{ij} x_j.
	\end{align*}
From our assumption \eqref{eq:prop-assumption-lin-psi} as well as
 Lemmas \ref{lem:psi_prelim5} and \ref{lem:psi-r-equivalence}, we obtain that   $(\EE|Z_i|^q)^{1/q}\lesssim_{r,K,L}q^{1/r}\sqrt{\sum_{j=1}^n a_{ij}^2 x_j^2}$. Hence,

\begin{align*}	
	\EE \|X_A x\|_q
	& \leq \bigl(\EE \|X_Ax\|_q^q\bigr)^{1/q}
	= \bigl(\sum_{i=1}^m \EE |(X_Ax)_i|^q\bigr)^{1/q} 	
	\lesssim_{r,K,L} q^{1/r} \Bigl( \sum_{i=1}^m \bigl(\sum_{j=1}^n a_{ij}^2 x_j^2\bigr)^{q/2}  \Bigr)^{1/q} \\
	& \leq   q^{1/r}  \sup_{z\in B_p^n} \Bigl( \sum_{i=1}^m \Bigl| \sum_{j=1}^n a_{ij}^2 z_j^2\Bigr|^{q/2}\Bigr)^{1/q}
 	 =  q^{1/r} \|A\hadprod A \colon \ell^n_{p/2} \to \ell^m_{q/2}\|^{1/2}  \eqqcolon q^{1/r} a.
\end{align*}

From \eqref{eq:kbj}, we see that
\[
	k^{2/p^*-1} b_J^2
	\leq  \|(A\hadprod A)^T \colon \ell^m_{q^*/2} \to \ell^n_{p^*/2}\| \eqqcolon b^2.
\]

	The above two inequalities together with estimate \eqref{eq:lc-cutoff-norm-concentration-claim}
	(applied with  $t=4k^{\frac 1{p^*}-\frac 12}b_J M\sqrt{\ln(en)} s$), imply that
	\begin{equation}
		 \label{eq:lc-cutoff-concentration_a_b}
	\PP\bigl(\|X_A x\|_q \geq C(r,K,L) q^{1/r}a +   4bM \sqrt{\ln(en)} s \bigr)
	 \leq  4 \exp\bigl( - k \ln(en)s^2\bigr)
	\end{equation}
	for every $s\geq 0$ and any $x\in \Ext(K)$ with support of cardinality $k$.
	
	For any $k\leq n$,
	there are $2^k \binom{n}{k} \leq 2^k n^k \leq \exp( k \ln(en))$ vectors in $\Ext(K)$
	with support of cardinality $k$.
	Thus, using the union bound and \eqref{eq:lc-cutoff-concentration_a_b}, we see that for $s\geq \sqrt{2}$,
	\begin{multline*}
	\PP\bigl(\sup_{x\in \Ext K}\! \|X_A x\|_q \geq
	C(r,K,L) q^{1/r}a +   4bM \sqrt{\ln(en)} s\bigr)
	\leq
	4\sum_{k=1}^n \exp( -k \ln(en)(s^2-1))	\\
	\leq 4n\exp( - \ln(en)(s^2-1))
	= 4n(en)^{-s^2+1}\leq 4 e^{-s^2+1}.
	\end{multline*}
	Hence, by Lemma~\ref{lem:mean-median},
	\begin{align*}
	\EE\! \sup_{x\in \Ext K}\! \|X_A x\|_q
	& \leq C(r,K,L)q^{1/r}a +  4b M \sqrt{\ln(en)}  \Bigl(\sqrt{2} +4  e \frac{e^{-2}}{2\sqrt{2}}\Bigr).
	\end{align*}
	Recalling \eqref{eq:lc-cutoff-reduction-to-K} and the definitions of $a$ and $b$ yields the assertion.
\end{proof}

\begin{proof}[Proof of Theorem~\ref{thm:psi-p<2}] 	
	By a symmetrization argument (as in the first  paragraph of the proof of Theorem~\ref{thm:main-psi-r}),
	we may and do assume  that all the entries $X_{ij}$ are symmetric.
		Set $M = (4 L \ln(mn)/r)^{1/r}$.
		Denote $\widehat{X}_{ij} = X_{ij} \ind_{\{|X_{ij}|\leq M\}}$
		and let $\widehat{X}$ be the $m\times n$ matrix with entries $\widehat{X}_{ij}$.
		We have
		\begin{align*}
		\EE \|X_A \colon \ell^n_p\to \ell^m_q\|
		&= \EE \|X_A \colon \ell^n_p\to \ell^m_q\| \ind_{\{ \max_{k,l} |X_{kl}|\leq M\}} \\
		&\quad+ \EE \|X_A \colon \ell^n_p\to \ell^m_q\| \ind_{\{ \max_{k,l} |X_{kl}|> M\}}.
		\end{align*}
		
				The random matrix $\widehat{X}$ satisfies the assumptions of Lemma~\ref{prop:bounded-p-smaller-2}. Thus, the first summand above can be estimated as follows:
		\begin{align*}
		\MoveEqLeft
		\EE \|X_A \colon \ell^n_p\to \ell^m_q\| \ind_{\{ \max_{k,l} |X_{kl}|\leq M\}}\\
		& = \EE \sup_{y\in B_{q^*}^m, x\in B_p^n} \bigl\{ \sum_{i=1}^m \sum_{j=1}^n y_i a_{ij} X_{ij} x_j \bigr\}
		\cdot \ind_{\{ \max_{k,l} |X_{kl}|\leq M\}} \\
		& = \EE \sup_{y\in B_{q^*}^m, x\in B_p^n}\bigl\{\sum_{i=1}^m \sum_{j=1}^n y_i a_{ij} X_{ij} \ind_{\{|X_{ij}|\leq M\}} x_j \bigr\}
		\cdot \ind_{\{ \max_{k,l} |X_{kl}|\leq M\}} \\
		&= \EE \|\widehat{X}_A \colon \ell^n_p\to \ell^m_q\| \ind_{\{ \max_{k,l} |X_{kl}|\leq M\}}
		\leq  \EE \|\widehat{X}_A \colon \ell^n_p\to \ell^m_q\| \\
		&\lesssim_{r, K, L} \
	       q^{1/r}\ln(en)^{1/p^*} \|A\hadprod A \colon \ell^n_{p/2} \to \ell^m_{q/2}\|^{1/2}\\
		&\qquad\qquad\ +M \ln(en)^{1/2+1/p^*}  \|(A\hadprod A)^T \colon \ell^m_{q^*/2} \to \ell^n_{p^*/2}\|^{1/2}
		\\
		&\lesssim_{r, K, L} \
	        q^{1/r} \ln(en)^{1/p^*} \|A\hadprod A \colon \ell^n_{p/2} \to \ell^m_{q/2}\|^{1/2}\\
		&\qquad\qquad\ +  \ln(mn)^{1/r} \ln(en)^{1/2+1/p^*}  \|(A\hadprod A)^T \colon \ell^m_{q^*/2} \to \ell^n_{p^*/2}\|^{1/2}.
		\end{align*}
		For the second summand we write,
		using the Cauchy--Schwarz inequality
		and then Lemma~\ref{lem:cm-prelim-bound-2}
		 and Lemma~\ref{lem:max-psi-r} (with $k=mn$ and $v=4/r$; recall that $M = (4 L \ln(mn)/r)^{1/r}$),
		\begin{align*}
		\MoveEqLeft
		\EE \|X_A \colon \ell^n_p\to \ell^m_q\| \ind_{\{ \max_{k,l} |X_{kl}|> M\}}\\
		&\leq \bigl(	\EE \|X_A \colon \ell^n_p\to \ell^m_q\|^2\bigr)^{1/2}
		\PP( \max_{k\leq m, l\leq n} |X_{kl}| > M)^{1/2}\\
		& \lesssim_{r, K, L}  (m+n)^{1/r}	\|A\hadprod A \colon \ell^n_{p/2} \to \ell^m_{q/2} \|^{1/2}  \cdot (mn)^{-2/r +1/2}\\
        & \lesssim_{r} 	\|A\hadprod A \colon \ell^n_{p/2} \to \ell^m_{q/2} \|^{1/2}.
		\end{align*}
		Combinging the above three inequalities ends the proof.
\end{proof}

%--------------------
%
\section{Lower bounds and further discussion of conjectures}
\label{sec:lower-bounds}
%
%--------------------

\subsection{Lower bounds}
\label{subsec:lower-bounds}
Let us first provide lower bounds showing that the upper bounds obtained above
are indeed  sharp (up to logarithms).

\begin{proposition}
	\label{prop:cm-lower-bound}
	Let $X = (X_{ij})_{i\leq m, j\leq n}$ be a random matrix with independent  mean-zero entries satisfying $\EE |X_{ij}| \ge c$ for some $c\in(0,\infty)$.
	Then, for all $1\leq p, q\leq \infty$,
	\[
	\EE \|X_A\colon \ell_{p}^n\to\ell_q^m\|
	\geq \frac{c}{2\sqrt 2}\|A\hadprod A \colon \ell^n_{p/2} \to \ell^m_{q/2} \|^{1/2}.
	\]
\end{proposition}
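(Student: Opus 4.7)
The plan is to fix $x\in B_p^n$ and exploit the pointwise bound $\|X_A\|_{p\to q}\ge\|X_A x\|_q$. Since the $\ell_q^m$-norm depends only on absolute values and is convex for $q\ge 1$, I would apply Jensen's inequality to the convex map $u\mapsto\|u\|_q$ evaluated at the random vector $(|(X_Ax)_i|)_i$ to bring the expectation inside:
\[
\EE\|X_A x\|_q \;=\; \EE\bigl\|(|(X_A x)_i|)_{i=1}^m\bigr\|_q \;\ge\; \bigl\|(\EE|(X_A x)_i|)_{i=1}^m\bigr\|_q.
\]
It then suffices to bound $\EE|(X_Ax)_i|=\EE\bigl|\sum_j a_{ij}x_j X_{ij}\bigr|$ from below.

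For this, the central auxiliary statement to prove is: for independent mean-zero $Y_1,\dots,Y_k$ with $\EE|Y_j|\ge c$ and $\alpha_j\in\RR$,
\[
\EE\Bigl|\sum_{j=1}^k \alpha_j Y_j\Bigr| \;\ge\; \frac{c}{2\sqrt 2}\sqrt{\sum_{j=1}^k \alpha_j^2}.
\]
The factor $\tfrac12$ comes from standard symmetrization by an independent copy $Y_j'$, giving $\EE|\sum \alpha_j Y_j|\ge\tfrac12\EE|\sum\alpha_j(Y_j-Y_j')|$. Symmetry of $Y_j-Y_j'$ permits injection of independent Rademacher signs $\varepsilon_j$, after which Jensen applied in the $Y'$ variables (using $\EE Y_j'=0$ and convexity of $|\cdot|$) reduces matters to $\EE|\sum\varepsilon_j\alpha_j Y_j|$. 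The factor $1/\sqrt 2$ then arises from Khintchine's inequality applied conditionally on $Y$: $\EE_\varepsilon|\sum\varepsilon_j\alpha_j Y_j|\ge \tfrac{1}{\sqrt 2}\sqrt{\sum\alpha_j^2 Y_j^2}$. A final Jensen step, exploiting convexity of the weighted Euclidean norm $u\mapsto\sqrt{\sum\alpha_j^2 u_j^2}$ in $u\in\RR^k$ and applied to $u=(|Y_j|)_j$, yields $\EE\sqrt{\sum\alpha_j^2 Y_j^2}=\EE\sqrt{\sum\alpha_j^2|Y_j|^2}\ge\sqrt{\sum\alpha_j^2(\EE|Y_j|)^2}\ge c\sqrt{\sum\alpha_j^2}$.

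Applying this auxiliary bound row-by-row with $\alpha_j=a_{ij}x_j$ and combining with the Jensen step above gives, for every $x\in B_p^n$,
\[
\EE\|X_A\|_{p\to q}\;\ge\;\EE\|X_A x\|_q\;\ge\;\frac{c}{2\sqrt 2}\,\Bigl\|\bigl(\textstyle\sqrt{\sum_j a_{ij}^2 x_j^2}\bigr)_i\Bigr\|_q.
\]
To finish I would take the supremum over $x\in B_p^n$. The change of variables $z_j=x_j^2$ satisfies $\|z\|_{p/2}=\|x\|_p^2$, so $\{x^2:x\in B_p^n\}=B_{p/2}^n\cap\RR_+^n$; since $A\hadprod A$ has nonnegative entries the operator norm $\|A\hadprod A\colon\ell_{p/2}^n\to\ell_{q/2}^m\|$ is attained on the nonnegative cone. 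A direct computation identifies $\sup_{x\in B_p^n}\bigl\|(\sqrt{\sum_j a_{ij}^2 x_j^2})_i\bigr\|_q$ with $\|A\hadprod A\colon\ell_{p/2}^n\to\ell_{q/2}^m\|^{1/2}$, which gives the proposition.

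The only delicate point I anticipate is the convexity input in the final Jensen step of the auxiliary lemma; one must apply it to the coordinate-wise absolute values $(|Y_j|)_j$ so that the weighted Euclidean norm genuinely acts on a nonnegative vector. Everything else—symmetrization, Rademacher injection, Khintchine with the sharp constant $1/\sqrt 2$, and the substitution $z=x^2$—is standard.
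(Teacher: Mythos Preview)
Your proof is correct and uses the same core ingredients as the paper's proof---symmetrization, injection of Rademacher signs, Khintchine's inequality with the sharp constant $1/\sqrt{2}$, and Jensen's inequality together with the substitution $z_j=x_j^2$ to recover $\|A\hadprod A\colon\ell_{p/2}^n\to\ell_{q/2}^m\|^{1/2}$. The organization differs slightly: the paper works at the level of the full operator norm, first symmetrizing the matrix and then using Jensen plus the contraction principle (Lemma~\ref{lem:contraction-principle}) to reduce to a pure Rademacher matrix $(\varepsilon_{ij}a_{ij})_{i,j}$ before applying Jensen and Khintchine; you instead fix $x\in B_p^n$, push the expectation inside $\|\cdot\|_q$, and package everything as a scalar auxiliary lemma applied row by row. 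Your final Jensen step (convexity of $u\mapsto\sqrt{\sum_j\alpha_j^2u_j^2}$ applied to $u=(|Y_j|)_j$) plays the role that the paper's combination of Jensen in $|X_{ij}|$ and the contraction principle plays, so your argument is marginally more self-contained in that it avoids invoking the contraction principle as a separate tool. Both routes yield the same constant $c/(2\sqrt{2})$.
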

Using duality \eqref{eq:duality}	  we immediately obtain the following corollary.

\begin{corollary}\label{cor:cm-lower-bound}
	Let $X = (X_{ij})_{i\leq m, j\leq n}$ be as in Proposition~\ref{prop:cm-lower-bound}. Then, for all $1\leq p,q\leq \infty$,
	\[
	\EE \|X_A\colon \ell_{p}^n\to\ell_q^m\|
	\geq \frac{c}{2\sqrt 2}\|(A\hadprod A)^T \colon \ell^m_{q^*/2} \to \ell^n_{p^*/2}\|^{1/2}.
	\]
\end{corollary}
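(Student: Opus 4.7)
The plan is to deduce the corollary from Proposition~\ref{prop:cm-lower-bound} by transposition, using the operator-norm duality~\eqref{eq:duality}. The key observation is that $(X_A)^T = (X^T)_{A^T}$ entry-wise (both have $(j,i)$-entry $a_{ij}X_{ij}$), so that
\[
\|X_A\colon \ell_p^n \to \ell_q^m\| = \|(X^T)_{A^T}\colon \ell_{q^*}^m \to \ell_{p^*}^n\|,
\]
and the same identity persists after taking expectations.

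Next, I would apply Proposition~\ref{prop:cm-lower-bound} to the matrix $X^T = (X_{ij})_{j\le n, i\le m}$ and the deterministic $n\times m$ matrix $A^T$, with the parameters $p,q$ replaced by $q^*,p^*$, respectively. The hypotheses of the proposition are invariant under transposition: $X^T$ still has independent, mean-zero entries, and the uniform first-moment lower bound $\EE|X_{ij}|\ge c$ depends only on the individual distributions of the entries. This application yields
\[
\EE \|(X^T)_{A^T}\colon \ell_{q^*}^m \to \ell_{p^*}^n\| \ge \frac{c}{2\sqrt 2}\, \|A^T \hadprod A^T\colon \ell_{q^*/2}^m \to \ell_{p^*/2}^n\|^{1/2}.
\]

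To conclude, I would observe that Hadamard multiplication commutes with transposition, i.e.\ $A^T \hadprod A^T = (A\hadprod A)^T$ (both are the matrix with $(j,i)$-entry $a_{ij}^2$), so the right-hand side is exactly $\frac{c}{2\sqrt 2}\|(A\hadprod A)^T\colon \ell_{q^*/2}^m \to \ell_{p^*/2}^n\|^{1/2}$. Combined with the expectation identity from the first paragraph, this gives the desired lower bound on $\EE\|X_A\colon \ell_p^n \to \ell_q^m\|$.

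There is no real obstacle beyond careful bookkeeping of the simultaneous swaps $(p,q)\leftrightarrow(q^*,p^*)$ and $m\leftrightarrow n$ when transposing; all genuine work is packed into Proposition~\ref{prop:cm-lower-bound}, which the corollary uses as a black box.
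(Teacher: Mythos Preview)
Your proposal is correct and matches the paper's own argument: the paper simply says ``Using duality~\eqref{eq:duality} we immediately obtain the following corollary,'' and your write-up is a careful unpacking of exactly that one-line duality argument.
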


\begin{proof}[Proof of Proposition \ref{prop:cm-lower-bound}]
Let $\| \cdot\|$ denote the operator norm from $\ell_{p}^n$ to $\ell_q^m$. For $i\in\{1,\dots,m\}$ and $j\in\{1,\dots,n \}$, let us denote by $E_{ij}$  the $m\times n$ matrix with entry $1$ at the intersection of $i$th row and $j$th column and with all other entries $0$.
	By the symmetrization trick described in Remark~\ref{rmk:symmetrization},
	 it suffices to consider matrices $X$ with symmetric entries
	 and  prove the assertion with a twice better constant $c/\sqrt 2$
	 (note that, also by Remark~\ref{rmk:symmetrization}, the lower bound for the absolute first moment of the symmetrized entries does not change and is still equal to $c$). 

If $X$ has symmetric independent entries, it has the same distribution as $(\varepsilon_{ij}|X_{ij}|)_{ij}$, where $\varepsilon_{ij}$, $i\leq m$, $j\leq n$, are i.i.d.~Rademacher random variables, independent of all other random variables. 
Hence,
 by Jensen's inequality and the contraction principle (Lemma \ref{lem:contraction-principle} applied  with $\alpha_{ij} = 1/\EE|X_{ij}| \le 1/c$ and $x_{ij}=a_{ij}\EE|X_{ij}| E_{ij}$), we get
\begin{align} \label{eq:cm-lower-bd-prestart}\nonumber
\EE \Bigl\|\sum_{i=1}^m \sum_{j=1}^n X_{ij} a_{ij}E_{ij} \Bigr\|
& \nonumber
= \EE \Bigl\|\sum_{i,j}\varepsilon_{ij}|X_{ij}|a_{ij}E_{ij} \Bigr\|
\ge \EE \Bigl\|\sum_{i,j} \varepsilon_{ij}\EE|X_{ij}|a_{ij} E_{ij} \Bigr\|  \\ &
\ge c\ \EE \Bigl\|\sum_{i,j} \varepsilon_{ij}a_{ij} E_{ij} \Bigr\|.
\end{align}
Thus, it suffices to estimate from below $\ \EE\|\sum_{i,j} \varepsilon_{ij}a_{ij} E_{ij} \|$.

Since the $\ell_q$ norm is unconditional, we obtain from the inequalities of Jensen and Khintchine (see \cite{Haagerup}) that
	\begin{align*}
		\EE\Bigl\|\sum_{i=1}^m \sum_{j=1}^n \varepsilon_{ij}a_{ij} E_{ij} \Bigr\|
		& = \EE \sup_{x\in B_p^n} \Bigl\| \bigl( \sum_{j=1}^n a_{ij}\varepsilon_{ij} x_j\bigr)_{i=1}^m \Bigr\|_q
		= \EE \sup_{x\in B_p^n} \Bigl\| \Bigl( \bigl| \sum_{j=1}^n a_{ij}\varepsilon_{ij} x_j \bigr|\Bigr)_{i=1}^m \Bigr\|_q \\
		& \geq \sup_{x\in B_p^n} \Bigl\| \Bigl( \EE \bigl| \sum_{j=1}^n a_{ij}\varepsilon_{ij} x_j \bigr|\Bigr)_{i=1}^m \Bigr\|_q\\
		&
		\hspace{-0,55 cm}\mathop{\geq}^{\text{Khintchine's}}_{\text{inequality}}   \frac{1}{\sqrt 2} \sup_{x\in B_p^n} \Bigl\| \Bigl( \bigl( \sum_{j=1}^n a_{ij}^2 x_j^2 \bigr)^{1/2} \Bigr)_{i=1}^m \Bigr\|_q \\
		& =  \frac{1}{\sqrt 2} \sup_{z\in B_{p/2}^n} \Bigl\| \Bigl( \sum_{j=1}^n a_{ij}^2 z_j  \Bigr)_{i=1}^m \Bigr\|_{q/2}^{1/2}
		 = \frac{1}{\sqrt 2} \|A\hadprod A \colon \ell^n_{p/2} \to \ell^m_{q/2} \|^{1/2}.
	\end{align*}
	This together with the estimate in \eqref{eq:cm-lower-bd-prestart} yields the assertion.
\end{proof}

Since $\|G_A\colon \ell_{p}^n\to\ell_q^m\| \ge \max_{i,j} |a_{ij}g_{ij}|$, it suffices to prove the following proposition in order to provide the lower bound in Conjecture~\ref{conj:conjecture_1}.

\begin{proposition}
	 For the $m\times n$ Gaussian matrix $G_A$, we have
	 \begin{equation} \label{eq:hippo-lower}
\EE \|G_A\colon \ell_{p}^n\to\ell_q^m\|
\gtrsim_{p,q} 
\begin{cases}
  \max_{j\le n}\sqrt{\ln (j+1)} b_j^{\rearr{}}  &    \text{if }\ p\leq q\leq 2,\\
  \max_{i\le m}\sqrt{\ln (i+1)} d_i^{\rearr{}}  &   \text{if } \ 2\leq p \leq q, \\
  0 & \text{otherwise,}
\end{cases} 
\end{equation}
where $b_j= \|(a_{ij})_{i\le m} \|_{2q/(2-q)}$ and $d_i=\| (a_{ij})_{j\le n}\|_{2p/(p-2)}$.
\end{proposition}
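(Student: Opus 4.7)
The plan is to derive \eqref{eq:hippo-lower} by reducing each of the two non-trivial ranges to a boundary case already established in the paper, so essentially no new computation is needed. The main observation is that for fixed $q$ the operator norm $\|G_A\colon \ell_p^n \to \ell_q^m\|$ is monotone non-decreasing in $p$ (because $\|\cdot\|_p \le \|\cdot\|_1$ gives $B_1^n \subset B_p^n$), and dually for fixed $p$ it is monotone non-increasing in $q$ (since $\|\cdot\|_q \ge \|\cdot\|_\infty$ gives $B_{q^\ast}^m \subset B_1^m$). Combined with \eqref{eq:duality}, this lets the two cases of \eqref{eq:hippo-lower} be obtained from Proposition~\ref{prop:gauss-p=1-q<2} alone. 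The ``otherwise'' case is trivial: the right-hand side is $0$.

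First I would treat the case $1 \le p \le q \le 2$. By monotonicity in $p$,
\[
\EE \|G_A\colon \ell_p^n \to \ell_q^m\| \;\ge\; \EE \|G_A\colon \ell_1^n \to \ell_q^m\|,
\]
and since $1 \le q \le 2$, Proposition~\ref{prop:gauss-p=1-q<2} applies and gives the desired lower bound $\max_{j\le n} \sqrt{\ln(j+1)}\, b_j^{\rearr{}}$ with $b_j = \|(a_{ij})_{i\le m}\|_{2q/(2-q)}$.

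For the case $2 \le p \le q \le \infty$, I would apply duality \eqref{eq:duality}: since $G^T$ is again an i.i.d.\ standard Gaussian matrix, $\|G_A\colon \ell_p^n \to \ell_q^m\|$ is equidistributed with $\|H_{A^T}\colon \ell_{q^\ast}^m \to \ell_{p^\ast}^n\|$, where $H$ is an $n\times m$ Gaussian matrix. Now $1 \le q^\ast \le p^\ast \le 2$, so the previous step applied to the $n \times m$ matrix $A^T$ with the exponent $p^\ast$ in place of $q$ (and $q^\ast$ in place of $p$) yields
\[
\EE \|H_{A^T}\colon \ell_{q^\ast}^m \to \ell_{p^\ast}^n\| \;\gtrsim_{p,q}\; \max_{i\le m} \sqrt{\ln(i+1)}\, \tilde{b}_i^{\rearr{}},
\]
where $\tilde{b}_i = \|((A^T)_{ji})_{j\le n}\|_{2p^\ast/(2-p^\ast)} = \|(a_{ij})_{j\le n}\|_{2p/(p-2)} = d_i$, using the elementary identity $2p^\ast/(2-p^\ast) = 2p/(p-2)$ for $p\ge 2$. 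This is exactly the asserted bound.

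There is no serious obstacle: the only things to verify carefully are the monotonicity step (immediate from $B_1^n \subset B_p^n$) and the bookkeeping of indices and conjugate exponents when passing to the transpose. In particular one must check that the sequence $b$ attached to $A$ in the ``row'' case matches, after dualizing, the sequence $d$ attached to $A^T$ in the ``column'' case, which is what the identity $2p^\ast/(2-p^\ast) = 2p/(p-2)$ ensures.
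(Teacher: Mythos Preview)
Your proof is correct and follows essentially the same approach as the paper: reduce the case $p\le q\le 2$ to $p=1$ via the inclusion $B_1^n\subset B_p^n$ and apply Proposition~\ref{prop:gauss-p=1-q<2}, then obtain the case $2\le p\le q$ by duality \eqref{eq:duality}. Your exposition is in fact more detailed than the paper's, which dispatches the argument in two sentences.
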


\begin{proof}
	Since $B_1^n\subset B_p^n$ for $p\ge 1$ and the $b_j$'s do not depend on $p$, it suffices to prove the first part of the assertion (in the range $p\leq q\leq 2$) only in the case $p=1\leq q\leq 2$. In this case \eqref{eq:hippo-lower} follows by Propostion~\ref{prop:gauss-p=1-q<2}.
	
	The assertion in the range $2\leq p \leq q$ follows by duality \eqref{eq:duality}.
\end{proof}

\subsection{The proof of Inequalities \texorpdfstring{\eqref{eq:bound_q<p_third_term_unnec} and \eqref{eq:bound_Emax_term}}{with D1 + D2}}
\label{sec:The_proof_of_Inequalities_with_D1+D2}

Let us now show that in the case $q<p$, the third term on the right-hand side in Conjecture~\ref{conj:conjecture_1} is not needed.
To this end it suffices to prove \eqref{eq:bound_q<p_third_term_unnec} only in the case $q<2$, since the case $p>2$ follows by duality \eqref{eq:duality}. 

\begin{proposition}	\label{prop:third-term-is-not-needed}
	Whenever $1\leq q<p \leq \infty$ and $q<2$, we have
	\begin{equation}	\label{eq:third-term-is-not-needed}
	D_2=\|(A\hadprod A)^T \colon \ell^m_{q^*/2} \to \ell^n_{p^*/2}\|^{1/2} \gtrsim_{p,q} \max_{j\le n}\sqrt{\ln (j+1)} b_j^{\rearr{}} ,
	\end{equation}
	where $b_j= \|(a_{ij})_{i\le m} \|_{2q/(2-q)}$.
\end{proposition}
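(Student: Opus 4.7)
Fix $k\le n$. Since $D_2$ is invariant under column permutations of $A$, I will assume $b_1\ge b_2\ge\cdots\ge b_n$, so that $b_j^{\rearr{}}=b_j$ for every $j$. My aim is to prove $D_2\gtrsim_{p,q}\sqrt{\ln(k+1)}\,b_k$ for every $k$, from which \eqref{eq:third-term-is-not-needed} will follow by maximising over $k$.

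The plan is to pool together per-column duality witnesses into a single test vector. Since $b_j^2=\|(a_{ij}^2)_{i\le m}\|_{q/(2-q)}$ and $q/(2-q)$ is H\"older conjugate to $q^*/2$, duality produces, for each $j\le k$, a nonnegative $y^{(j)}\in B_{q^*/2}^m$ with $\sum_{i}a_{ij}^2 y^{(j)}_i=b_j^2$. I will form the pointwise maximum $y_i\coloneqq \max_{j\le k}y^{(j)}_i$ and rescale it to $\tilde y\coloneqq k^{-2/q^*}y$. The elementary inequality $(\max_{j\le k}y^{(j)}_i)^{q^*/2}\le \sum_{j\le k}(y^{(j)}_i)^{q^*/2}$ yields $\|y\|_{q^*/2}^{q^*/2}\le k$, so $\tilde y\in B_{q^*/2}^m$. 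Because $A\hadprod A$ has non-negative entries and $\tilde y\ge k^{-2/q^*}y^{(j)}$ coordinate-wise, this gives, for every $j\le k$,
\[
\bigl((A\hadprod A)^T\tilde y\bigr)_j\ge k^{-2/q^*}\sum_{i}a_{ij}^2 y^{(j)}_i=k^{-2/q^*}b_j^2,
\]
which is the pooled lower bound I need.

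Retaining only the top $k$ coordinates in $\|(A\hadprod A)^T\tilde y\|_{p^*/2}^{p^*/2}=\sum_j \bigl((A\hadprod A)^T\tilde y\bigr)_j^{p^*/2}$ (which is harmless whether $p^*/2\ge 1$ or $p^*/2<1$, since all summands are non-negative) will then give
\[
D_2^{p^*}\ \ge\ \sum_{j\le k}\bigl(k^{-2/q^*}b_j^2\bigr)^{p^*/2}\ \ge\ k^{1-p^*/q^*}b_k^{p^*},
\]
i.e.\ $D_2\ge k^{\alpha}b_k$ with $\alpha\coloneqq 1/p^*-1/q^*>0$ (the strict inequality coming from $q<p$). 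Since $\inf_{k\ge 1}k^{\alpha}/\sqrt{\ln(k+1)}>0$ depends only on $\alpha$, maximising over $k$ will finish the proof. The only real choice is how to aggregate the $y^{(j)}$; the pointwise maximum works because the rescaling factor $k^{2/q^*}$ needed to land back in $B_{q^*/2}^m$ is more than compensated by the polynomial growth $k^{1/p^*-1/q^*}$, so there is no serious obstacle and the fact that $\|\cdot\|_{p^*/2}$ may only be a quasi-norm when $p>2$ plays no role.
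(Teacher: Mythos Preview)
Your proof is correct and takes a genuinely different route from the paper's. The paper first reduces to $p\le 2$ (using that $D_2$ is non-decreasing in $p$), then uses the dual formulation $D_2^2=\|A\hadprod A\colon \ell_{(p^*/2)^*}^n\to \ell_{(q^*/2)^*}^m\|$ and plugs in a harmonic-type test vector $z_j=j^{-r}$ with $r$ the midpoint of $\bigl(\tfrac{2-p}{p},\tfrac{2-q}{q}\bigr)$; the calculation then uses $(q^*/2)^*=q/(2-q)\ge 1$ together with $b_j\ge b_{j_0}$ to arrive at $D_2^2\gtrsim_{p,q} b_{j_0}^2\,j_0^{1/q-1/p}$. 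You instead work on the primal side, testing $(A\hadprod A)^T$ against a vector in $B_{q^*/2}^m$ obtained by aggregating the per-column dual witnesses $y^{(j)}$ via a pointwise maximum; this avoids the reduction to $p\le 2$ (the possible failure of $p^*/2\ge 1$ is indeed irrelevant, as you note) and yields the cleaner quantitative bound $D_2\ge k^{1/p^*-1/q^*}b_k$, i.e.\ $D_2^2\ge b_k^2\,k^{2(1/q-1/p)}$, with twice the exponent the paper obtains. Either exponent suffices, since any positive power of $k$ dominates $\sqrt{\ln(k+1)}$.
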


\begin{proof}
	Since the right-hand side of \eqref{eq:third-term-is-not-needed} does not depend on $p$, and the left-hand side is non-decreasing with $p$, we may consider only the case $1\le q<p\le 2$.
	By permuting the columns of $A$  we may and do assume without loss of generality that the sequence $(b_j)_j$ is non-increasing.

	 Fix $j_0\le n$. Let $r$ be the midpoint of the non-empty interval $(\frac{2-p}p, \frac{2-q}q)$.  Take $x=(x_j)_{j\le n}$ with $x_j=\frac{1}{j^r}$. Since $rp/(2-p)>1$, we have
	\[
		\sum_{j=1}^n x_j^{p/(2-p)} \le \sum_{j=1}^\infty \frac{1}{j^{rp/{(2-p)}}} =C(p,q)<\infty,
	\] 
	so $x\in C'(p,q)B^n_{p/(2-p)}=C'(p,q)B^n_{(p^\ast/2)^\ast}$. Therefore, the inequality $(q^\ast/2)^\ast = q/(2~-~q) \ge~1$ and the facts that $b_j\ge b_{j_0}$ for all $j\le j_0$, and that $r<(2-q)/q$ imply
	\begin{align*}
		D_2^2 & = \sup_{z\in B_{(p^\ast/2)^\ast}^n} \biggl( \sum_{i=1}^m \Bigl( \sum_{j=1}^n a_{ij}^2 z_j\Bigr)^{(q^\ast/2)^\ast} \biggr)^{1/(q^\ast/2)^\ast} 
		\gtrsim_{p,q} \biggl( \sum_{i=1}^m \Bigl( \sum_{j=1}^{j_0} a_{ij}^2 j^{-r}\Bigr)^{q/(2-q)} \biggr)^{(2-q)/q} 
		\\
		& \ge  \Bigl( \sum_{i=1}^m  \sum_{j=1}^{j_0} a_{ij}^{2q/(2-q)} j^{-{rq/(2-q)}} \Bigr)^{(2-q)/q} 
		=   \Bigl(  \sum_{j=1}^{j_0} b_j^{2q/(q-2)} j^{-{rq/(2-q)}} \Bigr)^{(2-q)/q}
		\\
		& \ge b_{j_0}^2j_0^{-r+(2-q)/q } \gtrsim_{p,q} b_{j_0}^2 \ln(j_0+1).
	\end{align*}
	Taking the maximum over all $j_0\le n$ completes the proof.
\end{proof}

Now we turn to the proof of \eqref{eq:bound_Emax_term}. Note that it suffices to prove only the first two-sided inequality in \eqref{eq:bound_Emax_term}, since the second one follows from it by duality \eqref{eq:duality}.

\begin{proposition}
	For all $1\le p, q\le \infty$, we have
\begin{multline}		\label{eq:bound_Emax_term-last-section}
	\|A\hadprod A \colon \ell^n_{p/2} \to \ell^m_{q/2} \|^{1/2}+ \EE \max_{i \le m,j\le n} |a_{ij}g_{ij}| 
	\\
	\asymp_{q} \|A\hadprod A \colon \ell^n_{p/2} \to \ell^m_{q/2} \|^{1/2}+\max_{i\le m, j\le n}\sqrt{\ln (j+1)} a_{ij}',
\end{multline}
where the matrix $(a_{ij}')_{i,j}$  is obtained by permuting the columns of the matrix $(|a_{ij}|)_{i,j}$ in such a way that $\max_i a_{i1}'\ge \dots \ge \max_i a_{in}'$.
\end{proposition}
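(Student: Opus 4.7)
The plan is to reduce the claim to a purely deterministic comparison of rearrangement functionals. First, applying Lemmas~\ref{lem:vH2017-3} and~\ref{lem:vH2017-4} to the $mn$-parameter family $\{a_{ij}g_{ij}\}_{i\le m,\, j\le n}$, each distributed as $\mathcal{N}(0,a_{ij}^2)$, I would identify
\begin{equation*}
    \EE \max_{i\le m,\, j\le n}|a_{ij}g_{ij}| \asymp \max_{k\le mn} \sqrt{\ln(k+1)}\,\alpha_k^{\rearr{}},
\end{equation*}
where $\alpha_1^{\rearr{}}\ge\dots\ge\alpha_{mn}^{\rearr{}}$ is the non-increasing rearrangement of $(|a_{ij}|)_{i\le m,\,j\le n}$. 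Simultaneously, since $\sqrt{\ln(j+1)}$ does not depend on $i$, setting $c_j\coloneqq\max_{i\le m}|a_{ij}|$ gives $\max_{i,j}\sqrt{\ln(j+1)}\,a_{ij}' = \max_{j\le n}\sqrt{\ln(j+1)}\,c_j^{\rearr{}}$, with $c_j^{\rearr{}}$ the non-increasing rearrangement of $(c_j)_{j\le n}$. The proposition thus reduces to
\begin{equation*}
    D_1 + \max_{k\le mn}\sqrt{\ln(k+1)}\,\alpha_k^{\rearr{}} \;\asymp_q\; D_1 + \max_{j\le n}\sqrt{\ln(j+1)}\,c_j^{\rearr{}}.
\end{equation*}

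The ``$\gtrsim$'' direction is essentially free: since $(c_j)_{j\le n}$ is obtained by selecting one entry of $(|a_{ij}|)_{i,j}$ per column, the $j$-th largest column maximum is bounded by the $j$-th largest matrix entry, i.e.\ $c_j^{\rearr{}}\le\alpha_j^{\rearr{}}$, and taking $\max_j\sqrt{\ln(j+1)}(\cdot)$ gives the claim. For the main ``$\lesssim_q$'' direction I would work pointwise in $k$: fixing $k$, set $t\coloneqq\alpha_k^{\rearr{}}$ (so at least $k$ entries of $A$ have modulus $\ge t$), and let $M_k$ denote the number of distinct columns of $A$ containing at least one such entry. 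I would then fix once and for all a parameter $\beta=\beta(q)\in(0,1)$ (the value $\beta=1/2$ suffices) and split into two cases. In Case~(i), when $M_k\ge k^{\beta}$, at least $k^\beta$ columns have $c_j\ge t$, hence $c_{\lceil k^\beta\rceil}^{\rearr{}}\ge t$, which gives $\max_j\sqrt{\ln(j+1)}\,c_j^{\rearr{}}\gtrsim_\beta\sqrt{\ln(k+1)}\,t$ directly. In Case~(ii), when $M_k<k^{\beta}$, the pigeonhole principle produces a column $j^\ast$ containing more than $k^{1-\beta}$ entries of modulus $\ge t$; combined with the trivial bound $D_1^2\ge\|(A\hadprod A)e_{j^\ast}\|_{q/2}=\|(a_{ij^\ast})_{i\le m}\|_q^2$ (valid for every $p\in[1,\infty]$ since $\|e_{j^\ast}\|_{p/2}=1$) and the elementary estimate $\|(a_{ij^\ast})_{i\le m}\|_q\ge t\,N_{j^\ast}(t)^{1/q}$ for $q<\infty$, this yields $D_1\gtrsim t\,k^{(1-\beta)/q}$.

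The step expected to require the most care is balancing the two cases, because Case~(ii) produces the polynomial factor $k^{(1-\beta)/q}$ rather than the desired $\sqrt{\ln(k+1)}$. The idea is that for $k\ge k_0(q,\beta)$ one has $k^{(1-\beta)/q}\ge C(q)^{-1}\sqrt{\ln(k+1)}$, so in that range Case~(ii) delivers $D_1\gtrsim_q\sqrt{\ln(k+1)}\,t$; for the finitely many $k\le k_0(q,\beta)$, the crude bound $\sqrt{\ln(k+1)}\,\alpha_k^{\rearr{}}\le\sqrt{\ln(k_0+1)}\,\alpha_1^{\rearr{}}=\sqrt{\ln(k_0+1)}\,c_1^{\rearr{}}\lesssim_q\max_j\sqrt{\ln(j+1)}\,c_j^{\rearr{}}$ suffices. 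Combining the two cases and taking $\max_k$ of the resulting pointwise estimates yields the ``$\lesssim_q$'' inequality, which together with the trivial reverse bound from the previous paragraph completes the proof; the implicit constant degenerates as $q\to\infty$, reflecting the fact that the polynomial exponent $(1-\beta)/q$ in Case~(ii) vanishes in the limit.
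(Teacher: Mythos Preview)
Your proof is correct and takes a genuinely different route from the paper's. Both approaches yield a constant that degenerates as $q\to\infty$ (indeed the claimed equivalence fails for $q=\infty$, as the single-column example $n=1$, $a_{i1}\equiv 1$ with $m\to\infty$ shows), so your final remark is exactly right.

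The paper's argument is more ``Gaussian'': it centers column by column, writing
\[
\EE\max_{i,j}|a_{ij}g_{ij}| \le \EE\max_{j}\Bigl(\max_i|a_{ij}g_{ij}|-\EE\max_i|a_{ij}g_{ij}|\Bigr) + \max_j\EE\max_i|a_{ij}g_{ij}|,
\]
then uses that $y\mapsto\max_i|a_{ij}y_i|$ is $\bigl(\max_i|a_{ij}|\bigr)$-Lipschitz together with Gaussian concentration and Lemma~\ref{lem:vH2017-3} to control the first term by $\max_j\sqrt{\ln(j+1)}\,c_j^{\rearr{}}$; the second term is bounded by $\gamma_q\max_j\|(a_{ij})_i\|_q\le\gamma_q D_1$ via the crude inclusion $\max_i|\cdot|\le\|\cdot\|_q$. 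By contrast, your argument first absorbs all the probabilistic content into the single equivalence $\EE\max_{i,j}|a_{ij}g_{ij}|\asymp\max_{k}\sqrt{\ln(k+1)}\,\alpha_k^{\rearr{}}$, and then proves a purely deterministic rearrangement inequality via a pigeonhole dichotomy on the number of columns meeting a given level set. This makes the role of $D_1$ more transparent --- it enters precisely when many large entries are concentrated in one column --- and avoids the second use of Gaussian concentration, at the cost of the extra bookkeeping in balancing the two cases and handling small $k$. The paper's route is shorter; yours isolates the combinatorial core.
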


\begin{proof}
	 By permuting the columns of the matrix $A$, we can  assume that the sequence $(\max_{i\le m} |a_{ij}|)_{j=1}^n$ is non-increasing. 
	 We have
\begin{multline}	\label{eq:E-max-0}
    \EE\max_{i\le m,j\le n} |a_{ij} g_{ij}| \le \EE \max_{j\le n} \Big(\max_{i\le m} |a_{ij} g_{ij}| - \EE \max_{i\le m} |a_{ij} g_{ij}|\Big) \\ + \max_{j\le n} \EE \max_{i\le m}|a_{ij}g_{ij}|.
\end{multline}

The function $y \mapsto \max_{i\le m} |a_{ij} y_i|$ is $\max_{i\le m} |a_{ij}|$-Lipschitz with respect to the Euclidean norm on $\RR^m$, so by Gaussian concentration (see, e.g., \cite[Chapter~5.1]{Ledoux}),
\[
	\PP\bigl(\max_{i\le m} |a_{ij} g_{ij}| - \EE \max_{i\le m} |a_{ij} g_{ij}| \ge t \bigr) \le \exp \Bigl(-\frac{t^2} {2\max_{i\le m} |a_{ij}|}\Bigr)
\]
for all  $t\ge 0$, $j\leq n$.
Thus, Lemma~\ref{lem:vH2017-3} and inequality \eqref{eq:E-max-0} imply
\begin{align}\label{eq:E-max-1}
  \EE\max_{i\le m,j\le n} |a_{ij} g_{ij}|  \lesssim \max_{j\le n} \Big(\sqrt{\ln(j+1)}\max_{i\le m} |a_{ij}|\Big)+
\max_{j\le n} \EE \max_{i\le m}|a_{ij}g_{ij}|.
\end{align}
We have
\begin{align*}
  \max_{j\le n} \EE \max_{i\le m}|a_{ij}g_{ij}|
  & \le \max_{j\le n} \EE  \Big(\sum_{i=1}^m |a_{ij} g_{ij}|^q\Big)^{1/q} \le \gamma_q  \max_{j\le n} \|(a_{ij})_{i}\|_q \\
 & = \gamma_q  \max_{j\le n} \|(a_{ij}^2)_{i}\|_{q/2}^{1/2} \le
\gamma_q \|A\hadprod A \colon \ell^n_{p/2} \to \ell^m_{q/2} \|^{1/2},
\end{align*}
which, together with \eqref{eq:E-max-1}, provides the asserted upper bound.

On the other hand, if $(a_{l}^{\rearr{}})_{l\le mn}$ denotes the non-increasing rearrangement of the sequence of all absolute values of entries of $A$, then   Lemma~\ref{lem:vH2017-4} implies 
\begin{align*}		\nonumber
	\EE\max_{j\le n} \max_{i\le m}|a_{ij} g_{ij}| 
	\gtrsim \max_{l\le mn} \sqrt{\ln (l+1)}a_l^{\rearr{}} 
	&\ge \max_{j\le n} \sqrt{\ln(j+1)} a_{j}^{\rearr{}} 
	\\
	& \ge \max_{j\le n} \Big(\sqrt{\ln(j+1)}\max_{i\le m} a_{ij}'\Big),
\end{align*}
which provides the asserted lower bound.
\end{proof}

Note that the above proof  shows in fact that
\begin{equation*}	
	 \max_{j\le n} \|(a_{ij})_{i}\|_q + \EE \max_{i \le m,j\le n} |a_{ij}g_{ij}| 
	\\
	\asymp_{q} \max_{j\le n} \|(a_{ij})_{i}\|_q +\max_{i\le m, j\le n}\sqrt{\ln (j+1)} a_{ij}',
\end{equation*}
so
\begin{multline}	\label{eq:Emax-random-vs-max-deterministic}
	 \max_{j\le n} \|(a_{ij})_{i}\|_q + \max_{i\le m} \|(a_{ij})_{j}\|_{p^\ast} + \max_{j\le n, i \le m} \sqrt{\ln (i+1)} a_{ij}''	\\
	\asymp_{q} \max_{j\le n} \|(a_{ij})_{i}\|_q + \max_{i\le m} \|(a_{ij})_{j}\|_{p^\ast} +\max_{i\le m, j\le n}\sqrt{\ln (j+1)} a_{ij}',
	\end{multline}
	where the matrix $(a_{ij}'')_{i,j}$  is obtained by permuting the rows of the matrix $(|a_{ij}|)_{i,j}$ in such a way that $\max_j a_{1j}''\ge \dots \ge \max_j a_{mj}''$.

\subsection{Counterexample to a seemingly natural conjecture}
\label{subsection:counterexample}
In this subsection we provide an example showing that for any $p\le q <2$ the bound 
\begin{align}
\EE \|G_A\colon \ell_{p}^n\to\ell_q^m\| \lesssim_{p,q}  \|A\hadprod A \colon \ell^n_{p/2} \to \ell^m_{q/2} \|^{1/2} &+ \|(A\hadprod A)^T \colon \ell^m_{q^*/2} \to \ell^n_{p^*/2}\|^{1/2} \nonumber\\
&+ \EE\max_{i\leq m, j\leq n}|a_{ij}g_{ij}|.
\label{eq:upper-bound-fails}
\end{align}
cannot hold. By duality \eqref{eq:duality}, it also cannot hold for any $2<p\le q$. This explains that   Conjecture~\ref{conj:conjecture_1} cannot be simplified into a form like on the right-hand side of~\eqref{eq:lower-bound}.

Let  $p\le q <2$, $k, N\in \NN$, and let $A_1, \ldots, A_N$ be $k\times k$ matrices with all entries equal to one. Consider a block matrix
\[
A = 
\begin{pmatrix}
\begin{matrix}
A_1 &     \\
& A_2
\end{matrix}
&       0        \\
0           &  \begin{matrix}
\ddots &     \\
& A_N
\end{matrix}       
\end{pmatrix}
\]
of size $kN \times kN$, with  blocks $A_1, \ldots A_N$ on the diagonal and with all other entries equal to $0$.

Note that since $p\le q \le 2$,
\begin{align*}
	\|A\hadprod A \colon \ell^{kN}_{p/2} \to \ell^{kN}_{q/2} \| 
	&= \max_{l\le N} \|A_l\hadprod A_l \colon \ell^{k}_{p/2} \to \ell^{k}_{q/2} \| 
	= \|A_1\hadprod A_1 \colon \ell^{k}_{p/2} \to \ell^{k}_{q/2} \|
	\\
	& =\sup_{x\in B_{p/2}^k}  \Bigl( \sum_{i=1}^k \Bigl| \sum_{j=1}^k x_i \Bigr|^{q/2} \Bigr)^{2/q} 
	=\sup_{x\in B_{p/2}^k} k^{2/q} \Bigl| \sum_{i=1}^k x_i \Bigr| = k^{2/q},
\end{align*}
and similarly, since $2\le q^\ast\le p^{\ast}$,
\begin{equation*}
	\|(A\hadprod A)^T \colon \ell^{kN}_{q^\ast/2} \to \ell^{kN}_{p^\ast/2} \| 
	=  \|(A_1\hadprod A_1)^T \colon \ell^{k}_{q^\ast/2} \to \ell^{k}_{p^\ast/2} \| 
	= k^{2/p^\ast+1-2/q^\ast}.
\end{equation*}
The two bounds above and Lemma~\ref{lem:max-Gaussians} imply that the right-hand side of \eqref{eq:upper-bound-fails} is bounded from above by 
\begin{equation}	\label{eq:upper-bound-fails2}
	C \Bigl( k^{1/q}+k^{1/p^\ast+1/2-1/q^\ast} + \sqrt{\ln (kN)} \Bigr).
\end{equation}

On the other hand, since for all $j\le kN$, $\|(a_{ij})_{i} \|_{2q/(2-q)} = k^{(2-q)/(2q)}$, we obtain from the lower bound \eqref{eq:hippo-lower} that 
\begin{equation}	\label{eq:upper-bound-fails3}
	\EE \|G_A\colon \ell_p^{kN}\to \ell_q^{kN}\| \gtrsim \sqrt{\ln (kN)}  k^{(2-q)/(2q)}.
\end{equation}
If we take $N\asymp e^{e^k}$, then \eqref{eq:upper-bound-fails3} is of larger order than \eqref{eq:upper-bound-fails2} as $k\to \infty$, so \eqref{eq:upper-bound-fails} cannot hold.

\subsection{Discussion of another natural conjecture} \label{subsect:conj-wrong}

In this subsection we prove all the assertions of Remark~\ref{rmk:conj-wrong}. 
We begin by showing that for every $1\le p \le 2\le q \le \infty$, 
\begin{equation}	\label{eq:equiv-two-conj}
	D_1+D_2+ \EE \max_{i,j}|a_{ij}g_{ij}| \asymp_{p,q} \EE\max_{i\le m} \| (a_{ij}g_{ij})_j\|_{p^\ast} +\EE\max_{j\le n}  \| (a_{ij}g_{ij})_i\|_q,
\end{equation}
and,  in the case  $p,q\ge 2$,
\begin{multline} 	\label{eq:upper-exp-max-random}
	\EE\max_{i\le m} \| (a_{ij}g_{ij})_j\|_{p^\ast} +\EE\max_{j\le n}  \| (a_{ij}g_{ij})_i\|_q 
		\\
		\lesssim_{p,q}
		\max _{i\le m} \| (a_{ij})_j\|_{p^\ast} +\max_{j\le n}  \| (a_{ij})_i\|_q +\max_{i\le m} \sqrt{\ln(i+1)}d_i^{\rearr{}},
\end{multline}
where $ D_1  = \|A\hadprod A \colon \ell^n_{p/2} \to \ell^m_{q/2}\|^{1/2}$, $D_2  = \|(A\hadprod A)^T \colon \ell^m_{q^*/2} \to \ell^n_{p^*/2}\|^{1/2}$, and $d_i = \|(a_{ij})_{j\le n} \|_{2p/(p-2)}$. 
In other words, \eqref{eq:equiv-two-conj} shows that Conjecture~\ref{conj:conjecture_1} is equivalent to \eqref{eq:conj-wrong} as long as $1\le p \le 2\le q \le \infty$.

\begin{proof}[Proof of \eqref{eq:equiv-two-conj} and \eqref{eq:upper-exp-max-random}]
	Fix $i\le m$ and let $f(x)=\|(a_{ij}x_j)_j\|_{p^\ast}$ for $x\in \RR^n$. For $p\ge 2$ we have $p^\ast (2/p^\ast)^\ast = 2p/(p-2)$. Thus $f$ is Lipschitz continuous with constant $L_i$ equal to
	\[
			\sup_{x\in B_2^n} \Bigl(\sum_{j=1}^n |a_{ij}x_{j}|^{p^\ast}\Bigr)^{1/p^\ast} 
		\hspace{-3pt}  = \hspace{-3pt} \sup_{y\in B_{2/p^\ast}^n} \Bigl(\sum_{j=1}^n |a_{ij}|^{p^\ast} y_{j} \Bigr)^{1/p^\ast}
		\hspace{-3pt}  =\begin{cases}
 \max_{j\le n} |a_{ij}| &  \text{if }\  p\leq 2,\\
   \|(a_{ij})_j\|_{{2p/(p-2)}} &   \text{if } \ p\ge 2.
\end{cases} 
	\]
	Therefore, the Gaussian concentration inequality (see, e.g., \cite[Chapter~5.1]{Ledoux}) implies that for every $t\ge 0$ and every $i\le m$,
	\[
		\PP \Bigl(\|(a_{ij}g_{ij})_j\|_{p^\ast}  - \EE \|(a_{ij}g_{ij})_j\|_{p^\ast}  \ge t \Bigr) \le e^{-t^2/2L_i^2},
	\]
	so by Lemma~\ref{lem:vH2017-3} we get
	\begin{multline}\label{eq:help-5.10-1}
	\EE \max_{i\le m}\Bigl(\|(a_{ij}g_{ij})_j\|_{p^\ast}   - \EE \|(a_{ij}g_{ij})_j\|_{p^\ast} \Bigr) 
	\\
	\lesssim 
	\begin{cases}
\max_{i\le m} \max_{j\le n}   \sqrt{\ln(i+1)} a_{ij}'' &  \text{if }\  p\leq 2,\\
  \max_{i\le m} \sqrt{\ln(i+1)}d_i^{\rearr{}} &   \text{if } \ p\ge 2,
   \end{cases} 
	\end{multline}
	where  the matrix  $(a_{ij}'')_{i,j}$ is obtained by permuting the rows of the matrix $(|a_{ij}|)_{i,j}$ in such a way that $\max_j a_{1j}'' \ge \dots \ge \max_j a_{mj}''$. 
 
 	Moreover, by Jensen's inequality,
	\[
		\EE  \|(a_{ij}g_{ij})_j\|_{p^\ast} \le \bigl(\EE  \|(a_{ij}g_{ij})_j\|_{p^\ast}^{p^\ast} \bigr)^{1/p^\ast} =  \Bigl(\EE \sum_{j=1}^n |a_{ij}g_{ij}|^{p^\ast}\Bigr)^{1/p^\ast} =\gamma_{p^\ast}  \|(a_{ij})_j\|_{p^\ast}.
	\]
	This together with the triangle inequality and  \eqref{eq:help-5.10-1} implies
	\[
		\EE \max_{i\le m}  \|(a_{ij}g_{ij})_j\|_{p^\ast} \lesssim_{p} \max_{i\le m}\|(a_{ij})_j\|_{p^\ast} + \begin{cases}
\max_{i\le m} \max_{j\le n}   \sqrt{\ln(i+1)} a_{ij}'' &  \text{if }\  p\leq 2,\\
  \max_{i\le m} \sqrt{\ln(i+1)}d_i^{\rearr{}} &   \text{if } \ p\ge 2,
   \end{cases} 
	\]
	and, by duality, 
	\[
		\EE \max_{j\le n}  \|(a_{ij}g_{ij})_i\|_{q} \lesssim_{q} \max_{j\le n}\|(a_{ij})_i\|_{q} + \begin{cases}
 \max_{j\le n}\max_{i\le m}   \sqrt{\ln(j+1)} a_{ij}' &  \text{if }\  q\ge 2,\\
  \max_{j\le n} \sqrt{\ln(j+1)}b_j^{\rearr{}} &   \text{if } \ q\le 2,
   \end{cases} 
	\]
	where $b_j=\|(a_{ij})_i)\|_{2q/(2-q)}$, and the matrix $(a_{ij}')_{i,j}$ is obtained by permuting the columns of the matrix $(|a_{ij}|)_{i,j}$ in such a way that $\max_i a_{i1}'\ge \dots \ge \max_i a_{in}'$.
	This, together with Lemma~\ref{lem:special-norms} and \eqref{eq:bound_Emax_term-last-section}  yields in the case $p\le 2\le q$,
	\[
		\EE \max_{i\le m}  \|(a_{ij}g_{ij})_j\|_{p^\ast}  + \EE \max_{j\le n}  \|(a_{ij}g_{ij})_i\|_{q} \lesssim_{p,q} D_1+D_2 + \EE\max_{i,j} |a_{ij}g_{ij}|,
	\]
	what implies the lower bound of \eqref{eq:equiv-two-conj}.
	 In the case $2<p, q$ we additionally use \eqref{eq:Emax-random-vs-max-deterministic} and the simple observation that 
	 \[
	 	\max_{i\le m} \max_{j\le n}   \sqrt{\ln(i+1)} a_{ij}''  \le   \max_{i\le m} \sqrt{\ln(i+1)}d_i^{\rearr{}} 	\]
	to get \eqref{eq:upper-exp-max-random}.

	Now we move to the proof of the upper bound of \eqref{eq:equiv-two-conj} in the case $p\le 2\le q$. Since the $\ell_{p^\ast}^n$ norm is unconditional, we have by Jensen's inequality and Lemma~\ref{lem:special-norms}
	\begin{align*}
		\EE  \max_{i\le m}  \|(a_{ij}g_{ij})_j\|_{p^\ast}  
		=\EE  \max_{i\le m}  \|(|a_{ij}g_{ij}|)_j\|_{p^\ast}
		 &\ge   \max_{i\le m}   \|(|a_{ij}|\EE|g_{ij}|)_j\|_{p^\ast} 
		\\ &=\sqrt{2/\pi}   \max_{i\le m}   \|(|a_{ij}|)_j\|_{p^\ast} 
		=\sqrt{2/\pi}  D_2 ,
	\end{align*}
	and dually
	\begin{equation*}
		\EE  \max_{j\le n}  \|(a_{ij}g_{ij})_i\|_{q}  
		 \ge  
		 \sqrt{2/\pi}   D_1.
	\end{equation*}
	Moreover, since $\|\cdot\|_q\ge \|\cdot\|_{\infty}$,
	\[
		\EE  \max_{j\le n}  \|(a_{ij}g_{ij})_i\|_{q}  \ge \EE\max_{j} \max_{i} |a_{ij}g_{ij}|,
	\]
	which finishes the proof of the upper bound of \eqref{eq:equiv-two-conj}.
	\end{proof}

Next, for every pair $(p,q)\in [1,\infty]^2$ which does not satisfy the condition $1\le p\le 2\le q\le \infty$ we shall give  examples of $m,n\in \NN$, and  $m\times n$ matrices $A$, for which
\begin{equation} \label{eq:example-two-conj}
	 \EE \|G_A\colon \ell_{p}^n\to\ell_q^m\| \gg \EE\max_{i\le m} \| (a_{ij}g_{ij})_j\|_{p^\ast} +\EE\max_{j\le n}  \| (a_{ij}g_{ij})_i\|_q
\end{equation}
when $m,n \to \infty$. This shows that the natural conjecture \eqref{eq:conj-wrong} is wrong outside the range $1\le p\le 2\le q\le \infty$. 
The case $p=2=q$, when \eqref{eq:conj-wrong} is valid (cf. \eqref{eq:LvHY}), is in a sense a boundary case, for which \eqref{eq:conj-wrong} (i.e., a natural generalization of \eqref{eq:LvHY}) may hold.

\begin{example} [for \eqref{eq:example-two-conj} in the case $q<p$.]
	Let $m=n$, and $A=\operatorname{Id}_n$. 
	Then by Lemmas~\ref{lem:max-Gaussians} and \ref{lem:vH2017-4} we have
	\[
		\EE\max_{i\le m} \| (a_{ij}g_{ij})_j\|_{p^\ast} +\EE\max_{j\le n}  \| (a_{ij}g_{ij})_i\|_q = 2 \max_{i\le n} |g_{ii}| \asymp \sqrt{\ln  n},
	\]
	whereas Proposition~\ref{prop:cm-lower-bound} and our assumption $p/q>1$ imply
	\begin{align*}
		 \EE \|G_A\colon \ell_{p}^n\to\ell_q^n\| &\gtrsim \|\operatorname{Id}_n \colon \ell_{p/2}^n \to \ell_{q /2}^n \|^{1/2} = \sup_{x\in B_{p/2}^n}  \Bigl( \sum_{i=1}^n |x_i|^{q/2} \Bigr)^{1/q}
		\\
		&=   \Bigl(  \sup_{y\in B_{p/q}^n} \sum_{i=1}^n |y_i| \Bigr)^{1/q} = \bigl( n^{1/(p/q)^*} \bigr)^{1/q}  \gg \sqrt{\ln n}.
	\end{align*}
\end{example}

Since cases $2<p \le q$ and $p\le q <2$ are dual (see \eqref{eq:duality}), we give an example for which  \eqref{eq:example-two-conj} holds only in the first case.

\begin{example} [for \eqref{eq:example-two-conj} in the case $2<p \le q$.]
	Fix $p$ and $q$ satisfying $2<p \le q$. Let $m,n \to \infty$ be such that $m^{1/q}\gg n^{1/p^\ast}$, and let $A$ be an $m\times n$ matrix with all entries equal to $1$. 
	For $p>2$ we have $2(p/2)^\ast = 2p/(p-2)$.
	This together with \eqref{eq:upper-exp-max-random} implies
	\begin{align*}
	   \MoveEqLeft[4]
		\EE\max_{i\le m} \| (a_{ij}g_{ij})_j\|_{p^\ast} +\EE\max_{j\le n}  \| (a_{ij}g_{ij})_i\|_q 
		\\
		&\lesssim_{p,q}
		\max _{i\le m} \| (a_{ij})_j\|_{p^\ast} +\max_{j\le n}  \| (a_{ij})_i\|_q +\max_{i\le m} \sqrt{\ln(i+1)}d_i^{\rearr{}}
		\\
		& = n^{1/p^\ast} + m^{1/q} + \sqrt{\ln(m+1)}n^{(p-2)/2p} 
		\lesssim m^{1/q} + \sqrt{\ln m}\, n^{\frac1{2(p/2)^\ast}}.
	\end{align*}
	
	On the other hand, Proposition~\ref{prop:cm-lower-bound} and our assumption $p/2>1$ imply
	\begin{align*}
		 \EE \|G_A\colon \ell_{p}^n\to\ell_q^n\| 
		 &\gtrsim \|A \colon \ell_{p/2}^n \to \ell_{q /2}^n \|^{1/2} 
		 = \sup_{x\in B_{p/2}^n}  \Bigl( \sum_{i=1}^m \Bigl|\sum_{j=1}^n x_j\Bigr|^{q/2} \Bigr)^{1/q}
		\\
		&=  m^{1/q} \sup_{x\in B_{p/2}^n}  \Bigl( \Bigl|\sum_{j=1}^n x_j\Bigr| \Bigr)^{1/2}
		= m^{1/q} n^{\frac1{2(p/2)^\ast}} \gg m^{1/q} + \sqrt{\ln m} \, n^{\frac1{2(p/2)^\ast}}.
	\end{align*}
\end{example}

\subsection{Infinite dimensional Gaussian operators}		\label{subsect:infty-dim}

In this subsection we prove Proposition~\ref{prop:finite-implies-infinite} concerning infinite dimensional Gaussian operators. It allows us to see that Conjecture~\ref{conj:conjecture_1} implies Conjecture~\ref{conj:conjecture_infty}.

\begin{proof}[Proof of Proposition~\ref{prop:finite-implies-infinite}]
We adapt the proof of \cite[Corollary~1.2]{LvHY} to prove Proposition~\ref{prop:finite-implies-infinite} in the case $p\leq 2\leq q$ -- remaining cases may be proven similarly. Fix $1\le p \le 2\le q\le \infty$ for which \eqref{eq:hippo} holds and a deterministic infinite matrix $A=(a_{ij})_{i,j\in \NN}$. Using the monotone convergence theorem one can show that a matrix $B = (b_{ij})_{i,j\in \NN}$ defines a bounded operator between $\ell_p(\NN)$ and $\ell_q(\NN)$ if an only if $\sup_{n\in \NN} \|(b_{ij})_{i,j\le n}\colon \ell_p^n\to \ell_q^n\| < \infty$. Interpreting $\|B\colon \ell_p(\NN)\to \ell_q(\NN)\|$ as infinity for matrices which do not define a bounded operator, we have
\begin{align*}
	\MoveEqLeft[18]\EE \|G_A\colon \ell_p(\NN) \to \ell_q(\NN) \|
	= \EE \sup_{x\in B_p^\infty} \biggl( \sum_{i=1}^\infty \Bigl| \sum_{j=1}^\infty a_{ij}g_{ij}x_j \Bigr|^q \biggr)^{1/q}
	\\ =  \EE \lim_{n\to \infty} \sup_{x\in B_p^n} \biggl( \sum_{i=1}^n \Bigl| \sum_{j=1}^n a_{ij}g_{ij}x_j \Bigr|^q \biggr)^{1/q}
	&= \lim_{n\to \infty}  \EE \sup_{x\in B_p^n} \biggl( \sum_{i=1}^n \Bigl| \sum_{j=1}^n a_{ij}g_{ij}x_j \Bigr|^q \biggr)^{1/q}
	\\
	&= \lim_{n\to \infty} \EE \bigl\| (g_{ij}a_{ij})_{i,j\le n} \colon \ell_p^n \to \ell_q^n \bigr\|
\end{align*}
and similarly
\begin{align*}
	\|A\hadprod A \colon \ell_{p/2}(\NN) \to \ell_{q/2}(\NN) \| &= \lim_{n\to \infty} \|(a_{ij}^2)_{i,j\le n} \colon \ell^n_{p/2} \to \ell^n_{q/2} \|,\\
	\|(A\hadprod A)^T \colon \ell_{q^*/2}(\NN) \to \ell_{p^*/2}(\NN) \| &= \lim_{n\to \infty} \|(a_{ji}^2)_{i,j\le n} \colon \ell^n_{q^*/2} \to \ell^n_{p^*/2} \|,\\
\shortintertext{and}
	\EE\sup_{i, j \in \NN}|a_{ij}g_{ij}| &= \lim_{n\to \infty} \EE\sup_{i, j \le n}|a_{ij}g_{ij}|.
	\end{align*}
	Therefore, \eqref{eq:hippo} implies the following: $\EE \|G_A\colon \ell_p(\NN) \to \ell_q(\NN) \| <\infty $ if and only if $\|A\hadprod A \colon \ell_{p/2}(\NN) \to \ell_{q/2}(\NN) \| <\infty$, $\|(A\hadprod A)^T \colon \ell_{q^*/2}(\NN) \to \ell_{p^*/2}(\NN) \| <\infty$, and $\EE\sup_{i, j \in \NN}|a_{ij}g_{ij}| <\infty$.
	It thus suffices to prove the following claim: $\|G_A\colon \ell_p(\NN) \to \ell_q(\NN) \| <\infty $ almost surely  if and only if $\EE \|G_A\colon \ell_p(\NN) \to \ell_q(\NN) \| <\infty $.
	
	If $\PP(\|G_A\colon \ell_p(\NN) \to \ell_q(\NN) \| <\infty ) <1$, then $\PP(\|G_A\colon \ell_p(\NN) \to \ell_q(\NN) \| =\infty ) >0$, so $\EE \|G_A\colon \ell_p(\NN) \to \ell_q(\NN) \| =\infty $.
	
	 Assume now that $\PP(\|G_A\colon \ell_p(\NN) \to \ell_q(\NN) \| <\infty ) =1$. By \eqref{eq:eps-nets1} and \eqref{eq:eps-nets2} we know that for every $n\in\NN$ there exist finite sets $S_n$ and $T_n$ such that
	 \begin{align*}
 	\|G_A\colon \ell_p(\NN) \to \ell_q(\NN) \|
	&=  \sup_{n\in \NN} \sup_{x\in B_p^n, y\in B_{q^\ast}^n}  \sum_{i=1}^n \sum_{j=1}^n  y_i a_{ij} g_{ij} x_j
		\\ &\asymp	\sup_n \sup_{x\in S_n, y\in T_n}  \sum_{i=1}^n \sum_{j=1}^n  y_i a_{ij} g_{ij} x_j 	\qquad \text{a.s.}
	 \end{align*}
	 In particular, there exist Gaussian random variables $(\Gamma_k)_{k\in \NN}$ such that
	 \[
	 \|G_A\colon \ell_p(\NN) \to \ell_q(\NN) \|
		\asymp
		\sup_{k\in \NN} \Gamma_k \qquad \text{a.s.}
	\]
	  Therefore, we may apply \cite[(1.2)]{LandauShepp} to see that there exists $\varepsilon >0$ such that $\EE \exp(\varepsilon  \|G_A\colon \ell_p(\NN) \to \ell_q(\NN) \|^2 ) < \infty$, so $\EE \|G_A\colon \ell_p(\NN) \to \ell_q(\NN) \| <\infty $, which completes the proof of the claim.
	\end{proof}

\medskip

\subsection*{Acknowledgments}
R.~Adamczak is partially supported by the  National Science Center, Poland via the Sonata Bis  grant  no.\ 2015/18/E/ST1/00214. R.~Adamczak was partially supported by by the WTZ Grant PL 06/2018 of the OeAD.  J.~Prochno and M.~Strzelecka are --- and M.~Strzelecki was --- supported by the Austrian Science Fund (FWF) Project P32405 \textit{Asymptotic Geometric Analysis and Applications}.
M.~Strzelecka was partially supported by the National Science Center, Poland,
via the Maestro grant no.\ 2015/18/A/ST1/00553. 

  \bibliographystyle{amsplain}
  \bibliography{matrices}

\providecommand{\bysame}{\leavevmode\hbox to3em{\hrulefill}\thinspace}
\providecommand{\MR}{\relax\ifhmode\unskip\space\fi MR }
% \MRhref is called by the amsart/book/proc definition of \MR.
\providecommand{\MRhref}[2]{%
  \href{http://www.ams.org/mathscinet-getitem?mr=#1}{#2}
}
\providecommand{\href}[2]{#2}
\begin{thebibliography}{10}

\bibitem{AM2007}
D.~Achlioptas and F.~Mcsherry, \emph{Fast computation of low-rank matrix
  approximations}, J. ACM \textbf{54} (2007), no.~2, 9–es.

\bibitem{MR2949869}
R.~Adamczak, R.~Lata{\l}a, A.~E. Litvak, A.~Pajor, and N.~Tomczak-Jaegermann,
  \emph{Chevet type inequality and norms of submatrices}, Studia Math.
  \textbf{210} (2012), no.~1, 35--56. \MR{2949869}

\bibitem{MR3478525}
R.~Adamczak, R.~Lata{\l}a, Z.~Pucha{\l}a, and K.~\.{Z}yczkowski,
  \emph{Asymptotic entropic uncertainty relations}, J. Math. Phys. \textbf{57}
  (2016), no.~3, 032204, 24. \MR{3478525}

\bibitem{AC2009}
N.~Ailon and B.~Chazelle, \emph{The fast {J}ohnson-{L}indenstrauss transform
  and approximate nearest neighbors}, SIAM J. Comput. \textbf{39} (2009),
  no.~1, 302--322. \MR{2506527}

\bibitem{ABDF2015}
G.~Akemann, J.~Baik, and P.~Di~Francesco (eds.), \emph{The {O}xford handbook of
  random matrix theory}, Oxford University Press, Oxford, 2015.

\bibitem{AGZ2010}
G.~W. Anderson, A.~Guionnet, and O.~Zeitouni, \emph{An introduction to random
  matrices}, Cambridge Studies in Advanced Mathematics, vol. 118, Cambridge
  University Press, Cambridge, 2010. \MR{2760897}

\bibitem{BvH2016}
A.~S. Bandeira and R.~van Handel, \emph{Sharp nonasymptotic bounds on the norm
  of random matrices with independent entries}, Ann. Probab. \textbf{44}
  (2016), no.~4, 2479--2506. \MR{3531673}

\bibitem{Bennett}
G.~Bennett, \emph{Schur multipliers}, Duke Math. J. \textbf{44} (1977), no.~3,
  603--639. \MR{493490}

\bibitem{BGN}
G.~Bennett, V.~Goodman, and C.~M. Newman, \emph{Norms of random matrices},
  Pacific J. Math. \textbf{59} (1975), no.~2, 359--365. \MR{393085}

\bibitem{BG1981}
Y.~Benyamini and Y.~Gordon, \emph{Random factorization of operators between
  {B}anach spaces}, J. Analyse Math. \textbf{39} (1981), 45--74. \MR{632456}

\bibitem{BLM2013}
S.~Boucheron, G.~Lugosi, and P.~Massart, \emph{Concentration inequalities},
  Oxford University Press, Oxford, 2013, A nonasymptotic theory of
  independence, With a foreword by Michel Ledoux. \MR{3185193}

\bibitem{BDN2015}
J.~Bourgain, S.~Dirksen, and J.~Nelson, \emph{Toward a unified theory of sparse
  dimensionality reduction in {E}uclidean space}, Geom. Funct. Anal.
  \textbf{25} (2015), no.~4, 1009--1088. \MR{3385629}

\bibitem{CarlMaureyPuhl}
B.~Carl, B.~Maurey, and J.~Puhl, \emph{Grenzordnungen von
  absolut-{$(r,\,p)$}-summierenden {O}peratoren}, Math. Nachr. \textbf{82}
  (1978), 205--218. \MR{498116}

\bibitem{CGLP2012}
D.~Chafa\"{\i}, O.~Gu\'{e}don, G.~Lecu\'{e}, and A.~Pajor, \emph{Interactions
  between compressed sensing random matrices and high dimensional geometry},
  Panoramas et Synth\`eses [Panoramas and Syntheses], vol.~37, Soci\'{e}t\'{e}
  Math\'{e}matique de France, Paris, 2012. \MR{3113826}

\bibitem{DS2001}
K.~R. Davidson and S.~J. Szarek, \emph{Local operator theory, random matrices
  and {B}anach spaces}, Handbook of the geometry of {B}anach spaces, {V}ol.
  {I}, North-Holland, Amsterdam, 2001, pp.~317--366. \MR{1863696}

\bibitem{FR2013}
S.~Foucart and H.~Rauhut, \emph{A mathematical introduction to compressive
  sensing}, Applied and Numerical Harmonic Analysis, Birkh\"{a}user/Springer,
  New York, 2013. \MR{3100033}

\bibitem{FY2019}
O.~Friedland and P.~Youssef, \emph{Approximating matrices and convex bodies},
  Int. Math. Res. Not. IMRN (2019), no.~8, 2519--2537. \MR{3942169}

\bibitem{Gl1983}
E.~D. Gluskin, \emph{Norms of random matrices and diameters of
  finite-dimensional sets}, Mat. Sb. (N.S.) \textbf{120(162)} (1983), no.~2,
  180--189, 286. \MR{687610}

\bibitem{GluskinKwapien95}
E.~D. Gluskin and S.~Kwapie\'{n}, \emph{Tail and moment estimates for sums of
  independent random variables with logarithmically concave tails}, Studia
  Math. \textbf{114} (1995), no.~3, 303--309. \MR{1338834}

\bibitem{GvN1951}
H.~H. Goldstine and J.~von Neumann, \emph{Numerical inverting of matrices of
  high order. {II}}, Proc. Amer. Math. Soc. \textbf{2} (1951), 188--202.
  \MR{41539}

\bibitem{G1985}
Y.~Gordon, \emph{Some inequalities for {G}aussian processes and applications},
  Israel J. Math. \textbf{50} (1985), no.~4, 265--289. \MR{800188}

\bibitem{GLSW2002}
Y.~Gordon, A.~E. Litvak, C.~Sch\"{u}tt, and E.~M. Werner, \emph{Geometry of
  spaces between polytopes and related zonotopes}, Bull. Sci. Math.
  \textbf{126} (2002), no.~9, 733--762. \MR{1941083}

\bibitem{GHLP}
O.~Gu\'{e}don, A.~Hinrichs, A.~E. Litvak, and J.~Prochno, \emph{On the
  expectation of operator norms of random matrices}, Geometric aspects of
  functional analysis, Lecture Notes in Math., vol. 2169, Springer, Cham, 2017,
  pp.~151--162. \MR{3645120}

\bibitem{GMPT2008}
O.~Gu\'{e}don, S.~Mendelson, A.~Pajor, and N.~Tomczak-Jaegermann,
  \emph{Majorizing measures and proportional subsets of bounded orthonormal
  systems}, Rev. Mat. Iberoam. \textbf{24} (2008), no.~3, 1075--1095.
  \MR{2490210}

\bibitem{GR2007}
O.~Gu\'{e}don and M.~Rudelson, \emph{{$L_p$}-moments of random vectors via
  majorizing measures}, Adv. Math. \textbf{208} (2007), no.~2, 798--823.
  \MR{2304336}

\bibitem{Haagerup}
U.~Haagerup, \emph{The best constants in the {K}hintchine inequality}, Studia
  Math. \textbf{70} (1981), no.~3, 231--283 (1982). \MR{654838}

\bibitem{HKNPUsurvey}
A.~Hinrichs, D.~Krieg, E.~Novak, J.~Prochno, and M.~Ullrich, \emph{{On the
  power of random information}}, Multivariate Algorithms and Information-Based
  Complexity (F.~J. Hickernell and P.~Kritzer, eds.), De Gruyter,
  Berlin/Boston, 1994, pp.~43--64.

\bibitem{HKNPU2021}
\bysame, \emph{Random sections of ellipsoids and the power of random
  information}, Trans. Amer. Math. Soc. \textbf{374} (2021), no.~12,
  8691--8713. \MR{4337926}

\bibitem{HPS2021}
A.~Hinrichs, J.~Prochno, and M.~Sonnleitner, \emph{Random sections of
  $\ell_p$-ellipsoids, optimal recovery and {G}elfand numbers of diagonal
  operators}, 2021.

\bibitem{HPV2021}
A.~Hinrichs, J.~Prochno, and J.~Vyb\'{\i}ral, \emph{Gelfand numbers of
  embeddings of {S}chatten classes}, Math. Ann. \textbf{380} (2021), no.~3-4,
  1563--1593. \MR{4297193}

\bibitem{HM-SO}
P.~Hitczenko, S.~J. Montgomery-Smith, and K.~Oleszkiewicz, \emph{Moment
  inequalities for sums of certain independent symmetric random variables},
  Studia Math. \textbf{123} (1997), no.~1, 15--42. \MR{1438303}

\bibitem{Hoeffding63}
W.~Hoeffding, \emph{Probability inequalities for sums of bounded random
  variables}, J. Amer. Statist. Assoc. \textbf{58} (1963), 13--30. \MR{144363}

\bibitem{KU2021}
D.~Krieg and M.~Ullrich, \emph{Function values are enough for
  {$L_2$}-approximation}, Found. Comput. Math. \textbf{21} (2021), no.~4,
  1141--1151. \MR{4298242}

\bibitem{Kwapien87}
S.~Kwapie\'{n}, \emph{Decoupling inequalities for polynomial chaos}, Ann.
  Probab. \textbf{15} (1987), no.~3, 1062--1071. \MR{893914}

\bibitem{LandauShepp}
H.~J. Landau and L.~A. Shepp, \emph{On the supremum of a {G}aussian process},
  Sankhy\={a} Ser. A \textbf{32} (1970), 369--378. \MR{286167}

\bibitem{MR1388035}
R.~Lata{\l}a, \emph{Tail and moment estimates for sums of independent random
  vectors with logarithmically concave tails}, Studia Math. \textbf{118}
  (1996), no.~3, 301--304. \MR{1388035}

\bibitem{Latala-Some-estimates}
\bysame, \emph{Some estimates of norms of random matrices}, Proc. Amer. Math.
  Soc. \textbf{133} (2005), no.~5, 1273--1282. \MR{2111932}

\bibitem{LatalaStrzeleckaMat}
R.~Lata\l{}a and M.~Strzelecka, \emph{Comparison of weak and strong moments for
  vectors with independent coordinates}, Mathematika \textbf{64} (2018), no.~1,
  211--229. \MR{3778221}

\bibitem{latala-swiatkowski2021norms}
R.~Lata{\l}a and W.~{\'S}wi{\k{a}}tkowski, \emph{Norms of randomized circulant
  matrices}, Electron. J. Probab. \textbf{27} (2022), Paper No. 80, 23.
  \MR{4441144}

\bibitem{LvHY}
R.~Lata{\l}a, R.~van Handel, and P.~Youssef, \emph{The dimension-free structure
  of nonhomogeneous random matrices}, Invent. Math. \textbf{214} (2018), no.~3,
  1031--1080. \MR{3878726}

\bibitem{Ledoux}
M.~Ledoux, \emph{The concentration of measure phenomenon}, Mathematical Surveys
  and Monographs, vol.~89, American Mathematical Society, Providence, RI, 2001.
  \MR{1849347}

\bibitem{L2007}
\bysame, \emph{Deviation inequalities on largest eigenvalues}, Geometric
  aspects of functional analysis, Lecture Notes in Math., vol. 1910, Springer,
  Berlin, 2007, pp.~167--219. \MR{2349607}

\bibitem{Ledoux-Talagrand}
M.~Ledoux and M.~Talagrand, \emph{Probability in {B}anach spaces}, Ergebnisse
  der Mathematik und ihrer Grenzgebiete (3) [Results in Mathematics and Related
  Areas (3)], vol.~23, Springer-Verlag, Berlin, 1991, Isoperimetry and
  processes. \MR{1102015}

\bibitem{LLR1995}
N.~Linial, E.~London, and Y.~Rabinovich, \emph{The geometry of graphs and some
  of its algorithmic applications}, Combinatorica \textbf{15} (1995), no.~2,
  215--245. \MR{1337355}

\bibitem{Matlak}
D.~Matlak, \emph{Oszacowania norm macierzy losowych}, Master's thesis,
  Uniwersytet Warszawski, 2017.

\bibitem{NRV2014}
A.~Naor, O.~Regev, and T.~Vidick, \emph{Efficient rounding for the
  noncommutative {G}rothendieck inequality}, Theory Comput. \textbf{10} (2014),
  257--295. \MR{3267842}

\bibitem{MR3081910}
Z.~Pucha{\l}a, {\L}.~Rudnicki, and K.~\.{Z}yczkowski, \emph{Majorization
  entropic uncertainty relations}, J. Phys. A \textbf{46} (2013), no.~27,
  272002, 12. \MR{3081910}

\bibitem{Rau2010}
H.~Rauhut, \emph{Compressive sensing and structured random matrices},
  Theoretical foundations and numerical methods for sparse recovery, Radon Ser.
  Comput. Appl. Math., vol.~9, Walter de Gruyter, Berlin, 2010, pp.~1--92.
  \MR{2731597}

\bibitem{Riemer-Schuett}
S.~Riemer and C.~Sch\"{u}tt, \emph{On the expectation of the norm of random
  matrices with non-identically distributed entries}, Electron. J. Probab.
  \textbf{18} (2013), no. 29, 13. \MR{3035757}

\bibitem{RV2007}
M.~Rudelson and R.~Vershynin, \emph{Sampling from large matrices: an approach
  through geometric functional analysis}, J. ACM \textbf{54} (2007), no.~4,
  Art. 21, 19. \MR{2351844}

\bibitem{Seginer}
Y.~Seginer, \emph{The expected norm of random matrices}, Combin. Probab.
  Comput. \textbf{9} (2000), no.~2, 149--166. \MR{1762786}

\bibitem{S1962}
D.~Slepian, \emph{The one-sided barrier problem for {G}aussian noise}, Bell
  System Tech. J. \textbf{41} (1962), 463--501. \MR{133183}

\bibitem{So2011}
A.~M.-C. So, \emph{Moment inequalities for sums of random matrices and their
  applications in optimization}, Math. Program. \textbf{130} (2011), no.~1,
  Ser. A, 125--151. \MR{2853163}

\bibitem{ST2004}
D.~A. Spielman and S.-H. Teng, \emph{Nearly-linear time algorithms for graph
  partitioning, graph sparsification, and solving linear systems}, Proceedings
  of the Thirty-Sixth Annual ACM Symposium on Theory of Computing (New York,
  NY, USA), STOC '04, Association for Computing Machinery, 2004, p.~81–90.

\bibitem{Strzelecka}
M.~Strzelecka, \emph{Estimates of norms of log-concave random matrices with
  dependent entries}, Electron. J. Probab. \textbf{24} (2019), Paper No. 107,
  15. \MR{4017125}

\bibitem{Talagrand96}
M.~Talagrand, \emph{A new look at independence}, Ann. Probab. \textbf{24}
  (1996), no.~1, 1--34. \MR{1387624}

\bibitem{Tr2008_b}
J.~A. Tropp, \emph{Norms of random submatrices and sparse approximation}, C. R.
  Math. Acad. Sci. Paris \textbf{346} (2008), no.~23-24, 1271--1274.
  \MR{2473306}

\bibitem{Tr2008}
\bysame, \emph{On the conditioning of random subdictionaries}, Appl. Comput.
  Harmon. Anal. \textbf{25} (2008), no.~1, 1--24. \MR{2419702}

\bibitem{Tropp12}
\bysame, \emph{User-friendly tail bounds for sums of random matrices},
  Foundations of Computational Mathematics \textbf{12} (2012), no.~4, 389--434.

\bibitem{T2015}
\bysame, \emph{An introduction to matrix concentration inequalities},
  Foundations and Trends® in Machine Learning \textbf{8} (2015), no.~1-2,
  1--230.

\bibitem{vH2017}
R.~van Handel, \emph{On the spectral norm of {G}aussian random matrices},
  Trans. Amer. Math. Soc. \textbf{369} (2017), no.~11, 8161--8178. \MR{3695857}

\bibitem{vH2017_survey}
\bysame, \emph{Structured random matrices}, Convexity and concentration, IMA
  Vol. Math. Appl., vol. 161, Springer, New York, 2017, pp.~107--156.
  \MR{3837269}

\bibitem{V2012}
R.~Vershynin, \emph{Introduction to the non-asymptotic analysis of random
  matrices}, Compressed sensing, Cambridge Univ. Press, Cambridge, 2012,
  pp.~210--268. \MR{2963170}

\bibitem{V2018}
\bysame, \emph{High-dimensional probability}, Cambridge Series in Statistical
  and Probabilistic Mathematics, vol.~47, Cambridge University Press,
  Cambridge, 2018, An introduction with applications in data science, With a
  foreword by Sara van de Geer. \MR{3837109}

\bibitem{vN}
J.~von Neumann and H.~H. Goldstine, \emph{Numerical inverting of matrices of
  high order}, Bull. Amer. Math. Soc. \textbf{53} (1947), no.~11, 1021--1099.

\bibitem{W1955}
E.~P. Wigner, \emph{Characteristic vectors of bordered matrices with infinite
  dimensions}, Ann. of Math. (2) \textbf{62} (1955), 548--564. \MR{77805}

\bibitem{W1957}
\bysame, \emph{Characteristic vectors of bordered matrices with infinite
  dimensions. {II}}, Ann. of Math. (2) \textbf{65} (1957), 203--207. \MR{83848}

\bibitem{W1958}
\bysame, \emph{On the distribution of the roots of certain symmetric matrices},
  Ann. of Math. (2) \textbf{67} (1958), 325--327. \MR{95527}

\bibitem{W1928}
J.~Wishart, \emph{The generalised product moment distribution in samples from a
  normal multivariate population}, Biometrika \textbf{20A} (1928), no.~1/2,
  32--52.

\end{thebibliography}

\end{document}